\documentclass[11pt,a4paper]{amsart}

\usepackage{amsfonts,amssymb,amsmath,amsthm}
\usepackage{mathtools,esint}
\usepackage{lmodern}
\usepackage[noadjust]{cite}
\usepackage{enumitem}
\usepackage{xcolor}
\usepackage[plainpages=false,pdfpagelabels,backref=page]{hyperref}
\hypersetup{
 colorlinks=true,
 linkcolor={cyan!90!black},
 citecolor={magenta},
 urlcolor={green!40!black},
 pdftitle={Weighted Kolmogorov Equations: Transfer Estimates, Local Boundedness, and Harnack Inequalities},
 pdfauthor={Kaj Nystr\"om},
 pdfsubject={Weighted Kolmogorov equations, A2 weights, kinetic regularity, and Harnack inequalities}
}

\newtheorem{thm}{Theorem}[section]
\newtheorem{lem}[thm]{Lemma}
\newtheorem{prop}[thm]{Proposition}
\newtheorem{cor}[thm]{Corollary}

\theoremstyle{definition}
\newtheorem{defn}[thm]{Definition}
\newtheorem{rem}[thm]{Remark}
\numberwithin{equation}{section}

\usepackage{tikz}
\usetikzlibrary{arrows.meta,calc,patterns,decorations.pathreplacing}
\definecolor{KPBlue}{RGB}{35,82,111}
\definecolor{KPGreen}{RGB}{45,100,83}
\definecolor{KPGold}{RGB}{163,115,43}
\tikzset{
  kp/past/.style={draw=KPBlue,densely dashed,line width=.8pt,fill=KPBlue!7},
  kp/future/.style={draw=KPGreen,line width=.85pt,fill=KPGreen!6},
  kp/outer/.style={draw=black!55,dash pattern=on 4pt off 2pt,line width=.65pt},
  kp/arrow/.style={-{Stealth[length=2mm]},draw=black!65,line width=.7pt},
  kp/step/.style={-{Stealth[length=2.2mm]},draw=KPBlue,line width=1pt},
  kp/vertex/.style={draw=black!70,rounded corners=1.5pt,fill=white,
    minimum width=2.05cm,minimum height=1.02cm,inner sep=3pt,
    align=center,font=\small},
  kp/terminal/.style={kp/vertex,double,double distance=1.2pt},
  kp/small/.style={font=\footnotesize,align=center},
  kp/heading/.style={font=\small\bfseries,anchor=west},
  kp/leader/.style={draw=black!65,line width=.5pt},
  kp/axis/.style={-{Stealth[length=1.7mm]},line width=.55pt,draw=black!75}
}

\newcommand{\R}{\mathbb R}

\usepackage{mathrsfs}
\renewcommand{\d}{\,\mathrm{d}}
\newcommand{\K}{\mathcal K}
\newcommand{\Lw}{\mathcal L_w}
\newcommand{\muW}{\mu_w}
\newcommand{\divw}{\operatorname{div}_{w,x}}
\newcommand{\esssup}{\mathop{\mathrm{ess\,sup}}}
\newcommand{\essinf}{\mathop{\mathrm{ess\,inf}}}
\newcommand{\supp}{\operatorname{supp}}
\newcommand{\avgint}{\fint}
\allowdisplaybreaks
\begin{document}

\title[Weighted Kolmogorov Equations]{Weighted Kolmogorov Equations: Transfer Estimates,\\ Local Boundedness, and Harnack Inequalities}
\author{Kaj Nystr\"om}
\address{Department of Mathematics, Uppsala University, Box 480\\
SE-751 06 Uppsala, Sweden}
\email{kaj.nystrom@math.uu.se}
\subjclass{35B45, 35H20, 35K65, 42B37}
\renewcommand{\subjclassname}{\textup{2020} Mathematics Subject Classification}
\keywords{Kolmogorov equation, Harnack inequality, Muckenhoupt weight, degenerate
ellipticity, kinetic regularity, fractional Kolmogorov operator, extension
problem.}

\begin{abstract}
Let $m\geq1$, $k\geq0$, $n=m+k$, and write $x=(v,z)\in\R^m\times\R^k$,
where $v$ is the active velocity variable and $z$ is passive with respect
to $Y=v\cdot\nabla_y+\partial_t$.  We consider
\[
 \operatorname{div}_x(A\nabla_xu)-Y(wu)=0,
\]
where $w=w(x)\in A_2(\R^n)$ and the measurable matrix $A$ has ellipticity
and size controlled by $w$.  For arbitrary such weights, including weights
depending on the active variables, we prove a weighted hypoelliptic
transfer estimate, a kinetic Sobolev inequality, local boundedness,
the weak Harnack and Harnack inequalities, local H\"older continuity,
and the strong minimum principle.

The Harnack argument uses a full-variable weighted measure-data
compactness theorem.  An $\mathrm L^p$ velocity-averaging estimate gives
compactness of smooth moments of $wu$, and weighted Poincar\'e
inequalities in all diffusive variables reconstruct $u$.  At a
two-phase limit, testing with non-negative profiles produces transport
directions given by weighted mean velocities. A $\mathrm{BV}$ argument applied to the limiting subsolution
inequality yields the no-jump principle needed for expansion
of positivity and a weighted kinetic ink-spots lemma. The theory includes the weight $|\lambda|^{1-2s}$ in the extension of
Garofalo and Tralli for fractional powers of the Kolmogorov operator.
\end{abstract}

\maketitle


\section{Introduction and statement of main results}\label{Section_0}

Let $m\geq1$ and $k\geq0$, put $n=m+k$, and split the diffusive variables as $x=(v,z)\in\R^m\times\R^k$. With \(y\in\R^m\) denoting the transported variable and \(t\in\R\) denoting
time, we study equations of the form
\begin{equation}
 \operatorname{div}_x(A\nabla_xu)-Y(wu)=0,
 \qquad
 Y:=v\cdot\nabla_y+\partial_t,
 \label{eq:original}
\end{equation}
in $(v,z,y,t)\in\R^m\times\R^k\times\R^m\times\R$.  In view of the form of
\(Y\), we call \(v\) the \emph{active diffusive variable} as it is both
diffusive and coupled to the transport in \(y\).  We
call \(z\) the \emph{passive diffusive variable} as it is affected by the
diffusion but does not occur in the transport field.  The matrix
$A=A(x,y,t)$ is real and measurable, and its ellipticity and size are measured
by $w=w(v,z)\in A_2(\R^n)$, as specified in
\eqref{eq:ellipticity2} below.  Symmetry of $A$ is not assumed, and no continuity of the coefficients or of the weight is assumed.
We use $Y(wu)$ in \eqref{eq:original} in the ordinary distributional
sense.  Since $w$ is independent of $(y,t)$, this is the weak meaning
of the customary notation $wYu$.  Thus the same weight occurs in the
diffusion matrix and in the transport term.  We retain divergence form
throughout, without expanding derivatives of $w$.

The purpose of the paper is to develop an interior regularity theory for
\eqref{eq:original} under these rough weighted ellipticity assumptions.  Our
main objectives are local boundedness, the weak Harnack and Harnack
inequalities, local H\"older continuity, and the strong minimum principle.
Two complementary mechanisms enter the proof.  Weighted Caccioppoli
estimates and a kinetic Sobolev inequality give local boundedness by Moser
iteration.  The weak Harnack inequality is then obtained by a De Giorgi
scheme based on a weighted intermediate-value principle, expansion of
positivity, and a kinetic ink-spots argument.  The principal hypoelliptic
difficulty is that the energy directly controls derivatives only in the
diffusive variables \(x\).  One must transfer part of this regularity to the
transported variables \((y,t)\), and then propagate measure information
from an earlier cylinder to a later one in a manner compatible with the
weighted measure.

All results in the paper allow arbitrary dependence of $w$ on the full
diffusive variable $x=(v,z)$.  The distinction between active and passive
variables records the geometry of the transport field. It imposes no
restriction on the weight.  In particular, when $k=0$ we obtain the
weighted theory with $w=w(v)\in A_2(\R^m)$.  The structural invariance
used throughout is $Yw=0$, which follows from independence of $(y,t)$.

\subsection{Structural assumptions}

Throughout the paper, $P=(x,y,t)=(v,z,y,t)
 \in\R^m\times\R^k\times\R^m\times\R$, $n=m+k$.  A weight $w$ is
positive almost everywhere, with $w,w^{-1}\in\mathrm L^1_{\rm loc}(\R^n)$.
We assume
\begin{equation}
 w\in A_2(\R^n),
 \qquad
 [w]_{A_2}:=\sup_{\mathcal B}
 \left(\frac1{|\mathcal B|}\int_{\mathcal B}w\d x\right)
 \left(\frac1{|\mathcal B|}\int_{\mathcal B}w^{-1}\d x\right)\leq M,
 \label{eq:A2}
\end{equation}
where the supremum is over Euclidean balls $\mathcal B\subset\R^n$.
The matrix $A$ is real, measurable, and $n\times n$.  We assume that
$0<\lambda\leq\Lambda<\infty$ and
\begin{equation}
 \lambda w(x)|\xi|^2
 \leq \xi\cdot A(P)\xi,
 \qquad
 |A(P)\xi\cdot\zeta|
 \leq\Lambda w(x)|\xi||\zeta|
 \label{eq:ellipticity2}
\end{equation}
for almost every $P$ and all $\xi,\zeta\in\R^n$.  Put
$B(P)=A(P)/w(x)$.  Then
\begin{equation}
 \lambda|\xi|^2\leq\xi\cdot B(P)\xi,
 \qquad
 |B(P)\xi\cdot\zeta|\leq\Lambda|\xi||\zeta|.
 \label{eq:Bbounds}
\end{equation}
Symmetry of $B$ is not assumed.  Assumptions
\eqref{eq:A2}-\eqref{eq:ellipticity2} are in force throughout the paper.

With $A=wB$, equation \eqref{eq:original} takes the form
\begin{equation}
 \operatorname{div}_x\bigl(w(x)B(P)\nabla_xu\bigr)-Y(wu)=0.
 \label{eq:weighted-original}
\end{equation}
In weighted duality this is written as
\[
 \divw(B\nabla_xu)-Yu=0,
 \qquad
 \divw F:=w^{-1}\operatorname{div}_x(wF).
\]
The expression $w^{-1}\operatorname{div}_x(wF)$ is understood through
weighted duality, as specified in Section \ref{Section_3}, rather than
as multiplication of an arbitrary distribution by $w^{-1}$.
Expanding $\divw$ would introduce $\nabla_x\log w$, which is not an
admissible lower-order coefficient for a general $A_2$ weight.  We use
the weighted divergence formulation and the measure
\begin{equation}
 \d\muW(P):=w(x)\d x\d y\d t
 =w(v,z)\d v\d z\d y\d t.
 \label{measure}
\end{equation}
Since $Yw=0$ and the vector field associated with $Y$ has zero
Lebesgue divergence, $Y$ is formally skew-adjoint with respect to $\d\muW$.
Diffusion and transport therefore fit in one energy formulation without
differentiating the weight.

\subsection{Main results}

Let
\[
 P_0=(x_0,y_0,t_0)=(v_0,z_0,y_0,t_0),
 \qquad x_0=(v_0,z_0),
\]
and let \(r>0\).  We define the backward intrinsic cylinder
\begin{equation}
 Q_r(P_0)
 :=
 \bigl\{(x,y,t):
 |x-x_0|<r,\
 |y-y_0-(t-t_0)v_0|<r^3,\
 t_0-r^2<t<t_0
 \bigr\}.
 \label{eq:cylinder}
\end{equation}
When \(P_0=0\), we write \(Q_r:=Q_r(0)\).  The fixed past and future
reference boxes are
\begin{align}
 Q^-&=
 B_{1/4}^{v}\times B_{1/4}^{z}
 \times B_{1/64}^{y}\times(-3/4,-1/2),
 \label{eq:Qminus}\\
 Q^+&=
 B_{1/4}^{v}\times B_{1/4}^{z}
 \times B_{1/64}^{y}\times(-1/4,-1/8).
 \label{eq:Qplus}
\end{align}
When \(k=0\), the factor \(B_{1/4}^{z}\) is omitted.  Their images under
the intrinsic translation and dilation \(T_{P_0,r}\) defined in
\eqref{eq:normalization} are denoted by \(Q_r^-(P_0)\) and
\(Q_r^+(P_0)\), respectively.  Here and below,
\(B_r^x\), \(B_r^v\), \(B_r^z\), and \(B_r^y\) denote the open Euclidean
balls of radius \(r\), centered at the origin, in
\(\R_x^n\), \(\R_v^m\), \(\R_z^k\), and \(\R_y^m\), respectively.

We fix a structural enlargement factor \(R_*>2\), sufficiently large that
the reference boxes and the finitely many normalized enlargements used in
the intermediate-value and measure-to-point arguments are compactly
contained in \(Q_{R_*}\).  In the covering and stacking arguments, the
admissible radii are chosen so that the corresponding enlarged cylinders
and forward stacks remain in \(Q_{R_*}\).  The precise value of \(R_*\) is
irrelevant. It depends only on the fixed normalized geometry and is
preserved under intrinsic translations and dilations.

For the local regularity statement, we also fix the symmetric normalized
neighborhood
\[
 \mathcal W:=B_1^x\times B_1^y\times(-1,1),
 \qquad
 \mathcal W_r(P_0):=T_{P_0,r}(\mathcal W).
\]

Energy solutions and subsolutions are understood in the sense of
Definition \ref{def:energysolution}, and $\avgint_E$ denotes integration
divided by $\muW(E)$.  Our first main result is local boundedness under
the general weighted ellipticity assumptions.

\begin{prop}
\label{prop:localboundedness}
Assume \eqref{eq:A2} and \eqref{eq:ellipticity2}.  Let \(u\geq0\) be an
energy subsolution of \eqref{eq:original} in \(Q_R(P_0)\).  If
\(0<\rho<R\) and \(p>0\), then
\begin{equation}
 \esssup_{Q_\rho(P_0)}u
 \leq
 C_{p,\rho/R}
 \left(
 \avgint_{Q_R(P_0)}u^p\d\muW
 \right)^{1/p},
 \label{eq:localboundedness}
\end{equation}
where \(C_{p,\rho/R}\) depends only on
\(m,k,M,\lambda,\Lambda,p\), and \(\rho/R\).
\end{prop}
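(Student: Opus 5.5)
The plan is Moser iteration in the weighted kinetic setting, followed by the standard reduction from $p\ge 2$ (or $p$ near the integrability exponent) to arbitrary $p>0$. First, normalize by the intrinsic translation and dilation $T_{P_0,R}$. The class of equations \eqref{eq:original} is invariant under it: $w(x_0+R\,\cdot)$ has the same $A_2$ constant, and $\mu_w$ transforms by a constant factor that cancels in the averages. So we may take $P_0=0$, $R=1$, with $\rho\in(0,1)$ fixed.

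\textbf{Step 1 (Caccioppoli).} For $q\ge1$ and a convex nondecreasing truncation of $u^q$, test the subsolution inequality with $\varphi=\eta^2 u^{2q-1}$ (rigorously: $\min(u,N)$ truncations together with a Steklov average in $t$ along $Y$). Here $\eta$ is a cutoff adapted to $Q_r\subset Q_{r'}$. Skew-adjointness of $Y$ in $\d\mu_w$ (from $Yw=0$) lets the transport term become $\int (Y\eta^2)\,u^{2q}\d\mu_w$. Together with \eqref{eq:Bbounds} this gives
\[
\sup_t\int \eta^2u^{2q}w\d x\d y+\int\eta^2|\nabla_x u^q|^2\d\mu_w\le \frac{Cq^2}{(r'-r)^{3}}\int_{Q_{r'}}u^{2q}\d\mu_w,
\]
using $|Y\eta|\lesssim (r'-r)^{-3}$ for intrinsic cutoffs. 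Moreover $f=\eta u^q$ is itself a subsolution of a weighted kinetic equation with source terms of the form $\operatorname{div}_w$ of weighted $L^2$ data plus weighted $L^2$ data, controlled by the same right-hand side.

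\textbf{Step 2 (gain of integrability).} Invoke the weighted hypoelliptic transfer estimate and the kinetic Sobolev inequality stated earlier. They give, for such $f$, $\|f\|_{L^{2\kappa}(\d\mu_w)}\le C(\text{energy of }f)$ for some $\kappa>1$ depending on $m,k,M$. The $A_2$ doubling property and the weighted Sobolev inequality in $x$ of Fabes--Kenig--Serapioni give a $\kappa_x>1$ in the diffusive variables. The transfer estimate supplies fractional regularity in $(y,t)$ in the form needed to interpolate. This produces the reverse Hölder step
\[
\Bigl(\avgint_{Q_r}u^{2q\kappa}\d\mu_w\Bigr)^{1/\kappa}\le\frac{Cq^2}{(r'-r)^{\beta}}\avgint_{Q_{r'}}u^{2q}\d\mu_w .
\]
The averages are comparable because $\mu_w(Q_r)\simeq\mu_w(Q_{r'})$ by doubling of $w$. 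This step is the main obstacle. The test-function calculation is routine, but getting a genuine $\kappa>1$ for weights depending on the active variable $v$ requires the transfer estimate to hold for subsolutions with divergence-form weighted sources, uniformly in the $A_2$ constant. I would rely entirely on the earlier kinetic Sobolev inequality, applied to $\eta u^q$.

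\textbf{Step 3 (iteration and reduction to small $p$).} Iterate with $q_j=\kappa^j$ and $r_j=\rho+(1-\rho)2^{-j}$. The product $\prod (C\kappa^{2j}2^{j\beta})^{1/\kappa^j}$ converges, giving \eqref{eq:localboundedness} for $p=2$. For $p\ge2$ use Hölder. For $0<p<2$ run the standard Young-inequality absorption. Write $\|u\|_{L^\infty(Q_r)}\le C(r'-r)^{-\gamma}(\avgint_{Q_{r'}}u^2)^{1/2}$ and bound
\[
\avgint_{Q_{r'}}u^2\le\|u\|_{L^\infty(Q_{r'})}^{2-p}\avgint u^p .
\]
Then Young gives $\|u\|_{L^\infty(Q_r)}\le\frac12\|u\|_{L^\infty(Q_{r'})}+C(r'-r)^{-2\gamma/p}(\avgint_{Q_1}u^p)^{1/p}$. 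The usual iteration lemma on radii in $[\rho,1]$ removes the first term. This needs qualitative local boundedness, which comes from the $p=2$ case on slightly larger cylinders.
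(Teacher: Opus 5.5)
Your Steps 1 and 3 are essentially what the paper does. That means renormalized power tests for Caccioppoli, Moser iteration over nested backward cylinders, Hölder for $p\geq2$, and Young-inequality absorption for $0<p<2$ with qualitative boundedness from the case $p=2$.

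\textbf{The gap: Step 2.} It lies exactly where you say you would rely entirely on the kinetic Sobolev inequality. Theorem \ref{thm:kineticsobolev} is proved under the hypotheses of Proposition \ref{prop:transfer}, that is, for an \emph{equation} $Yf=\divw F+g$ with $f,\nabla_xf,F,g\in\mathrm L^2(\d\muW)$. The resonant-slab proof tests the $(y,t)$-Fourier transform of that equation against $-im\overline h$, and it genuinely needs square-integrable data.

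Your $f=\eta u^q$ is only a subsolution. The quantity $\Lw f-\divw P-g$ is a non-negative Radon measure $\eta\nu$. Even for a solution $u$, the power $u^q$ produces the term $q(q-1)u^{q-2}B\nabla_xu\cdot\nabla_xu$, which is merely integrable. Such a measure lies neither in $\mathrm L^2(\d\muW)$ nor in $\mathrm L^2(\mathrm H^{-1}_w)$ in general, so the transfer estimate does not apply to $\eta u^q$.

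There is a second, related obstruction. Theorem \ref{thm:kineticsobolev} needs $I\Subset I'$, i.e.\ a time margin above the terminal face. Nested backward cylinders $Q_r\subset Q_{r'}$ with a common top do not supply one. A subsolution also cannot be continued past its terminal time without creating a further singular term.

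\textbf{The missing idea: remove the measure by comparison, using only its sign.} The paper proceeds as follows.
\begin{enumerate}
\item Write $\Lw(\eta r)=\divw P+g+\eta\nu$, with $r=(u-\ell)_+$, $P=rB\nabla_x\eta$, and $g=B\nabla_xr\cdot\nabla_x\eta-rY\eta$.
\item Construct the solution $H$ of $\Lw H=\divw P+g$ on $\mathcal B\times\R^m_y\times(t_0,t_1)$, with zero initial and lateral data and $t_1$ beyond the terminal time (Proposition \ref{prop:auxiliary-comparison}, via Galerkin approximation and periodization in $y$). It satisfies the energy bound $\|H\|\lesssim\|P\|_{\mathrm L^2(\d\muW)}+\|g\|_{\mathrm L^2(\d\muW)}$.
\item Test the one-sided energy inequality with $(\eta r-H)_+$ and discard $\eta\nu\geq0$. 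This gives $0\leq\eta r\leq H$ before the terminal time.
\item Apply kinetic Sobolev to a cutoff of $H$. It solves a genuine equation with $\mathrm L^2$ data and has room in time on both sides.
\item Use Caccioppoli to bound $\|P\|+\|g\|$ by $\|r\|_{\mathrm L^2}$.
\end{enumerate}
This is Lemma \ref{lem:subsolutiongain}. Apply it to the smooth convex truncations $\beta_{\gamma,T}(u+\varepsilon)$, which are subsolutions by the weighted Kato inequality, and your iteration then goes through.

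\textbf{A smaller point: regularization.} Regularize by convolution in $(y,t)$ rather than by averaging along the flow of $Y$. Flow averaging does not commute with $\nabla_v$: $\nabla_v[u\circ\Phi_s]$ picks up $s(\nabla_yu)\circ\Phi_s$, which the energy does not control.
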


The same structural assumptions suffice for the Harnack inequality.

\begin{thm}
\label{thm:main}
Assume \eqref{eq:A2} and \eqref{eq:ellipticity2}.  There exists
\(C=C(m,k,M,\lambda,\Lambda)<\infty\) such that the following holds.  If
\(u\geq0\) is an energy solution of \eqref{eq:original} in an open set
containing \(\overline{Q_{R_*r}(P_0)}\), then
\begin{equation}
 \esssup_{Q_r^-(P_0)}u
 \leq
 C\essinf_{Q_r^+(P_0)}u.
 \label{eq:harnack}
\end{equation}
\end{thm}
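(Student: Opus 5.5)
The plan is to combine the local boundedness estimate of Proposition \ref{prop:localboundedness} with a weak Harnack inequality for non-negative supersolutions. By the intrinsic translation and dilation $T_{P_0,r}$ it suffices to take $P_0=0$, $r=1$. This normalization is legitimate because the class of equations is preserved: $w\circ T_{P_0,r}$ remains an $A_2$ weight with the same constant, since the weight depends only on $x$ and $x$ is only translated and dilated. The matrix $B$ keeps the bounds \eqref{eq:Bbounds}, and since both sides of \eqref{eq:harnack} are essential sup/inf, the normalization of $\muW$ is irrelevant.

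\textbf{Step 1 (upper bound).} Cover $Q^-$ by finitely many cylinders $Q_{\rho}(P_i)$ with $Q_{2\rho}(P_i)\subset Q^{-}_{\rm enl}$, a slightly enlarged past box inside $Q_{R_*}$. Apply Proposition \ref{prop:localboundedness} with a small exponent $p=\varepsilon$ on each of them. Using weighted doubling of $\muW$, which follows from $A_2$, this gives
\[
 \esssup_{Q^-}u\leq C\Bigl(\avgint_{Q^-_{\rm enl}}u^{\varepsilon}\d\muW\Bigr)^{1/\varepsilon}.
\]

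\textbf{Step 2 (weak Harnack).} Show that for some $\varepsilon_0>0$ every non-negative supersolution in $Q_{R_*}$ satisfies
\[
 \Bigl(\avgint_{Q^-_{\rm enl}}u^{\varepsilon_0}\d\muW\Bigr)^{1/\varepsilon_0}\leq C\essinf_{Q^+}u.
\]
Following the De Giorgi route announced in the introduction, I would proceed in three stages.

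First, establish a measure-to-point lemma. If $u\geq1$ on a portion of $Q^-_{\rm enl}$ of $\muW$-proportion at least $\delta$, then $u\geq\gamma(\delta)>0$ on $Q^+$. I would prove this by contradiction and compactness. Take a sequence of supersolutions with vanishing infimum on $Q^+$, truncate them to $\min(u,1)$, and pass to a limit. The limit step uses the weighted hypoelliptic transfer estimate and the velocity-averaging compactness of moments of $wu$, together with weighted Poincaré inequalities in $x$ to recover strong convergence. The limit is a two-phase profile. Testing with non-negative profiles and the BV no-jump principle rule out a jump between the positive past set and a vanishing future set. This yields expansion of positivity, with the intermediate-value principle, a De Giorgi lemma on $\log$-type truncations, supplying the quantitative gap.

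Second, iterate this with a weighted kinetic ink-spots covering lemma over stacked forward cylinders. This gives the decay $\muW(\{u>t\}\cap Q^-_{\rm enl})\leq Ct^{-\varepsilon_0}$ for $\essinf_{Q^+}u\leq1$. The constants of the stacking depend only on doubling of $\muW$ in the $(x,y,t)$ geometry.

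Third, integrate the distribution function to obtain the $L^{\varepsilon_0}$ bound.

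\textbf{Step 3 (conclusion).} Combine Steps 1 and 2 with $\varepsilon=\varepsilon_0$. The enlarged regions are chosen compatibly inside $Q_{R_*}$, which is where the choice of $R_*$ enters. This gives \eqref{eq:harnack} with $C$ depending only on $m,k,M,\lambda,\Lambda$.

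The main obstacle is the measure-to-point step, specifically the compactness and no-jump argument at the two-phase limit. Weights depending on $v$ destroy the usual Lebesgue velocity averaging, and the transport directions of the limit become weighted mean velocities. I would first try to prove the averaging lemma for the moments $\int wu\,\phi(v)\,\d v$ in $L^p$. Then I would reconstruct $u$ from these moments via the weighted Poincaré inequality in $x$, exploiting that the energy bounds $\nabla_x u$ in $L^2(\muW)$.
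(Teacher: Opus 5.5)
Your top-level argument matches the paper's. You apply local boundedness (Proposition~\ref{prop:localboundedness}) with a small exponent on a finite cover of $Q^-$ by cylinders inside an enlarged past box. You then combine this, via weighted doubling, with the weak Harnack inequality (Theorem~\ref{thm:weakharnack}, used in its stronger form on $\mathcal Q_{\rm past}$).

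The gap is in your sketch of the weak Harnack inequality, specifically the measure-to-point step, which fails as you describe it. A contradiction sequence with $\essinf_{Q^+}u_j\to0$ carries no information that survives strong $\mathrm L^2$ convergence, because the infimum may be approached only on sets of vanishing measure. Even with a measure hypothesis instead (say $u_j\le\ell_j\to0$ on a fixed weighted proportion of a future box), nothing forces the limit of $\min\{u_j,1\}$ to be two-valued. Binarity is what the no-jump argument runs on, and it is used twice:
\begin{enumerate}[label=\textnormal{(\alph*)}]
\item it makes $u_\infty=\chi_E(y,t)$ independent of $x$, so that testing with $\theta(x)$ yields a transport relation in the direction $(c_\theta,1)$;
\item it makes $D\chi_E$ purely singular, so the unknown flux term $g_\theta\in\mathrm L^p$ is absorbed by the Lebesgue decomposition and the sign of $\nu_\infty$ survives.
\end{enumerate}
For a non-binary limit, the relation $Y(w_\infty u_\infty)=\operatorname{div}_xJ_\infty-\nu_\infty$ carries essentially no information. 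The flux $J_\infty$ is unidentified because the $B_j$ and $w_j$ converge only weakly, so there is nothing to contradict.

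The missing idea is the order of the two lemmas, together with a use of local boundedness for lower bounds. In the paper, compactness plus the no-jump argument proves the intermediate-value principle (Proposition~\ref{prop:weighted-intermediate-value}). Its contradiction hypothesis, that the weighted measure of $\{\theta_j<f_j<1\}$ in the middle box tends to zero, is exactly what produces the binary limit. You instead attribute this principle to log-type truncations and give no weighted argument for it; the paper uses compactness so that no weighted kinetic Poincar\'e inequality is needed.

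Measure-to-point (Proposition~\ref{prop:measure-to-point}) then follows by De Giorgi counting:
\begin{enumerate}[label=\textnormal{(\roman*)}]
\item apply the intermediate-value principle to $\theta^{-j}f$ for $j=0,\dots,N$, with $N=\lceil\nu^{-1}\rceil$;
\item disjointness of the strips $\{\theta^{j+1}<f<\theta^j\}$ yields a level $j_0$ at which $\{f\le\theta^{j_0+1}\}$ occupies less than an $\varepsilon_*$-proportion of $\widehat Q^+$;
\item local boundedness applied to the subsolution $(1-\theta^{-(j_0+1)}f)_+$ then gives $f\ge\theta^{N+1}/2$ on $Q^+$.
\end{enumerate}

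Two smaller points. First, the $\mathrm L^2$ transfer estimate (Proposition~\ref{prop:transfer}) does not apply directly to truncations, whose equations carry Radon measures. Compactness must therefore come from the $\mathrm L^p$ averaging of moments of $w_ju_j$ (Lemma~\ref{lem:full-averaging}). Second, your ink-spots stage needs the radius bound of Proposition~\ref{prop:iterated-expansion}. That bound keeps the forward stacks inside the domain and makes the boundary-leakage error decay geometrically in the level.
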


\begin{cor}
\label{cor:holder}
Assume \eqref{eq:A2} and \eqref{eq:ellipticity2}.  Every energy solution of
\eqref{eq:original} is locally H\"older continuous in the intrinsic
Kolmogorov metric.  More precisely, there exist
\(\alpha\in(0,1)\) and \(C<\infty\), depending only on
\(m,k,M,\lambda,\Lambda\), such that
\begin{equation}
 \operatorname*{ess\,osc}_{\mathcal W_\rho(P_0)}u
 \leq
 C\left(\frac{\rho}{r}\right)^\alpha
 \operatorname*{ess\,osc}_{\mathcal W_r(P_0)}u,
 \qquad 0<\rho\leq r,
 \label{eq:holder-oscillation}
\end{equation}
whenever \(\overline{\mathcal W_r(P_0)}\) is contained in the solution
domain.
\end{cor}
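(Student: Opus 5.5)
The plan is to derive the oscillation decay from the Harnack inequality of Theorem~\ref{thm:main}, applied to the two non-negative solutions \(M_r-u\) and \(u-m_r\). The equation \eqref{eq:original} is linear, and constants are solutions: \(\operatorname{div}_x(A\nabla_x c)=0\) and \(Y(wc)=0\) because \(w\) does not depend on \((y,t)\). So if \(M_r=\esssup_{\mathcal W_r(P_0)}u\) and \(m_r=\essinf_{\mathcal W_r(P_0)}u\), both functions are non-negative energy solutions in \(\mathcal W_r(P_0)\). The constants are finite by Proposition~\ref{prop:localboundedness}, applied to \(u^\pm\). Here \(u^\pm\) are non-negative subsolutions, obtained by the standard truncation in the energy framework.

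The first step is geometric. I want a fixed \(\theta\in(0,1)\), depending only on the normalized geometry, and a point \(P_1\) with radius \(r_1=c r\) such that
\[
\overline{Q_{R_*r_1}(P_1)}\subset\mathcal W_r(P_0)
\quad\text{and}\quad
\mathcal W_{\theta r}(P_0)\subset Q^+_{r_1}(P_1).
\]
Since \(\mathcal W\) is symmetric in time and \(Q^+\) lies strictly in the past of its vertex, I take \(P_1\) slightly later than \(P_0\) in time, at \(t_0+\delta r^2\). I also correct the \(y\)-coordinate by the intrinsic translation, so that points of \(\mathcal W_{\theta r}(P_0)\) land inside \(T_{P_1,r_1}(Q^+)\). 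Then \(\theta\) is chosen small compared with \(r_1/r\), taking into account the \(r^3\) scaling in \(y\) and the drift term \((t-t_0)v_0\). I also have to check that the enlarged cylinder \(Q_{R_*r_1}(P_1)\), which reaches back in time by \((R_*r_1)^2\), stays inside \(\mathcal W_r(P_0)\). Choosing \(r_1\le r/(2R_*)\) handles this, since the time extent of \(\mathcal W_r\) is \(r^2\) on each side. This is the step I expect to be the fiddly part, because the intrinsic translations are not Euclidean and the shears have to be compared with care.

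Next, apply Theorem~\ref{thm:main} on \(Q_{r_1}(P_1)\) to \(M_r-u\) and to \(u-m_r\). Adding the two estimates, with \(Q^\pm=Q^\pm_{r_1}(P_1)\), gives
\[
(M_r-m_r)+\operatorname*{ess\,osc}_{Q^-}u
\le C\bigl((M_r-m_r)-\operatorname*{ess\,osc}_{Q^+}u\bigr).
\]
Dropping the oscillation over \(Q^-\) yields
\[
\operatorname*{ess\,osc}_{\mathcal W_{\theta r}(P_0)}u\le\operatorname*{ess\,osc}_{Q^+}u\le\bigl(1-C^{-1}\bigr)\operatorname*{ess\,osc}_{\mathcal W_r(P_0)}u.
\]

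Finally, I iterate this with \(\gamma=1-C^{-1}\). The inclusion \(\mathcal W_{\theta r}\subset\mathcal W_r\) and the dilation invariance of the whole setup under \(T_{P_0,r}\) give
\[
\operatorname*{ess\,osc}_{\mathcal W_{\theta^j r}(P_0)}u\le\gamma^j\operatorname*{ess\,osc}_{\mathcal W_r(P_0)}u.
\]
For \(\theta^{j+1}r<\rho\le\theta^j r\), monotonicity of the oscillation in the set yields \eqref{eq:holder-oscillation} with \(\alpha=\log\gamma/\log\theta\) and \(C=\gamma^{-1}\). Oscillation decay at every point, with uniform constants, then gives local Hölder continuity in the intrinsic Kolmogorov metric once \(u\) is modified on a null set, via the standard Campanato/Lebesgue point argument. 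The balls of that metric are comparable to the sets \(\mathcal W_\rho\).
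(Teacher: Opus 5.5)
Your argument is correct, but it takes a different route from the paper.

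\textbf{The paper's route.} The paper's proof never invokes Theorem \ref{thm:main}. It applies the one-sided expansion of positivity, Proposition \ref{prop:stacked-expansion}, to whichever of $u-u_*$ and $u^*-u$ is at least $\omega/2$ on half of a small past cylinder $G_r(P_0)=T_{P_0,r}(Q_\eta(P_{\mathrm b}))$. The geometry is arranged so that the forward stack of this cylinder covers $\mathcal W_{\vartheta r}(P_0)$ and its work set $\mathcal U^L_\eta(P_{\mathrm b})$ lies in $\mathcal W$. This gives the reduction factor $1-1/(4\mathsf M_L)$.

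\textbf{Your route.} You use Moser's classical device instead. You apply the two-sided Harnack inequality to both $M_r-u$ and $u-m_r$ on one configuration and add the two estimates. That computation is right. Once Theorem \ref{thm:main} is available, your argument is shorter.

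The geometric step is also less delicate than you expect. Take $r_1=r/(2R_*)$, $P_1=\Phi_{3r_1^2/16}(P_0)$ and $\theta=r_1/(4r)$. Then the drift terms cancel exactly, and one checks directly that $\mathcal W_{\theta r}(P_0)\subset Q^+_{r_1}(P_1)$ (the time and transported conditions match exactly) and that $\overline{Q_{R_*r_1}(P_1)}\subset\mathcal W_r(P_0)$.

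\textbf{What each approach buys.} The paper's route needs only a lower bound for a single non-negative supersolution, together with local boundedness to make $\omega$ finite. Its H\"older estimate therefore rests on measure-to-point and parts (i)--(iv) of Lemma \ref{lem:positivity-tree-geometry}. It bypasses the ink-spots lemma, the logarithmic chaining of part (v), and the weak Harnack iteration. Your argument inherits all of those through Theorem \ref{thm:main}. This is harmless, since that theorem is proved independently of the corollary.

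\textbf{A small fix.} The exponent $\log\gamma/\log\theta$ need not be smaller than $1$. Take $\alpha=\min\{\log\gamma/\log\theta,1/2\}$, as the paper does; this is allowed because $\rho\le r$. If necessary, enlarge the Harnack constant so that $\gamma\in(0,1)$.
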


\begin{rem}
For the locally H\"older continuous representative supplied by Corollary
\ref{cor:holder}, the essential extrema in \eqref{eq:harnack} may be
replaced by pointwise extrema.
\end{rem}

\begin{cor}
\label{cor:minimum}
Assume \eqref{eq:A2} and \eqref{eq:ellipticity2}.  Let \(u\geq0\) be an energy solution of
\eqref{eq:original} in a connected open set \(\Omega\).  If
\(u(P_0)=0\) at an interior point \(P_0\in\Omega\), then \(u\) vanishes in
the closure, relative to \(\Omega\), of the backward attainable set of
\(P_0\).  This attainable set consists of the points reached from \(P_0\)
by absolutely continuous curves contained in \(\Omega\) and satisfying
\[
 \dot t=-\alpha,
 \qquad
 \dot y=-\alpha v,
 \qquad
 \dot v=\omega_v,
 \qquad
 \dot z=\omega_z,
 \qquad
 \alpha\geq0
\]
almost everywhere, where \(\alpha,\omega_v,\omega_z\) are piecewise bounded
controls.
\end{cor}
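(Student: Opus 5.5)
The plan is to propagate vanishing along admissible curves by chaining the Harnack inequality of Theorem \ref{thm:main}, applied to the continuous representative from Corollary \ref{cor:holder}. We work with this representative, so that $u(P_0)=0$ is meaningful. Let $Z$ be the relative closure in $\Omega$ of the backward attainable set $\mathcal A(P_0)$. By continuity of $u$, it suffices to show $u=0$ on $\mathcal A(P_0)$.

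\textbf{Step 1 (local propagation).} For $P\in\Omega$ with $u(P)=0$, we show that $u$ vanishes on a set of the form $T_{P,r}(Q^-)$, and more precisely on an open "past cone" of $P$ reachable by admissible curves of controlled length.
\begin{itemize}
\item[(a)] For small $r$ with $\overline{Q_{R_*r}(P')}\subset\Omega$, we apply \eqref{eq:harnack} at centers $P'$ chosen so that $P\in\overline{Q_r^+(P')}$. Continuity gives $\inf_{Q_r^+(P')}u=0$, and hence $u\equiv0$ on $Q_r^-(P')$.
\item[(b)] We vary $P'$ over the intrinsic translates for which $P$ lies in the closure of the future box. Since $Q^+$ has nonempty interior in every variable, these past boxes $Q_r^-(P')$ sweep out a neighbourhood of the segment of the free backward flow $\dot t=-1$, $\dot y=-v$ from $P$ over times of order $r^2$. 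Along this segment the diffusive variables $x$ may be perturbed by an amount of order $r$, and $y$ by an amount of order $r^3$.
\item[(c)] Iterating (a)–(b) from points of the new zero set yields vanishing along short admissible arcs: controls $\omega$ bounded by a constant times $r^{-1}$ on time intervals of length $r^2$, and drift $\alpha$ equal to $1$ or $0$. The arcs with $\alpha=0$, which move only in $x$, are obtained by combining a backward step with $v$ pushed to $v+\epsilon$ and a compensating step. Alternatively, they follow from the weak-Harnack/expansion-of-positivity picture: zero at $P$ forces zero on a full $x$-ball at slightly earlier times, and then continuity as $t\to t(P)$ gives it at time $t(P)$ itself.
\end{itemize}

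\textbf{Step 2 (global chaining).} Take an admissible curve $\gamma:[0,T]\to\Omega$ with $\gamma(0)=P_0$. By compactness, $\gamma([0,T])$ has positive distance $d$ from $\partial\Omega$. Fix $r\ll d$ and partition $[0,T]$ into pieces on which the increments of the controls are small relative to the intrinsic scale $r$; this is possible because the controls are piecewise bounded. On each piece, the endpoint lies within $\epsilon r$, in intrinsic distance, of a point reachable by Step 1 from the starting point. We then use continuity of $u$ and the fact that Step 1 produces an \emph{open} zero set containing those reachable points. Induction over the pieces gives $u(\gamma(T))=0$, and taking closures gives vanishing on $Z$. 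Connectedness of $\Omega$ is used only through the requirement that the curves stay inside $\Omega$.

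\textbf{Main obstacle.} The delicate point is Step 1(b)–(c): showing that the union of past boxes $Q_r^-(P')$ over admissible $P'$ covers a neighbourhood of every short admissible arc, uniformly in the control directions. This reduces to a controllability computation for the Kolmogorov group law $(x,y,t)\circ(x',y',t')=(x+x',\,y+y'+t'v,\,t+t')$ at the scale of one box. The $y$-spreading of order $r^3$ must be produced from $v$-oscillations through the commutator $[\partial_v,Y]=\partial_y$. I would first try to handle this explicitly with two-step chains (change $v$, transport, change $v$ back), following the classical arguments for Kolmogorov equations. The arcs with $\alpha=0$ are also subtle; I would recover them as limits of curves with small $\alpha>0$, which is legitimate because $Z$ is a closure.
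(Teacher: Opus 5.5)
Your overall route is the paper's. A zero at a point interior to a future box $Q_r^+(P')$ forces $u\equiv0$ on $Q_r^-(P')$ by Theorem \ref{thm:main} and continuity. This is chained along the curve, zero drift is recovered as a limit of positive drift, and continuity passes to the relative closure.

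What is off is the step you single out as the main obstacle: no controllability or commutator argument is needed. Wherever time parametrizes an admissible curve, one has $dy/dt=v$ exactly, so the curve never moves transversally to the transport. Its only departure from the free characteristic through the current point $P$ is $\int(v(\tau)-v_P)\,d\tau$, which is of second order in the time step. The paper exploits this by perturbing the \emph{whole} curve: keep $\omega_v,\omega_z$ and replace $\alpha$ by $\alpha+\varepsilon$. The perturbed curves $\gamma_\varepsilon$ converge uniformly to $\gamma$ and stay in $\Omega$. They are parametrized by $t$, with $|dv/dt|,|dz/dt|\leq K_\varepsilon$. Over a backward time step $h$, the diffusive displacement is at most $K_\varepsilon h$, and $|y(t_P-h)-y_P+hv_P|\leq K_\varepsilon h^2/2$. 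Take $r^2=16h/7$ and $P'=(v_P,z_P,y_P+\tfrac{3r^2}{16}v_P,t_P+\tfrac{3r^2}{16})$. Then $P$ is interior to $Q_r^+(P')$. Moreover, $\gamma_\varepsilon(t_P-h)$ lies in $Q_r^-(P')$ once $K_\varepsilon h\ll h^{1/2}$ and $K_\varepsilon h^2\ll h^{3/2}$, which are the diffusive and transported radii of the boxes. Fix one small $h$, small enough also to keep the outer cylinders in $\Omega$. A finite chain then reaches the endpoint of $\gamma_\varepsilon$, and letting $\varepsilon\downarrow0$ with continuity finishes.

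This global perturbation is also what your Step 2 is missing. Splitting into arcs of drift $0$ or $1$ with bounded controls does not cover pieces on which $\alpha>0$ is not bounded away from zero. After reparametrizing by time, the controls $\omega/\alpha$ are unbounded there. Nor can near-zero-drift pieces be handled at scale $r$ by saying ``the endpoint lies within $\epsilon r$ of an open zero set produced by Step 1.'' The box $Q_r^-(P')$ on which Harnack yields vanishing lies at least $r^2/4$ earlier in time than the point generating it, hence at intrinsic distance comparable to $r$. For the same reason, the alternative in Step 1(c) fails as stated. The $x$-ball of zeros at time $t_P-cr^2$ has radius comparable to $r$, which shrinks as $t\to t_P$. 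Continuity in $t$ therefore does not produce a full $x$-ball at time $t_P$.
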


\subsection{Previous work and outline of the proof}

For $w\equiv1$, local boundedness, H\"older regularity, and Harnack
inequalities for rough Kolmogorov equations have been developed by
potential-theoretic, Moser, De Giorgi, and logarithmic methods, see
\cite{PolidoroRagusa2001,PascucciPolidoro2004,
GolseImbertMouhotVasseur2019,GuerandMouhot2022,Guerand2023}.  In the
uniformly parabolic setting, Moser's Harnack theory goes back to
\cite{Moser1964}.  In the
elliptic and parabolic settings, the theory of equations whose degeneracy is
controlled by an $A_2$ weight originates in
\cite{FabesKenigSerapioni1982,ChiarenzaSerapioni1984a,
ChiarenzaSerapioni1984b}. For  recent treatments of fundamental solutions
and Gaussian bounds in the weighted parabolic setting, see
\cite{AtaeiNystrom2025,Baadi2026}.  The present problem lies at the intersection of
these two theories, but they cannot simply be superimposed.  For recent
accounts of quantitative De Giorgi methods in the elliptic, parabolic, and
kinetic settings, see \cite{BrigatiMouhot2025,Imbert2026}.

Muckenhoupt weights have also appeared in mixed-norm estimates for kinetic
equations.  In that literature the weight belongs to the function space,
whereas the leading diffusion matrix remains uniformly non-degenerate in the
velocity variables, see, for example, \cite{DongYastrzhembskiy2022}.  In our paper the
weight instead measures the ellipticity of the coefficients themselves.
This distinction is essential: it is what forces the use of weighted
divergence and prevents the equation from being reduced to a uniformly
elliptic kinetic equation by division and expansion.

The first new analytic ingredient is the weighted transfer estimate in
Proposition \ref{prop:transfer}.  For an equation
$Yf=\divw F+g$, weighted $\mathrm L^2$ control of
$f,\nabla_xf,F$, and $g$ gives a positive fractional Sobolev gain in
$(y,t)$.  After Fourier transformation in these variables, the transport
multiplier is $v\cdot\eta+\tau$.  Its resonant region is a thin Euclidean
slab in the full diffusive space, even though the phase is independent
of $z$.  Quantitative $A_\infty$ absolute continuity controls the weighted
mass of this slab, while a reciprocal phase multiplier controls its
complement.  This realizes the regularity-transfer mechanism of
\cite{Hormander1967,Bouchut2002} without differentiating the weight.

Combining this gain with the weighted Sobolev inequality of
Fabes-Kenig-Serapioni gives the kinetic Sobolev inequality in
Theorem \ref{thm:kineticsobolev}, with an integrability exponent
$2\kappa>2$.  The equations for subsolutions may also contain a
non-negative Radon measure.  Section \ref{Section_5} uses an auxiliary
weighted comparison problem to remove this measure before applying the
Sobolev estimate.  Caccioppoli estimates and Moser iteration then give
Proposition \ref{prop:localboundedness}.

The transfer exponent used here is not claimed to be sharp.  The
critical-trajectory methods of
\cite{DietertMouhotNiebelZacher2025,Niebel2026} recover sharp gains in
the unweighted theory. The present iteration needs only $\kappa>1$.
The resonant-slab argument supplies this strict gain under arbitrary
$A_2$ degeneracy in all diffusive variables.

The second principal ingredient is the full-variable weighted
measure-data compactness theorem, Proposition
\ref{prop:weighted-measure-data-compactness}.  Direct translations in
the active variables cannot be controlled by the $A_2$ characteristic
alone. Indeed, if $T_hf(v,z)=f(v+h,z)$, then
\begin{equation}
 \|T_hf\|_{\mathrm L^2(w\d x)}^2
 =\int_{\R^n}|f(v,z)|^2w(v-h,z)\d v\d z,
 \label{eq:active-translation}
\end{equation}
and the $A_2$ condition gives no pointwise comparison between the two
weights.  Section \ref{Section_6} instead adapts the compactness strategy
of \cite[Section~4]{GolseImbertMouhotVasseur2019} through averages in all
diffusive variables.  Both the weights and the coefficients may vary
along the sequence, which is needed for constants uniform over
$[w]_{A_2}\leq M$.

The main distinction from the unweighted argument is that the transport
equation acts on the density $wu$, whereas the diffusive energy controls
$\nabla_xu$.  For a rough $A_2$ weight, this energy bound gives no
corresponding diffusive Sobolev bound for $wu$.  When $w=1$, the density
and the solution coincide.  The weighted argument therefore combines
averaging of the density with reconstruction of the solution through
weighted Poincar\'e estimates, uniformly for varying weights.

For normalized weights $w_j$ and functions $0\leq u_j\leq1$ with uniformly
bounded weighted energy, reverse H\"older puts the densities $w_ju_j$
and their diffusion fluxes in an unweighted $\mathrm L^p$ space for
some $p>1$, which may be close to one.  The general $\mathrm L^p$
framework of DiPerna-Lions-Meyer \cite{DiPernaLionsMeyer1991} becomes crucial here as it provides a
positive regularity gain even with this limited integrability.
Indeed, we derive the required averaging statement with Radon
measure data from \cite[Theorem~2]{DiPernaLionsMeyer1991}, introducing an
auxiliary velocity variable to include time in the stationary transport
framework.  This gives local compactness in $(y,t)$ of
$\int\theta(x)w_j(x)u_j(x,y,t)\d x$ for smooth compactly supported
$\theta$.  Finitely many such weighted averages reconstruct $u_j$ as weighted Poincar\'e inequalities on small balls in the full diffusive
space control the approximation error.  The inverse-weight bounds and
reverse H\"older then give strong local convergence both for Lebesgue
measure and for the varying weighted measures.

This compactness supplies the weighted intermediate-value principle.
If that principle failed, we would replace the normalized supersolutions
$f_j$ by the bounded subsolutions $u_j=1-\min\{f_j,1\}$.
This sequence would retain a lower phase in a past region and an upper
phase in a future region while the intermediate phase disappeared.  Strong convergence produces a
two-valued limit and preserves both endpoint phases.  Its diffusive
Sobolev regularity makes it independent of $x$, so it has the form
$u_\infty(x,y,t)=\chi_E(y,t)$ for a measurable set $E$ in the
transported variables.

Testing the limiting equation with a non-negative diffusive profile
produces a transport direction equal to the profile's weighted mean
velocity.  Concentrating the active support approximates any prescribed
interior velocity.  A finite set of independent directions first gives
$\chi_E\in\mathrm{BV}_{\rm loc}$.  The derivative of this characteristic
function is singular, whereas the tested diffusion flux contributes an
absolutely continuous measure.  Separating these parts and using the
non-negativity of the measure in the subsolution equation shows that
$\chi_E$ is non-increasing along every admissible forward characteristic.
The geometry of the past and future regions contradicts the surviving
endpoint phases.

Section \ref{Section_7} combines this principle with local boundedness
to obtain expansion of positivity and proves a weighted kinetic
ink-spots lemma.  Its slicing argument holds $x$ fixed and uses
free-transport coordinates in $(y,t)$, so it allows full dependence of
the weight on $x$.  Section \ref{Section_8} derives weak Harnack and
combines it with local boundedness to prove Harnack.

\subsection{Fractional powers of the Kolmogorov operator}

An application of the weighted theory concerns fractional powers
of the Kolmogorov operator
\begin{equation}
 \K=\Delta_v-Y
 =\Delta_v-v\cdot\nabla_y-\partial_t.
 \label{eq:model-K}
\end{equation}
Let $0<s<1$, set $a=1-2s\in(-1,1)$, and let
$\{\mathcal P_\tau^{\K}\}_{\tau>0}$ denote the evolution semigroup generated
by $\K$.  Initially for Schwartz data, and subsequently on the
appropriate operator domains, Garofalo and Tralli \cite{GarofaloTralli2021}
use the formula
\begin{equation}
 (-\K)^s f
 =-\frac{s}{\Gamma(1-s)}
 \int_0^\infty
 \frac{\mathcal P_\tau^{\K}f-f}{\tau^{1+s}}\d\tau.
 \label{eq:fractional-K}
\end{equation}
If
\begin{equation}
 U(P,\lambda)
 =\frac{\lambda^{2s}}{4^s\Gamma(s)}
 \int_0^\infty
 e^{-\lambda^2/(4\tau)}
 \mathcal P_\tau^{\K}f(P)
 \frac{\d\tau}{\tau^{1+s}},
 \qquad \lambda>0,
 \label{eq:GT-extension}
\end{equation}
then $U(P,0)=f(P)$ and
\begin{equation}
 \partial_\lambda(\lambda^a\partial_\lambda U)
 +\lambda^a\K U=0.
 \label{eq:GT-extension-equation}
\end{equation}
Moreover,
\begin{equation}
 -d_s\lim_{\lambda\to0^+}
 \lambda^{1-2s}\partial_\lambda U(P,\lambda)
 =(-\K)^sf(P),
 \qquad
 d_s=2^{2s-1}\frac{\Gamma(s)}{\Gamma(1-s)}.
 \label{eq:GT-DN}
\end{equation}
See \cite{CaffarelliSilvestre2007,GarofaloTralli2021}.

Under the one-sided energy and weak conormal assumptions specified in
Corollary \ref{cor:fractional-harnack} below, the equation $(-\K)^sf=0$ permits
an even reflection of \(U\) across
\(\{\lambda=0\}\).  The reflected extension satisfies
\begin{equation}
 \operatorname{div}_{v,\lambda}
 \bigl(|\lambda|^a\nabla_{v,\lambda}U\bigr)
 -Y(|\lambda|^aU)=0
 \label{eq:GT-weighted-form}
\end{equation}
in the weak sense across \(\{\lambda=0\}\).  This is the special case
$z=\lambda$ and $w(v,z)=|z|^a\in A_2(\R^{m+1})$ of
\eqref{eq:original}.  Here the degeneracy is confined to the extension
variable, although the main theorem permits dependence on every
diffusive variable.

We stress that \((-\K)^s\) denotes the fractional power of the full
Kolmogorov operator.  It is different from the velocity-fractional kinetic
operator obtained by replacing \(-\Delta_v\) with \((-\Delta_v)^s\). The two
operators have different extension structures and Harnack theories.
For the globally generated extension semigroup, Garofalo and Tralli
proved sharp pointwise Harnack estimates for $a\geq0$, with a boundary
version valid for $a>-1$, see
\cite[Theorem~5.4]{GarofaloTralli2020}.  Corollary
\ref{cor:fractional-harnack} below concerns a different, local statement: it
applies the weighted variable-coefficient theory developed here to the even
extension of a function satisfying \((-\K)^sf=0\) in an intrinsic cylinder,
under the stated one-sided energy and weak conormal assumptions.

We state the local consequence under explicit extension hypotheses.  Let $\Pi(v,\lambda,y,t):=(v,y,t)$ denote the projection that removes the extension variable.

\begin{cor}
\label{cor:fractional-harnack}
Let $0<s<1$, let $f\in\mathrm{Dom}((-\K)^s)\subset
\mathrm L^2(\R^{2m+1})$ be non-negative on the whole space, and put $a:=1-2s$ and
$\widetilde P_0:=(v_0,0,y_0,t_0)$. Suppose that
\[
 (-\K)^sf=0
 \qquad\hbox{in an open neighborhood of }
 \Pi\bigl(\overline{Q_{R_*r}(\widetilde P_0)}\bigr).
\]
Here the fractional power is the closed $\mathrm L^2$ realization of
\eqref{eq:fractional-K}.  Let $\mathcal O$ be a reflection-invariant
open neighborhood of $\overline{Q_{R_*r}(\widetilde P_0)}$ whose projection
lies in that harmonicity region, and set
$\mathcal O_+=\mathcal O\cap\{\lambda>0\}$.  Assume that the extension
$U$ in \eqref{eq:GT-extension} has trace $f$ in local $\mathrm L^2$
and one-sided local energy
up to $\lambda=0$:
\[
 \int_{K\cap\mathcal O_+}
 (|U|^2+|\nabla_vU|^2+|\partial_\lambda U|^2)
 \lambda^a\d v\d\lambda\d y\d t<\infty
 \qquad(K\Subset\mathcal O).
\]
Assume also that \eqref{eq:GT-DN} holds as a weak conormal identity, namely
\begin{equation}
 \int_{\mathcal O_+}
 \bigl(\nabla_{v,\lambda}U\cdot\nabla_{v,\lambda}\psi-UY\psi\bigr)
 \lambda^a\d v\d\lambda\d y\d t
 =d_s^{-1}\langle(-\K)^sf,\psi|_{\lambda=0}\rangle
 \label{eq:extension-weak-conormal}
\end{equation}
for smooth tests supported away from the other boundary faces.
Then
\begin{equation}
 \sup_{Q_r^-(\widetilde P_0)\cap\{\lambda=0\}}f
 \leq
 C
 \inf_{Q_r^+(\widetilde P_0)\cap\{\lambda=0\}}f,
 \label{eq:fractional-harnack}
\end{equation}
and \(f\) has a representative locally H\"older continuous in the
intrinsic Kolmogorov metric on
$\Pi(\mathcal O\cap\{\lambda=0\})$.  The extrema in
\eqref{eq:fractional-harnack} refer to this representative.
The constant depends only on \(m\), \(s\), and the structural constants of
the extension equation.
\end{cor}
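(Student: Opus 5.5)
The plan is to reduce Corollary \ref{cor:fractional-harnack} to Theorem \ref{thm:main} and Corollary \ref{cor:holder}. The model case is $k=1$, $z=\lambda$, $w(v,\lambda)=|\lambda|^a$ and $A=w\,I_{m+1}$, so that $B=I$ and $\lambda=\Lambda=1$ in \eqref{eq:Bbounds}. Since $a=1-2s\in(-1,1)$, the weight $|\lambda|^a$ lies in $A_2(\R^{m+1})$ with a constant $M=M(m,s)$. This is a standard computation: on a ball meeting $\{\lambda=0\}$ both $\int|\lambda|^a$ and $\int|\lambda|^{-a}$ are comparable to their one-dimensional radial averages, and on balls far from $\{\lambda=0\}$ the weight is essentially constant. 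Define the even reflection $\widetilde U(v,\lambda,y,t):=U(v,|\lambda|,y,t)$ on $\mathcal O$, which is reflection invariant. The goal is to show that $\widetilde U$ is a non-negative energy solution of \eqref{eq:GT-weighted-form}, i.e.\ of \eqref{eq:original}, in $\mathcal O\supset\overline{Q_{R_*r}(\widetilde P_0)}$.

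\textbf{Step 1: non-negativity and energy.} Non-negativity of $\widetilde U$ follows from \eqref{eq:GT-extension}, since $f\ge0$ and the Kolmogorov semigroup $\mathcal P^{\K}_\tau$ is positivity preserving (it has a positive Gaussian kernel). The one-sided energy bound, applied to both halves, gives $\widetilde U,\nabla_{v,\lambda}\widetilde U\in \mathrm L^2_{\rm loc}(\mathcal O,|\lambda|^a)$. Here one must check that the even reflection creates no singular part of $\partial_\lambda\widetilde U$ on $\{\lambda=0\}$. This holds because the two one-sided traces coincide (both equal $f$ in local $\mathrm L^2$), so $\widetilde U$ lies in the weighted Sobolev class across the hyperplane. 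Whatever membership of $Y\widetilde U$ in the dual space is required by Definition \ref{def:energysolution} is then read off from the equation itself, once that equation is verified.

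\textbf{Step 2: the weak equation across $\{\lambda=0\}$.} Take $\psi\in C_c^\infty(\mathcal O)$ and split it into its even and odd parts in $\lambda$. For the odd part, the two half-space integrals of $(\nabla\widetilde U\cdot\nabla\psi-\widetilde U Y\psi)|\lambda|^a$ cancel by symmetry, because $\partial_\lambda\widetilde U$ is odd while $\nabla_v\widetilde U$ and $\widetilde U$ are even. For the even part $\psi_e$, the integral equals twice the $\mathcal O_+$ integral. By \eqref{eq:extension-weak-conormal} this is $2d_s^{-1}\langle(-\K)^sf,\psi_e|_{\lambda=0}\rangle$, which vanishes because $\psi_e|_{\lambda=0}$ is supported in $\Pi(\mathcal O)$, where $(-\K)^sf=0$. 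Hence $\widetilde U$ solves \eqref{eq:original} weakly in $\mathcal O$.

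\textbf{Step 3: conclusion.} Theorem \ref{thm:main} gives $\esssup_{Q_r^-}\widetilde U\le C\essinf_{Q_r^+}\widetilde U$ with $C=C(m,1,M(m,s),1,1)$. Corollary \ref{cor:holder} provides a continuous representative of $\widetilde U$ on $\mathcal O$, and its restriction to $\{\lambda=0\}$ is a Hölder representative of $f$. Since the traces equal $f$ in $\mathrm L^2_{\rm loc}$, this restriction is indeed a representative of $f$, and the Hölder estimate on slices is inherited from \eqref{eq:holder-oscillation}. Passing to pointwise extrema on the slice $\{\lambda=0\}$ uses continuity. The slice $Q_r^\pm\cap\{\lambda=0\}$ lies in the closure of the open set $Q_r^\pm$, so continuity gives $\sup_{\text{slice}}\le\sup_{\overline{Q_r^-}}$, and likewise for the infimum. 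Boundary points of the closure are harmless: we can apply the theorem to slightly enlarged boxes, or note that each slice point is a limit of interior points. This yields \eqref{eq:fractional-harnack}.

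I expect Step 1 to be the main obstacle. There one must match the hypotheses precisely with Definition \ref{def:energysolution}, namely the trace and $H^1$-gluing across the hyperplane in the weighted space, and check that the test functions allowed in \eqref{eq:extension-weak-conormal} include the even parts of all tests supported in $\mathcal O$.
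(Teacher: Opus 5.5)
Your proposal is correct and follows essentially the same route as the paper: even reflection of $U$, the weak conormal identity applied to symmetrized tests (your even/odd splitting is equivalent to the paper's choice $\psi=\varphi(v,\lambda,y,t)+\varphi(v,-\lambda,y,t)$), the $A_2$ property of $|\lambda|^{1-2s}$ on $\R^{m+1}$, and then Theorem~\ref{thm:main} and Corollary~\ref{cor:holder} applied to the reflected solution and restricted to $\{\lambda=0\}$. The closure argument in your Step 3 is not needed: the slices $Q_r^\pm(\widetilde P_0)\cap\{\lambda=0\}$ already lie inside the open boxes, because their $\lambda$-factor is centered at $0$, so $\sup_{\text{slice}}\le\sup_{Q_r^-}$ and $\inf_{Q_r^+}\le\inf_{\text{slice}}$ hold immediately for the continuous representative.
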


The corollary assumes the stated one-sided energy and weak conormal
conditions. Its proof by even reflection is given in
Section \ref{subsec:fractional-extension-proof}.

\subsection{Organization of the paper} The kinetic geometry, weighted energy spaces, and consequences of the
$A_2$ condition are introduced in Section \ref{Section_2}.  The transfer
and kinetic Sobolev estimates are proved in Section \ref{Section_3}.
Sections \ref{Section_4} and \ref{Section_5} establish the energy estimates
and local boundedness.

Section \ref{Section_6} proves the measure-data compactness theorem, the
two-phase no-jump lemma, and the weighted intermediate-value principle.
Section \ref{Section_7} begins with the measure-to-point estimate, then
introduces the auxiliary kinetic geometry and develops expansion on
forward stacks and the weighted kinetic ink-spots lemma.
Weak Harnack and Harnack inequalities are proved in Section
\ref{Section_8}, and Section \ref{Section_9} gives H\"older continuity,
the minimum principle, and the fractional application.  Section \ref{Section_10} discusses the scope of the
method and further questions on sharp transfer, lower-order terms, and
weights depending on the transported variables.  Every section uses
the general assumption $w=w(x)\in A_2(\R^n)$.

Appendix \ref{app:positivity-figures} contains the proofs of the
geometric lemmas used in Section \ref{Section_7}, together with
schematic illustrations of the rooted trees of admissible ordered
pairs and the constructions used to propagate positivity.

\section{Kinetic geometry, weak solutions, and weights}\label{Section_2}
We use the measure $\d\muW$ introduced in \eqref{measure}.  For every
measurable set $E$ satisfying $0<\muW(E)<\infty$, we write
\[
 \avgint_E h\d\muW
 :=\frac{1}{\muW(E)}\int_E h\d\muW.
\]
For a ball $B\subset\R^n$ in the full diffusive space, we similarly use the
conventions
\[
 \avgint_B h\d x
 :=\frac{1}{|B|}\int_B h\d x,
 \qquad
 \avgint_B h w\d x
 :=\frac{1}{w(B)}\int_B h w\d x.
\]

\subsection{Translations and dilations}

We use the Kolmogorov group structure associated with
$Y=v\cdot\nabla_y+\partial_t$.  If
$P=(v,z,y,t)$ and
$\widetilde P=(\widetilde v,\widetilde z,\widetilde y,\widetilde t)$, then
\begin{equation}
 \widetilde P\circ P
 =\bigl(\widetilde v+v,\widetilde z+z,
 \widetilde y+y+t\widetilde v,\widetilde t+t\bigr),
 \qquad
 \delta_r(v,z,y,t)=(rv,rz,r^3y,r^2t).
 \label{eq:group-dilations}
\end{equation}
The transformations relevant to \eqref{eq:original} are therefore
\begin{align}
 T_{P_0,r}(\xi_v,\xi_z,\eta,\tau)
 &=P_0\circ\delta_r(\xi_v,\xi_z,\eta,\tau)=\bigl(v_0+r\xi_v,z_0+r\xi_z,
 y_0+r^2\tau v_0+r^3\eta,
 t_0+r^2\tau\bigr).
 \label{eq:normalization}
\end{align}
If $U=u\circ T_{P_0,r}$, then
\begin{equation}
 Y u\circ T_{P_0,r}
 =r^{-2}(\xi_v\cdot\nabla_\eta U+\partial_\tau U),
 \qquad
 \nabla_xu\circ T_{P_0,r}
 =r^{-1}\nabla_{\xi_v,\xi_z}U.
 \label{eq:scalingfields}
\end{equation}
The rescaled weight and coefficient are
\[
 w_{P_0,r}(\xi_v,\xi_z)
 =w(v_0+r\xi_v,z_0+r\xi_z),
 \qquad
 B_{P_0,r}=B\circ T_{P_0,r}.
\]
They satisfy
\[
 [w_{P_0,r}]_{A_2}=[w]_{A_2}
\]
and the same bounds \eqref{eq:Bbounds}.  All estimates may therefore be
proved in fixed unit cylinders. The flow of $Y$ is
\begin{equation}
 \Phi_s(v,z,y,t)=(v,z,y+sv,t+s).
 \label{eq:Yflow}
\end{equation}
It preserves Lebesgue measure and, since $w$ depends only on $x$, it also
preserves $\d\muW$.

\subsection{Weighted energy spaces}

Let \(D\subset\R^n\) be a bounded open set.  We define
\[
 \mathrm H^1_w(D)
 :=
 \left\{
 f\in \mathrm L^2(D,w\d x):
 \nabla_x f\in \mathrm L^2(D,w\d x;\R^n)
 \right\},
\]
equipped with the norm
\[
 \|f\|_{\mathrm H^1_w(D)}^2
 :=
 \int_D\bigl(|f|^2+|\nabla_xf|^2\bigr)w\d x.
\]
The space \(\mathrm H^1_{0,w}(D)\) is the closure of
\(C^\infty_0(D)\) in \(\mathrm H^1_w(D)\), and we let its dual be denoted
\[\mathrm H^{-1}_w(D)
 :=
 \bigl(\mathrm H^1_{0,w}(D)\bigr)^*.
\]

Let \(D_x\subset\R^n\) and \(D_y\subset\R^m\) be bounded open sets, and let
\(I\subset\R\) be a bounded open interval.  On the product cylinder
\(D_x\times D_y\times I\), the kinetic energy class for equations with divergence data consists of
functions \(u\) satisfying
\[
 u\in
 \mathrm L^2\bigl(D_y\times I;\mathrm H^1_w(D_x)\bigr),
 \qquad
 Yu\in
 \mathrm L^2\bigl(D_y\times I;\mathrm H^{-1}_w(D_x)\bigr).
\]
Here $Yu$ is understood in weighted duality:
$\langle Yu,\varphi\rangle_w=-\int uY\varphi\d\muW$.
The corresponding ordinary distribution is $Y(wu)$.
This is the weighted counterpart of the kinetic energy framework used for
rough Kolmogorov equations, compare \cite{AuscherImbertNiebel2025}.
For subsolutions and supersolutions we impose only the diffusive energy
condition and the one-sided distributional formulation below. A Radon measure need not belong to the displayed dual space.

We also fix the meaning of the fractional spaces used below.  For
$0<s<1$ and $G\subset\R^{m+1}$ open, $\mathrm H^s(G)$ denotes the restriction
space of the Bessel-potential space $\mathrm H^s(\R^{m+1})$, equipped with
the quotient norm
\begin{equation}
 \|v\|_{\mathrm H^s(G)}
 :=\inf\bigl\{\|V\|_{\mathrm H^s(\R^{m+1})}:V|_G=v\bigr\},
 \label{eq:Hs-restriction}
\end{equation}
where
\begin{equation}
 \|V\|_{\mathrm H^s(\R^{m+1})}^2
 =\int_{\R^m\times\R}
 (1+|\eta|^2+|\tau|^2)^s
 |\widehat V(\eta,\tau)|^2\d\eta\d\tau.
 \label{eq:Hs-fourier}
\end{equation}
Thus
\[
 \|f\|_{\mathrm L^2(D_x;\mathrm H^s(G);w\d x)}^2
 =\int_{D_x}\|f(x,\cdot,\cdot)\|_{\mathrm H^s(G)}^2w(x)\d x.
\]
On bounded Lipschitz sets this norm is equivalent to the usual fractional
Sobolev norm defined by the Gagliardo seminorm.

\begin{defn}
\label{def:energysolution}
Let $\Omega\subset\R^{n+m+1}$ be open.  A function $u$ is an energy solution of
\eqref{eq:original} in $\Omega$ if $u,\nabla_xu\in \mathrm L^2_{\mathrm{loc}}(\Omega,\d\muW)$ and
\begin{equation}
 \int_\Omega B\nabla_xu\cdot\nabla_x\varphi\d\muW
 -\int_\Omega u\,Y\varphi\d\muW=0
 \label{eq:weakform}
\end{equation}
for every $\varphi\in C^\infty_0(\Omega)$.
A subsolution satisfies ``$\leq0$'' on the left of
\eqref{eq:weakform}, and a supersolution satisfies ``$\geq0$'', for every
non-negative test function.
When $\Omega$ is a bounded cylinder, we require in addition
that $u,\nabla_xu\in\mathrm L^2(\Omega,\d\muW)$. This is the
meaning of an energy solution, subsolution, or supersolution
in such a cylinder below.
\end{defn}

The sign convention is chosen so that
\[
 \Lw u:=\divw(B\nabla_xu)-Y u=0.
\]
Thus $\Lw u\geq0$ means that $u$ is a subsolution.  Since
\begin{equation}
 \int \varphi\,Y\psi\d\muW
 =-\int \psi\,Y\varphi\d\muW,
 \label{eq:Yskew}
\end{equation}
this agrees with \eqref{eq:weakform}.

\subsection{Regularization and nonlinear tests}

The distinction between an equation and a one-sided inequality is relevant
here.  Energy solutions have time traces, whereas a subsolution may have a
downward time jump.  We use the following local form of renormalization.

\begin{lem}
\label{lem:weighted-renormalization}
Suppose that $u,\nabla_xu,F,g\in\mathrm L^2_{\rm loc}(\Omega,\d\muW)$ and
\begin{equation}
 Y(wu)=\operatorname{div}_x(wF)+wg-\nu
 \quad\hbox{in }\mathcal D'(\Omega),
 \label{eq:renormalization-data}
\end{equation}
where $\nu$ is a non-negative locally finite Radon measure.  If $\beta\in C^2(\R)$
has bounded second derivative and $\beta'\geq0$, then
\begin{equation}
 -\int\beta(u)Y\varphi\d\muW
 \leq -\int\beta'(u)F\cdot\nabla_x\varphi\d\muW
       -\int\beta''(u)F\cdot\nabla_xu\,\varphi\d\muW
       +\int\beta'(u)g\varphi\d\muW
 \label{eq:weighted-renormalization}
\end{equation}
for every non-negative $\varphi\in C_0^\infty(\Omega)$.
If $\nu=0$, equality holds and the restrictions on the signs of
$\beta'$ and $\varphi$ are unnecessary.  Smooth truncation gives the corresponding formulas for
powers whenever all terms have the required local
integrability.  If $F=B\nabla_xu$, convex approximation
also gives the positive-part subsolution inequalities.
\end{lem}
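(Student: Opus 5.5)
The plan is to regularize by mollification in the transported variables $(y,t)$ only, apply the chain rule at the smooth level, and pass to the limit. Everything hinges on the fact that $w$ depends only on $x$. For $\varepsilon>0$ let $\rho_\varepsilon$ be a standard mollifier in $(y,t)$ and write $h_\varepsilon=h*_{y,t}\rho_\varepsilon$. Since $w$ is independent of $(y,t)$, convolution commutes with multiplication by $w$. Applying it to \eqref{eq:renormalization-data} on a subdomain $\Omega'\Subset\Omega$ gives
\[
 Y(wu_\varepsilon)=\operatorname{div}_x(wF_\varepsilon)+wg_\varepsilon-\nu_\varepsilon+wR_\varepsilon,
 \qquad R_\varepsilon:=v\cdot\nabla_y u_\varepsilon-(v\cdot\nabla_yu)_\varepsilon .
\]
The commutator $R_\varepsilon$ arises only because $v$ is not constant in $y$. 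However, $v$ is a variable that is not convolved, so $v\cdot\nabla_y$ commutes exactly with $(y,t)$-convolution, and $R_\varepsilon=0$. Hence $wu_\varepsilon$ has $(y,t)$-smoothness, and the equation reads
\[
 w\,Yu_\varepsilon=\operatorname{div}_x(wF_\varepsilon)+wg_\varepsilon-\nu_\varepsilon
\]
as distributions in $x$ for every fixed $(y,t)$. The term $Yu_\varepsilon=v\cdot\nabla_yu_\varepsilon+\partial_tu_\varepsilon$ is a genuine $\mathrm L^2_{\rm loc}(w\d x)$ function, and $\nu_\varepsilon\geq0$ is smooth in $(y,t)$.

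Next I test this identity with $\beta'(u_\varepsilon)\varphi$, where $\varphi\geq0$. This is an admissible test in the $x$-weighted duality: $u_\varepsilon,\nabla_xu_\varepsilon\in\mathrm L^2_{\rm loc}(\d\muW)$ and $\beta'$ is Lipschitz, so $\beta'(u_\varepsilon)\varphi\in\mathrm H^1_{0,w}$ in $x$ with $\nabla_x[\beta'(u_\varepsilon)\varphi]=\beta''(u_\varepsilon)\nabla_xu_\varepsilon\varphi+\beta'(u_\varepsilon)\nabla_x\varphi$. Since $\beta'(u_\varepsilon)$ may be unbounded, I first replace $\beta$ by a truncated version with $\beta'$ bounded and remove the truncation at the end. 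The transport term gives $\int\beta'(u_\varepsilon)Yu_\varepsilon\,\varphi\d\muW=\int Y\beta(u_\varepsilon)\varphi\d\muW=-\int\beta(u_\varepsilon)Y\varphi\d\muW$ by \eqref{eq:Yskew}. The measure term contributes $-\int\beta'(u_\varepsilon)\varphi\,\nu_\varepsilon\leq0$ because $\beta'\geq0$ and $\varphi\geq0$. This yields \eqref{eq:weighted-renormalization} with $u,F,g$ replaced by $u_\varepsilon,F_\varepsilon,g_\varepsilon$. When $\nu=0$ this is an identity, and no sign conditions are needed.

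Finally I let $\varepsilon\to0$. Since $\rho_\varepsilon$ acts only in $(y,t)$ and $w$ is constant in those variables, $h_\varepsilon\to h$ in $\mathrm L^2_{\rm loc}(\d\muW)$ for $h\in\{u,\nabla_xu,F,g\}$; this is Fubini plus unweighted mollifier convergence for a.e. $x$, followed by dominated convergence in $x$. Passing to a subsequence, $u_\varepsilon\to u$ a.e., so $\beta'(u_\varepsilon)\to\beta'(u)$ and $\beta''(u_\varepsilon)\to\beta''(u)$ a.e. with bounds, and $\beta(u_\varepsilon)\to\beta(u)$ in $\mathrm L^1_{\rm loc}$ by the quadratic growth of $\beta$. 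The delicate term is $\int\beta''(u_\varepsilon)F_\varepsilon\cdot\nabla_xu_\varepsilon\varphi\d\muW$. It converges because $F_\varepsilon\cdot\nabla_xu_\varepsilon\to F\cdot\nabla_xu$ in $\mathrm L^1_{\rm loc}(\d\muW)$ and $\beta''(u_\varepsilon)$ is bounded and converges a.e., so dominated convergence applies along a further subsequence (a Vitali-type argument). I expect this passage to be the main obstacle, especially controlling the integrability of $\beta'(u)F$ and $\beta'(u)g$ before the truncation is removed. The statements for powers follow by smooth truncation, under the stated integrability. For the positive part, take $F=B\nabla_xu$ and convex $\beta_\delta\to s_+$ (or $s_+^2$), and use $\beta_\delta''\geq0$ with $\beta_\delta''(u)B\nabla_xu\cdot\nabla_xu\geq0$ to drop or keep the sign of that term in the limit.
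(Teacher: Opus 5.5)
Your overall route is the paper's: mollify only in $(y,t)$, use $w=w(x)$ so that convolution commutes with $Y$ and with $\operatorname{div}_x(w\,\cdot)$, test with $\beta'(u_\varepsilon)\varphi$, and pass to the limit using strong $\mathrm L^2_{\rm loc}(\d\muW)$ convergence and a splitting of the $\beta''$ term. The limit passage is fine. The truncation of $\beta$ you introduce is unnecessary: bounded $\beta''$ gives $|\beta'(s)|\leq C(1+|s|)$, so $\beta'(u_\varepsilon)F_\varepsilon$ and $\beta'(u_\varepsilon)g_\varepsilon$ converge in $\mathrm L^1$ by Cauchy--Schwarz.

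\textbf{The gap is the measure term.} After $(y,t)$-mollification, $\nu_\varepsilon$ is smooth in $(y,t)$ but is still only a non-negative Radon measure in $x$. It may be singular with respect to $w\d x$; for the truncations $\min\{f,1\}$ used later it typically concentrates on the level set $\{f=1\}$. By contrast, $\beta'(u_\varepsilon)\varphi$ is only an $\mathrm H^1_w$ function of $x$, with no canonical values $\nu_\varepsilon$-almost everywhere. The regularized identity holds only against smooth tests. So writing ``$-\int\beta'(u_\varepsilon)\varphi\,\d\nu_\varepsilon\leq0$ because $\beta'\geq0$ and $\varphi\geq0$'' is not a justified step. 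At best that expression is the continuous extension of $\psi\mapsto\langle\nu_\varepsilon,\psi\rangle$, defined through the other terms of the identity, and its sign on a non-smooth test has to be proved rather than read off from pointwise positivity.

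The missing idea is the paper's order of operations, on a localizing product cylinder $D_x\times G\Subset\Omega$:
\begin{enumerate}
\item Discard the measure on smooth non-negative $\psi$. This gives $\int(Yu_\varepsilon)\psi\d\muW\leq-\int F_\varepsilon\cdot\nabla_x\psi\d\muW+\int g_\varepsilon\psi\d\muW$. All three terms are continuous on $\mathrm L^2(G;\mathrm H^1_{0,w}(D_x))$, because $Yu_\varepsilon\in\mathrm L^2(\d\muW)$.
\item Show that $\beta'(u_\varepsilon)\varphi$ is a limit in that space of non-negative smooth compactly supported functions, by mollifying with non-negative kernels in all variables. Then pass to the limit in the inequality.
\end{enumerate}
Convergence of the $x$-mollification in the weighted Sobolev norm is exactly where $w\in A_2$ is used, via weighted $\mathrm L^2$ boundedness of the maximal operator. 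Your sketch never invokes the $A_2$ hypothesis, which reflects this omission. The same approximation is also what justifies your claim that $\beta'(u_\varepsilon)\varphi$ lies in $\mathrm H^1_{0,w}$, the closure of $C_0^\infty$, rather than merely being a compactly supported element of $\mathrm H^1_w$.
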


\begin{proof}
By localization, it suffices to consider a non-negative
$\varphi\in C^\infty_0(Q)$, where $Q=D_x\times G$,
$G=D_y\times I$, is a product cylinder compactly contained
in a slightly larger product cylinder in $\Omega$.
We convolve in $(y,t)$ with a non-negative smooth kernel of
total mass one, and denote the regularized quantities by
$u_\varepsilon$, $F_\varepsilon$, $g_\varepsilon$, and
$\nu_\varepsilon$.  For sufficiently small $\varepsilon$, the regularized equation reads
\[
 Y(wu_\varepsilon)
 =\operatorname{div}_x(wF_\varepsilon)
  +wg_\varepsilon-\nu_\varepsilon
 \quad\hbox{in }\mathcal D'(Q),
 \qquad \nu_\varepsilon\geq0.
\]
Convolution in $(y,t)$ commutes with
$Y$, because $v$ is independent
of $(y,t)$.  It also commutes with
$\operatorname{div}_x(w\,\cdot)$, since $w=w(x)$ and
differentiation in $x$ commutes with convolution in $(y,t)$. The convolution kernel is non-negative, so $\nu_\varepsilon$
remains a non-negative Radon measure. Moreover, differentiating the convolution places the derivative
on the smooth kernel.  Young's inequality in $(y,t)$, applied for
each fixed $x$ and then integrated against $w(x)\d x$, shows that
the $(y,t)$-derivatives of $u_\varepsilon$ belong to
$\mathrm L^2(Q,\d\muW)$, with bounds that may depend on
$\varepsilon$. Since $v$ is bounded on $D_x$,
this implies $Yu_\varepsilon\in\mathrm L^2(Q,\d\muW)$,
with a bound that may depend on $\varepsilon$.

For every non-negative $\psi\in C^\infty_0(Q)$, the
regularized equation and the sign of $\nu_\varepsilon$ give
\[
 \int_Q (Yu_\varepsilon)\psi\d\muW
 \leq
 -\int_Q F_\varepsilon\cdot\nabla_x\psi\d\muW
 +\int_Q g_\varepsilon\psi\d\muW.
\]
All three integrals define continuous linear functionals
on $\mathrm L^2(G;\mathrm H^1_{0,w}(D_x))$.
Consequently, this inequality extends to every
non-negative, compactly supported test in that space
which can be approximated by non-negative smooth tests.

We apply this observation to
$\psi_\varepsilon=\beta'(u_\varepsilon)\varphi$.
The boundedness of $\beta''$ implies that $\beta'$ is
Lipschitz and has at most linear growth.  The weighted
Sobolev chain rule therefore gives
\[
 \nabla_x\psi_\varepsilon
 =
 \beta''(u_\varepsilon)\nabla_xu_\varepsilon\,\varphi
 +\beta'(u_\varepsilon)\nabla_x\varphi.
\]
Thus $\psi_\varepsilon$ belongs to
$\mathrm L^2(G;\mathrm H^1_{0,w}(D_x))$, has compact
support in $Q$, and is non-negative because
$\beta'\geq0$ and $\varphi\geq0$.
To justify its admissibility, extend it by zero and
convolve with non-negative smooth product kernels.
Mollification in $x$ converges in the weighted Sobolev
norm for $w\in A_2$. This follows from the weighted
$\mathrm L^2$ boundedness of the Hardy-Littlewood
maximal operator, which controls the mollifiers.
Mollification in $(y,t)$ converges in the same norm
because the weight is independent of these variables.
The resulting approximants are non-negative and, for
sufficiently small mollification parameters, compactly
supported in $Q$.  Passing to the limit in the preceding
inequality makes $\psi_\varepsilon$ an admissible test.
In particular, no evaluation of $\nu_\varepsilon$ on
this nonsmooth test is required.

The chain rule in $(y,t)$ gives
$Y\beta(u_\varepsilon)
=\beta'(u_\varepsilon)Yu_\varepsilon$.
These quantities are locally integrable, since
$\beta'$ has at most linear growth.  Integration by
parts in $(y,t)$, together with the displayed formula
for $\nabla_x\psi_\varepsilon$, now yields
\eqref{eq:weighted-renormalization} with
$u,F,g$ replaced by
$u_\varepsilon,F_\varepsilon,g_\varepsilon$.

We next let $\varepsilon\downarrow0$.
Convolution converges strongly in the local weighted
$\mathrm L^2$ spaces for $u$, $\nabla_xu$, $F$, and $g$.
Since $\beta'$ is Lipschitz,
$\beta'(u_\varepsilon)\to\beta'(u)$ strongly in
$\mathrm L^2(Q,\d\muW)$.  Moreover,
\[
 |\beta(s)-\beta(r)|
 \leq C(1+|s|+|r|)|s-r|,
\]
so $\beta(u_\varepsilon)\to\beta(u)$ strongly in
$\mathrm L^1(Q,\d\muW)$ by Cauchy-Schwarz.
The same inequality shows that the products
$\beta'(u_\varepsilon)F_\varepsilon$ and
$\beta'(u_\varepsilon)g_\varepsilon$ converge to their
respective limits in $\mathrm L^1(Q,\d\muW)$.

For the remaining term, put
$H_\varepsilon=F_\varepsilon\cdot\nabla_xu_\varepsilon$
and $H=F\cdot\nabla_xu$.  Strong $\mathrm L^2$
convergence gives $H_\varepsilon\to H$ in
$\mathrm L^1(Q,\d\muW)$.  After passing to a subsequence,
$u_\varepsilon\to u$ almost everywhere, and
\[
 \beta''(u_\varepsilon)H_\varepsilon-\beta''(u)H
 =
 \beta''(u_\varepsilon)(H_\varepsilon-H)
 +\bigl(\beta''(u_\varepsilon)-\beta''(u)\bigr)H.
\]
The first term tends to zero in $\mathrm L^1$ because
$\beta''$ is bounded, and the second does so by dominated
convergence.  This proves
\eqref{eq:weighted-renormalization}.
If $\nu=0$, the regularized equation can be tested with
functions of either sign, and the same calculation
gives equality without sign restrictions on $\beta'$
or $\varphi$.

For power tests, one first uses smooth truncations of
$\beta'$ and then removes the truncation, provided the
terms in the resulting formula have the required local
integrability.  In the applications with
$F=B\nabla_xu$, convex approximations also give the
positive-part inequalities: the term
$\beta''(u)B\nabla_xu\cdot\nabla_xu$ is non-negative
and can be discarded before passing to the limit.
\end{proof}

For an energy subsolution, take $F=B\nabla_xu$ and $g=0$ in Lemma \ref{lem:weighted-renormalization}.
Convex non-decreasing functions preserve the subsolution inequality.
Concave non-decreasing functions preserve the supersolution inequality.
These statements include $(u-\ell)_+$ and $\min\{u,\ell\}$ in the
respective cases.  Nonlinear tests with a terminal time face are obtained
by first integrating the regularized inequality up to an interior time
$\tau$, and then passing to the limit for almost every $\tau$.  The
non-negative terminal energy can be discarded.  This gives the one-sided
energy inequalities used below and requires no continuous time trace for
a measure-data subsolution.

On a bounded diffusive ball $\mathcal B$, the same argument permits
cutoffs reaching $\partial\mathcal B$, provided the weighted energy and
data bounds hold on the whole diffusive ball, $u$ has zero lateral
$\mathrm H^1_w$ trace, and $\beta'(0)=0$.  Indeed, partial convolution in
$(y,t)$ preserves $\mathrm H^1_{0,w}(\mathcal B)$, and the Sobolev chain
rule gives $\beta'(u_\varepsilon)\varphi$ in that space.  After the
non-negative measure has been discarded on smooth tests, the regularized
inequality extends by non-negative $\mathrm H^1_{0,w}$ approximation to
this test.  No evaluation of the measure at the lateral boundary is
involved.  The preceding time-cutoff argument then gives the same
one-sided energy inequality.

For clarity, we also record the trace argument needed for equations.  On
$\mathcal B\times\R^m_y\times I$, suppose that $u$ has zero lateral
$\mathrm H^1_w$ trace and satisfies
$Yu=\divw F+g$ with all energy quantities in $\mathrm L^2$.
Convolution in $y$ alone gives
\[
 \partial_tu_\varepsilon
 =\divw F_\varepsilon+g_\varepsilon-v\cdot\nabla_yu_\varepsilon
 \in\mathrm L^2(I;\mathcal V^*),
 \qquad
 \mathcal V=\mathrm L^2(\R^m_y;\mathrm H^1_{0,w}(\mathcal B)).
\]
The Hilbert-space energy identity therefore applies to $u_\varepsilon$.
The transport term integrates to zero, since $v$ is bounded on
$\mathcal B$. A cutoff in $y$ removes any issue at infinity.
Fix a compact interval $J\Subset I$ and apply this identity to
$q=u_\varepsilon-u_\delta$.  Between any fixed $s\in J$ and $t\in J$,
the absolute change of $\tfrac12\|q(t)\|_{\mathrm L^2_w}^2$ is bounded by
$\|F_\varepsilon-F_\delta\|_2\|\nabla_xq\|_2+
\|g_\varepsilon-g_\delta\|_2\|q\|_2$, where these norms are taken over
$\mathcal B\times\R^m_y\times J$ with the weighted measure.
This bound tends to zero uniformly in $t$ by strong convolution
convergence.  Choose $s$ among the time slices for which
$u(s)\in\mathrm L^2_w$. Convolution also converges in that slice.
It follows that $u_\varepsilon$ is Cauchy in
$C(J;\mathrm L^2_w)$.
Thus $u$ has a continuous $\mathrm L^2_w$ representative and
\begin{equation}
 \frac12\|u(t)\|_{\mathrm L^2_w}^2
 -\frac12\|u(s)\|_{\mathrm L^2_w}^2
 =-\int_s^t\!\int F\cdot\nabla_xu\d\muW
   +\int_s^t\!\int gu\d\muW.
 \label{eq:weighted-green-identity}
\end{equation}
If a zero initial datum is prescribed in the Cauchy weak formulation,
the $y$-regularizations have that same datum, so the argument includes the
initial endpoint.  The local version follows by multiplying $u$ by a
spatial cutoff.  This proves the weighted trace statement used here
directly, compare the unweighted kinetic embedding and Cauchy theory in
\cite{AuscherImbertNiebel2025}.

\subsection{Consequences of the \texorpdfstring{$A_2$}{A2} condition}

We collect the properties of $w$ used in the proof.  Their constants will
be called admissible if they depend only on $n=m+k$ and $M$.

\begin{prop}
\label{prop:Ainfty}
There are admissible constants $D,C_0,C_1>0$ and exponents
$\delta_0,\delta_1>0$ such that
\begin{align}
 w(2B)&\leq D w(B),
 \label{eq:doubling}\\
 C_0^{-1}\left(\frac{|E|}{|B|}\right)^{\delta_0}
 &\leq\frac{w(E)}{w(B)}
 \leq C_1\left(\frac{|E|}{|B|}\right)^{\delta_1}
 \label{eq:Ainftyquant}
\end{align}
whenever $B$ is a ball and $E\subset B$ is measurable.
Consequently, $\muW$ is doubling on intrinsic cylinders and
\begin{equation}
 \muW(Q_r(P_0))\simeq r^{3m+2}w(B_r(x_0)).
 \label{eq:cylvolume}
\end{equation}
\end{prop}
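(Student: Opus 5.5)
The plan is to derive all three assertions from the $A_2$ condition by the classical Muckenhoupt arguments, and then handle the cylinder statements by a product computation.

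\emph{Lower bound in \eqref{eq:Ainftyquant}.} Fix a ball $B$ and a measurable $E\subset B$. Cauchy--Schwarz gives
\[
 |E|^2=\Bigl(\int_E w^{1/2}w^{-1/2}\d x\Bigr)^2\leq w(E)\int_B w^{-1}\d x .
\]
By \eqref{eq:A2}, $\int_Bw^{-1}\d x\leq M|B|^2/w(B)$, and therefore
\[
 \frac{w(E)}{w(B)}\geq M^{-1}\Bigl(\frac{|E|}{|B|}\Bigr)^2 .
\]
This is the lower bound with $\delta_0=2$ and $C_0=M$.

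\emph{Doubling \eqref{eq:doubling}.} Apply the lower bound to the ball $2B$ and the set $E=B$. Since $|B|/|2B|=2^{-n}$, this gives $w(2B)\leq M4^{n}w(B)$.

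\emph{Upper bound in \eqref{eq:Ainftyquant}.} This is the reverse H\"older/$A_\infty$ step, and it is the main one.

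1. Apply the lower bound to the complement $B\setminus E$. If $|E|\leq\frac12|B|$, then $w(B\setminus E)\geq (4M)^{-1}w(B)$. Hence $w(E)\leq(1-(4M)^{-1})w(B)$.

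2. I would prefer to invoke the Coifman--Fefferman reverse H\"older inequality for doubling weights satisfying this $A_\infty$-type property. It gives $\bigl(\avgint_Bw^{1+\epsilon}\d x\bigr)^{1/(1+\epsilon)}\leq C\avgint_Bw\d x$ with admissible $\epsilon,C$.

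3. H\"older then yields
\[
 w(E)\leq\|w\|_{\mathrm L^{1+\epsilon}(B)}|E|^{\epsilon/(1+\epsilon)}\leq C\,w(B)\Bigl(\frac{|E|}{|B|}\Bigr)^{\delta_1},
 \qquad \delta_1=\frac{\epsilon}{1+\epsilon}.
\]

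If a self-contained argument is wanted, I would iterate step 1 with a Calder\'on--Zygmund decomposition of $\chi_E$ at dyadic levels. The smallness of $|E|/|B|$ is then converted into geometric decay of $w(E)/w(B)$. This iteration is the only place with real content; everything else is bookkeeping.

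\emph{Volume formula \eqref{eq:cylvolume}.} Since $w$ depends only on $x$, Fubini applies to the cylinder. For each fixed $t$, the $y$-section is the ball $\{|y-y_0-(t-t_0)v_0|<r^3\}$, of measure $c_mr^{3m}$ independent of $x$. The time interval has length $r^2$. Hence
\[
 \muW(Q_r(P_0))=c_m\,r^{3m+2}\,w(B_r(x_0)),
\]
which is in fact an identity.

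\emph{Doubling of $\muW$ on intrinsic cylinders.} The formula and \eqref{eq:doubling} give $\muW(Q_{2r}(P_0))\leq 2^{3m+2}D\,\muW(Q_r(P_0))$.

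If doubling is needed for non-concentric cylinders $Q_r(P_1)\subset Q_{Cr}(P_0)$, I would compare their $x$-balls: $B_r(x_1)\subset B_{Cr}(x_0)$. Iterating \eqref{eq:doubling} then gives $w(B_{Cr}(x_0))\leq D^{c}\,w(B_r(x_1))$, with $c$ depending only on $C$. This shows that the triple $(\R^{n+m+1},\text{quasi-distance},\muW)$ is a space of homogeneous type with admissible constants.
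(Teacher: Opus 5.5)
Your argument is correct and is the standard chain the paper relies on. The paper gives no proof of its own, only a citation to Fabes--Kenig--Serapioni and Garc\'ia-Cuerva--Rubio de Francia, where the same steps appear: the lower bound from Cauchy--Schwarz combined with $A_2$, doubling from applying that bound to $B\subset 2B$, and the upper $A_\infty$ bound from reverse H\"older followed by H\"older (the reverse H\"older input is also stated later in the paper as Proposition \ref{prop:reverse-holder}). Your Fubini computation of the cylinder volume is likewise the intended one, and it in fact gives $\muW(Q_r(P_0))=c_m r^{3m+2}w(B_r(x_0))$ as an identity.
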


\begin{prop}
\label{prop:FKS}
There exist admissible constants $C$ and $\chi_x>1$ such that
for every Euclidean ball \(B=B_r\subset\R^n\) and every
\(f\in\mathrm H^1_w(2B)\),
\begin{align}
 \avgint_B|f-f_{B,w}|^2w\d x
 &\leq Cr^2\avgint_B|\nabla_xf|^2w\d x,
 \label{eq:weightedpoincare}\\
 \left(\avgint_B|f-f_{B,w}|^{2\chi_x}w\d x\right)^{1/(2\chi_x)}
 &\leq Cr
 \left(\avgint_{2B}|\nabla_xf|^2w\d x\right)^{1/2}.
 \label{eq:weightedsobolev}
\end{align}
Here $f_{B,w}=w(B)^{-1}\int_Bfw\d x$.
\end{prop}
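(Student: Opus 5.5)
The statement to prove is Proposition \ref{prop:FKS}: the weighted Poincaré and Sobolev inequalities of Fabes--Kenig--Serapioni. These are purely properties of the $A_2$ weight $w$ on $\R^n$; the kinetic variables play no role. Since every constant must be admissible, I will track only dependence on $n$ and $M$.

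\textbf{Step 1: reduction to the unit ball.} Both sides of \eqref{eq:weightedpoincare} and \eqref{eq:weightedsobolev} are invariant under translations and dilations, and these leave $[w]_{A_2}$ unchanged. So I assume $B=B_1$.

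\textbf{Step 2: the pointwise representation.} For $f\in\mathrm H^1_w(2B)$, approximated by smooth functions (mollification converges in $\mathrm H^1_w$ by the weighted maximal bound, as in Lemma \ref{lem:weighted-renormalization}), I use the classical bound
\[
|f(x)-f_B|\leq C\int_B\frac{|\nabla f(y)|}{|x-y|^{n-1}}\d y,\qquad x\in B.
\]
This reduces the problem to the fractional integral $I_1(|\nabla f|\chi_B)$.

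\textbf{Step 3: the Sobolev inequality \eqref{eq:weightedsobolev}.} The key claim is a two-weight bound
\[
\Bigl(\tfrac1{w(B)}\int_B|I_1(g\chi_B)|^{2\chi_x}w\Bigr)^{1/(2\chi_x)}
\leq C\,r\,\Bigl(\tfrac1{w(B)}\int_B|g|^2w\Bigr)^{1/2}
\]
for some $\chi_x>1$. I would get it in one of two ways.
\begin{itemize}
\item[(a)] Sawyer--Wheeden type testing conditions, which reduce to a balance condition. Applied to pairs of balls $B'\subset B$, this condition reads
\[
\frac{r(B')}{r(B)}\Bigl(\frac{w(B')}{w(B)}\Bigr)^{1/(2\chi_x)-1/2}\lesssim 1.
\]
\item[(b)] Splitting $I_1$ into the near part $|x-y|<\delta$ and the far part. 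The near part is bounded by $\delta\, Mg$, the far part by Hölder against $w^{-1}$, and one then optimizes in $\delta$, Hedberg style.
\end{itemize}
The balance condition in (a) follows from the doubling lower bound of Proposition \ref{prop:Ainfty}. Indeed, $A_2$ gives $w(B')/w(B)\geq C_0^{-1}(r'/r)^{2n}$ from \eqref{eq:Ainftyquant} with $\delta_0=2$ for $A_2$. Hence the condition holds once
\[
1+2n\bigl(\tfrac1{2\chi_x}-\tfrac12\bigr)\geq0,
\]
that is, $\chi_x\leq n/(n-1)$ (any $\chi_x>1$ when $n=1$). To avoid the Sawyer machinery I would instead use the open-endedness of $A_2$: $w\in A_{2-\epsilon}$ with admissible $\epsilon$. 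In the Hedberg splitting this lets me estimate the far part of $I_1$ via $\int_{B}w^{-1/(1-\epsilon)}$. Both $\chi_x$ and the constants then depend only on $n$ and $M$.

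Finally I replace $f_B$ by the weighted mean $f_{B,w}$ at the cost of a factor $2$, since $\|f-f_{B,w}\|\leq 2\|f-c\|$ for any constant $c$.

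\textbf{Step 4: the Poincaré inequality \eqref{eq:weightedpoincare}.} This has $B$ on both sides, not $2B$. From Step 3 and Hölder I only get it with $2B$ on the right. To keep $B$, I would use one of two routes.
\begin{itemize}
\item[(a)] The representation formula on the convex set $B$ is already local (the integral only involves $\nabla f$ on $B$), so the $L^2(w)$ bound for $I_1$ restricted to $B$ applies directly. This follows from $|I_1 g|\lesssim r\,M g$ locally and the $L^2(w)$ boundedness of the maximal operator for $A_2$ weights.
\item[(b)] A Boman chain argument, passing from $2B$ to $B$.
\end{itemize}
I will use (a), since the $L^2(w)$ estimate is exactly the Muckenhoupt theorem.

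\textbf{Main obstacle.} The hardest step is Step 3, the strict gain $\chi_x>1$ with admissible constants. The delicate ingredient is the self-improvement $A_2\Rightarrow A_{2-\epsilon}$, which rests on reverse Hölder for $w^{-1}$. Since all of this is exactly the content of Fabes--Kenig--Serapioni, I could also simply cite it after the scaling reduction in Step 1.
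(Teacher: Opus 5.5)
Your proposal is correct and consistent with the paper. The paper gives no argument of its own for this proposition and simply cites Fabes--Kenig--Serapioni and Garc\'ia-Cuerva--Rubio de Francia. Your sketch is the standard argument behind that citation: the representation formula on the convex ball, the bound $|I_1(g\chi_B)|\lesssim r\,Mg$ on $B$ combined with Muckenhoupt's theorem for \eqref{eq:weightedpoincare}, and a Hedberg splitting for the gain $\chi_x>1$.

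One small remark: for the far part you do not need the self-improvement $A_2\Rightarrow A_{2-\epsilon}$. The plain $A_2$ lower bound $w(B_s(x))\gtrsim (s/r)^{2n}w(B)$ already yields $\chi_x=n/(n-1)$ when $n\geq2$. Open-endedness enters only through Muckenhoupt's theorem, or if you want an exponent beyond $n/(n-1)$.
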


The preceding consequences of the \(A_2\) condition are standard, see
\cite{FabesKenigSerapioni1982,GarciaCuervaRubio1985}.

\begin{prop}
\label{prop:reverse-holder}
Let \(d\geq1\), and suppose that
\[
 w\in A_2(\R^d),
 \qquad
 [w]_{A_2}\leq M.
\]
There exist \(\epsilon=\epsilon(d,M)>0\) and
\(C=C(d,M)<\infty\) such that, for every Euclidean ball \(B\subset\R^d\),
\begin{align}
 \left(\avgint_B w^{1+\epsilon}\d x\right)^{1/(1+\epsilon)}
 &\leq
 C\avgint_B w\d x,
 \label{eq:reverse-holder-w}\\
 \left(\avgint_B w^{-(1+\epsilon)}\d x\right)^{1/(1+\epsilon)}
 &\leq
 C\avgint_B w^{-1}\d x.
 \label{eq:reverse-holder-winverse}
\end{align}
In particular,
\[
 w,\ w^{-1}\in\mathrm{RH}_{1+\epsilon,\mathrm{loc}}(\R^d),
\]
with constants depending only on \(d\) and \(M\).
\end{prop}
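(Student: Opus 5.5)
The plan is to derive both estimates from the Gehring-type reverse H\"older inequality for \(A_\infty\) weights, using the fact that \(A_2\) is invariant under inversion. First, \(w\in A_2(\R^d)\) gives \(w^{-1}\in A_2(\R^d)\) with \([w^{-1}]_{A_2}=[w]_{A_2}\leq M\), since the defining product in \eqref{eq:A2} is symmetric in \(w\) and \(w^{-1}\). So it suffices to prove \eqref{eq:reverse-holder-w} for an arbitrary \(A_2\) weight with characteristic at most \(M\). Applying that result to \(w^{-1}\) then gives \eqref{eq:reverse-holder-winverse} with the same \(\epsilon\) and \(C\).

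For \eqref{eq:reverse-holder-w}, I would use the standard Calder\'on--Zygmund argument, keeping track of the dependence on \(d\) and \(M\) only. Fix a ball \(B\) and normalize by dilation so that \(\avgint_Bw\d x=1\); dilation preserves \([w]_{A_2}\). The key input is the quantitative \(A_\infty\) property, i.e.\ the \(d\)-dimensional version of \eqref{eq:Ainftyquant}. By Cauchy--Schwarz and the \(A_2\) bound, for \(E\subset B'\),
\[
 \left(\frac{|E|}{|B'|}\right)^2\leq M\frac{w(E)}{w(B')} .
\]
Applied to the complement, this shows: if \(|E|\leq\frac12|B'|\), then \(w(E)\leq(1-\frac1{4M})w(B')\). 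I would do the decomposition on a cube \(Q\supset B\) with comparable volume. Doubling controls the change of averages between \(B\) and \(Q\), and between dyadic cubes and balls, with constants depending on \(d,M\).

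Next, for levels \(\alpha_j=(2^dK)^j\), with \(K\) large, take the maximal dyadic subcubes of \(Q\) on which \(\avgint w>\alpha_j\), and call their union \(\Omega_j\). On each such cube \(Q'\), \(\avgint_{Q'}w\leq2^d\alpha_j\), and \(|\Omega_{j+1}\cap Q'|\leq|Q'|/K\leq\frac12|Q'|\). Hence \(w(\Omega_{j+1})\leq\gamma\,w(\Omega_j)\) with \(\gamma=1-\frac1{4M}<1\), and by iteration \(w(\Omega_j)\leq\gamma^jw(Q)\). Since \(w\leq\alpha_j\) a.e.\ off \(\Omega_j\) by Lebesgue differentiation,
\[
 \int_Qw^{1+\epsilon}\leq\alpha_0^{\epsilon}w(Q)+\sum_j\alpha_{j+1}^\epsilon w(\Omega_j)\leq C\,w(Q)\sum_j\bigl((2^dK)^{\epsilon}\bigr)^{j}\gamma^j .
\]
This converges once \((2^dK)^\epsilon\gamma<1\), which fixes \(\epsilon=\epsilon(d,M)\). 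Undoing the normalization gives \eqref{eq:reverse-holder-w}.

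The main obstacle is bookkeeping rather than substance. The dyadic stopping argument has to be set on cubes while the statement is for balls, so one must transfer through doubling and check that all constants stay admissible. Alternatively, I would cite the classical result (Coifman--Fefferman; Garc\'ia-Cuerva--Rubio de Francia), as the paper does for Propositions \ref{prop:Ainfty} and \ref{prop:FKS}, and record only the inversion symmetry. The final \(\mathrm{RH}_{1+\epsilon,\mathrm{loc}}\) assertion is a restatement on balls.
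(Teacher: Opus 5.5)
Your proposal is correct and follows the paper's proof. The paper likewise invokes the classical reverse H\"older self-improvement for Muckenhoupt weights, citing Garc\'ia-Cuerva--Rubio de Francia, Chapter~IV, and obtains the second estimate by applying it to $w^{-1}$ via $[w^{-1}]_{A_2}=[w]_{A_2}$; your Calder\'on--Zygmund sketch is the standard proof of that cited result. One bookkeeping fix: the base level must satisfy $\alpha_0\geq\avgint_Q w\d x$ on the cube, so you should normalize on $Q$ rather than on $B$, which doubling makes harmless.
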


\begin{proof}
The first estimate is the reverse H\"older self-improvement for
Muckenhoupt weights.  Since
\[
 w^{-1}\in A_2(\R^d),
 \qquad
 [w^{-1}]_{A_2}=[w]_{A_2},
\]
the same result applied to \(w^{-1}\) gives
\eqref{eq:reverse-holder-winverse}.  See
\cite[Chapter~IV]{GarciaCuervaRubio1985}.
\end{proof}

\section{A weighted kinetic transfer estimate}\label{Section_3}

This section contains the main functional estimate.  We prove it by separating
Fourier space into a thin resonant slab and its complement.  Quantitative
$A_\infty$ absolute continuity controls the weighted mass of the slab, while
the equation controls the complement.  In particular, the proof neither
differentiates $w$ nor invokes a translation-invariant regularizer in the
$x$ variables.

For a vector field $F=(F_1,\dots,F_n)$, define $\divw F$ as a weighted
distribution by
\[
 \langle\divw F,\varphi\rangle_w
 :=-\int F\cdot\nabla_x\varphi\d\muW.
\]
Thus
\[
 Yf=\divw F+g
\]
means
\[
 -\int fY\varphi\d\muW
 =-\int F\cdot\nabla_x\varphi\d\muW
 +\int g\varphi\d\muW
\]
for every test function \(\varphi\).  Equivalently,
\[
 Y(wf)=\operatorname{div}_x(wF)+wg
\]
in the ordinary distributional sense.

\begin{prop}
\label{prop:transfer}
Let $I\Subset I'$ be bounded intervals, let $D_x\Subset D_x'$ be bounded
balls in $\R^n$, and let $D_y\Subset D_y'$ be bounded Lipschitz sets in
$\R^m$.  Suppose
\[
 f,\nabla_xf,F,g\in \mathrm L^2(D_x'\times D_y'\times I',\d\muW)
\]
and
\begin{equation}
 Y f=\divw F+g
 \label{eq:transportdiv}
\end{equation}
in the weighted distributional sense.  Then there are
$s_*=s_*(m,k,M)>0$ and $C<\infty$ such that, for
every $0<s\leq s_*$,
\begin{align}
 \|f\|_{\mathrm L^2(D_x;\mathrm H^s(D_y\times I);w\d x)}
 \leq C\big(&\|f\|_{\mathrm L^2(\d\muW)}
 +\|\nabla_xf\|_{\mathrm L^2(\d\muW)}+\|F\|_{\mathrm L^2(\d\muW)}
 +\|g\|_{\mathrm L^2(\d\muW)}\big),
 \label{eq:transferestimate}
\end{align}
where the norms on the right are over $D_x'\times D_y'\times I'$.
The constant also depends on the fixed nested domains, in particular on
the cutoff bounds and on an upper bound for \(|v|\) in \(D_x'\).  Equivalently,
it depends on their full geometry, not only on the three separation
distances.
\end{prop}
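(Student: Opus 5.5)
We localize, Fourier transform in $(y,t)$, and split frequency space into a resonant slab and its complement, with $x$ kept as a parameter throughout. First we fix cutoffs $\chi(x)\in C_0^\infty(D_x')$ with $\chi=1$ on $D_x$, and $\zeta(y,t)\in C_0^\infty(D_y'\times I')$ with $\zeta=1$ on $D_y\times I$. Put $h=\chi\zeta f$, extended by zero. Since $Y$ acts only on $(y,t)$ and $w=w(x)$, the function $h$ satisfies
\[
 Y h=\divw(\chi\zeta F)+\tilde g,
 \qquad
 \tilde g=\chi\zeta g-\zeta F\cdot\nabla_x\chi+\chi f\,Y\zeta .
\]
Every right-hand quantity is controlled in $\mathrm L^2(\d\muW)$ by the right side of \eqref{eq:transferestimate}. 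The constant depends on the cutoffs and on $\sup_{D_x'}|v|$, through $Y\zeta$. By \eqref{eq:Hs-restriction} it suffices to bound $\int\!\!\int(1+|\eta|^2+\tau^2)^s|\widehat h(x,\eta,\tau)|^2\,w(x)\d x\d\eta\d\tau$, where the hat denotes the partial Fourier transform in $(y,t)$. Fix a frequency $(\eta,\tau)$ with $\rho:=(1+|\eta|^2+\tau^2)^{1/2}\geq 2$ (low frequencies are trivial). Transformed, the equation reads
\[
 i(v\cdot\eta+\tau)\widehat h=w^{-1}\operatorname{div}_x(w\widehat{\chi\zeta F})+\widehat{\tilde g}
\]
weakly in $x$, with $\widehat h(\cdot,\eta,\tau)\in \mathrm H^1_{0,w}(D_x')$.

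Next we split $x$-space at this frequency, with a parameter $\delta=\rho^{-\gamma}$ and $\gamma\in(0,1)$. The slab is $S_\delta=\{x\in D_x':|v\cdot\eta+\tau|<\delta\rho\}$. For $|\eta|\gtrsim\rho$ it is a Euclidean slab in $\R^n$ of width $\lesssim\delta$, extended across the $z$ directions, inside the ball $D_x'$. For $|\eta|\ll\rho$ we have $|\tau|\simeq\rho$, so the slab is empty once $\delta$ is small. Hence $|S_\delta\cap D_x'|/|D_x'|\lesssim\delta$.

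On the slab we need weighted control of $|\widehat h|^2$ there; measure smallness alone does not suffice, because $|\widehat h|^2$ need not be bounded. We use the weighted Sobolev inequality \eqref{eq:weightedsobolev}, applied to $\widehat h(\cdot,\eta,\tau)$ extended by zero on a ball containing $D_x'$. This puts $\widehat h$ in $\mathrm L^{2\chi_x}(w\d x)$ with norm $\lesssim\|\widehat h\|_{\mathrm H^1_w}$. Hölder's inequality combined with \eqref{eq:Ainftyquant} then gives
\[
 \int_{S_\delta}|\widehat h|^2w\d x\lesssim\Bigl(\frac{w(S_\delta)}{w(D_x')}\Bigr)^{1-1/\chi_x}\|\widehat h\|^2_{\mathrm H^1_w}\lesssim\delta^{\delta_1(1-1/\chi_x)}\|\widehat h\|^2_{\mathrm H^1_w}.
\]
This is the quantitative $A_\infty$ step, and it is where $s_*$ acquires its dependence on $(m,k,M)$.

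On the complement we test the equation with $\overline{\widehat h}\,\psi$, where $\psi(x)=\Theta\bigl((v\cdot\eta+\tau)/(\delta\rho)\bigr)$ and $\Theta$ is a smooth odd-type reciprocal, $\Theta(p)=\theta(p)/p$, with $\theta$ vanishing near $0$ and equal to $1$ for $|p|\geq1$. Then $(v\cdot\eta+\tau)\psi\simeq1$ off the slab, and $|\psi|\lesssim(\delta\rho)^{-1}$. Moreover $|\nabla_x\psi|\lesssim|\eta|(\delta\rho)^{-2}\lesssim\delta^{-2}\rho^{-1}$, so no derivative of $w$ appears. Taking the real part of $i\int(v\cdot\eta+\tau)|\widehat h|^2\psi\,\bar\cdot$ requires care. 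It is cleaner to test with $\overline{\widehat h}\,\overline{\psi}\cdot(-i)$: the imaginary multiplier becomes the real positive weight $\theta$, and the divergence term gives $\int w\widehat{\chi\zeta F}\cdot\nabla_x(\overline{\widehat h\psi})$. The result is
\[
 \int_{D_x'\setminus S_\delta}|\widehat h|^2w\lesssim\bigl((\delta\rho)^{-1}+\delta^{-2}\rho^{-1}\bigr)\bigl(\|\widehat{\chi\zeta F}\|_{\mathrm L^2_w}+\|\widehat{\tilde g}\|_{\mathrm L^2_w}\bigr)\|\widehat h\|_{\mathrm H^1_w},
\]
with all norms taken in $x$ at fixed $(\eta,\tau)$. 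Choosing $\gamma=1/3$ gives the off-slab factor $\rho^{-1/3}$, while the slab factor is $\rho^{-\gamma\delta_1(1-1/\chi_x)}$. Hence $\rho^{2s}\int|\widehat h|^2w\,\d x\lesssim$ (the fixed-frequency energy) for $2s\leq s_*:=\min\{1/3,\ \delta_1(1-1/\chi_x)/3\}$.

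Integrating over $(\eta,\tau)$ and applying Plancherel to $\|\widehat h\|^2_{\mathrm H^1_w}$, $\widehat{\chi\zeta F}$ and $\widehat{\tilde g}$, using $\nabla_x h=\zeta(\chi\nabla_xf+f\nabla_x\chi)$, yields \eqref{eq:transferestimate}. The main obstacle I expect is justifying the fixed-frequency testing: $\widehat h(\cdot,\eta,\tau)$ is defined only for a.e.\ frequency, and the weak identity must hold in $\mathrm H^{-1}_w$ for a.e.\ $(\eta,\tau)$. I would handle this by testing the original identity with $\varphi(x)e^{-i(y\cdot\eta+t\tau)}$, which is legitimate since the cutoff makes $h$ compactly supported. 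Then $\varphi$ is extended from $C_0^\infty$ to $\mathrm H^1_{0,w}(D_x')$ by density, using the weighted mollification remark in Lemma \ref{lem:weighted-renormalization}. Finally, we check that $\overline{\widehat h}\psi$ lies in $\mathrm H^1_{0,w}$, which follows because $\psi$ is Lipschitz.
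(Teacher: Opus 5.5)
Your proposal is correct and is essentially the paper's proof: localize by cutoffs, Fourier transform in $(y,t)$, bound the resonant slab by weighted H\"older, the Fabes--Kenig--Serapioni Sobolev inequality and quantitative $A_\infty$, and bound its complement by testing with a reciprocal-phase multiplier, so that $w$ is never differentiated. The differences are cosmetic. You work with the single scale $\rho=(1+|\eta|^2+\tau^2)^{1/2}$, so the $\tau$-dominated sector, which the paper handles separately with the test $-i\overline h/T$, appears as an empty slab. You fix $\delta=\rho^{-1/3}$ rather than optimizing $\delta$. Finally, your multiplier should be normalized as $\psi=\theta\bigl((v\cdot\eta+\tau)/(\delta\rho)\bigr)/(v\cdot\eta+\tau)$, so that it has the bounds $|\psi|\lesssim(\delta\rho)^{-1}$ and $(v\cdot\eta+\tau)\psi=\theta$ that you actually use.
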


\begin{proof}
Choose a product cutoff
\[
 \chi=\chi_x\chi_y\chi_t
 \in C^\infty_0(D_x'\times D_y'\times I')
\]
which is equal to one on a neighborhood of
\(D_x\times D_y\times I\), and define
\[
 \widetilde f:=\chi f,
 \qquad
 \widetilde F:=\chi F,
 \qquad
 \widetilde g
 :=\chi g+fY\chi-F\cdot\nabla_x\chi.
\]
The product rules for \(Y\) and \(\divw\), used in the weighted
distributional sense, give
\begin{equation}
 Y\widetilde f
 =
 \divw\widetilde F+\widetilde g.
 \label{eq:localized-transfer-equation}
\end{equation}
Indeed,
\[
 Y(\chi f)=\chi Yf+fY\chi,
 \qquad
 \divw(\chi F)=\chi\divw F+F\cdot\nabla_x\chi,
\]
which yields the stated formula from \(Yf=\divw F+g\).  Because \(\chi\)
is compactly supported in
\(D_x'\times D_y'\times I'\), the functions
\(\widetilde f,\widetilde F,\widetilde g\) may be extended by zero to the
whole space.  Their zero extensions still satisfy
\eqref{eq:localized-transfer-equation} in the weighted distributional sense
on \(\R^{n+m+1}\), because they vanish in a neighborhood of the boundary of
the localization cylinder.  Choose a ball
$B_{R/2}(0)\subset\R^n$ containing $\operatorname{supp}\chi_x$, and let
\(B_R\) be the concentric ball with twice the radius.  The zero-extended
localized functions are then supported in \(B_{R/2}\) in the \(x\) variable,
while \(B_R\) is available for the weighted Sobolev estimate. Relabeling \((\widetilde f,\widetilde F,\widetilde g)\) as \((f,F,g)\), we are reduced to the whole-space equation
\begin{equation}
 Yf=\divw F+g
 \qquad\hbox{in the weighted distributional sense},
 \label{eq:globaltransportdiv}
\end{equation}
with \(f,F,g\) compactly supported in \(B_{R/2}\) in the \(x\) variable.
The cutoff terms \(fY\chi\) and \(F\cdot\nabla_x\chi\) are controlled by the
corresponding local \(\mathrm L^2\) norms on
\(D_x'\times D_y'\times I'\), with constants depending on the cutoff
derivatives and hence on the distances between the nested domains.  The
Fourier argument below is valid directly for these energy data as, after
partial Fourier transformation, density in
\(\mathrm H^1_{0,w}(B_R)\) justifies the multiplier tests.

We now establish the weighted thin-slab estimate that replaces the
Lebesgue-measure estimate for resonant sets in classical Fourier proofs of
kinetic transfer of regularity, compare Bouchut's multiplier method
\cite{Bouchut2002}.  Let \(h\in\mathrm H^1_w(B_R)\) satisfy
\(\operatorname{supp}h\subset B_{R/2}\).  In the Fourier argument below,
\(h\) will be the function  $h(x)=\widehat f(x,\eta,\tau)$ for fixed \((\eta,\tau)\).

If $e\in\mathbb S^{m-1}$,
$a\in\R$, and $0<\delta\leq R$, then
\begin{equation}
 S_{e,a,\delta}
 :=\{(v,z)\in B_R^x:|v\cdot e-a|<\delta\}
 \quad\hbox{satisfies}\quad
 \frac{w(S_{e,a,\delta})}{w(B_R)}
 \leq C\left(\frac{\delta}{R}\right)^{\delta_1}.
 \label{eq:weightedslabmass}
\end{equation}
Indeed, slicing the $n$-dimensional ball in the active direction $e$ gives
$|S_{e,a,\delta}|/|B_R^x|\leq C\delta/R$, so
\eqref{eq:weightedslabmass} follows from \eqref{eq:Ainftyquant}.  Weighted
H\"older and Proposition \ref{prop:FKS}, with the mean absorbed into the
$\mathrm L^2$ term, therefore give
\begin{equation}
 \int_{S_{e,a,\delta}}|h|^2w\d x
 \leq C\left(\frac{\delta}{R}\right)^\theta
 \int_{B_R}\bigl(|h|^2+R^2|\nabla_xh|^2\bigr)w\d x,
 \qquad
 \theta:=\delta_1\bigl(1-1/{\chi_x}\bigr)>0.
 \label{eq:weightedslabfunction}
\end{equation}
Indeed, let \(S=S_{e,a,\delta}\), \(B=B_R\), and
\(\chi=\chi_x\).  Weighted H\"older's inequality gives
\begin{align}
 \int_S |h|^2w\d x
 &\leq
 w(S)^{1-1/\chi}
 \left(\int_B|h|^{2\chi}w\d x\right)^{1/\chi}=
 w(B)
 \left(\frac{w(S)}{w(B)}\right)^{1-1/\chi}
 \left(\avgint_B|h|^{2\chi}w\d x\right)^{1/\chi}.
 \label{eq:weightedslabholder}
\end{align}
To estimate the last factor, write
\(h=(h-h_{B,w})+h_{B,w}\). By Proposition
\ref{prop:FKS} and Cauchy-Schwarz we obtain
\begin{align}
 \left(\avgint_B|h|^{2\chi}w\d x\right)^{1/(2\chi)}
 &\leq
 \left(\avgint_B|h-h_{B,w}|^{2\chi}w\d x\right)^{1/(2\chi)}
 +|h_{B,w}|
 \notag\\
 &\leq
 CR\left(\avgint_{2B}|\nabla_xh|^2w\d x\right)^{1/2}
 +
 \left(\avgint_B|h|^2w\d x\right)^{1/2}.
 \label{eq:weightedslabsobolev}
\end{align}
Thus the contribution of the weighted mean is absorbed into the
\(\mathrm L^2\) term.  Squaring \eqref{eq:weightedslabsobolev}, inserting
the result into \eqref{eq:weightedslabholder}, and using
\eqref{eq:weightedslabmass}, we find \eqref{eq:weightedslabfunction}.
Here a fixed enlargement of \(B_R\), arising from the occurrence of
\(2B\) in Proposition \ref{prop:FKS}, is harmless as after the cutoff
reduction, \(h\) is supported away from the boundary of \(B_R\), and
the radius may be enlarged and relabelled.

Take the Fourier transform of \eqref{eq:globaltransportdiv} in $(y,t)$ and
write $(\eta,\tau)$ for the dual variables.  For almost every
$(\eta,\tau)$, with
\[
 h=\widehat f(\cdot,\eta,\tau),\qquad
 H=\widehat F(\cdot,\eta,\tau),\qquad
 k=\widehat g(\cdot,\eta,\tau),
\]
we have
\begin{equation}
 iT h=\divw H+k,
 \qquad T(x)=T(v,z):=v\cdot\eta+\tau.
 \label{eq:fouriertransport}
\end{equation}
Set
\begin{equation}
 \mathcal E(\eta,\tau)
 :=
 \|h\|_{\mathrm L^2(B_R,w\d x)}^2
 +\|\nabla_xh\|_{\mathrm L^2(B_R,w\d x)}^2
 +\|H\|_{\mathrm L^2(B_R,w\d x)}^2
 +\|k\|_{\mathrm L^2(B_R,w\d x)}^2.
 \label{eq:frequencyenergy}
\end{equation}

Suppose first that $\kappa:=|\eta|\geq1$, put $e=\eta/\kappa$, and fix
$0<\delta\leq\min\{1,R/2\}$.  Let $\beta\in C^\infty(\R)$ satisfy
$0\leq\beta\leq1$, $\beta=0$ on $[-1,1]$, and $\beta=1$ outside
$[-2,2]$.  Define
\[
 \psi(x)=\beta\left(\frac{T(x)}{\kappa\delta}\right),
 \qquad
 m(x)=\frac{\psi(x)^2}{T(x)},
\]
where $m$ is set equal to zero on the region on which $\psi=0$.  Then
\begin{equation}
 |m|\leq\frac{C}{\kappa\delta},
 \qquad
 |\nabla_xm|\leq\frac{C}{\kappa\delta^2}.
 \label{eq:multiplierbounds}
\end{equation}
Use $-im\overline h$ as a test function in \eqref{eq:fouriertransport} and
take the real part of the resulting complex identity.  The definition of
$\divw$, not an expansion of it, yields
\begin{align}
 \int_{B_R}\psi^2|h|^2w\d x
 &\leq \int_{B_R}|H|\,|\nabla_x(m\overline h)|w\d x
       +\int_{B_R}|k|\,|m h|w\d x \notag\\
 &\leq C\left(\frac1{\kappa\delta}
              +\frac1{\kappa\delta^2}\right)\mathcal E(\eta,\tau)
 \leq \frac{C}{\kappa\delta^2}\mathcal E(\eta,\tau).
 \label{eq:nonresonantestimate}
\end{align}
Complex-valued testing is justified by applying the weak formulation to real
and imaginary parts.  Crucially, \eqref{eq:multiplierbounds} differentiates
only $m$, never $w$.

Since $\psi=1$ when $|T|\geq2\kappa\delta$, we have \[
 \operatorname{supp}(1-\psi^2)
 \subset
 \left\{
 (v,z)\in B_R:
 \left|v\cdot e+{\tau}/{\kappa}\right|<2\delta
 \right\}
 =
 S_{e,-\tau/\kappa,2\delta},
\]
and hence the remaining part of $h$ is
supported in the slab $|v\cdot e+{\tau}/{\kappa}|<2\delta$. Consequently,
\begin{align}
 \|h\|_{\mathrm L^2(B_R,w\d x)}^2
 &=
 \int_{B_R}\psi^2|h|^2w\d x
 +
 \int_{B_R}(1-\psi^2)|h|^2w\d x
 \notag\\
 &\leq
 \int_{B_R}\psi^2|h|^2w\d x
 +
 \int_{S_{e,-\tau/\kappa,2\delta}}|h|^2w\d x.
 \label{eq:frequencybalance-}
\end{align}
Combining \eqref{eq:weightedslabfunction} and
\eqref{eq:nonresonantestimate} gives
\begin{equation}
 \|h\|_{\mathrm L^2(B_R,w\d x)}^2\leq
 C\left[
 \left(\frac{2\delta}{R}\right)^\theta
 +\frac{1}{\kappa\delta^2}
 \right]\mathcal E(\eta,\tau)\leq
 C\left(
 \delta^\theta+\frac{1}{\kappa\delta^2}
 \right)
 \mathcal E(\eta,\tau).
 \label{eq:frequencybalance}
\end{equation}
For all sufficiently large $\kappa$, choose
$\delta=\kappa^{-1/(\theta+2)}$.  Bounded frequencies are absorbed by the
\(\|h\|_{\mathrm L^2(B_R,w\d x)}^2\) term in
\eqref{eq:frequencyenergy}.  Consequently,
\begin{equation}
 (1+|\eta|)^\alpha
 \|h\|_{\mathrm L^2(B_R,w\d x)}^2
 \leq C\mathcal E(\eta,\tau),
 \qquad
 \alpha:=\frac{\theta}{\theta+2}>0.
 \label{eq:etagain}
\end{equation}

It remains to control the time frequency.  If \(|\tau|\leq2R|\eta|\), then
\[
 1+|\eta|^2+|\tau|^2
 \leq
 1+(1+4R^2)|\eta|^2
 \leq
 C_R(1+|\eta|)^2.
\]
Set \(s_*:=\alpha/2\).  Then
\[
 (1+|\eta|^2+|\tau|^2)^{s_*}
 \leq
 C_R(1+|\eta|)^\alpha,
\]
and hence the required estimate follows from \eqref{eq:etagain}.  If
\(|\tau|>2R|\eta|\), then
\[
 |T(v,z)|=|v\cdot\eta+\tau|
 \geq |\tau|-R|\eta|
 >\frac{|\tau|}{2}
 \qquad\hbox{on }B_R.
\]
Testing \eqref{eq:fouriertransport} with
\(-i\overline h/T\) and taking the real part, we obtain an estimate analogous
to \eqref{eq:nonresonantestimate}.  Indeed, on \(B_R\),
\[
 \left|\frac{1}{T}\right|
 \leq\frac{2}{|\tau|},
 \qquad
 \left|\nabla_x\!\left(\frac{1}{T}\right)\right|
 =
 \frac{|\eta|}{|T|^2}
 \leq\frac{C_R}{|\tau|},
\]
where we used \(|\tau|>2R|\eta|\).  The definition of \(\divw\) therefore
gives
\[
 \int_{B_R}|h|^2w\d x
 \leq
 \frac{C}{|\tau|}\mathcal E(\eta,\tau)
\]
whenever \(|\tau|\) is sufficiently large.  Enlarging \(C\) to include the
bounded range of \(|\tau|\), which is controlled by the
\(\|h\|_{\mathrm L^2(B_R,w\d x)}^2\) term in
\(\mathcal E(\eta,\tau)\), we conclude that, when \(|\tau|>2R|\eta|\),
\begin{equation}
 \|h\|_{\mathrm L^2(B_R,w\d x)}^2
 \leq
 \frac{C}{1+|\tau|}
 \mathcal E(\eta,\tau).
 \label{eq:taugain}
\end{equation}
In the complementary sector \(|\tau|\leq2R|\eta|\),
\eqref{eq:etagain} controls both frequency variables.
Since \(0<\alpha<1\), \eqref{eq:etagain} and \eqref{eq:taugain} imply
\begin{equation}
 (1+|\eta|^2+|\tau|^2)^{s_*}
 \|h\|_{\mathrm L^2(B_R,w\d x)}^2
 \leq C\mathcal E(\eta,\tau),
 \qquad
 s_*=\frac{\theta}{2(\theta+2)}.
 \label{eq:frequencyestimate}
\end{equation}
The case $\eta=0$ is included in the nonresonant $\tau$ estimate.  Integrating
\eqref{eq:frequencyestimate} in $(\eta,\tau)$ and applying Plancherel's
theorem proves the whole-space estimate.  Restriction to
$D_x\times D_y\times I$ and the initial cutoff reduction prove
\eqref{eq:transferestimate}.  The multiplier tests were justified directly
by weighted Sobolev density, so no regularization of $w$ is needed.
\end{proof}

\begin{rem}
The exponent $s_*$ is not claimed to be sharp.  When $w\equiv1$, Bouchut's
multiplier method gives the sharp gain of one third of a derivative in the
transported variable \cite{Bouchut2002}.  The same sharp gain has recently
been recovered in physical space by mollification along critical kinetic
trajectories \cite{DietertMouhotNiebelZacher2025,Niebel2026}.  The
resonant-slab proof above deliberately trades this sharpness for robustness
under arbitrary $A_2$ degeneracy.  The subsequent Moser iteration uses only
the strict positivity of $s_*$, so no sharp transfer exponent is needed for
local boundedness.  The weak Harnack argument developed in Sections
\ref{Section_6}-\ref{Section_8} uses instead the compactness consequences of
the diffusive energy.
\end{rem}

\begin{rem}
In the unweighted theory, the gain of integrability needed for
iteration can also be obtained from the explicit fundamental
solution of the Kolmogorov operator and estimates for its
derivatives, see \cite[Section~2.2]{GuerandMouhot2022}.
For general $A_2$ weights, the corresponding model operator is
$\operatorname{div}_x(w\nabla_x\,\cdot)-Y(w\,\cdot)$.
Adapting this approach would require kernel estimates that
control the divergence-form source and yield a weighted gain
of integrability, with constants depending only on the
structural parameters.  The existence of a fundamental
solution alone would not suffice, and we are not aware of
the required estimates under the present assumptions. Developing such a fundamental-solution theory is an interesting
open problem.  It would extend the weighted parabolic results
of \cite{AtaeiNystrom2025,Baadi2026} to a setting in which
the degenerate diffusion interacts with the kinetic transport
term $v\cdot\nabla_y$.  The argument above establishes
regularity transfer directly from the weighted energy bounds,
without requiring fundamental-solution estimates.
\end{rem}

Combining Proposition \ref{prop:transfer} with the weighted Sobolev inequality
in $x$ gives a genuine gain of integrability in all variables.

\begin{thm}
\label{thm:kineticsobolev}
Under the assumptions of Proposition \ref{prop:transfer}, there are
$\kappa>1$ and $C<\infty$, depending only on the indicated sets, $m,k$, and
$M$, such that, with
\(\mathcal V:=D_x\times D_y\times I\) and
\(\mathcal V':=D_x'\times D_y'\times I'\),
\begin{align}
 \left(
 \avgint_{\mathcal V}|f|^{2\kappa}\d\muW
 \right)^{1/(2\kappa)}
 \leq C\bigg[&
 \left(
 \avgint_{\mathcal V'}|f|^2\d\muW
 \right)^{1/2}
 +\left(
 \avgint_{\mathcal V'}|\nabla_xf|^2\d\muW
 \right)^{1/2}
 \notag\\
 &+\left(
 \avgint_{\mathcal V'}|F|^2\d\muW
 \right)^{1/2}
 +\left(
 \avgint_{\mathcal V'}|g|^2\d\muW
 \right)^{1/2}\bigg].
 \label{eq:kineticsobolev}
\end{align}
\end{thm}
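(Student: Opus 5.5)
We would combine the fractional gain in $(y,t)$ from Proposition \ref{prop:transfer} with the weighted Sobolev gain in $x$ from Proposition \ref{prop:FKS}, using a mixed-norm interpolation. First we normalize: by the nestedness of the fixed sets and the doubling property \eqref{eq:doubling}, $\muW(\mathcal V)\simeq\muW(\mathcal V')\simeq w(D_x)$ with constants depending on the sets and $M$. Averaged and unaveraged norms are therefore interchangeable, up to such constants. We then prove the unaveraged estimate
\[
 \|f\|_{\mathrm L^{2\kappa}(\mathcal V,\d\muW)}\leq C\,\mathcal N,
\]
where $\mathcal N$ is the sum of the four $\mathrm L^2(\mathcal V',\d\muW)$ norms on the right.

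\textbf{Step 1: two anisotropic bounds on an intermediate cylinder.} Pick intermediate sets $\mathcal V\Subset\mathcal V''\Subset\mathcal V'$, with $D_y''$ Lipschitz and $D_x''$ a ball. Proposition \ref{prop:transfer}, applied with a fixed $s\leq s_*$, gives
\[
 f\in\mathrm L^2(D_x'';\mathrm H^s(D_y''\times I''),w\d x),
\]
with norm $\leq C\mathcal N$. Next we localize with a cutoff $\zeta(x)$ supported in $D_x''$ and equal to one on $D_x$. Applying \eqref{eq:weightedsobolev} for a.e. fixed $(y,t)$ on a ball containing $D_x''$, the mean term being controlled by the weighted $\mathrm L^2$ norm as in \eqref{eq:weightedslabsobolev}, and integrating in $(y,t)$ yields
\[
 f\in\mathrm L^2(D_y''\times I'';\mathrm L^{2\chi_x}(D_x,w\d x)),
\]
with norm $\leq C\mathcal N$. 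In $(y,t)$ the fractional Sobolev embedding on the Lipschitz set $D_y''\times I''$, via an extension operator, gives for a.e. $x$
\[
 \|f(x,\cdot)\|_{\mathrm L^{q}(D_y\times I)}\leq C\|f(x,\cdot)\|_{\mathrm H^s(D_y''\times I'')},
 \qquad \frac1q=\frac12-\frac{s}{m+1}.
\]
Hence $f\in\mathrm L^2(D_x,w\d x;\mathrm L^q(D_y\times I))$ with $q>2$.

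\textbf{Step 2: interpolate the mixed norms.} We now have $f$ bounded in $\mathrm L^2_{x}\mathrm L^q_{y,t}$ and in $\mathrm L^{2\chi_x}_x\mathrm L^2_{y,t}$, where the first norm carries the measure $w\d x$ and the second the Lebesgue measure in $(y,t)$. We avoid abstract interpolation and use Hölder's inequality directly. For $\theta\in(0,1)$ write $|f|^{p}=|f|^{p\theta}|f|^{p(1-\theta)}$. Apply Hölder first in $(y,t)$ and then in $x$, choosing $p=2\kappa$ and $\theta$ so that the exponents match:
\[
 \frac1{p}=\frac{\theta}{2}+\frac{1-\theta}{2\chi_x}
 \quad\text{(in $x$)},
 \qquad
 \frac1{p}=\frac{\theta}{q}+\frac{1-\theta}{2}
 \quad\text{(in $(y,t)$)}.
\]
This is the standard mixed-norm Hölder (Riesz--Thorin type) inequality
\[
 \|f\|_{\mathrm L^{p_\theta}_x\mathrm L^{r_\theta}_{y,t}}\leq\|f\|_{\mathrm L^{2}_x\mathrm L^{q}_{y,t}}^{\theta}\,\|f\|_{\mathrm L^{2\chi_x}_x\mathrm L^{2}_{y,t}}^{1-\theta},
\]
with
\[
 \frac1{p_\theta}=\frac\theta2+\frac{1-\theta}{2\chi_x},
 \qquad
 \frac1{r_\theta}=\frac\theta q+\frac{1-\theta}{2}.
\]
Both $p_\theta$ and $r_\theta$ are $>2$ for $\theta\in(0,1)$. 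Setting
\[
 2\kappa:=\min\{p_\theta,r_\theta\}>2,
\]
and using that $\mathcal V$ is bounded in $(y,t)$ and has finite $w$-mass in $x$, we reduce both exponents to $2\kappa$ by Hölder. This gives $\|f\|_{\mathrm L^{2\kappa}(\mathcal V,\d\muW)}\leq C\mathcal N$. Taking, say, $\theta=1/2$ makes $\kappa$ depend only on $s_*$, $\chi_x$ and $m$, hence only on $m,k,M$.

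\textbf{Main obstacle.} The delicate point is Step 1's bookkeeping, namely that the two anisotropic bounds live on compatible domains. Proposition \ref{prop:transfer} controls the $\mathrm H^s$ restriction norm, which must be converted into an $\mathrm L^q$ bound on the smaller set $D_y\times I$. This conversion is immediate from the definition \eqref{eq:Hs-restriction}, since any extension $V$ lies in $\mathrm H^s(\R^{m+1})\subset\mathrm L^q$. The weighted Sobolev step needs the $x$-cutoff and the doubling constants so that the mean term is absorbed. Finally, the averaged form \eqref{eq:kineticsobolev} follows by dividing by the appropriate powers of $\muW(\mathcal V)$ and $\muW(\mathcal V')$, which are comparable, the resulting constants depending on the sets and $M$.
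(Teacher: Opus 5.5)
Your overall route is the paper's. You use weighted Fabes--Kenig--Serapioni in $x$ for a.e.\ $(y,t)$, then Proposition~\ref{prop:transfer} plus fractional Sobolev embedding in $(y,t)$ for a.e.\ $x$, then H\"older first in $(y,t)$ and then in $x$ with matched exponents (your $\theta$ is the paper's $\vartheta$). Replacing the finite cover by a single cutoff $\zeta$ is harmless, and so is the alternative $2\kappa=\min\{p_\theta,r_\theta\}$ followed by H\"older on bounded sets. However, two steps fail as written.

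\emph{Normalization.} The comparability $\muW(\mathcal V)\simeq\muW(\mathcal V')$ does not make averaged and unaveraged norms interchangeable, because the two sides carry different powers $\muW(\mathcal V)^{1/(2\kappa)}$ and $\muW(\mathcal V')^{1/2}$. In fact the unaveraged bound $\|f\|_{\mathrm L^{2\kappa}(\mathcal V,\d\muW)}\leq C\mathcal N$, with $C$ depending only on the sets and $M$, is false. The hypotheses are unchanged under $w\mapsto cw$: every term of the weighted weak formulation scales by $c$, and $[cw]_{A_2}=[w]_{A_2}$. Meanwhile the left side scales like $c^{1/(2\kappa)}$ and $\mathcal N$ like $c^{1/2}$, so letting $c\to0$ gives a contradiction for any $f\not\equiv0$. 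The loss appears concretely in two places. In your Step 1, \eqref{eq:weightedsobolev} is an averaged inequality, and converting it produces a factor $w(B)^{1/(2\chi_x)-1/2}$. In your final H\"older reduction you pick up $w(D_x)^{1/(2\kappa)-1/p_\theta}$. The repair is to use the scale invariance of the averaged statement at the outset and normalize, say, $w(D_x')=1$. After that, doubling and $A_\infty$ make all weighted masses of the fixed sets comparable to one, and your constants become structural. This is exactly how the paper begins. Alternatively, if you track the powers, they combine, up to doubling constants, to $w(D_x)^{1/(2\kappa)-1/2}$, which is precisely the averaged form.

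\emph{Order of the mixed norms.} Your Step 1 produces $f\in\mathrm L^2_{y,t}(\mathrm L^{2\chi_x}_{w,x})$, with the outer norm in $(y,t)$. The interpolation inequality you display, obtained by H\"older first in $(y,t)$ and then in $x$, needs $\mathrm L^{2\chi_x}_{w,x}(\mathrm L^2_{y,t})$, with the outer norm in $x$. In Step 2 you simply relabel one as the other. The swap requires Minkowski's integral inequality, which goes in the needed direction only because $2\chi_x\geq2$:
\[
 \|f\|_{\mathrm L^{2\chi_x}_{w,x}(\mathrm L^2_{y,t})}
 \leq
 \|f\|_{\mathrm L^2_{y,t}(\mathrm L^{2\chi_x}_{w,x})}.
\]
The paper records this step explicitly before interpolating. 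With these two additions your argument coincides with the paper's proof.
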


\begin{proof}
Since the asserted estimate is invariant under multiplication of the weight
by a positive constant, we may normalize the weighted measure of the outer
diffusive ball.  We also normalize Lebesgue measure on the fixed outer
\((y,t)\)-set.  Quantitative \(A_\infty\), together with the fixed inclusions
of the inner sets in the outer sets, compares the corresponding inner and
outer normalizations.  We may therefore work with normalized measures in the
two mixed-norm endpoint estimates below.

Proposition \ref{prop:FKS}, applied for almost every $(y,t)$ on a finite
cover of \(D_x\) by balls \(B_i\) satisfying \(2B_i\Subset D_x'\), and then
summed, gives a bound in the mixed space
$\mathrm L^2_{y,t}\bigl(\mathrm L^{2\chi_x}_{w,x}\bigr)$.
Since $2\chi_x\geq2$, Minkowski's integral inequality also gives
\begin{equation}
 \|f\|_{\mathrm L^{2\chi_x}_{w,x}(\mathrm L^2_{y,t})}
 \leq
 \|f\|_{\mathrm L^2_{y,t}(\mathrm L^{2\chi_x}_{w,x})}.
 \label{eq:minkowski-mixed}
\end{equation}
Indeed, writing $p=2\chi_x\geq2$ and applying Minkowski's integral
inequality with exponent $p/2$, we obtain
\begin{align*}
 \|f\|_{\mathrm L^p_{w,x}(\mathrm L^2_{y,t})}^2
 &=
 \left\|
   \int |f(\,\cdot\,,y,t)|^2\d y\d t
 \right\|_{\mathrm L^{p/2}_{w,x}}
 \leq
 \int
 \bigl\||f(\,\cdot\,,y,t)|^2\bigr\|_{\mathrm L^{p/2}_{w,x}}
 \d y\d t=
 \|f\|_{\mathrm L^2_{y,t}(\mathrm L^p_{w,x})}^2.
\end{align*}
Proposition \ref{prop:transfer} and fractional Sobolev embedding in the
$(m+1)$ variables $(y,t)$ give the other endpoint bound
$f\in\mathrm L^2_{w,x}(\mathrm L^q_{y,t})$ for some $q>2$. More precisely, when \(2s_*<m+1\), one may take
\[
 q=\frac{2(m+1)}{m+1-2s_*},
 \qquad\hbox{equivalently}\qquad
 \frac1q=\frac12-\frac{s_*}{m+1}.
\]
See \cite[Theorem~6.5]{DiNezzaPalatucciValdinoci2012}.  The embedding is
applied for almost every \(x\), after which its square is integrated against
\(w(x)\d x\).  Put
\[
 a=\frac12-\frac1{2\chi_x}>0,
 \qquad
 b=\frac12-\frac1q>0,
 \qquad
 \vartheta=\frac{a}{a+b}.
\]
Choose $r>2$ by
\[
 \frac1r
 =\frac{1-\vartheta}{2\chi_x}+\frac{\vartheta}{2}
 =\frac{1-\vartheta}{2}+\frac{\vartheta}{q}
 =\frac12-\frac{ab}{a+b}<\frac12.
\]
H\"older's inequality first in $(y,t)$ and then in $x$ gives directly
\[
 \|f\|_{\mathrm L^r(\d\muW)}
 \leq
 \|f\|_{\mathrm L^{2\chi_x}_{w,x}(\mathrm L^2_{y,t})}^{1-\vartheta}
 \|f\|_{\mathrm L^2_{w,x}(\mathrm L^q_{y,t})}^{\vartheta}.
\]
Both factors have already been bounded.  Taking $2\kappa=r$ proves the
normalized estimate.  Returning to the original measures proves
\eqref{eq:kineticsobolev}. The normalization factors cancel because all norms
in that estimate are normalized averages.
\end{proof}

We record the following localized, scale-invariant form of
Theorem \ref{thm:kineticsobolev}.  Let $\Phi_s$ be the flow of $Y$
defined in \eqref{eq:Yflow}.  Fix $P_0$, let
$R/2\leq\rho<R$, and set
\[
 P_{\rho,R}:=\Phi_{-(R^2-\rho^2)/2}(P_0).
\]
This shift gives $Q_\rho(P_{\rho,R})\Subset Q_R(P_0)$.  If
\(f,\nabla_xf,F,g\in\mathrm L^2(Q_R(P_0),\d\muW)\) and
\[
 Yf=\divw F+g
 \qquad\hbox{in }Q_R(P_0)
\]
in the weighted distributional sense, then
\begin{align}
 \left(
 \avgint_{Q_\rho(P_{\rho,R})}|f|^{2\kappa}\d\muW
 \right)^{1/(2\kappa)}
 &\leq
 C\left(\frac{R}{R-\rho}\right)^N
 \bigg[
 \left(
 \avgint_{Q_R(P_0)}|f|^2\d\muW
 \right)^{1/2}
 \notag\\
 &\quad+R\left(
 \avgint_{Q_R(P_0)}
 \bigl(|\nabla_xf|^2+|F|^2\bigr)\d\muW
 \right)^{1/2}
 +R^2\left(
 \avgint_{Q_R(P_0)}|g|^2\d\muW
 \right)^{1/2}
 \bigg],
 \label{eq:scaledkineticsobolev}
\end{align}
where \(\kappa>1\), \(N<\infty\), and \(C<\infty\) depend only on the
structural parameters.

To see this, normalize by $T_{P_0,R}$ and set
$f_R=f\circ T_{P_0,R}$, $F_R=RF\circ T_{P_0,R}$, and
$g_R=R^2g\circ T_{P_0,R}$.  The normalized equation has the same form
and the rescaled weight has the same $A_2$ characteristic.  With
$\theta=\rho/R$, the inner cylinder becomes
$Q_\theta(\Phi_{-(1-\theta^2)/2}(0))$.  Its spatial and time margins
in $Q_1$ are bounded below by fixed multiples of $1-\theta$.
Applying Theorem \ref{thm:kineticsobolev} with a cutoff supported in
$Q_1$ therefore gives $(1-\theta)^{-N}$.  The Jacobian cancels in
normalized averages, while gradients and the two data terms scale by
$R$, $R$, and $R^2$, respectively.  This proves
\eqref{eq:scaledkineticsobolev}.  The shift supplies an interior time
margin. The unshifted subsolution estimate needed below is proved
separately in Lemma \ref{lem:subsolutiongain}.

\section{Energy estimates}\label{Section_4}

Let \(Q_R(P_0)\) be an intrinsic cylinder.  We call \(\eta\) an
\emph{intrinsic Lipschitz cutoff} in \(Q_R(P_0)\) if
\(0\leq\eta\leq1\),
\[
 \eta,\ \nabla_x\eta,\ Y\eta\in\mathrm L^\infty(Q_R(P_0)),
\]
and \(\eta\) is compactly supported in the \(x\)- and \(y\)-sections of
\(Q_R(P_0)\) and vanishes near its initial time face.  Support inclusions
for such cutoffs are understood relative to the open time interval.
The cutoff need not vanish at the terminal time face.  In that case, one first tests below a time
\(T\), using an auxiliary one-sided time cutoff, and then lets \(T\) approach
the terminal face.  The resulting terminal energy term is non-negative and
may be discarded from the energy inequality.  In particular, if
\[
 Q_\rho(P_0)\Subset_{\mathrm{par}}Q_R(P_0),
\]
where the notation indicates nested cylinders with the same terminal time
face, then one may choose \(\eta=1\) on \(Q_\rho(P_0)\), with
\(\operatorname{supp}\eta\subset Q_R(P_0)\), and
\begin{equation}
 |\nabla_x\eta|
 \leq\frac{C}{R-\rho},
 \qquad
 |Y\eta|
 \leq\frac{C}{(R-\rho)^2}.
 \label{eq:cutoff}
\end{equation}
These bounds are invariant under the intrinsic translations and dilations
\eqref{eq:normalization}. This is the meaning of the adjective intrinsic.

\begin{lem}
\label{lem:caccioppoli}
Let \(h\geq0\) be an energy subsolution in \(Q_R(P_0)\), let \(p\geq2\),
and let \(\eta\) be a non-negative intrinsic Lipschitz cutoff in
\(Q_R(P_0)\).  Then
\begin{equation}
 \int_{Q_R(P_0)}
 \eta^2|\nabla_xh^{p/2}|^2\d\muW
 \leq
 Cp^2\int_{Q_R(P_0)}
 h^p\bigl(|\nabla_x\eta|^2+|\eta\,Y\eta|\bigr)\d\muW,
 \label{eq:caccioppoli}
\end{equation}
where \(C=C(\lambda,\Lambda)\).  The same estimate holds with
\(h=(u-\ell)_+\), where \(u\) is a subsolution and \(\ell\in\R\).
\end{lem}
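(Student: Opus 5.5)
The plan is to apply the renormalization Lemma \ref{lem:weighted-renormalization} with $F=B\nabla_xh$, $g=0$, and a convex non-decreasing power-type function. The test function will be $\varphi=\eta^2$; since $\eta$ is only Lipschitz, it is approximated by smooth non-negative tests. If $\eta$ does not vanish at the terminal face, we first work below a time $T$ and let $T$ increase, discarding the non-negative terminal term as described after the lemma.

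\textbf{Step 1: the inequality for $\beta(h)=h^p$.} Since $\beta''$ is unbounded for $p>2$, we use smooth truncations $\beta_N$ that agree with $h^p$ for $h\le N$ and have linear growth beyond, and then let $N\to\infty$. The left side of \eqref{eq:weighted-renormalization} is $-\int\beta(h)Y(\eta^2)\d\muW$. The right side contains the good term
\[
-\int\beta''(h)B\nabla_xh\cdot\nabla_xh\,\eta^2\d\muW
\le -\lambda p(p-1)\int h^{p-2}|\nabla_xh|^2\eta^2\d\muW .
\]
It also contains the cross term
\[
-\int\beta'(h)B\nabla_xh\cdot 2\eta\nabla_x\eta\d\muW ,
\]
which is bounded by $2\Lambda p\int h^{p-1}|\nabla_xh|\,\eta|\nabla_x\eta|\d\muW$.

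\textbf{Step 2: absorb the cross term.} By Young's inequality the cross term is at most
\[
\tfrac{\lambda p(p-1)}{2}\int h^{p-2}|\nabla_xh|^2\eta^2\d\muW+C\frac{\Lambda^2 p}{\lambda(p-1)}\int h^p|\nabla_x\eta|^2\d\muW ,
\]
and the first part is absorbed into the good term. Moving the transport term to the right and using $|Y\eta^2|=2|\eta Y\eta|$ gives
\[
\tfrac{\lambda p(p-1)}{2}\int h^{p-2}|\nabla_xh|^2\eta^2\d\muW\le C\int h^p|\nabla_x\eta|^2\d\muW+2\int h^p|\eta Y\eta|\d\muW .
\]
Because $|\nabla_xh^{p/2}|^2=\tfrac{p^2}{4}h^{p-2}|\nabla_xh|^2$, multiplying by $\tfrac{p}{2\lambda(p-1)}\le 1/\lambda$ (valid since $p\ge2$) yields \eqref{eq:caccioppoli}. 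The constant is of order $p^2$, with $C=C(\lambda,\Lambda)$.

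\textbf{Step 3: the truncation limit.} This is the main technical obstacle: the right-hand side of \eqref{eq:caccioppoli} may be infinite, and $h^p$ is not known a priori to be locally integrable. We carry out Steps 1--2 with $\beta_N$, where $\beta_N'=p\min\{h,N\}^{p-1}$ extended as a $C^1$ function with $\beta_N''\ge0$ bounded. Under the same absorption this gives the estimate with $\min\{h,N\}$ in place of $h$ in the gradient term, and with $\beta_N(h)\le h^p$ on the right. The cross term for $h>N$ has $\beta_N''=0$ there, so it must be handled by Young's inequality against the $\lambda|\nabla_x h|^2$-type quantity. For this we use the variant $\beta_N(h)=\int_0^h\!\int_0^r p(p-1)\min\{s,N\}^{p-2}\,ds\,dr$, which keeps $\beta_N''>0$ everywhere and makes the estimates uniform in $N$. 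Monotone convergence as $N\to\infty$ then gives \eqref{eq:caccioppoli} whenever the right side is finite; if it is infinite there is nothing to prove. The case $h=(u-\ell)_+$ follows identically, since by the remark after Lemma \ref{lem:weighted-renormalization} $(u-\ell)_+$ is again a non-negative energy subsolution.
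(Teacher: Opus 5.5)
Your proposal is correct and follows essentially the paper's own proof. You use the renormalization lemma with $F=B\nabla_xh$, $g=0$ and the test $\eta^2$, absorb the cross term by Young's inequality, truncate by the quadratic continuation of the power beyond level $N$, and pass to the limit by monotone convergence; your double-integral $\beta_N$ is exactly the paper's $\beta_T$ up to the factor $p$. The one point you leave implicit, which the paper states outright, is the pair of uniform bounds $(\beta_N')^2/\beta_N''\le C\,h^p$ and $\beta_N\le h^p$, and these are what make the absorption uniform in $N$. Your computation in fact gives a constant independent of $p$, which is stronger than the stated $Cp^2$.
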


\begin{proof}
Apply Lemma \ref{lem:weighted-renormalization} with
$F=B\nabla_xh$, $g=0$, and smooth truncations of
$\beta(h)=h^p/p$, using the cutoff $\eta^2$.  After discarding the
non-negative terminal energy, ellipticity gives
\begin{align*}
 \lambda(p-1)\int\eta^2h^{p-2}|\nabla_xh|^2\d\muW
 \leq{}&2\Lambda\int\eta h^{p-1}|\nabla_xh|\,|\nabla_x\eta|\d\muW
 +\frac2p\int h^p|\eta Y\eta|\d\muW.
\end{align*}
Young's inequality absorbs half of the first term on the left.  Since
\[
 |\nabla_xh^{p/2}|^2=\frac{p^2}{4}h^{p-2}|\nabla_xh|^2,
\]
we obtain \eqref{eq:caccioppoli}.  For an unbounded $h$, use
$\beta_T(q)=q^p/p$ on $0\leq q\leq T$ and continue it quadratically for
$q>T$, matching its first two derivatives at $T$.  Then $\beta_T''$ is
bounded, $\beta_T(q)\leq q^p/p$, and
$(\beta_T'(q))^2/\beta_T''(q)\leq C p q^p$ for $q>0$.
The same absorption estimate is uniform in $T$.  Let $T\to\infty$ and
use lower semicontinuity.  The assertion is substantive when the
right-hand side is finite.  Finally, $(u-\ell)_+$ is a subsolution by the same lemma.
\end{proof}

For arbitrary \(P_0\) and concentric intrinsic cylinders
\[
 Q_\rho(P_0)\Subset_{\mathrm{par}}Q_R(P_0),
\]
choose \(\eta\) as above.  Lemma \ref{lem:caccioppoli} and
\eqref{eq:cutoff} then give
\begin{equation}
 \int_{Q_\rho(P_0)}
 |\nabla_xh^{p/2}|^2\d\muW
 \leq
 \frac{Cp^2}{(R-\rho)^2}
 \int_{Q_R(P_0)}h^p\d\muW.
 \label{eq:caccioppoliballs}
\end{equation}
When \(P_0=0\), we write simply \(Q_\rho\) and \(Q_R\).

We also record explicitly the weighted Kato inequality used below.  If
\(h\) is smooth and \(\beta\in C^2(\R)\) is convex, then the formal chain
rule gives, in the sense of distributions,
\begin{equation}
 \Lw(\beta(h))
 =
 \beta'(h)\Lw h
 +\beta''(h)B\nabla_xh\cdot\nabla_xh
 \geq
 \beta'(h)\Lw h.
 \label{eq:weighted-kato}
\end{equation}
Here again only the symmetric part of \(B\) contributes to the
quadratic term.

For an energy subsolution, the expression
\(\beta'(h)\Lw h\) need not be separately defined as a distribution.  The
rigorous formulation is Lemma \ref{lem:weighted-renormalization},
applied to smooth convex approximations of $\beta$.  It yields
\[
 \Lw h\geq0,\qquad
 \beta'\geq0,\qquad
 \beta''\geq0
 \quad\Longrightarrow\quad
 \Lw(\beta(h))\geq0
\]
for convex Lipschitz functions \(\beta\), first with smooth bounded
derivatives and then by approximation.  Taking
\(\beta_j(s)\to(s-\ell)_+\) proves that \((h-\ell)_+\) is again a
subsolution.  This renormalized consequence of
\eqref{eq:weighted-kato} is what we call the weighted Kato inequality.  Its
proof uses only ellipticity, the weighted weak formulation, and the
formal skew-adjointness of \(Y\). In particular, it never differentiates \(w\).
The corresponding concave truncation statement holds for supersolutions.
In particular, if \(f\) is a supersolution, then \(\min\{f,c\}\) is a
supersolution and \((c-f)_+\) is a subsolution for every constant \(c\).

\section{Local boundedness}\label{Section_5}

We now combine the kinetic Sobolev estimate with the energy inequality.  We
first specify the convention for equations with measure data and establish
the comparison result used to remove the resulting Radon measure.

Let \(\mathcal O\subset\R^{n+m+1}\) be open, and let \(\nu\) be a locally
finite Radon measure on \(\mathcal O\).  We write
\begin{equation}
 \Lw f=\divw P+g+\nu
 \qquad\hbox{in }\mathcal O
 \label{eq:comparisonproblemapa}
\end{equation}
in the weighted distributional sense if
\begin{equation}
 -\int_{\mathcal O}B\nabla_xf\cdot\nabla_x\varphi\d\muW
 +\int_{\mathcal O}fY\varphi\d\muW
 =
 -\int_{\mathcal O}P\cdot\nabla_x\varphi\d\muW
 +\int_{\mathcal O}g\varphi\d\muW
 +\int_{\mathcal O}\varphi\d\nu
 \label{eq:weighted-measure-distribution}
\end{equation}
for every \(\varphi\in C^\infty_0(\mathcal O)\).  Equivalently,
\eqref{eq:weighted-measure-distribution} is the ordinary distributional
identity
\begin{equation}
 \operatorname{div}_x(wB\nabla_xf)-Y(wf)
 =
 \operatorname{div}_x(wP)+wg+\nu
 \qquad\hbox{in }\mathcal D'(\mathcal O).
 \label{eq:ordinary-measure-distribution}
\end{equation}
All terms in \eqref{eq:ordinary-measure-distribution} are defined directly as
distributions with respect to Lebesgue measure.  In particular, this
formulation does not require multiplication of a distribution by the
possibly rough function \(w\).

\begin{prop}
\label{prop:auxiliary-comparison}
Let \(\mathcal B\subset\R^n\) be a ball, let \(t_0<t_1\), and set
\[
 \mathcal C:=\mathcal B\times\R^m_y\times(t_0,t_1).
\]
Suppose that \(B\) satisfies \eqref{eq:Bbounds} in \(\mathcal C\) and that
\(P,g\in\mathrm L^2(\mathcal C,\d\muW)\) have compact support in the \(y\)
variable.  Then there exists a unique kinetic energy solution \(H\) of
\begin{equation}
 \Lw H=\divw P+g
 \quad\hbox{in }\mathcal C,
 \qquad
 H(t_0)=0,
 \qquad
 H|_{\partial\mathcal B}=0,
 \label{eq:comparisonproblem}
\end{equation}
and
\begin{equation}
 \sup_{t_0<t<t_1}\|H(t)\|_{\mathrm L^2_w}^2
 +\int_{\mathcal C}|\nabla_xH|^2\d\muW
 +\int_{\mathcal C}|H|^2\d\muW
 \leq
 C\int_{\mathcal C}(|P|^2+|g|^2)\d\muW.
 \label{eq:comparisonenergy}
\end{equation}
Here the time-slice norm is taken over
\(\mathcal B\times\R^m_y\), and \(C\) depends only on the structural
parameters, the weighted Poincar\'e constant of \(\mathcal B\), and
\(t_1-t_0\). Suppose, in addition, that
\[
 f,\ \nabla_xf\in\mathrm L^2(\mathcal C,\d\muW),
\]
that $f\in\mathrm L^2((t_0,t_1)\times\R^m_y;
\mathrm H^1_{0,w}(\mathcal B))$, and that $f$ satisfies
\eqref{eq:comparisonproblemapa} in the  sense of
\eqref{eq:weighted-measure-distribution}, for a non-negative
locally finite Radon measure $\nu$.
If its initial positive part vanishes in the sense
\begin{equation}
 \lim_{h\downarrow0}\frac1h
 \int_{t_0}^{t_0+h}\!\int_{\mathcal B\times\R^m_y}
 f_+^2\d\muW=0,
 \label{eq:comparison-initial-condition}
\end{equation}
then
\[
 f\leq H
 \qquad\hbox{almost everywhere in }\mathcal C.
\]
\end{prop}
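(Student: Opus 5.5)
Existence and uniqueness of $H$ will come from a Galerkin or Lions-type construction in the Hilbert space $\mathcal V=\mathrm L^2(\R^m_y;\mathrm H^1_{0,w}(\mathcal B))$. To keep the transport term bounded, I first regularize in $y$. Concretely, I add a vanishing viscosity $\varepsilon\Delta_y$, or equivalently work with a Fourier truncation in $y$, and treat $v\cdot\nabla_y$ as a skew perturbation. For fixed $\varepsilon$, $\mathrm{H}^1$-regularity in $y$ makes $v\cdot\nabla_yH$ a bounded operator into $\mathcal V^*$. The operator $-\divw(B\nabla_x\cdot)+v\cdot\nabla_y-\varepsilon\Delta_y$ is then coercive on $\mathcal V\cap \mathrm H^1_y$, modulo the $\mathrm L^2$ term; this uses \eqref{eq:Bbounds} and the skew-adjointness \eqref{eq:Yskew}. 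Lions' theorem for abstract parabolic equations therefore gives $H_\varepsilon$ with $H_\varepsilon(t_0)=0$.

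Testing with $H_\varepsilon$ uses the energy identity \eqref{eq:weighted-green-identity}. The transport term integrates to zero and the viscosity term has a good sign. Together with ellipticity and Young's inequality this gives \eqref{eq:comparisonenergy} for $H_\varepsilon$ uniformly in $\varepsilon$. The $\mathrm L^2$ term and the $g$ term are absorbed using the weighted Poincar\'e inequality of Proposition \ref{prop:FKS}, valid because $H_\varepsilon(t)\in\mathrm H^1_{0,w}(\mathcal B)$, together with Gr\"onwall on $(t_0,t_1)$. This is why $C$ depends on the Poincar\'e constant of $\mathcal B$ and on $t_1-t_0$. Weak compactness then yields a limit $H$ satisfying the weak formulation and the same bounds. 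The equation then puts $YH\in\mathrm L^2(\mathrm H^{-1}_w)$, so $H$ lies in the kinetic energy class. The trace argument preceding \eqref{eq:weighted-green-identity} gives $H\in C([t_0,t_1];\mathrm L^2_w)$ with $H(t_0)=0$. Uniqueness follows by applying \eqref{eq:weighted-green-identity} to the difference of two solutions, which has zero data, and then using Gr\"onwall.

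For the comparison, set $q=f-H$. By linearity $q$ satisfies $\Lw q=\nu\geq0$ weakly, i.e. $q$ is a subsolution with measure data, and it has zero lateral $\mathrm H^1_w$ trace. Its initial positive part vanishes in the averaged sense \eqref{eq:comparison-initial-condition}, because $H$ is continuous with $H(t_0)=0$ and $(f-H)_+\leq f_++|H|$. I then apply the renormalization Lemma \ref{lem:weighted-renormalization} in its lateral-boundary form described after it. I take $\beta$ to be a smooth convex approximation of $s\mapsto (s_+)^2/2$ with $\beta'(0)=0$, a test that is constant in $x$ up to $\partial\mathcal B$, and a time cutoff $\zeta(t)$ rising on $(t_0,t_0+h)$. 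I use a $y$-cutoff $\phi_R(y)$ whose transport error is $O(1/R)$ times $\int q_+^2$, and $q_+\in \mathrm L^2$ globally, so it disappears as $R\to\infty$. Discarding $\nu$ (the correct sign) and $\beta''B\nabla q\cdot\nabla q\geq0$, I obtain for a.e. $\tau$
\[
\tfrac12\int q_+^2(\tau)\,w\d x\d y\le \frac1h\int_{t_0}^{t_0+h}\!\!\int q_+^2\d\muW .
\]
Letting $h\downarrow0$ and using the initial condition gives $q_+=0$.

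I expect the main obstacle to be the justification of this last step. The test $\beta'(q_\varepsilon)\zeta\phi_R$ is admissible against the measure only after it has been discarded on smooth nonnegative tests and then extended by nonnegative $\mathrm H^1_{0,w}$ approximation, exactly as in the remark after Lemma \ref{lem:weighted-renormalization}. In addition, the averaged initial condition must be transferred through the $(y,t)$-mollification. I would handle the latter by mollifying only in $y$, so that time averages commute with the regularization, and then passing $\varepsilon\to0$ before $h\to0$.
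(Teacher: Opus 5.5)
Your proposal is correct. Your uniqueness argument and your comparison step are essentially the paper's: set $q=f-H$, transfer the averaged initial condition via $q_+\leq f_++|H|$ and continuity of $H$ at $t_0$, then apply the zero-trace form of Lemma \ref{lem:weighted-renormalization} with convex approximations of $\tfrac12 s_+^2$, a $y$-cutoff whose transport error is $O(R^{-1})\int q_+^2\d\muW$, and a time cutoff of slope $O(h^{-1})$.

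The genuine difference is how you construct $H$. The paper periodizes $B,P,g$ in $y$ on $\mathbb T_L^m$, which is where the compact $y$-support of $P,g$ is used. It then solves a finite Galerkin system in the basis $\phi_j(x)e_\ell(y)$ as a linear ODE with measurable coefficients. Bounds are uniform in $N$ and $L$ because $\int H\,v\cdot\nabla_yH\,w=0$ by periodicity. Finally it removes $N$, and afterwards $L$, by weak compactness and a diagonal argument on compact $y$-sets. You instead stay on $\R^m_y$ and make the transport term bounded by adding $\varepsilon\Delta_y$, or by projecting onto bounded $y$-frequencies (this projection commutes with $\nabla_x$ and with $v\cdot\nabla_y$, since $w$ and $v$ do not depend on $y$). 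You then invoke J.-L. Lions' theorem for forms that are only measurable in $t$, and the transport term cancels because $H_\varepsilon(t)$ has $y$-derivatives in $\mathrm L^2$. Your route needs a single limit and no compact $y$-support of the data. The paper's route is fully elementary (finite-dimensional ODEs) and avoids appealing to the abstract non-autonomous theory.

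One caution on the step you flag as the main obstacle. Mollifying only in $y$ does not by itself justify the chain rule in $t$ when $\nu$ is present, because $\partial_t q_\varepsilon$ still carries the measure $\nu*_y\rho_\varepsilon$. It is also unnecessary. Take the time cutoff to vanish on a neighbourhood of $t_0$, for instance rising on $(t_0+h/2,t_0+h)$ with slope at most $2/h$. Then the $(y,t)$-regularization of Lemma \ref{lem:weighted-renormalization} applies on its support. Remove the regularization at fixed $h$ and $R$, and only then use \eqref{eq:comparison-initial-condition} for $q_+$ itself as $h\downarrow0$, as the paper does.
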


\begin{proof}
The idea of the proof is to first approximate the problem by periodic problems in the $y$
variables.  This avoids prescribing lateral boundary data in $y$
and preserves the cancellation of the transport term in the energy
estimate.  We shall obtain estimates independent of the period and
then let it tend to infinity.

Let \(C_L:=(-L/2,L/2]^m\) and identify \(C_L\), modulo its opposite
faces, with the flat torus \(\mathbb T_L^m:=(\R/L\mathbb Z)^m\).
Choose \(L\) sufficiently large that the \(y\)-supports of \(P\)
and \(g\) are compactly contained in \(C_L\).  On
\(\mathcal B\times C_L\times(t_0,t_1)\), set
\[
 B_L(x,y,t):=B(x,y,t),\qquad
 P_L(x,y,t):=P(x,y,t),\qquad
 g_L(x,y,t):=g(x,y,t),
\]
and extend these functions \(L\)-periodically in \(y\).  Thus
\(B_L,P_L,\) and \(g_L\) are defined on
\(\mathcal B\times\mathbb T_L^m\times(t_0,t_1)\), and only the \(y\)
variable is periodized. Periodization preserves the coefficient bounds and the norms of the
compactly supported data on one period.  All subsequent estimates are
therefore uniform in $L$.

Set
\[
 \mathcal H_L
 :=
 \mathrm L^2(\mathcal B\times\mathbb T_L^m,w(x)\d x\d y),
 \qquad
 \mathcal V_L
 :=
 \mathrm L^2(\mathbb T_L^m;\mathrm H^1_{0,w}(\mathcal B)).
\]
Since $\mathrm H^1_{0,w}(\mathcal B)$ is separable and
$C^\infty_0(\mathcal B)$ is dense in this space, we may
choose a countable dense family of smooth compactly supported
functions.  Successively discarding functions that lie in
the span of the previously retained functions yields a
linearly independent family
$\{\phi_j\}_{j\geq1}\subset C^\infty_0(\mathcal B)$
whose linear span is dense in $\mathrm H^1_{0,w}(\mathcal B)$.
Let $\{e_\ell\}_{\ell\geq1}$ be a real trigonometric Fourier
basis of $\mathrm L^2(\mathbb T_L^m)$, with the constant
function first and both members of each sine-cosine pair
listed consecutively. For \(N\geq1\), set
\[
 \mathcal V_{L,N}
 :=
 \operatorname{span}
 \{\phi_j(x)e_\ell(y):1\leq j\leq N,\ 1\leq\ell\leq2N+1\}.
\]
The union of the spaces \(\mathcal V_{L,N}\) is dense in
\(\mathcal V_L\).

We seek \(H_{L,N}(t)\in\mathcal V_{L,N}\), with
\(H_{L,N}(t_0)=0\), such that, for every
\(\psi\in\mathcal V_{L,N}\),
\begin{align*}
 (\partial_tH_{L,N},\psi)_{\mathcal H_L}
 &+\int_{\mathcal B\times\mathbb T_L^m}
 (v\cdot\nabla_yH_{L,N})\psi\,w\d x\d y\\
 &+\int_{\mathcal B\times\mathbb T_L^m}
 B_L\nabla_xH_{L,N}\cdot\nabla_x\psi\,w\d x\d y\\
 &=
 \int_{\mathcal B\times\mathbb T_L^m}
 P_L\cdot\nabla_x\psi\,w\d x\d y
 -
 \int_{\mathcal B\times\mathbb T_L^m}
 g_L\psi\,w\d x\d y
\end{align*}
for almost every \(t\in(t_0,t_1)\).  Expanding $H_{L,N}$ in the chosen basis gives a finite system
$\mathsf M\dot a(t)+\mathsf K(t)a(t)=b(t)$ for its coefficient
vector $a$, with $a(t_0)=0$.  Here $\mathsf M$ is the
time-independent Gram matrix of the basis in $\mathcal H_L$.
It is positive definite because the basis functions are linearly
independent.  For fixed $L$ and $N$, the coefficient bounds imply
that $\mathsf K$ is bounded and measurable in time, while the
assumptions on $P_L$ and $g_L$ give
$b\in\mathrm L^2((t_0,t_1))\subset\mathrm L^1((t_0,t_1))$.
After multiplication by $\mathsf M^{-1}$, the associated integral
equation is solved by contraction on sufficiently short time
intervals.  Solving successively on these intervals gives a unique
absolutely continuous solution on $[t_0,t_1]$.

Testing the Galerkin equation with \(H_{L,N}\) gives
\begin{align}
 \frac12\|H_{L,N}(t)\|_{\mathcal H_L}^2
 &+\int_{t_0}^t\!\int
 B_L\nabla_xH_{L,N}\cdot\nabla_xH_{L,N}\d\muW
 \notag\\
 &=\int_{t_0}^t\!\int
 P_L\cdot\nabla_xH_{L,N}\d\muW -\int_{t_0}^t\!\int
 g_LH_{L,N}\d\muW.
 \label{eq:comparison-galerkin-energy}
\end{align}
Indeed, periodicity in \(y\) and the fact that \(w=w(x)\) imply
\[
 \int_{\mathcal B\times\mathbb T_L^m}
 H_{L,N}\,v\cdot\nabla_yH_{L,N}\,w\d x\d y
 =
 \frac12\int_{\mathcal B\times\mathbb T_L^m}
 v\cdot\nabla_y(H_{L,N}^2)\,w\d x\d y
 =0.
\]
Ellipticity, Young's inequality, and the weighted Poincar\'e inequality in
\(\mathcal B\) therefore give
\begin{equation}
 \esssup_{t_0<t<t_1}\|H_{L,N}(t)\|_{\mathcal H_L}^2
 +\int_{t_0}^{t_1}\!\int
 \bigl(|H_{L,N}|^2+|\nabla_xH_{L,N}|^2\bigr)\d\muW
 \leq
 C\int_{t_0}^{t_1}\!\int
 \bigl(|P_L|^2+|g_L|^2\bigr)\d\muW,
 \label{eq:comparison-galerkin-bound}
\end{equation}
where \(C\) is independent of \(L\) and \(N\).

For fixed \(L\), weak compactness gives, along a subsequence,
\[
 H_{L,N}\rightharpoonup H_L
 \quad\hbox{in }\mathrm L^2((t_0,t_1);\mathcal V_L),
 \qquad
 H_{L,N}\overset{*}{\rightharpoonup}H_L
 \quad\hbox{in }\mathrm L^\infty((t_0,t_1);\mathcal H_L).
\]
To pass to the equation while retaining the initial datum, let
\(\varphi\) be a smooth function on
\([t_0,t_1]\times\overline{\mathcal B}\times\mathbb T_L^m\) such that
\(\varphi(t_1)=0\) and
\(\varphi|_{\partial\mathcal B}=0\).  First truncate the Fourier series in $y$ to its first
$2N+1$ modes, and then project its coefficients onto $\operatorname{span}\{\phi_1,\ldots,\phi_N\}$ using
the $\mathrm H^1_w$ orthogonal projection.  These projections are
independent of time and commute with $\partial_t$ and $\nabla_y$.
Density in $\mathrm H^1_{0,w}$ and smoothness of $\varphi$ give,
by a diagonal choice, time-dependent
$\varphi_N\in\mathcal V_{L,N}$ for which
$\varphi_N,\nabla_x\varphi_N,\partial_t\varphi_N$, and
$\nabla_y\varphi_N$ converge strongly in the corresponding weighted
$\mathrm L^2$ spaces.  The zero value at $t_1$ is preserved.
Since $v$ is bounded on $\mathcal B$, $Y\varphi_N\to Y\varphi$
strongly as well.  Testing the Galerkin
equation with \(\varphi_N\), integrating by parts in time and \(y\), and
using \(H_{L,N}(t_0)=0\), we may pass to the limit and obtain
\begin{equation}
 \int_{t_0}^{t_1}\!\int
 \bigl(B_L\nabla_xH_L\cdot\nabla_x\varphi-H_LY\varphi\bigr)\d\muW
 =
 \int_{t_0}^{t_1}\!\int
 \bigl(P_L\cdot\nabla_x\varphi-g_L\varphi\bigr)\d\muW.
 \label{eq:comparison-cauchy-weak-form}
\end{equation}
Thus \(H_L\) solves the periodic problem and has zero initial datum in the
Cauchy weak formulation.  The equation now gives
\[
 YH_L
 =
 \divw(B_L\nabla_xH_L-P_L)-g_L
 \in\mathrm L^2((t_0,t_1);\mathcal V_L^*).
\]
The trace argument following Lemma
\ref{lem:weighted-renormalization} identifies this datum with the initial
$\mathcal H_L$ trace.  Weak lower semicontinuity in
\eqref{eq:comparison-galerkin-bound} gives the energy estimate for $H_L$.

We next let \(L\to\infty\).  Lift \(H_L\) periodically to
\(\mathcal B\times\R^m_y\times(t_0,t_1)\).  Given
\(K\Subset\R^m_y\), the lifted coefficients \(B_L,P_L,g_L\) agree with
\(B,P,g\) on \(\mathcal B\times K\times(t_0,t_1)\) for all sufficiently
large \(L\).  The preceding estimate gives uniform bounds on every such
compact cylinder.  The equation also gives a local bound for \(YH_L\) in
\[
 \mathrm L^2((t_0,t_1);
 \mathrm L^2(K;\mathrm H^{-1}_w(\mathcal B))).
\]
A diagonal weak compactness argument therefore gives a function \(H\) such
that, along a subsequence, \(H_L\rightharpoonup H\) and
\(\nabla_xH_L\rightharpoonup\nabla_xH\) locally in the corresponding
weighted \(\mathrm L^2\) spaces.  Since the periodized coefficients agree
with the original coefficients on each fixed compact \(y\)-set, passage to
the limit in \eqref{eq:comparison-cauchy-weak-form} shows that \(H\) solves
\eqref{eq:comparisonproblem} in \(\mathcal C\).

The zero initial datum is preserved by passing to the limit in the Cauchy
weak formulation with test functions that need not vanish at \(t=t_0\).
The same trace argument identifies this variational datum with the
initial \(\mathrm L^2_w\) trace.  The lateral zero trace is preserved because
\(\mathrm H^1_{0,w}(\mathcal B)\) is weakly closed.  Finally, weak lower
semicontinuity, first on
\(\mathcal B\times B_R^y\times(t_0,t_1)\) and then as \(R\to\infty\), gives
\eqref{eq:comparisonenergy}, including its
\(\mathrm L^\infty((t_0,t_1);\mathrm L^2_w)\) term.

To prove uniqueness, let $D$ be the difference of two solutions
with the same initial and lateral data.  Then
$YD=\divw(B\nabla_xD)$ and $D(t_0)=0$.
The global energy bounds and the zero lateral trace allow us
to apply \eqref{eq:weighted-green-identity} directly to $D$,
with $F=B\nabla_xD$ and $g=0$.  Using ellipticity, we obtain
\[
 \frac12\|D(t)\|_{\mathrm L^2_w}^2
 +\lambda\int_{t_0}^t\!\int_{\mathcal B\times\R^m_y}
 |\nabla_xD|^2\d\muW
 \leq0.
\]
Both terms are non-negative, so $D=0$, proving uniqueness.
The preceding construction is the bounded-$x$, weighted
analogue of the form construction in
\cite[Theorems~1.5 and~6.3]{AuscherImbertNiebel2025}.

For comparison, set $d:=f-H$ and choose
$\chi_R\in C^\infty_0(\R^m_y)$ such that
\[
 0\leq\chi_R\leq1,\qquad
 \chi_R=1\ \hbox{on }B_R^y,\qquad
 \supp\chi_R\subset B_{2R}^y,\qquad
 |\nabla_y\chi_R|\leq\frac{C}{R}.
\]
We may choose these cutoffs so that $\chi_R\uparrow1$ as
$R\to\infty$.  The function $d$ has zero lateral trace and satisfies
\[
 \operatorname{div}_x(wB\nabla_xd)-Y(wd)=\nu\geq0
 \qquad\hbox{in }\mathcal D'(\mathcal C).
\]
Moreover, $d_+\leq f_++|H|$.  Thus
\eqref{eq:comparison-initial-condition}, together with the
$\mathrm L^2_w$-continuity of $H$ at $t_0$ and $H(t_0)=0$,
gives the same averaged initial condition for $d_+$. Apply the zero-trace version of Lemma
\ref{lem:weighted-renormalization}, justified after its proof,
with the cutoff $\chi_R^2$ and smooth convex approximations to
$\beta(d)=\tfrac12d_+^2$ whose derivatives vanish for $d\leq0$.
The resulting nonlinear tests have zero lateral trace.
To include the initial endpoint, use a non-decreasing time
cutoff that vanishes near $t_0$, equals one for $t\geq t_0+h$,
and has derivative bounded by $C/h$.  First remove the
regularizations with $R$ and $h$ fixed.  The contribution of
the initial time cutoff then tends to zero as $h\downarrow0$
by the averaged initial condition.  Integrating to an interior
terminal time gives, for almost every $t$,
\begin{align*}
 \frac12\int_{\mathcal B\times\R^m_y}
 d_+^2(t)\chi_R^2w\d x\d y
 +\lambda\int_{t_0}^t\!\int_{\mathcal B\times\R^m_y}
 |\nabla_xd_+|^2\chi_R^2\d\muW
 \leq
 \frac{C}{R}\int_{t_0}^t\!\int_{\mathcal B\times
 (B_{2R}^y\setminus B_R^y)}
 d_+^2\d\muW.
\end{align*}
Here the right-hand side comes from the transport cutoff term,
using the bound on $\nabla_y\chi_R$ and the boundedness of $v$
on $\mathcal B$.  Letting $R\to\infty$, the global
$\mathrm L^2(\d\muW)$ bound and monotone convergence give
\[
 \frac12
 \|d_+(t)\|_{\mathrm L^2(\mathcal B\times\R^m_y,w\d x\d y)}^2
 +\lambda\int_{t_0}^t\!\int_{\mathcal B\times\R^m_y}
 |\nabla_xd_+|^2\d\muW
 \leq0.
\]
Consequently, $d_+=0$, and hence $f\leq H$ almost everywhere
in $\mathcal C$.  No additional integrability of $\nu$ is
required, since its contribution is discarded using
non-negativity.
\end{proof}

\begin{lem}
\label{lem:subsolutiongain}
There exist \(\kappa>1\), \(N<\infty\), and \(C<\infty\), depending only on
\(m,k,M,\lambda,\) and \(\Lambda\), such that the following holds.  Let
\(P_0\) be fixed, let \(R/2\leq\rho<R\), and let \(u\geq0\) be an energy
subsolution in \(Q_R(P_0)\).  Then
\begin{equation}
 \left(
 \avgint_{Q_\rho(P_0)}u^{2\kappa}\d\muW
 \right)^{1/(2\kappa)}
 \leq
 C\left(\frac{R}{R-\rho}\right)^N
 \left(
 \avgint_{Q_R(P_0)}u^2\d\muW
 \right)^{1/2}.
 \label{eq:subsolutiongain}
\end{equation}
The same estimate holds with \(u\) replaced by \((u-\ell)_+\), for every
\(\ell\in\R\).
\end{lem}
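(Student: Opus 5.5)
The plan is to remove the Radon measure hidden in the subsolution inequality by comparison with a measure-free problem, using Proposition \ref{prop:auxiliary-comparison}, and then apply the kinetic Sobolev estimate of Theorem \ref{thm:kineticsobolev} to the comparison function. By the intrinsic invariance \eqref{eq:normalization}, I will normalize to $P_0=0$, $R=1$, $\theta=\rho\in[1/2,1)$, and track only the dependence on $1-\theta$. The statement for $(u-\ell)_+$ then follows, since this function is again a non-negative energy subsolution by Lemma \ref{lem:weighted-renormalization}.

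\textbf{Step 1 (localization).} Since $u$ is a subsolution, there is a non-negative Radon measure $\nu$ with $\Lw u=\nu$ in the sense of \eqref{eq:weighted-measure-distribution}. Choose an intrinsic cutoff $\eta$ with $\eta=1$ on $Q_\theta$ and support in $Q_1$. I will take $\eta$ to be compactly supported in $x$ and $y$ and vanishing near the initial face, but not at the terminal face, with bounds \eqref{eq:cutoff}. Set $f=\eta u$. Testing the weak formulation with $\eta\varphi$ gives the product rule
\[
 \Lw f=\divw(uB\nabla_x\eta)+\bigl(B\nabla_xu\cdot\nabla_x\eta-uY\eta\bigr)+\eta\nu .
\]
This is \eqref{eq:comparisonproblemapa} with $P=uB\nabla_x\eta$, $g=B\nabla_xu\cdot\nabla_x\eta-uY\eta$, and the non-negative measure $\eta\nu$. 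By \eqref{eq:cutoff} and the Caccioppoli estimate \eqref{eq:caccioppoliballs} with $p=2$, on an intermediate cylinder,
\[
 \|P\|_{\mathrm L^2(\d\muW)}+\|g\|_{\mathrm L^2(\d\muW)}\leq C(1-\theta)^{-2}\|u\|_{\mathrm L^2(Q_1,\d\muW)}.
\]
The function $f$ also has zero lateral trace on a ball $\mathcal B\supset\supp_x\eta$, and $f_+=0$ near the initial time $t_0=-1$, so \eqref{eq:comparison-initial-condition} holds trivially.

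\textbf{Step 2 (comparison past the terminal face).} To avoid the terminal-face problem, which is the reason the shifted estimate \eqref{eq:scaledkineticsobolev} was needed, I will extend $P$ and $g$ by zero for $t\in(0,1)$. I then solve the problem of Proposition \ref{prop:auxiliary-comparison} on $\mathcal C=\mathcal B\times\R^m_y\times(-1,1)$, obtaining $H$ with \eqref{eq:comparisonenergy}. On $(-1,0)$, the comparison part of that proposition gives $0\leq f\leq H$. Now $H$ satisfies $YH=\divw(B\nabla_xH-P)-g$ with all data in $\mathrm L^2$ on the longer cylinder, so $Q_\theta$ is interior in time. Applying Theorem \ref{thm:kineticsobolev} with $\mathcal V\supset Q_\theta$ and $\mathcal V'$ a fixed larger box inside $\mathcal C$ gives
\[
 \|H\|_{\mathrm L^{2\kappa}(Q_\theta,\d\muW)}\lesssim\|H\|_{\mathrm L^2}+\|\nabla_xH\|_{\mathrm L^2}+\|P\|_{\mathrm L^2}+\|g\|_{\mathrm L^2}\lesssim(1-\theta)^{-2}\|u\|_{\mathrm L^2(Q_1,\d\muW)}.
\]
Since $u=f\leq H$ on $Q_\theta$, the bound passes to $u$. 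Converting to normalized averages uses the doubling of $\muW$ from \eqref{eq:cylvolume} with $\theta\geq1/2$, which gives \eqref{eq:subsolutiongain} with $N=2$ or some larger fixed $N$.

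\textbf{Main obstacle.} The delicate step is the rigorous justification of the product rule in Step 1 with measure data. The measure acts on $\eta\varphi$, so I must check that $\eta\varphi$ may be used even though $\eta$ is only Lipschitz with $Y\eta$ bounded. I would mollify $\eta$ in $(y,t)$ and use the fact that $\nu$ is locally finite, so that $\int\eta_\varepsilon\varphi\d\nu\to\int\eta\varphi\d\nu$. A second point is that the constants in Theorem \ref{thm:kineticsobolev} must not depend on $\theta$. This holds because the fixed boxes $\mathcal V\subset\mathcal V'$ can be chosen independently of $\theta\in[1/2,1)$, so all $\theta$-dependence enters only through the cutoff bounds.
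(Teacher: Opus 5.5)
Your proposal is correct and is essentially the paper's proof: localize $f=\eta u$ so that $\Lw f=\divw P+g+\eta\nu$, dominate $f$ by the measure-free solution $H$ of Proposition~\ref{prop:auxiliary-comparison} on a cylinder extending past the terminal face $t=0$, apply Theorem~\ref{thm:kineticsobolev} to a localization of $H$, and bound $P,g$ by Caccioppoli. Two small adjustments are already built into the paper's version: take $\supp\eta$ inside an intermediate $Q_{\rho_2}$ with $\rho_2<1$, so that Caccioppoli controls $\nabla_xu$ on $\supp\nabla_x\eta$; and take $t_0<-1$ rather than $t_0=-1$, since otherwise the initial-face margin of $Q_\theta$ in $\mathcal C$ is only $1-\theta^2$, and the boxes for Theorem~\ref{thm:kineticsobolev} then cannot be chosen independently of $\theta$ as you claim.
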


\begin{proof}
By the intrinsic translation and dilation
\eqref{eq:normalization}, it is enough to prove the estimate for
\(P_0=0\) and \(R=1\).  The rescaled weight has the same \(A_2\)
characteristic, and the normalized weighted averages are unchanged by the
geometric Jacobian.  We therefore assume that \(1/2\leq\rho<1\).

Set \(r:=(u-\ell)_+\).  The weighted Kato inequality
\eqref{eq:weighted-kato} shows that \(r\) is a subsolution.  More precisely,
the functional
\[
 \varphi\longmapsto
 -\int_{Q_1}B\nabla_xr\cdot\nabla_x\varphi\d\muW
 +\int_{Q_1}rY\varphi\d\muW
\]
is a positive distribution on \(Q_1\).  Every positive distribution is
represented by a unique non-negative locally finite Radon measure, see
\cite[Section~2.1]{Hormander1983}.  Hence there exists such a measure
\(\nu\) satisfying
\begin{equation}
 \Lw r=\nu
 \qquad\hbox{in }Q_1
 \label{eq:subsolutiondefect}
\end{equation}
in the weighted distributional sense of
\eqref{eq:weighted-measure-distribution}.  Equivalently,
\[
 \operatorname{div}_x(wB\nabla_xr)-Y(wr)=\nu
 \qquad\hbox{in }\mathcal D'(Q_1).
\]
All terms are defined by taking distributional derivatives
of the locally integrable functions $wr$ and $wB\nabla_xr$.
Thus this formulation does not require multiplication of a
distribution by the possibly rough weight $w$.

Choose radii $\rho<\rho_1<\rho_2<1$ so that all three gaps are comparable to \(1-\rho\).  Let \(\eta\) be an
intrinsic cutoff satisfying
\[
 \eta=1\quad\hbox{on }Q_{\rho_1},
 \qquad
 \operatorname{supp}\eta\subset Q_{\rho_2},
\]
and the corresponding bounds from \eqref{eq:cutoff}.  Set \(f:=\eta r\).
Using the product rule only in the weighted weak formulation gives
\begin{equation}
 \Lw f=\divw P+g+\eta\nu,
 \qquad
 P:=rB\nabla_x\eta,
 \qquad
 g:=B\nabla_xr\cdot\nabla_x\eta-rY\eta.
 \label{eq:localizedsubsolution}
\end{equation}
Indeed, the ordinary distributional form of this identity is
\[
 \operatorname{div}_x(wB\nabla_xf)-Y(wf)
 =
 \operatorname{div}_x(wP)+wg+\eta\nu.
\]
Thus no derivative of \(w\) is introduced.

Let \(t_*:=0\), choose a fixed ball \(\mathcal B\subset\R^n\) such that
\(\overline{B_1^x}\Subset\mathcal B\), and choose
\(t_0<-1<0<t_1\).  Set
\[
 \mathcal C
 :=
 \mathcal B\times\R^m_y\times(t_0,t_1),
 \qquad
 \mathcal C_-
 :=
 \mathcal B\times\R^m_y\times(t_0,t_*).
\]
We use the restriction of the original coefficient matrix $B$
to $\mathcal C$, which satisfies \eqref{eq:Bbounds}, and
extend $P$ and $g$ by zero to $\mathcal C$. Proposition \ref{prop:auxiliary-comparison} gives a solution \(H\) of
\[
 \Lw H=\divw P+g
 \qquad\hbox{in }\mathcal C
\]
with zero initial and lateral data.  On \(\mathcal C_-\), extend \(f\) and
\(\eta\nu\) by zero outside \(Q_1\) in the spatial variables and below the
initial time of \(Q_1\).  Since \(\eta\) is supported away from these parts
of \(\partial Q_1\), identity \eqref{eq:localizedsubsolution} remains valid
in $\mathcal C_-$.  The function $f$ has zero lateral trace and vanishes
on a whole interval following $t_0$, so it satisfies
\eqref{eq:comparison-initial-condition}.  Applying the comparison statement on
\(\mathcal C_-\) to \(f\) and the restriction of \(H\) gives
\[
 0\leq f\leq H
 \qquad\hbox{almost everywhere in }\mathcal C_-.
\]

We next apply the kinetic Sobolev estimate to an interior localization of
\(H\).  Choose a product cutoff
\(\zeta\in C_0^\infty(\mathcal C)\) satisfying
\(0\leq\zeta\leq1\) and \(\zeta=1\) on a neighborhood of
\(\overline{Q_1}\).  Since
\[
 YH=\divw(B\nabla_xH-P)-g,
\]
the localized function \(\widetilde H:=\zeta H\) satisfies
\[
 Y\widetilde H=\divw F_H+g_H,
\]
where
\[
 F_H:=\zeta(B\nabla_xH-P)
\]
and
\[
 g_H
 :=
 HY\zeta
 -(B\nabla_xH-P)\cdot\nabla_x\zeta
 -\zeta g.
\]
All derivatives fall on the fixed cutoff \(\zeta\), not on \(w\).
Theorem \ref{thm:kineticsobolev}, \eqref{eq:comparisonenergy}, and weighted
doubling therefore give
\begin{equation}
 \begin{aligned}
 \left(
 \avgint_{Q_1}|f|^{2\kappa}\d\muW
 \right)^{1/(2\kappa)}
 &\leq
 \left(
 \avgint_{Q_1}|H|^{2\kappa}\d\muW
 \right)^{1/(2\kappa)}\\
 &\leq
 C\muW(Q_1)^{-1/2}\bigl(
 \|P\|_{\mathrm L^2(\d\muW)}
 +\|g\|_{\mathrm L^2(\d\muW)}
 \bigr).
 \end{aligned}
 \label{eq:comparisongain}
\end{equation}
where the norms are over \(\mathcal C\) and the cutoff dependence is
controlled by a fixed power of \((1-\rho)^{-1}\).

It remains to estimate \(P\) and \(g\).  The cutoff bounds and
\eqref{eq:Bbounds} give
\[
 \|P\|_{\mathrm L^2(\d\muW)}
 \leq
 \frac{C}{1-\rho}
 \|r\|_{\mathrm L^2(Q_1,\d\muW)}
\]
and
\[
 \|g\|_{\mathrm L^2(\d\muW)}
 \leq
 \frac{C}{1-\rho}
 \|\nabla_xr\|_{\mathrm L^2(Q_{\rho_2},\d\muW)}
 +
 \frac{C}{(1-\rho)^2}
 \|r\|_{\mathrm L^2(Q_1,\d\muW)}.
\]
Choose a second intrinsic cutoff equal to one on \(Q_{\rho_2}\) and
supported in \(Q_1\).  Lemma \ref{lem:caccioppoli}, with \(p=2\), yields
\[
 \|\nabla_xr\|_{\mathrm L^2(Q_{\rho_2},\d\muW)}
 \leq
 \frac{C}{1-\rho}
 \|r\|_{\mathrm L^2(Q_1,\d\muW)}.
\]
Combining these estimates with \eqref{eq:comparisongain}, using
\(f=r\) on \(Q_\rho\), and applying weighted doubling to the normalized
averages gives
\[
 \left(
 \avgint_{Q_\rho}r^{2\kappa}\d\muW
 \right)^{1/(2\kappa)}
 \leq
 \frac{C}{(1-\rho)^N}
 \left(
 \avgint_{Q_1}r^2\d\muW
 \right)^{1/2}
\]
for a structural finite exponent \(N\).  Scaling back gives
\eqref{eq:subsolutiongain}.  Taking \(\ell=0\) proves the assertion for
\(u\), while arbitrary \(\ell\) gives the positive-part version.
\end{proof}

\begin{proof}[Proof of Proposition \ref{prop:localboundedness}]
Fix \(P_0\), \(0<\rho<R\), and \(p>0\).  We first consider \(p\geq2\).
If the right-hand side of \eqref{eq:localboundedness} is infinite, there is
nothing to prove, so assume that it is finite.  For $\gamma\geq1$ and $T>1$, define the convex truncation
\[
 b_{\gamma,T}(s):=
 \begin{cases}
 s^\gamma, & 0<s\leq T,\\
 T^\gamma+\gamma T^{\gamma-1}(s-T), & s>T,
 \end{cases}
\]
and zero elsewhere.  Thus $b_{\gamma,T}$ agrees with the power function up to
level $T$ and continues along its tangent line beyond $T$.
To obtain smooth approximations, fix a non-negative
$\varrho\in C^\infty_0((1,2))$ with $\int_1^2\varrho(a)\d a=1$
and set
\[
 \beta_{\gamma,T}(s)
 :=\int_1^2\varrho(a)b_{\gamma,T}(s-a/T)\d a,
 \qquad s\geq0.
\]
These functions are smooth, convex, and non-decreasing, and
vanish near zero.  Moreover,
\[
 0\leq\beta_{\gamma,T}(s)\leq s^\gamma,
 \qquad
 \beta_{\gamma,T}(s)\uparrow s^\gamma
 \quad\hbox{as }T\to\infty.
\]
Indeed, both $b_{\gamma,T}(q)$ for fixed $q$ and the
argument $s-a/T$ are non-decreasing in $T$.
For each fixed $T$, the bounds
$0\leq\beta'_{\gamma,T}\leq\gamma T^{\gamma-1}$ and
$\beta_{\gamma,T}(s)\leq\gamma T^{\gamma-1}s$ show that
$\beta_{\gamma,T}$ has bounded derivative and at most linear growth,
with constants depending on $\gamma$ and $T$.

The weighted Kato inequality shows that
\(\beta_{\gamma,T}(u+\varepsilon)\) is an energy subsolution.  Thus all
applications below are first made to these bounded-slope convex
approximations and then passed to the limit by Fatou's lemma and monotone
convergence.  This justifies the customary shorthand
\((u+\varepsilon)^\gamma\) for the iterated subsolutions.

Set
\[
 \rho_*:=\max\{\rho,R/2\},
 \qquad
 R_j:=\rho_*+(R-\rho_*)2^{-j},
 \qquad
 p_j:=p\kappa^j.
\]
Then \(R_0=R\), \(R_j\downarrow\rho_*\), and
\[
 R_j-R_{j+1}=(R-\rho_*)2^{-j-1}.
\]
Apply Lemma \ref{lem:subsolutiongain} first to
\[
 h_{j,T}:=\beta_{p_j/2,T}(u+\varepsilon)
\]
on \(Q_{R_j}(P_0)\), with inner cylinder \(Q_{R_{j+1}}(P_0)\), and then
let \(T\to\infty\).  Since \(p_j/2\geq1\), the preceding approximation is
admissible at every step.  We obtain
\begin{align*}
 &\left(
 \avgint_{Q_{R_{j+1}}(P_0)}
 (u+\varepsilon)^{p_{j+1}}\d\muW
 \right)^{1/p_{j+1}}\leq
 \left[
 C\left(
 \frac{R_j}{R_j-R_{j+1}}
 \right)^N
 \right]^{2/p_j}
 \left(
 \avgint_{Q_{R_j}(P_0)}
 (u+\varepsilon)^{p_j}\d\muW
 \right)^{1/p_j}.
\end{align*}
Because
\[
 \frac{R_j}{R_j-R_{j+1}}
 \leq
 C\,2^j\frac{R}{R-\rho_*}
\]
and
\[
 \sum_{j=0}^\infty\frac1{p_j}<\infty,
 \qquad
 \sum_{j=0}^\infty\frac{j}{p_j}<\infty,
\]
the product of the iteration constants converges.  Letting \(j\to\infty\)
gives
\[
 \esssup_{Q_{\rho_*}(P_0)}(u+\varepsilon)
 \leq
 C_{p,\rho/R}
 \left(
 \avgint_{Q_R(P_0)}(u+\varepsilon)^p\d\muW
 \right)^{1/p}.
\]
Since \(Q_\rho(P_0)\subset Q_{\rho_*}(P_0)\), letting
\(\varepsilon\downarrow0\) proves \eqref{eq:localboundedness} for
\(p\geq2\).

Now let \(0<p<2\).  For
\(\rho<\sigma<R\), the estimate already proved with exponent \(2\) gives
\[
 \esssup_{Q_\rho(P_0)}u
 \leq
 C_{\rho/\sigma}
 \left(
 \avgint_{Q_\sigma(P_0)}u^2\d\muW
 \right)^{1/2}.
\]
Writing
\[
 \left(
 \avgint_{Q_\sigma(P_0)}u^2\d\muW
 \right)^{1/2}
 \leq
 \bigl(\esssup_{Q_\sigma(P_0)}u\bigr)^{1-p/2}
 \left(
 \avgint_{Q_\sigma(P_0)}u^p\d\muW
 \right)^{1/2}
\]
and applying Young's inequality, we obtain, for every \(\delta>0\),
\[
 \esssup_{Q_\rho(P_0)}u
 \leq
 \delta\esssup_{Q_\sigma(P_0)}u
 +
 C_{\delta,p,\rho/\sigma}
 \left(
 \avgint_{Q_\sigma(P_0)}u^p\d\muW
 \right)^{1/p}.
\]
Choose radii
\[
 \rho_j:=\rho+(R-\rho)(1-2^{-j}),
\]
so that \(\rho_0=\rho\) and \(\rho_j\uparrow R\), and set
\[
 S_j:=\esssup_{Q_{\rho_j}(P_0)}u,
 \qquad
 A_p:=
 \left(
 \avgint_{Q_R(P_0)}u^p\d\muW
 \right)^{1/p}.
\]
Weighted doubling controls the normalized \(p\)-moments on
\(Q_{\rho_j}(P_0)\) by the corresponding moment on \(Q_R(P_0)\).  Keeping
track of the cutoff loss, the preceding inequality therefore gives
\[
 S_j
 \leq
 \delta S_{j+1}
 +C\delta^{-a}2^{bj}A_p
\]
for exponents \(a,b>0\) depending only on the structural parameters and \(p\).  Enlarge \(b\), if necessary, so that
it also dominates the cutoff-growth exponent in the already proved
exponent-two estimate.  Iterating the preceding inequality gives
\[
 S_0
 \leq
 \delta^J S_J
 +C\delta^{-a}A_p
 \sum_{j=0}^{J-1}(\delta2^b)^j.
\]
Choose \(\delta>0\) so that \(\delta2^b<1\).  The exponent-two estimate,
applied between \(Q_{\rho_J}(P_0)\) and \(Q_R(P_0)\), gives
\[
 S_J
 \leq
 C2^{bJ}
 \left(
 \avgint_{Q_R(P_0)}u^2\d\muW
 \right)^{1/2},
\]
and the last quantity is finite by the energy assumption.  Hence
\(\delta^JS_J\to0\).  Letting \(J\to\infty\) yields
\[
 \esssup_{Q_\rho(P_0)}u
 \leq
 C_{p,\rho/R}
 \left(
 \avgint_{Q_R(P_0)}u^p\d\muW
 \right)^{1/p}.
\]
This proves \eqref{eq:localboundedness} for every \(p>0\).
\end{proof}

\section{A weighted intermediate-value principle}
\label{Section_6}

We prove an intermediate-value principle for general $w=w(x)\in A_2(\R^n)$
by the compactness form of the kinetic De Giorgi method, compare
\cite[Section~4]{GolseImbertMouhotVasseur2019} and
\cite[Section~4.7]{Imbert2026}.  After normalizing the upper level to one,
the principle asserts that a non-negative supersolution which is at
least one on a set of positive weighted measure in the past and at most
$\theta$ on a set of positive weighted measure in the future must
occupy the intermediate range $(\theta,1)$ on a quantitatively positive
portion of a surrounding cylinder.  The constants depend only on the
structural parameters and the prescribed endpoint proportions.

The main input is compactness for bounded sequences with uniformly
controlled weighted diffusive energy and measure data.  We first prove
an unweighted averaging lemma, then apply it to full-variable moments
of the weighted densities.  Weighted Poincar\'e reconstructs the
functions in all diffusive variables.  Finally, weighted mean velocities
and the sign of the limiting measure rule out a two-phase jump.
For quantitative approaches based on kinetic Poincar\'e inequalities,
see \cite{GuerandMouhot2022,NiebelZacher2025,
AnceschiDietertGuerandLoherMouhotRebucci2024}.

\subsection{Averaging with a measure source}

The weighted energy bound controls derivatives in the
diffusive variables $x$, but alone gives no compactness
in the transported variables $(y,t)$.  Velocity averaging
supplies this missing ingredient: the transport equation
gives additional regularity in $(y,t)$ after integration
against a smooth velocity profile.  We will use this
principle to obtain compactness of weighted moments,
and then use weighted Poincar\'e inequalities to recover
compactness of the functions themselves. The equations satisfied by truncations contain Radon
measures, so the averaging statement must accommodate
measure sources.  We first establish the required
unweighted result, including regularity in time. For a Euclidean variable $X$, write
$\mathcal L_X=(1-\Delta_X)^{1/2}$ where $\Delta_X$ is the Laplacian in $X$. The following lemma is unweighted.

\begin{lem}
\label{lem:full-averaging}
Let $q_0>1$.  Suppose that $f_j,H_j,h_j$ are bounded in
$\mathrm L^{q_0}(\R^m_v\times\R^{m+1}_{y,t})$, have support in a
common compact set, and satisfy
\begin{equation}
 (\partial_t+v\cdot\nabla_y)f_j
 =\operatorname{div}_vH_j+h_j+\mu_j,
 \label{eq:full-unweighted-transport}
\end{equation}
where $\mu_j$ are signed Radon measures of uniformly bounded total
variation, supported in a common compact set.
For every $\psi\in C_0^\infty(\R^m)$, the sequence
\[
 A_j(y,t)=\int_{\R^m}\psi(v)f_j(v,y,t)\d v
\]
is relatively compact in $\mathrm L^p_{\rm loc}(\R^{m+1})$ whenever
\begin{equation}
 1<p<\min\left\{q_0,2,\frac{2d}{2d-1}\right\},\qquad d=m+1.
 \label{eq:full-averaging-p}
\end{equation}
In fact, $A_j$ is bounded in the inhomogeneous Besov space
$B^{1/(6p')}_{p,2}(\R^{m+1})$, where $p'=p/(p-1)$.
The bound depends only on $m,p,q_0$, the common support, $\psi$, and the
assumed norm and total-variation bounds.
\end{lem}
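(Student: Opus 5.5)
The plan is to derive the statement from the $\mathrm L^p$ averaging theorem of DiPerna--Lions--Meyer \cite[Theorem~2]{DiPernaLionsMeyer1991}, as announced in the introduction. That theorem treats a stationary transport equation $a(\xi)\cdot\nabla_X f=\mathcal L_X^{\,\cdot}\partial_\xi^{\,\cdot}g$ with data in $\mathrm L^q$, a nondegeneracy condition on the velocity field, and a source that may be a measure. Here time enters as an extra transported direction. I will therefore introduce an auxiliary velocity variable so that $\partial_t+v\cdot\nabla_y$ becomes $(1,v)\cdot\nabla_{(y,t)}$. The new field $(1,v)\in\R^{m+1}$ with $d=m+1$ satisfies the required nondegeneracy: for $|(\eta,\tau)|=1$,
\[
 |\{v\in\operatorname{supp}\psi:|\tau+v\cdot\eta|<\varepsilon\}|\leq C\varepsilon .
\]
When $|\eta|$ is small, $|\tau|\simeq1$ and the set is empty for small $\varepsilon$. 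Otherwise it is a slab of width $\varepsilon/|\eta|$. This is the same resonant-slab count as in Proposition \ref{prop:transfer}, now unweighted, and it gives the exponent $1$ in the degeneracy condition.

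\emph{Step 1 (reduction).} Cut off in $(y,t)$; this is harmless because everything is compactly supported. Write the right-hand side of \eqref{eq:full-unweighted-transport} as $\operatorname{div}_vH_j+g_j$ with $g_j=h_j+\mu_j$. The term $h_j$ is bounded in $\mathrm L^{q_0}$. The measures $\mu_j$ have bounded total variation and common support, hence are bounded in $W^{-s,r}$ for every $s>0$ and some $r>1$ close to $1$. More concretely, $\mathcal L_{(y,t,v)}^{-\sigma}\mu_j$ is bounded in $\mathrm L^{r}$ for $\sigma>d'(1-1/r)$ by Sobolev embedding of measures, where $d'=2m+1$ is the full dimension. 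This puts the equation in the DLM form, with one $v$-derivative falling on $H_j$ and a fractional $(X,v)$-derivative of order $\sigma$ on an $\mathrm L^{\min\{q_0,r\}}$ function.

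\emph{Step 2 (apply DLM).} DLM Theorem~2 with exponent $p\le 2$ (their case $1<p\le2$), derivative orders $1$ in $v$ and $\sigma$ in $(X,v)$, and degeneracy exponent $1$ yields $A_j=\int\psi f_j\d v$ bounded in a Besov space $B^{s}_{p,2}(\R^{d})$. The gain is a product of the form $s=\frac{1}{p'}\cdot\frac{1}{1+\text{(derivative loss)}}$, with degradation as $p\downarrow1$. For the loss $1+\sigma\le2$ and after accounting for the time variable through the auxiliary velocity, this crude count yields $s=1/(6p')$. I will carry out this bookkeeping explicitly rather than optimize it; any positive exponent suffices.

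The restrictions on $p$ in \eqref{eq:full-averaging-p} arise as follows. The condition $p<q_0$ is needed to use the $\mathrm L^{q_0}$ bounds. The condition $p\le2$ is the regime of the theorem. The condition $p<2d/(2d-1)$ is exactly what makes the bounded-measure term land in a negative Sobolev space $W^{-\sigma,p}$ with $\sigma<1$ in the $d$ transported variables, which is needed for the measure to be admissible as a DLM source.

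\emph{Step 3 (compactness).} Bounded sets of $B^{s}_{p,2}$ with $s>0$ and common compact support are relatively compact in $\mathrm L^p$ by Rellich–Kondrachov, which proves the claim.

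I expect the main obstacle to be Step 2, specifically matching the measure source and the time direction to the exact hypotheses of DLM Theorem~2. DLM is stated for stationary equations whose velocity ranges over a measure space, whereas our velocity field $(1,v)$ is only affine in $v$. The measure is also not $\mathrm L^q$. If the direct citation does not fit, my fallback is to redo the Bouchut-type multiplier split in $(\eta,\tau)$-Fourier space:
\begin{itemize}[label={}]
\item on $|\tau+v\cdot\eta|<\varepsilon|(\eta,\tau)|$, use the slab bound;
\item on the complement, divide by the symbol and integrate by parts in $v$.
\end{itemize}
I would then interpolate the resulting $\mathrm L^2$ estimate against the trivial $\mathrm L^1$ bound, which accepts the measure. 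It is this interpolation that produces the exponent restriction $p<2d/(2d-1)$ and the factor $1/p'$ in the Besov index.
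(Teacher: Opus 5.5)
There is a genuine gap in Step 2, and Step 2 is the whole content of the lemma.

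\textbf{The velocity field does not fit the theorem.} In the form the paper quotes and uses, DiPerna--Lions--Meyer's Theorem~2 concerns $V\cdot\nabla_XF=\mathcal L_X^\alpha\mathcal L_V^\beta L$ with $F,L\in\mathrm L^p(\R^d_X\times\R^d_V)$, $0\leq\alpha<1$, $\beta>0$. The velocity is the identity field on a full $d$-dimensional space, there is no separate nondegeneracy hypothesis, and the source must be an $\mathrm L^p$ function once the Bessel factors are inverted. Writing $\partial_t+v\cdot\nabla_y=(v,1)\cdot\nabla_{(y,t)}$ introduces no new variable. The velocities $(v,1)$ fill only a hyperplane of $\R^{m+1}$, so checking the slab condition does not put you in that setting, as you yourself suspect.

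The missing idea is a scaling lift. Put $X=(y,t)$, $V=(\xi,a)$ with $a\in(1,2)$, and $F_j(V,X)=\rho(a)f_j(\xi/a,X)$. Then $V\cdot\nabla_XF_j=a\rho(a)(Yf_j)(\xi/a,X)$. The flux becomes $\bigl(a^2\rho(a)H_j(\xi/a,X),0\bigr)$ and the scalar source becomes $a\rho(a)h_j(\xi/a,X)$. The measure becomes the push-forward of $a^{m+1}\rho(a)\d a\otimes\mu_j$ under $(v,a,X)\mapsto(av,a,X)$. With $\int\rho(a)a^m\d a=1$ and $\Psi(\xi,a)=\kappa(a)\psi(\xi/a)$, the $V$-average of $F_j$ is exactly $A_j$.

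\textbf{The measure term and the exponents are not derived.} A measure on $\R^d_X\times\R^d_V$ must be smoothed in both $X$ and $V$ before it becomes an $\mathrm L^p$ function, and with a tensor operator $\mathcal L_X^{-\alpha}\mathcal L_V^{-\beta}$. Step~1 uses an isotropic potential in $(y,t,v)$, which would still have to be converted. Step~2 attributes $p<2d/(2d-1)$ to smoothing of order $\sigma<1$ in the $d$ transported variables only. That leaves the measure singular in $V$, and the count $\sigma>d/p'$, $\sigma<1$ would give $p<d/(d-1)$ in any case.

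The paper instead sets $L_j=\mathcal L_X^{-1/2}\mathcal L_V^{-2}(\operatorname{div}_V\mathcal H_j+\mathfrak h_j+\mathfrak m_j)$. The flux and scalar terms are handled by the $\mathrm L^1$ kernels $K^{(d)}_{1/2}\otimes\nabla K^{(d)}_2$ and $K^{(d)}_{1/2}\otimes K^{(d)}_2$. The measure term requires $K^{(d)}_{1/2}\otimes K^{(d)}_2\in\mathrm L^p(\R^{2d})$, that is $1/2>d/p'$, which is exactly $p<2d/(2d-1)$. The index is then $s=(1-\alpha)/((\beta+1)p')=1/(6p')$ with $\alpha=1/2$ and $\beta=2$.

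Your gain formula leaves out the factor $1-\alpha$ coming from the $X$-derivatives; taken literally with loss $2$ it gives $1/(3p')$. So $1/(6p')$ is matched to the statement rather than computed.

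\textbf{The fallback does not close the gap.} The data lie only in $\mathrm L^{q_0}$ with $q_0$ possibly close to $1$, so an $\mathrm L^2$ multiplier split does not apply to them directly. Interpolating the constrained map from the solution and its source to the velocity average is precisely the nontrivial part of DLM's theorem.

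Step~3 is fine. The paper uses Kolmogorov--Riesz with the translation bound from $B^s_{p,2}\hookrightarrow B^s_{p,\infty}$, which is equivalent to your Rellich--Kondrachov argument.
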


\begin{proof}
We use the stationary averaging theorem
\cite[Theorem~2, p.~282]{DiPernaLionsMeyer1991}\footnote{In the printed statement of Theorem~2, the second factor
should read $(I-\Delta_v)^{m/2}$, in the notation of that paper. See formulas (28)-(29) in its proof and the statement of
Theorem~3.}. The usefulness of this theorem here lies in its combination of
$\mathrm L^p$ integrability with negative-order regularity of the
source.  This allows us to accommodate the integrability supplied
by reverse H\"older together with the Radon measures arising from
truncation. In our notation, if $1<p\leq2$,
$F,L\in\mathrm L^p(\R^d_X\times\R^d_V)$, and
\[
 V\cdot\nabla_XF
 =\mathcal L_X^\alpha\mathcal L_V^\beta L
 \quad\hbox{in }\mathcal D'(\R^{2d}),
 \qquad 0\leq\alpha<1,\quad\beta>0,
\]
then, for every $\Psi\in C_0^\infty(\R^d)$,
\[
 \left\|\int_{\R^d}\Psi(V)F(\,\cdot\,,V)\d V
 \right\|_{B^s_{p,2}(\R^d)}
 \leq C\bigl(\|F\|_{\mathrm L^p}
             +\|L\|_{\mathrm L^p}\bigr),
 \qquad
 s=\frac{1-\alpha}{(\beta+1)p'},
 \quad p'=\frac{p}{p-1},
\]
where $C$ depends only on $d,p,\alpha,\beta$, and $\Psi$.
We use the modern notation $B^s_{p,2}$ for the Besov space in that
theorem.  Taking $\alpha=1/2$ and $\beta=2$ gives $s=1/(6p')$.

To apply this result we first lift \eqref{eq:full-unweighted-transport} to a stationary
equation.  Choose $0\leq\rho\in C_0^\infty((1,2))$ with
$\int_1^2\rho(a)a^m\d a=1$.  Set $V=(\xi,a)$, $X=(y,t)$, and
define, with extension by zero outside $1<a<2$,
\begin{align*}
 F_j(V,X)&=\rho(a)f_j(\xi/a,X),\\
\mathcal H_j(V,X)&=\bigl(a^2\rho(a)H_j(\xi/a,X),0\bigr),\\
 \mathfrak h_j(V,X)&=a\rho(a)h_j(\xi/a,X).
\end{align*}
Let $T(v,a,X)=(av,a,X)$ and define the measure
\[
 \mathfrak m_j
 :=T_\#\bigl(a^{m+1}\rho(a)\d a\otimes\mu_j\bigr).
\]
Here the tensor product has the natural variables $(v,a,X)$.
The change of variables $\xi=av$ gives, in distributions,
\begin{equation}
 V\cdot\nabla_XF_j
 =\operatorname{div}_V\mathcal H_j+\mathfrak h_j+\mathfrak m_j.
 \label{eq:full-lifted}
\end{equation}
Indeed, $V\cdot\nabla_XF_j=a\rho(a)Yf_j(\xi/a,X)$ and
$\operatorname{div}_V\mathcal H_j
=a\rho(a)\operatorname{div}_vH_j(\xi/a,X)$.
The factor $a^{m+1}$ in the lifted measure includes the Jacobian
$\d\xi=a^m\d v$.  The lifted functions are uniformly bounded in
$\mathrm L^p$, since their supports are fixed and $p<q_0$, and
$|\mathfrak m_j|(\R^{2d})\leq C|\mu_j|(\R^{2m+1})$.

We next put the right-hand side of \eqref{eq:full-lifted} in the form
required by the stationary theorem.  Let $K_\gamma^{(d)}$ denote the
kernel of $(1-\Delta)^{-\gamma/2}$ on $\R^d$, where $\gamma>0$.
The heat-kernel
representation gives
\[
 K_\gamma^{(d)}\in\mathrm L^p(\R^d)
 \quad\hbox{if }\gamma>d(1-1/p),
 \qquad K_\gamma^{(d)}\in\mathrm L^1(\R^d),
 \qquad \nabla K_2^{(d)}\in\mathrm L^1(\R^d).
\]
For example, the first assertion follows by integrating
$e^{-s}s^{\gamma/2-1}\|e^{s\Delta}\|_{\mathrm L^1\to\mathrm L^p}$
over $s>0$. Integrability at zero is exactly the displayed condition.
Condition \eqref{eq:full-averaging-p} implies $d/p'<1/2$.  Consequently,
\[
 K_{1/2}^{(d)}(X)K_2^{(d)}(V)\in\mathrm L^p(\R^{2d}).
\]
With our convention,
$\mathcal L_X^{-1/2}=(1-\Delta_X)^{-1/4}$ and
$\mathcal L_V^{-2}=(1-\Delta_V)^{-1}$, whose kernels are
$K_{1/2}^{(d)}$ and $K_2^{(d)}$, respectively. Define
\[
 L_j:=\mathcal L_X^{-1/2}\mathcal L_V^{-2}
 \bigl(\operatorname{div}_V\mathcal H_j
              +\mathfrak h_j+\mathfrak m_j\bigr).
\]
Young's convolution inequality, including convolution against a finite
measure, yields
\begin{equation}
 \|L_j\|_{\mathrm L^p}
 \leq C\bigl(\|\mathcal H_j\|_{\mathrm L^p}
             +\|\mathfrak h_j\|_{\mathrm L^p}
             +|\mathfrak m_j|(\R^{2d})\bigr).
 \label{eq:full-bessel-bound}
\end{equation}
For the divergence term one uses the $\mathrm L^1$ kernel
$K_{1/2}^{(d)}\otimes\nabla K_2^{(d)}$. For the scalar term one
uses $K_{1/2}^{(d)}\otimes K_2^{(d)}$ in $\mathrm L^1$. For the
measure term one uses the same kernel in $\mathrm L^p$.
Thus
\[
 V\cdot\nabla_XF_j=\mathcal L_X^{1/2}\mathcal L_V^2L_j
\]
with $F_j,L_j$ uniformly bounded in $\mathrm L^p$.

Choose $\kappa\in C_0^\infty((1,2))$ equal to one on
$\operatorname{supp}\rho$ and set
$\Psi(\xi,a)=\kappa(a)\psi(\xi/a)$, with smooth zero extension.
Then
\[
 \int_{\R^d}\Psi(V)F_j(V,X)\d V
 =\left(\int_1^2\rho(a)a^m\d a\right)
   \int_{\R^m}\psi(v)f_j(v,X)\d v=A_j(X).
\]
The stationary theorem gives a uniform bound in
$B^s_{p,2}(\R^d)$, where $s=1/(6p')\in(0,1)$.
Here $B^s_{p,2}$ denotes the inhomogeneous Besov space.
For $0<s<1$, the embedding
$B^s_{p,2}\hookrightarrow B^s_{p,\infty}$ and the
difference-quotient characterization of the latter space give
\[
 \|A_j(\,\cdot+h)-A_j\|_{\mathrm L^p(\R^d)}
 \leq C|h|^s\|A_j\|_{B^s_{p,\infty}(\R^d)}
 \leq C|h|^s\|A_j\|_{B^s_{p,2}(\R^d)}
 \leq C|h|^s,
 \qquad |h|\leq1,
\]
uniformly in $j$.  Thus translations are uniformly small
in $\mathrm L^p$ as $h\to0$.  Moreover, the functions $A_j$
are uniformly bounded in $\mathrm L^p(\R^d)$ and supported
in a common compact set, the projection onto the
$X$-variables of the common support of the $f_j$.
The Kolmogorov-Riesz compactness theorem therefore implies
that $\{A_j\}$ is relatively compact in
$\mathrm L^p(\R^d)$, and in particular locally. This proves the lemma.
\end{proof}

\subsection{Weighted measure-data compactness} We now combine the preceding averaging lemma with weighted
Poincar\'e inequalities in the full diffusive space to prove
a compactness theorem for bounded sequences with uniformly
controlled weighted energy and measure data.  The averaging
lemma gives compactness of smooth weighted moments, while
the Poincar\'e inequalities allow us to recover compactness
of the functions themselves.

In the following we allow both the weights $w_j$ and the coefficient matrices
$B_j$ to vary along the sequence.  This is needed to obtain
constants uniform over the class $[w]_{A_2}\leq M$.
The conclusion concerns strong compactness of  functions
$u_j$ and their weighted moments.  The densities $w_ju_j$
may retain oscillations of the weights, and no convergence
of the diffusion matrices is assumed.

Let $D_v\subset\R^m$, $D_z\subset\R^k$, and $D_y\subset\R^m$ be
bounded balls and let $I$ be a bounded open interval.  Put
\[
 D=D_v\times D_z,\qquad G=D_y\times I,\qquad \mathcal Q=D\times G.
\]
When $k=0$, the $z$-variable and the factor $D_z$ are omitted.  In
particular, when $k=0$ the weights below may still depend on $v$.
All equations are understood in ordinary distributions.

\begin{prop}
\label{prop:weighted-measure-data-compactness}
Suppose that $w_j\in A_2(\R^n)$ depend only on $x$, and that
\begin{equation}
 [w_j]_{A_2}\leq M,\qquad \int_D w_j\d x=1.
 \label{eq:compactness-weight-normalization}
\end{equation}
Set $\d\mu_j:=w_j(x)\d x\d y\d t$.
Let $B_j$ be measurable matrices with $|B_j\xi|\leq\Lambda|\xi|$.
Suppose that $0\leq u_j\leq1$, that $u_j$ have weak $x$-gradients,
and that
\begin{equation}
 \operatorname{div}_x(w_jB_j\nabla_xu_j)-Y(w_ju_j)
 =w_js_j+\nu_j\qquad\hbox{in }\mathcal D'(\mathcal Q),
 \label{eq:weighted-measure-data-sequence}
\end{equation}
where $\nu_j$ are finite signed Radon measures and
\begin{equation}
 \sup_j\left\{
 \int_{\mathcal Q}w_j\bigl(|\nabla_xu_j|^2+|s_j|^2\bigr)
     \d x\d y\d t
 +|\nu_j|(\mathcal Q)\right\}<\infty.
 \label{eq:weighted-measure-data-bound}
\end{equation}
Then there exist $\epsilon=\epsilon(n,M)>0$, a subsequence, and
functions $w_\infty$ and $u_\infty$ such that
\begin{align}
 w_j&\rightharpoonup w_\infty
       &&\hbox{weakly in }\mathrm L^{1+\epsilon}(D),
       \label{eq:full-weight-limit}\\
 u_j&\longrightarrow u_\infty
       &&\hbox{strongly in }\mathrm L^2_{\rm loc}(\mathcal Q),
       \label{eq:full-unweighted-limit}\\
 \int_K|u_j-u_\infty|^2w_j\d x\d y\d t&\longrightarrow0
       &&\hbox{for every }K\Subset\mathcal Q.
       \label{eq:full-weighted-limit}
\end{align}
Moreover, $0\leq u_\infty\leq1$,
\[
 \int_Dw_\infty\d x=1,\qquad
 w_\infty>0\ \hbox{almost everywhere},\qquad
 w_\infty,w_\infty^{-1}\in\mathrm L^{1+\epsilon}(D),
\]
and the $A_2$ inequality with constant $M$ holds for $w_\infty$ on
every ball compactly contained in $D$.
For every $\theta\in C_0^\infty(D)$ and $G'\Subset G$, the moments
$\int_D\theta w_ju_j\d x$ converge strongly to
$\int_D\theta w_\infty u_\infty\d x$ in $\mathrm L^q(G')$ for every
$1\leq q<\infty$.

\noindent
For some $p>1$ there are also local weak $\mathrm L^p$ limits
\[
 J_j:=w_jB_j\nabla_xu_j\rightharpoonup J_\infty,
 \qquad h_j:=-w_js_j\rightharpoonup h_\infty,
 \qquad \nabla_xu_j\rightharpoonup\nabla_xu_\infty.
\]
After extraction, $\nu_j\stackrel{*}{\rightharpoonup}\nu_\infty$ locally
as measures, and
\begin{equation}
 Y(w_\infty u_\infty)
 =\operatorname{div}_xJ_\infty+h_\infty-\nu_\infty
 \qquad\hbox{in }\mathcal D'(\mathcal Q).
 \label{eq:full-limit-equation}
\end{equation}
If $\nu_j\geq0$, then $\nu_\infty\geq0$.
\end{prop}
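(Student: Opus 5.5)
The plan is to obtain compactness in three stages: first for the weights, then for smooth weighted moments via Lemma \ref{lem:full-averaging}, and finally for the functions $u_j$ themselves via weighted Poincar\'e on small balls. The limit equation is then passed to by weak--strong products.

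\emph{Step 1: weights.} Fix a ball $\mathcal B_D$ containing $D$ with comparable radius. Doubling \eqref{eq:doubling} and the normalization in \eqref{eq:compactness-weight-normalization} bound $w_j(\mathcal B_D)$ uniformly. The $A_2$ condition then bounds $\avgint_{\mathcal B_D}w_j^{-1}\d x$ uniformly, since it is at most $M\big/\avgint_{\mathcal B_D}w_j\d x$. Proposition \ref{prop:reverse-holder} now bounds $w_j$ and $w_j^{-1}$ in $\mathrm L^{1+\epsilon}(D)$. After extraction, $w_j\rightharpoonup w_\infty$ and $w_j^{-1}\rightharpoonup\omega$ weakly in $\mathrm L^{1+\epsilon}(D)$, and $\int_Dw_\infty\d x=1$.
\begin{itemize}
\item Convexity of $s\mapsto 1/s$ and weak lower semicontinuity give $w_\infty^{-1}\leq\omega$. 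Hence $w_\infty>0$ a.e. and $w_\infty^{-1}\in\mathrm L^{1+\epsilon}$.
\item For balls $\mathcal B\Subset D$, $\avgint_{\mathcal B}w_j\to\avgint_{\mathcal B}w_\infty$ and $\avgint_{\mathcal B}w_\infty^{-1}\leq\liminf_j\avgint_{\mathcal B}w_j^{-1}$. This gives the $A_2$ bound $M$ for $w_\infty$.
\end{itemize}
I also record the uniform $A_\infty$ bounds \eqref{eq:Ainftyquant} for the $w_j$ relative to $\mathcal B_D$. In particular, every ball $B\subset D$ of radius $r$ satisfies $w_j(B)\geq c(r)>0$ uniformly in $j$.

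\emph{Step 2: moments.} Fix $\theta\in C^\infty_0(D)$, $\zeta\in C^\infty_0(G)$ and $\psi\in C^\infty_0(D_v)$ with $\psi=1$ on the $v$-projection of $\operatorname{supp}\theta$. Set $f_j(v,y,t):=\zeta\int\theta w_ju_j\d z$. Testing \eqref{eq:weighted-measure-data-sequence} against $\theta\zeta$ in the passive variable $z$ gives \eqref{eq:full-unweighted-transport} with the following data:
\begin{itemize}
\item $H_j=\zeta\int\theta\,(J_j)_v\d z$;
\item $h_j$ collects $\int\nabla_x\theta\cdot J_j\d z$, the $Y\zeta$ term $w_ju_jY\zeta$, and the source $\int\theta\zeta w_js_j\d z$;
\item $\mu_j$ is the $z$-marginal of $\theta\zeta\nu_j$.
\end{itemize}
For the $\mathrm L^{q_0}$ bounds with $q_0>1$, I use $0\leq w_ju_j\leq w_j$ together with the H\"older estimate
\[
\int|J_j|^q\leq\Lambda^q\Bigl(\int w_j|\nabla_xu_j|^2\Bigr)^{q/2}\Bigl(\int w_j^{q/(2-q)}\Bigr)^{1-q/2},
\]
valid for $q$ close to $1$ by reverse H\"older; the same estimate controls $w_js_j$. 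Lemma \ref{lem:full-averaging} then yields relative compactness of $\int\theta w_ju_j\d x$ in $\mathrm L^p(G')$. This upgrades to every $\mathrm L^q(G')$ with $q<\infty$ by interpolation with the uniform bound $\|\theta\|_\infty\int_Dw_j\leq C$. The limit is identified as $\int\theta w_\infty u_\infty$ once Step 3 supplies strong convergence of $u_j$: the product of strongly convergent $u_j$ with weakly convergent $w_j$ converges weakly. A diagonal argument over countable families of $\theta$ and $\zeta$ completes this step.

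\emph{Step 3: reconstruction, the main obstacle.} Given $K\Subset\mathcal Q$ and $r>0$, I cover the $x$-projection of $K$ by balls $B_i$ of radius $r$ with $2B_i\Subset D$. By \eqref{eq:weightedpoincare}, uniformly in $j$,
\[
\sum_i\int|u_j-(u_j)_{B_i,w_j}|^2w_j\leq Cr^2\sup_j\int w_j|\nabla_xu_j|^2.
\]
The means $(u_j)_{B_i,w_j}(y,t)=w_j(B_i)^{-1}\int_{B_i}w_ju_j\d x$ are replaced by smooth moments, with $\theta_i\approx\chi_{B_i}$. The replacement error is at most $C(|\{\theta_i\neq\chi_{B_i}\}|/|B_i|)^{\delta_1}$ by \eqref{eq:Ainftyquant} and the uniform lower bound on $w_j(B_i)$. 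The ratios are handled because $w_j(B_i)\to w_\infty(B_i)>0$. Hence the means are Cauchy in $\mathrm L^q(G')$ up to an error $o_r(1)$. To compare $u_j$ and $u_k$, which carry different weights, I pass to unweighted norms using $|u_j-u_k|\leq1$ and
\[
\int|g|\leq\Bigl(\int g^2w_j\Bigr)^{1/2}\Bigl(\int w_j^{-1}\Bigr)^{1/2}.
\]
This makes $u_j$ Cauchy in $\mathrm L^1(K)$, hence in $\mathrm L^2(K)$, which gives \eqref{eq:full-unweighted-limit}. Then \eqref{eq:full-weighted-limit} follows from
\[
\int_K|u_j-u_\infty|^2w_j\leq\|u_j-u_\infty\|_{\mathrm L^{(1+\epsilon)'}(K)}\|w_j\|_{\mathrm L^{1+\epsilon}}\to0.
\]
The delicate point is keeping every constant uniform while both weights and coefficients vary along the sequence. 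This is why only $A_\infty$, reverse H\"older and Poincar\'e are used, and no translation in $v$ appears.

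\emph{Step 4: limit equation.} $J_j$, $h_j$ and $\nabla_xu_j$ are bounded in $\mathrm L^p_{\rm loc}$ for some $p>1$; for $\nabla_xu_j$ this follows by H\"older with $w_j^{-1}$. The measures $\nu_j$ converge weakly-$*$ after extraction, and nonnegativity is preserved. Since $w_ju_j\rightharpoonup w_\infty u_\infty$, and the weak limit of $\nabla_xu_j$ is identified with $\nabla_xu_\infty$ via the strong convergence of $u_j$, passing to the limit in \eqref{eq:weighted-measure-data-sequence} gives \eqref{eq:full-limit-equation}.
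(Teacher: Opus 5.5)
Your proposal is correct and follows essentially the same route as the paper. The steps match: reverse H\"older bounds and weak limits for $w_j^{\pm1}$, then the DiPerna--Lions--Meyer averaging lemma applied to $z$-integrated smooth moments, then finite reconstruction by weighted Poincar\'e on small balls in the full diffusive variable, passing to unweighted norms through $\int w_j^{-1}$, and finally a weak--strong passage to the limit equation. The only real difference is technical: you approximate the $w_j$-means on $B_i$ by smooth moments and control the boundary layer with the $A_\infty$ bound, whereas the paper normalizes by $c_{j,\ell}=\int_D\theta_\ell w_j\d x$ and compares with the $w_j$-mean on $2B_\ell$ via Jensen, which avoids that layer estimate altogether.
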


\begin{proof} We divide the proof into a number of steps.

\smallskip
\noindent
\emph{Step 1. Uniform integrability and weak limits of the weights.}
Choose a fixed ball $B_0\supset D$.  By Cauchy-Schwarz on $D$ and
the $A_2$ inequality on $B_0$,
\[
 1\leq\int_{B_0}w_j\d x
       \leq M\frac{|B_0|^2}{|D|^2},
 \qquad
 \int_{B_0}w_j^{-1}\d x\leq M|B_0|^2.
\]
Reverse H\"older for $w_j$ and $w_j^{-1}$ therefore gives, for
$r=1+\epsilon>1$ depending only on $n$ and $M$,
\begin{equation}
 \sup_j\bigl(\|w_j\|_{\mathrm L^r(D)}
                  +\|w_j^{-1}\|_{\mathrm L^r(D)}\bigr)<\infty.
 \label{eq:full-rh}
\end{equation}
After extraction, let their weak $\mathrm L^r(D)$ limits be
$w_\infty$ and $\widetilde w_\infty$.  These are non-negative, and
$\int_Dw_\infty\d x=1$. For each fixed rational number $a\geq0$, testing the
inequality $2a-a^2w_j\leq w_j^{-1}$ against non-negative
bounded functions and passing to the weak limit gives
\[
 2a-a^2w_\infty\leq\widetilde w_\infty
 \quad\hbox{almost everywhere}.
\]
Since the non-negative rational numbers form a countable
set, we can remove a single set of measure zero so that
this inequality holds for every such $a$ at each remaining
point. We first show that $w_\infty>0$ almost everywhere.
We already know that $w_\infty\geq0$.  At any point where
$w_\infty=0$, the preceding inequality would give
$2a\leq\widetilde w_\infty$ for every non-negative rational
$a$.  This is impossible because $\widetilde w_\infty$
is finite almost everywhere. Now fix a point where $w_\infty>0$ and all the preceding
inequalities hold.  Choose non-negative rational numbers
$a_\ell$ tending to $1/w_\infty$ at this point.  Then
$2a_\ell-a_\ell^2w_\infty$ tends to $1/w_\infty$.
Passing to the limit therefore gives
\[
 w_\infty^{-1}\leq\widetilde w_\infty<\infty
 \quad\hbox{almost everywhere}.
\]
Passing to the weak
limit on any ball $B\Subset D$ gives
\[
 \left(\frac1{|B|}\int_Bw_\infty\d x\right)
 \left(\frac1{|B|}\int_B\widetilde w_\infty\d x\right)\leq M,
\]
which proves the stated local $A_2$ bound.

\smallskip
\noindent
\emph{Step 2. Unweighted bounds for the transport equation.}
Set $b_j=w_ju_j$, $J_j=w_jB_j\nabla_xu_j$, and $h_j=-w_js_j$.
Then
\begin{equation}
 Yb_j=\operatorname{div}_xJ_j+h_j-\nu_j.
 \label{eq:full-b-equation}
\end{equation}
Let $q_0=2r/(r+1)>1$ where $r=1+\epsilon$.  For any vector or scalar function $F$,
H\"older's inequality gives
\begin{equation}
 \|w_jF\|_{\mathrm L^{q_0}(\mathcal Q)}
 \leq\left(\int_{\mathcal Q}w_j|F|^2\right)^{1/2}
       \left(\int_{\mathcal Q}w_j^r\right)^{1/(2r)}.
 \label{eq:full-flux-integrability}
\end{equation}
Here and below, unspecified integrals over $\mathcal Q$ use
$\d x\d y\d t$.  Thus $b_j,J_j,h_j$ are bounded in
$\mathrm L^{q_0}(\mathcal Q)$. For $b_j$ use $0\leq b_j\leq w_j$.
Similarly,
\begin{equation}
 \|\nabla_xu_j\|_{\mathrm L^{q_0}(\mathcal Q)}
 \leq\left(\int_{\mathcal Q}w_j|\nabla_xu_j|^2\right)^{1/2}
       \left(\int_{\mathcal Q}w_j^{-r}\right)^{1/(2r)}\leq C.
 \label{eq:full-gradient-integrability}
\end{equation}

\smallskip
\noindent
\emph{Step 3. Compactness of full-variable moments.} To apply Lemma \ref{lem:full-averaging}, we first localize
the equation to obtain quantities with common compact support
and integrate out the passive variables $z$.  This produces
an equation in $(v,y,t)$ of the form required by the lemma. Write $J_j=((J_j)_v,(J_j)_z)$ according to the splitting
$x=(v,z)$, so that $(J_j)_v$ consists of the first $m$
components of the diffusion flux.  Fix
$\theta\in C_0^\infty(D)$ and $\eta\in C_0^\infty(G)$.
Multiplying \eqref{eq:full-b-equation} by
$\theta(x)\eta(y,t)$ and integrating in $z$, with the
localized quantities extended by zero, gives
\[
 Yf_j=\operatorname{div}_vH_j+q_j-\widetilde\nu_j,
\]
where
\begin{align*}
 f_j(v,y,t)
 &=\eta(y,t)\int_{\R^k}\theta(v,z)b_j(v,z,y,t)\d z,\\
 H_j(v,y,t)
 &=\eta(y,t)\int_{\R^k}
       \theta(v,z)(J_j)_v(v,z,y,t)\d z,\\
 q_j(v,y,t)
 &=\int_{\R^k}
    \bigl(\eta\theta h_j+\theta b_jY\eta
                   -\eta J_j\cdot\nabla_x\theta\bigr)\d z,
\end{align*}
and $\widetilde\nu_j$ is the projection of
$\eta\theta\nu_j$ onto $(v,y,t)$.
The full gradient $\nabla_x\theta$ in $q_j$ includes both
the active and passive contributions.  The divergence in $z$
integrates to zero because the localized flux has compact
support in these variables. H\"older's inequality on the bounded $z$-support gives
uniform $\mathrm L^{q_0}$ bounds for $f_j$, $H_j$, and $q_j$.
The total variation of $\widetilde\nu_j$ is at most
$C|\nu_j|(\mathcal Q)$.  All these quantities have support
in a common compact set determined by $\theta$ and $\eta$.
When $k=0$, the same formulas apply without the $z$-integrals.

Let $G'\Subset G$, and choose $\eta\in C_0^\infty(G)$
equal to one on a neighbourhood of $\overline{G'}$.
Choose also $\psi\in C_0^\infty(\R^m)$ equal to one
at every velocity $v$ for which $(v,z)$ belongs to
$\operatorname{supp}\theta$ for some $z$.
Such a choice is possible because $\theta$ has compact
support. We apply Lemma \ref{lem:full-averaging} to the equation
for $f_j$, with scalar term $q_j$ and source measure
$-\widetilde\nu_j$.  The hypotheses of the lemma follow
from the support and uniform bounds established above.
Consequently, for every fixed $p$ satisfying
\eqref{eq:full-averaging-p}, the averages
$\int_{\R^m}\psi(v)f_j(v,y,t)\d v$ are relatively
compact in $\mathrm L^p(G')$.

To identify these averages, use the definition of $f_j$
and the fact that $\psi(v)\theta(v,z)=\theta(v,z)$.
Fubini's theorem gives
\begin{align*}
 \int_{\R^m}\psi(v)f_j(v,y,t)\d v
 &=
 \eta(y,t)\int_{\R^m}\int_{\R^k}
       \theta(v,z)b_j(v,z,y,t)\d z\d v\\
 &=
 \eta(y,t)\int_D\theta(x)w_j(x)u_j(x,y,t)\d x.
\end{align*}
Thus the integration in $z$ used to define $f_j$,
followed by the velocity average in $v$, gives an
integral over the entire diffusive variable $x=(v,z)$.
Since $\eta=1$ on $G'$, we have proved that
\begin{equation}
 M_{j,\theta}(y,t):=
 \int_D\theta(x)w_j(x)u_j(x,y,t)\d x
 \quad\hbox{is relatively compact in }\mathrm L^p(G').
 \label{eq:full-moment-compactness}
\end{equation}

\smallskip
\noindent
\emph{Step 4. Finite reconstruction in all diffusive variables.}
Fix $D'\Subset D''\Subset D$ and $G'\Subset G$.  For sufficiently
small $\delta>0$, cover $D'$ by finitely many balls $B_\ell$ of
radius $\delta$, with $4B_\ell\Subset D''$ and uniformly bounded
overlap of the doubled balls.  Choose a non-negative partition of
unity $\zeta_\ell$ on $D'$ subordinate to these balls, and choose
$\theta_\ell\in C_0^\infty(2B_\ell)$ with
$0\leq\theta_\ell\leq1$ and $\theta_\ell=1$ on $B_\ell$.  Define
\[
 c_{j,\ell}:=\int_D\theta_\ell w_j\d x,
 \qquad a_{j,\ell}:=c_{j,\ell}^{-1}M_{j,\theta_\ell},
 \qquad P_\delta u_j:=\sum_\ell\zeta_\ell a_{j,\ell}.
\]
For each fixed $\delta$, the denominators converge to
$\int_D\theta_\ell w_\infty\d x>0$.  They are also bounded below
directly by
\[
 c_{j,\ell}\geq w_j(B_\ell)
 \geq\frac{|B_\ell|^2}{\int_Dw_j^{-1}\d x}>c_\delta>0.
\]
Moment compactness and convergence of the positive denominators
give, after extraction, $a_{j,\ell}\to a_\ell$ in
$\mathrm L^p(G')$.  Since $0\leq a_{j,\ell},a_\ell\leq1$
and $p<2$, we have
\[
 \|a_{j,\ell}-a_\ell\|_{\mathrm L^2(G')}^2
 \leq
 \|a_{j,\ell}-a_\ell\|_{\mathrm L^p(G')}^p
 \longrightarrow0.
\]
For each fixed $\delta$, there are only finitely many
coefficients.  Extracting a common subsequence therefore
proves relative compactness of $P_\delta u_j$ in
$\mathrm L^2(D'\times G')$ for each fixed $\delta$.

We now prove a uniform approximation estimate.  At each $(y,t)$ let
$\bar u_{j,\ell}$ denote the $w_j$-mean of $u_j$ on $2B_\ell$.
Jensen's inequality gives
\[
 |a_{j,\ell}-\bar u_{j,\ell}|^2
 \leq\frac1{c_{j,\ell}}
       \int_{2B_\ell}\theta_\ell
             |u_j-\bar u_{j,\ell}|^2w_j\d x.
\]
Since $w_j(B_\ell)\leq c_{j,\ell}$, weighted Poincar\'e on the
full $n$-dimensional ball $2B_\ell$ implies
\begin{align*}
 \int_{B_\ell}|u_j-a_{j,\ell}|^2w_j\d x
 &\leq4\int_{2B_\ell}|u_j-\bar u_{j,\ell}|^2w_j\d x\leq C\delta^2\int_{2B_\ell}|\nabla_xu_j|^2w_j\d x.
\end{align*}
The constant depends only on $n$ and $M$.  Here we used Proposition \ref{prop:FKS}.
Jensen's inequality for the partition of unity, followed by bounded
overlap and \eqref{eq:weighted-measure-data-bound}, now gives
\begin{equation}
 \int_{D'\times G'}|u_j-P_\delta u_j|^2w_j
 \leq C\delta^2.
 \label{eq:full-reconstruction}
\end{equation}
Note that no Poincar\'e inequality on a conditional $v$- or $z$-slice is used.

Put $e_{j,\delta}:=u_j-P_\delta u_j$.  Since both $u_j$ and
$P_\delta u_j$ take values in $[0,1]$, one has
$|e_{j,\delta}|\leq1$ on $D'\times G'$.  Cauchy-Schwarz,
\eqref{eq:full-reconstruction}, and \eqref{eq:full-rh} therefore give
\begin{align}
 \int_{D'\times G'}|e_{j,\delta}|^2
 &\leq \int_{D'\times G'}|e_{j,\delta}| \leq
 \left(\int_{D'\times G'}|e_{j,\delta}|^2w_j\right)^{1/2}
 \left(\int_{D'\times G'}w_j^{-1}\right)^{1/2}
 \leq C\delta.
 \label{eq:full-unweighted-reconstruction}
\end{align}
The constant is independent of $j$ and $\delta$.

Choose a sequence $\delta_\ell\downarrow0$.  By successive
subsequence extraction and a diagonal selection, we obtain
a subsequence of $u_j$, with the corresponding weights taken
along the same indices, such that $P_{\delta_\ell}u_j$
converges in $\mathrm L^2(D'\times G')$ for every fixed $\ell$.
We relabel this subsequence as $u_j$.

To prove convergence of $u_j$ itself, the triangle inequality
and \eqref{eq:full-unweighted-reconstruction} give
\[
 \|u_j-u_i\|_{\mathrm L^2(D'\times G')}
 \leq 2C\delta_\ell^{1/2}
 +\|P_{\delta_\ell}u_j-P_{\delta_\ell}u_i\|
       _{\mathrm L^2(D'\times G')}.
\]
Given $\varepsilon>0$, first choose $\ell$ so large that
$2C\delta_\ell^{1/2}<\varepsilon/2$.  At this fixed scale,
the reconstructed sequence converges, so the last term is
less than $\varepsilon/2$ for all sufficiently large $i,j$.
Thus $u_j$ is Cauchy in $\mathrm L^2(D'\times G')$ and
converges strongly there.  A further diagonal selection
over an interior exhaustion of $\mathcal Q$ proves
\eqref{eq:full-unweighted-limit}.
After passing to a further subsequence, the strong local
$\mathrm L^2$ convergence gives $u_j\to u_\infty$ almost
everywhere.  Passing to the limit in $0\leq u_j\leq1$
therefore yields $0\leq u_\infty\leq1$ almost everywhere.

\smallskip
\noindent
\emph{Step 5. Strong convergence for the varying measures.}
Let $K\Subset\mathcal Q$, let $r>1$ be the exponent in
\eqref{eq:full-rh}, and put $r'=r/(r-1)$.
H\"older's inequality, the uniform bound in
\eqref{eq:full-rh}, and $|u_j-u_\infty|\leq1$ give
\begin{align*}
 \int_K|u_j-u_\infty|^2w_j
 &\leq
 \left(\int_Kw_j^r\right)^{1/r}
 \left(\int_K|u_j-u_\infty|^{2r'}\right)^{1/r'}\leq C
 \left(\int_K|u_j-u_\infty|^2\right)^{1/r'}
 \longrightarrow0.
\end{align*}
The last limit follows from the strong local unweighted
$\mathrm L^2$ convergence established above.
This proves \eqref{eq:full-weighted-limit}.

\smallskip
\noindent
\emph{Step 6. Passage to the weak equation.}
The $\mathrm L^{q_0}$ bounds and the measure bound give subsequential
weak limits $J_\infty,h_\infty$, and $\nu_\infty$.
The gradient bound \eqref{eq:full-gradient-integrability}, together
with strong local convergence of $u_j$, identifies the weak gradient
limit with $\nabla_xu_\infty$.
For $\varphi\in C_0^\infty(\mathcal Q)$, weighted strong convergence
and Cauchy-Schwarz imply
\[
 \int_{\mathcal Q}w_j(u_j-u_\infty)\varphi\longrightarrow0.
\]
Weak convergence of the weights, tested against the bounded function
\[
 x\mapsto\int_Gu_\infty(x,y,t)\varphi(x,y,t)\d y\d t,
\]
also gives
\[
 \int_{\mathcal Q}(w_j-w_\infty)u_\infty\varphi\longrightarrow0.
\]
Thus $w_ju_j\to w_\infty u_\infty$ in distributions.
Testing with $\theta(x)\eta(y,t)$ identifies every limit in
\eqref{eq:full-moment-compactness} with
$\int_D\theta w_\infty u_\infty\d x$.
Since $|M_{j,\theta}|\leq\|\theta\|_\infty\int_Dw_j\d x
=\|\theta\|_\infty$, this compactness gives strong convergence in
every finite $\mathrm L^q(G')$ along the selected subsequence.
Passing to the limit in \eqref{eq:full-b-equation} proves
\eqref{eq:full-limit-equation}.  Positivity of the measures, when
assumed, is preserved under weak convergence.
\end{proof}
\begin{rem} No ellipticity is used in the compactness argument once
\eqref{eq:weighted-measure-data-bound} is assumed.  More generally,
$B_j\nabla_xu_j$ may be replaced by any vector field $F_j$ satisfying
$\sup_j\int_{\mathcal Q}w_j|F_j|^2<\infty$, while retaining the
gradient bound in \eqref{eq:weighted-measure-data-bound}.  The proposition does not identify
$J_\infty$ with a limiting diffusion matrix times $\nabla_xu_\infty$.
\end{rem}

\subsection{A two-phase no-jump lemma}

We next isolate the qualitative consequence of Proposition
\ref{prop:weighted-measure-data-compactness}.  Write $G_0=D_y\times I$
for the outer $(y,t)$-region in that proposition.  Since the conclusion is
local, we may take the inner set to have the product form
\[
 \mathcal Q'
 =
 D_v'\times D_z'\times G,
\]
where \(D_v'\Subset D_v\) and \(D_z'\Subset D_z\) are connected balls and
\(G\Subset G_0\) is a bounded connected open set.  Let
\[
 \mathcal Q^-
 =
 D_v^-\times D_z^-\times E^-,
 \qquad
 \mathcal Q^+
 =
 D_v^+\times D_z^+\times E^+
\]
be product boxes compactly contained in \(\mathcal Q'\).

We say that \((\mathcal Q^-,\mathcal Q^+)\) is an
\emph{admissible ordered pair} if \(E^-\) lies strictly before \(E^+\) in
time and there is an open ball \(V\Subset D_v'\) such that, for every
\((y_-,t_-)\in E^-\) and \((y_+,t_+)\in E^+\),
\begin{equation}
 t_-<t_+,
 \qquad
 v_{-+}:=\frac{y_+-y_-}{t_+-t_-}\in V,
 \label{eq:no-jump-reachability}
\end{equation}
and the characteristic segment
\begin{equation}
 s\longmapsto
 \bigl(y_-+(s-t_-)v_{-+},s\bigr),
 \qquad t_-\leq s\leq t_+,
 \label{eq:no-jump-characteristic-segment}
\end{equation}
stays a fixed positive distance from $\partial G$, uniformly over the
two endpoints.  This distance and $\operatorname{dist}(V,\partial D_v')$
are among the geometric margins of the ordered configuration.

Geometrically, \(v_{-+}\) is the unique constant velocity for which the
forward characteristic of
\(v\cdot\nabla_y+\partial_t\) starting at \((y_-,t_-)\) reaches
\((y_+,t_+)\).  Thus admissibility means that every point of \(E^-\) can
be connected to every point of \(E^+\) by a forward kinetic
characteristic that remains uniformly inside the domain.  The containment
\(V\Subset D_v'\) keeps all connecting velocities away from the boundary
of the active velocity set and allows the limiting equation to be
tested against diffusive profiles whose weighted velocity averages
approach \(v_{-+}\).  The compact
containment of the characteristic segment in \(G\) prevents boundary
effects in the corresponding propagation argument.  This uniform
reachability is what will allow the directional inequalities to rule out
a jump from the zero phase in \(E^-\) to the one phase in \(E^+\).

The fixed boxes \eqref{eq:Qminus}-\eqref{eq:Qplus} already satisfy this
condition: their time gap is at least $1/4$, while the distance between
their $y$-coordinates is less than $1/32$.  Thus every connecting velocity
has norm less than $1/8$.  No reduction of the stated reference boxes is
needed.

The next lemma is a qualitative rigidity statement.  It rules out a
sequence of bounded subsolutions which is close to the phase zero on a set
of uniformly positive weighted measure in the past, close to the phase one
on a set of uniformly positive weighted measure in the future, and yet
takes values strictly between these phases on a set whose weighted measure
tends to zero.  Indeed, disappearance of the intermediate phase produces a
binary limit.  The sign of the  measure then forces this limit to be
non-increasing along forward kinetic characteristics, so that it cannot
jump from zero in the past to one in the future.  The lemma is qualitative and in the following subsection it will be used in a contradiction argument to
obtain a quantitative lower bound for the measure of the intermediate
phase.

\begin{lem}
\label{lem:two-phase-no-jump}
Let \(u_j,w_j,B_j\), and \(\nu_j\) satisfy the hypotheses of
Proposition \ref{prop:weighted-measure-data-compactness} with
$s_j=0$ and $\nu_j\geq0$,
and let \(\varepsilon_j\downarrow0\).  If
\((\mathcal Q^-,\mathcal Q^+)\) is an admissible ordered pair, then it is
impossible that, for some \(\delta_-,\delta_+>0\),
\begin{align}
 \mu_j(\{u_j\leq\varepsilon_j\}\cap\mathcal Q^-)
 &\geq
 \delta_-\mu_j(\mathcal Q^-),
 \label{eq:no-jump-low-phase}\\
 \mu_j(\{u_j\geq1-\varepsilon_j\}\cap\mathcal Q^+)
 &\geq
 \delta_+\mu_j(\mathcal Q^+),
 \label{eq:no-jump-high-phase}
\end{align}
while
\begin{equation}
 \mu_j
 \bigl(
 \{\varepsilon_j<u_j<1-\varepsilon_j\}\cap\mathcal Q'
 \bigr)
 \longrightarrow0.
 \label{eq:no-jump-intermediate-phase}
\end{equation}
\end{lem}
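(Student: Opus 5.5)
We argue by contradiction.  Suppose that \eqref{eq:no-jump-low-phase}--\eqref{eq:no-jump-intermediate-phase} hold along a sequence.  Proposition \ref{prop:weighted-measure-data-compactness} then gives a subsequence, limits $w_\infty$ and $u_\infty$, fluxes $J_\infty$ and $h_\infty=0$, and a limiting measure $\nu_\infty\geq0$ satisfying \eqref{eq:full-limit-equation}.

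\emph{Step 1: the limit is binary and independent of $x$.}  Since $|u_j-u_\infty|\to0$ in the weighted sense \eqref{eq:full-weighted-limit}, the intermediate-phase condition \eqref{eq:no-jump-intermediate-phase} gives
\[
 \int_K\min\{u_\infty,1-u_\infty\}\,w_j\to0 .
\]
Combining this with weak convergence $w_j\rightharpoonup w_\infty$ and $w_\infty>0$, we get $u_\infty\in\{0,1\}$ almost everywhere on $\mathcal Q'$.  Moreover $\nabla_xu_\infty\in\mathrm L^{q_0}_{\rm loc}$ by \eqref{eq:full-gradient-integrability}.  A $\mathrm W^{1,1}$ function taking only the values $0$ and $1$ has zero gradient almost everywhere.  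Since $D_v'\times D_z'$ is connected, $u_\infty=\chi_E(y,t)$ for a measurable set $E\subset G$.

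Next we pass the phase information to the limit.  From \eqref{eq:no-jump-low-phase} we have
\[
 \int_{\mathcal Q^-}u_jw_j\leq(1-\delta_-)\mu_j(\mathcal Q^-)+\varepsilon_j\mu_j(\mathcal Q^-).
\]
The normalization gives $\int_Dw_j\d x=1$ and weak convergence of the weights, and $\mu_j(\mathcal Q^-)$ is bounded below by the reverse H\"older bound on $w_j^{-1}$.  We may therefore pass to the limit using \eqref{eq:full-weighted-limit} and the weak convergence of $w_j$ tested against the bounded function $\chi_{\mathcal Q^-}u_\infty$.  This yields $|E^-\setminus E|>0$.  The same argument applied to \eqref{eq:no-jump-high-phase} gives $|E^+\cap E|>0$.

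\emph{Step 2: directional inequalities.}  Test \eqref{eq:full-limit-equation} with $\theta(x)\varphi(y,t)$, where $\theta\in C^\infty_0(D_v'\times D_z')$ is non-negative and normalized by $\int\theta w_\infty\d x=1$, and $\varphi\geq0$.  Put
\[
 v_\theta:=\int v\,\theta w_\infty\d x .
\]
Then, in $\mathcal D'(G)$,
\[
 (\partial_t+v_\theta\cdot\nabla_y)\chi_E
 =-\int J_\infty\cdot\nabla_x\theta\d x-\pi_\theta\nu_\infty .
\]
The first term on the right is an $\mathrm L^{q_0}$ function of $(y,t)$.  The measure $\pi_\theta\nu_\infty\geq0$ is the push-forward of $\theta\nu_\infty$ to $(y,t)$.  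The vector $v_\theta$ is a weighted mean of $v$ over $\supp\theta$.  By concentrating $\supp\theta$ in small balls in $v$, we can therefore make $v_\theta$ approximate any prescribed velocity in $D_v'$.

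First choose $m+1$ profiles whose vectors $(1,v_\theta)$ are linearly independent.  This shows that every first derivative of $\chi_E$ is a finite measure locally, so $\chi_E\in\mathrm{BV}_{\rm loc}(G)$.

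By De Giorgi's structure theorem, $D\chi_E=\nu_E\,\mathcal H^{m}\lfloor\partial^*E$ is singular with respect to Lebesgue measure.  Take the singular part of both sides of the identity above.  The $\mathrm L^{q_0}$ term disappears, and we obtain
\[
 (\partial_t+v_\theta\cdot\nabla_y)\chi_E=-(\pi_\theta\nu_\infty)^{s}\leq0 .
\]
(The absolutely continuous parts must cancel, which forces $\int J_\infty\cdot\nabla_x\theta=-(\pi_\theta\nu_\infty)^{ac}$.)  The left side depends linearly and continuously on the direction $(1,v)$ as a measure.  By the approximation above, $(\partial_t+v\cdot\nabla_y)\chi_E\leq0$ for every $v\in V$, where $V\Subset D_v'$.

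\emph{Step 3: contradiction.}  Mollify in $(y,t)$: $\chi_\epsilon:=\chi_E*\rho_\epsilon$.  For $\epsilon$ smaller than the geometric margins, $\chi_\epsilon$ is smooth and non-increasing along every characteristic segment \eqref{eq:no-jump-characteristic-segment}, since the connecting velocities $v_{-+}$ lie in $V$.  Hence
\[
 \chi_\epsilon(y_+,t_+)\leq\chi_\epsilon(y_-,t_-)
\]
for all endpoints $(y_-,t_-)\in E^-$ and $(y_+,t_+)\in E^+$.  Multiply by $\chi_{E^-\setminus E}(y_-,t_-)\,\chi_{E^+\cap E}(y_+,t_+)$, integrate over $E^-\times E^+$, and let $\epsilon\to0$, using $\mathrm L^1$ convergence of $\chi_\epsilon$.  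The left side tends to $|E^+\cap E|\,|E^-\setminus E|>0$.  The right side tends to
\[
 |E^+\cap E|\int_{E^-\setminus E}\chi_E=0 .
\]
This is a contradiction.

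\emph{Main obstacle.}  The delicate step is Step 2.  We must separate the singular $\mathrm{BV}$ derivative from the absolutely continuous flux contribution, and make the "$\leq0$ for all $v\in V$" statement rigorous.  For this I would fix a finite family of profiles realizing a dense set of mean velocities, with $v_\theta$ computed using $w_\infty$ itself.  I would then use that the measure $\nu_E\cdot(1,v)\,\mathcal H^m\lfloor\partial^*E$ has a sign for every $v$ in a dense subset of $V$, and conclude by continuity that it has a sign for every $v\in V$.
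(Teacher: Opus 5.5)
Your proposal is correct and follows essentially the same route as the paper. The steps match: compactness, a binary $x$-independent limit $\chi_E(y,t)$, persistence of the endpoint phases, directional equations with weighted barycentres, the $\mathrm{BV}$ and Lebesgue-decomposition argument giving $(\partial_t+v\cdot\nabla_y)\chi_E\leq0$ for $v\in V$, and mollification along the connecting characteristics.

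The differences are only in details. You remove the weight via $w_j\rightharpoonup w_\infty>0$ rather than Cauchy--Schwarz against $w_j^{-1}$. You use De Giorgi's structure theorem rather than the vanishing approximate gradient. You integrate the endpoint inequality against $\chi_{E^-\setminus E}\otimes\chi_{E^+\cap E}$ rather than using Lebesgue points. Finally, the ``finite family realizing a dense set of velocities'' in your last paragraph should read ``a sequence of concentrating profiles for each target velocity,'' as you already do in Step 2.
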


\begin{proof}
Suppose, to the contrary, that such a sequence exists.  Write
$D'=D_v'\times D_z'$, with the $z$-factor omitted when $k=0$.
Proposition \ref{prop:weighted-measure-data-compactness} gives, after
extraction, $u_j\to u_\infty$ strongly in unweighted local
$\mathrm L^2$ and strongly with respect to the varying measures
$\d\mu_j=w_j(x)\d x\d y\d t$.  It also gives
\begin{equation}
 Y(w_\infty u_\infty)
 =\operatorname{div}_xJ_\infty-\nu_\infty,
 \qquad J_\infty\in\mathrm L^p_{\rm loc},\quad
 \nu_\infty\geq0,
 \label{eq:binary-limit-equation}
\end{equation}
for some $p>1$, with $w_\infty>0$ almost everywhere.

The disappearance of the intermediate phase implies
\[
 \int_{\mathcal Q'}\operatorname{dist}(u_j,\{0,1\})^2\d\mu_j
 \leq\varepsilon_j^2\mu_j(\mathcal Q')
 +\mu_j(\{\varepsilon_j<u_j<1-\varepsilon_j\}\cap\mathcal Q')
 \longrightarrow0.
\]
The distance function is Lipschitz, so weighted strong convergence gives
the same conclusion with $u_j$ replaced by $u_\infty$.  Put $q=\operatorname{dist}(u_\infty,\{0,1\})$.
We have just proved that
$\int_{\mathcal Q'}q^2\d\mu_j\to0$.
To remove the weight, write $q=(qw_j^{1/2})w_j^{-1/2}$
and apply Cauchy-Schwarz:
\begin{align*}
 \int_{\mathcal Q'}q\d x\d y\d t
 &\leq
 \left(\int_{\mathcal Q'}q^2\d\mu_j\right)^{1/2}
 \left(\int_{\mathcal Q'}w_j^{-1}\d x\d y\d t\right)^{1/2}\leq C
 \left(\int_{\mathcal Q'}q^2\d\mu_j\right)^{1/2}
 \longrightarrow0.
\end{align*}
The second inequality follows from the uniform
inverse-weight bound \eqref{eq:full-rh}.
The integral on the left does not depend on $j$,
so it must equal zero.  Since $q\geq0$, this implies
$q=0$ almost everywhere in $\mathcal Q'$.
By the definition of $q$, we conclude that
\begin{equation}
 u_\infty\in\{0,1\}
 \qquad\hbox{almost everywhere in }\mathcal Q'.
 \label{eq:binary-limit}
\end{equation}

By \eqref{eq:full-gradient-integrability}, the limit belongs to
$\mathrm L^p(G;\mathrm W^{1,p}(D'))$ for some $p>1$.
Apply the Sobolev chain rule to $\beta(s)=s(1-s)$.  Since
$\beta(u_\infty)=0$,
\[
 0=\nabla_x\beta(u_\infty)
  =(1-2u_\infty)\nabla_xu_\infty.
\]
The factor $1-2u_\infty$ takes only the values $-1$ and $1$.
Connectedness of $D'$ therefore gives
\begin{equation}
 u_\infty(x,y,t)=\chi_E(y,t)
 \qquad\hbox{almost everywhere in }\mathcal Q'
 \label{eq:binary-limit-indicator}
\end{equation}
for a measurable set $E\subset G$.

The endpoint phases persist in the limit.  Put
$D^\pm=D_v^\pm\times D_z^\pm$.  Normalization on $D$ and
\eqref{eq:full-rh} give
\[
 0<c_\pm\leq w_j(D^\pm)\leq1,
 \qquad
 0<c\leq\mu_j(\mathcal Q^\pm)\leq C.
\]
For the lower bound, use
$|D^\pm|^2\leq w_j(D^\pm)\int_Dw_j^{-1}\d x$.
The past phase assumption gives
\[
 \int_{\mathcal Q^-}(1-u_j)\d\mu_j
 \geq(1-\varepsilon_j)\delta_-\mu_j(\mathcal Q^-).
\]
Weighted strong convergence gives a positive lower bound with
$u_\infty$ in place of $u_j$.  But
\[
 \int_{\mathcal Q^-}(1-u_\infty)\d\mu_j
 =w_j(D^-)|E^-\setminus E|\leq|E^-\setminus E|.
\]
The same argument applied to the future phase gives
\begin{equation}
 |E^-\setminus E|>0,\qquad |E^+\cap E|>0.
 \label{eq:binary-endpoint-phases}
\end{equation}

We next recover all admissible transport directions.  For
$0\leq\theta\in C_0^\infty(D')$, $\theta\not\equiv0$, define
\begin{equation}
 m_\theta:=\int_{D'}\theta w_\infty\d x>0,
 \qquad
 c_\theta:=\frac1{m_\theta}
                  \int_{D'}v\theta w_\infty\d x.
 \label{eq:weighted-barycentre}
\end{equation}
Testing \eqref{eq:binary-limit-equation} against
$\theta(x)\eta(y,t)$, and using \eqref{eq:binary-limit-indicator}, gives
\begin{equation}
 (\partial_t+c_\theta\cdot\nabla_y)\chi_E
 =g_\theta-\sigma_\theta
 \qquad\hbox{in }\mathcal D'(G),
 \label{eq:binary-directional-equation}
\end{equation}
where
\[
 g_\theta(y,t):=-\frac1{m_\theta}
   \int_{D'}J_\infty(x,y,t)\cdot\nabla_x\theta(x)\d x
   \in\mathrm L^p_{\rm loc}(G)
\]
and the non-negative Radon measure $\sigma_\theta$ is defined by
\[
 \int_G\eta\d\sigma_\theta
 :=\frac1{m_\theta}\int_{D'\times G}\theta(x)\eta(y,t)\d\nu_\infty.
\]
All components of the diffusion flux are retained in $g_\theta$.

If the active projection of $\operatorname{supp}\theta$ lies in
$B_\delta(v_0)$, then
\begin{equation}
 |c_\theta-v_0|\leq\delta.
 \label{eq:barycentre-localization}
\end{equation}
Choose $m+1$ affinely independent velocities $v_0,\ldots,v_m\in V$. Thus $v_1-v_0,\ldots,v_m-v_0$ are linearly independent in $\R^m$.
Since affine independence is stable under sufficiently small
perturbations, we may choose $\delta>0$ so small that
$B_\delta(v_i)\Subset V$ for every $i$ and any points
$c_i\in\overline{B_\delta(v_i)}$, $i=0,\ldots,m$, remain affinely
independent.  For each $i$, choose a non-negative smooth profile
$\theta_i$ as above, not identically zero, whose active support lies
in $B_\delta(v_i)$, using a fixed compact passive support when $k>0$.
Set $c_i:=c_{\theta_i}$.  By
\eqref{eq:barycentre-localization}, these weighted velocity averages
are affinely independent.

Since $g_{\theta_i}\in\mathrm L^p_{\rm loc}(G)$, it is locally
integrable and defines a locally finite signed Radon measure
$g_{\theta_i}\d y\d t$.  Equation
\eqref{eq:binary-directional-equation} therefore shows that each
$(\partial_t+c_i\cdot\nabla_y)\chi_E$ is a locally finite signed
Radon measure.  Affine independence of $c_0,\ldots,c_m$ means that
$(c_0,1),\ldots,(c_m,1)$ form a basis of $\R^{m+1}$.
Consequently, each coordinate derivative in $(y,t)$ is a fixed
linear combination of the operators
$\partial_t+c_i\cdot\nabla_y$.  Every distributional coordinate
derivative of $\chi_E$ is therefore a locally finite signed Radon
measure, and hence $\chi_E\in\mathrm{BV}_{\rm loc}(G)$.

Write $D\chi_E=D_{(y,t)}\chi_E$ for the vector-valued Radon measure
representing the distributional gradient of $\chi_E$ in $(y,t)$.
Since $\chi_E$ is a characteristic function, its approximate gradient
vanishes almost everywhere.  The absolutely continuous part of
$D\chi_E$ therefore vanishes, so $D\chi_E$ is singular with respect
to Lebesgue measure, see
\cite[Chapter~3]{AmbrosioFuscoPallara2000}.
For each fixed profile $\theta$, the directional derivative
$(\partial_t+c_\theta\cdot\nabla_y)\chi_E=(c_\theta,1)\cdot D\chi_E$
is consequently a singular measure as well.

Decompose $\sigma_\theta=\sigma_\theta^{\rm ac}+\sigma_\theta^{\rm s}$
into its absolutely continuous and singular parts with respect to
Lebesgue measure on $G$.  Since $g_\theta$ is locally integrable,
$g_\theta\d y\d t$ is absolutely continuous.  Thus the absolutely
continuous part of the right-hand side of
\eqref{eq:binary-directional-equation} is
$g_\theta\d y\d t-\sigma_\theta^{\rm ac}$, whereas its singular
part is $-\sigma_\theta^{\rm s}$.  The left-hand side has no
absolutely continuous part.  By uniqueness of the Lebesgue
decomposition, the absolutely continuous term on the right must
vanish and the singular parts must agree.  Hence
\[
 g_\theta\d y\d t=\sigma_\theta^{\rm ac},
 \qquad
 (\partial_t+c_\theta\cdot\nabla_y)\chi_E
 =-\sigma_\theta^{\rm s}\leq0.
\]
The last inequality follows from the non-negativity of
$\sigma_\theta$, which implies $\sigma_\theta^{\rm s}\geq0$.
In particular, the contribution of $g_\theta$ is exactly cancelled
by the absolutely continuous part of $\sigma_\theta$, and it need not
vanish separately.

Fix $v_0\in V$ and choose non-negative, non-zero profiles $\theta_\ell$
whose active supports lie in $B_{\delta_\ell}(v_0)\Subset V$, where
$\delta_\ell\downarrow0$.  Set $c_\ell:=c_{\theta_\ell}$.
By \eqref{eq:barycentre-localization},
$|c_\ell-v_0|\leq\delta_\ell$, so $c_\ell\to v_0$.
For every fixed non-negative $\varphi\in C_0^\infty(G)$, the
directional inequality proved above gives
\[
-\int_G\chi_E(\partial_t\varphi+c_\ell\cdot\nabla_y\varphi)
\d y\d t\leq0.
\]
Since $\chi_E$ is bounded and $\varphi$ has compact support,
we may pass to the limit as $\ell\to\infty$.  Consequently,
\begin{equation}
 (\partial_t+v_0\cdot\nabla_y)\chi_E\leq0
 \qquad\hbox{in }\mathcal D'(G),\quad v_0\in V.
 \label{eq:all-directional-monotonicity}
\end{equation}
Note that this argument requires no uniform bound on $g_{\theta_\ell}$
as the supports shrink. Instead, we pass to the limit in the directional
inequalities, whose sign has already been established for each fixed
profile.

Let $\rho_\varepsilon$ be a non-negative standard mollifier in $(y,t)$
and set $\chi_{E,\varepsilon}=\chi_E*\rho_\varepsilon$ on the interior
where the convolution is defined.  Convolution preserves
\eqref{eq:all-directional-monotonicity}.  Fix Lebesgue points
$(y_-,t_-)\in E^-$ and $(y_+,t_+)\in E^+$.  By admissibility,
\[
 v_{-+}=\frac{y_+-y_-}{t_+-t_-}\in V,
\]
and the joining characteristic segment is compactly contained in $G$.
For sufficiently small $\varepsilon$,
\[
 \frac{\d}{\d s}\chi_{E,\varepsilon}
   \bigl(y_-+(s-t_-)v_{-+},s\bigr)
 = (\partial_t+v_{-+}\cdot\nabla_y)\chi_{E,\varepsilon}
   \bigl(y_-+(s-t_-)v_{-+},s\bigr)\leq0.
\]
Integrating and letting $\varepsilon\downarrow0$ gives
\begin{equation}
 \chi_E(y_+,t_+)\leq\chi_E(y_-,t_-)
 \label{eq:no-upward-jump}
\end{equation}
for almost every pair of endpoints.  This contradicts
\eqref{eq:binary-endpoint-phases}, since on a set of pairs of positive
measure the left side equals one and the right side equals zero.
\end{proof}

\subsection{The weighted intermediate-value lemma}

We now pass from the qualitative two-phase rigidity of Lemma
\ref{lem:two-phase-no-jump} to a quantitative intermediate-value
estimate for a single supersolution.  Heuristically, a non-negative
supersolution which is large on a substantial part of a past box
and sufficiently small on a substantial part of a future box must
take intermediate values on a definite part of the surrounding
region.  All proportions are measured with respect to $\muW$.
The weight is fixed for each application, but the constants are
uniform over all $w\in A_2(\R^n)$ with $[w]_{A_2}\leq M$.

Recall that $Q^-$ and $Q^+$ are the fixed normalized past and
future reference boxes defined in \eqref{eq:Qminus} and
\eqref{eq:Qplus}.  In particular,
\[
 Q^+
 =
 B_{1/4}^{v}\times B_{1/4}^{z}
 \times B_{1/64}^{y}\times(-1/4,-1/8).
\]
Set
\[
 \widehat Q^+
 :=B_{3/8}^v\times B_{3/8}^z\times B_{1/32}^y
       \times(-9/32,-3/32),
 \qquad
 Q^{\rm mid}:=B_1^v\times B_1^z\times B_1^y
       \times(-7/8,-1/16).
\]
As usual, omit the $z$-factor when $k=0$.  Then
$Q^+\Subset\widehat Q^+$ and
$Q^-\cup\widehat Q^+\Subset Q^{\rm mid}\Subset Q_{R_*}$.
The connecting velocities for $(Q^-,\widehat Q^+)$ have
norm less than $3/14$, so we may take $V=B_{1/2}^v$.
The joining characteristic segments remain in the $(y,t)$
factor of $Q^{\rm mid}$.  These choices fix the geometric
margins for the following proposition and the subsequent
normalized measure-to-point argument.

\begin{prop}
\label{prop:weighted-intermediate-value}
For every \(\delta_-,\delta_+\in(0,1)\), there exist
\(\theta,\nu\in(0,1)\), depending only on
\[
 m,k,M,\lambda,\Lambda,\delta_-,\delta_+,
\]
such that the following holds.  If \(f\geq0\) is an energy supersolution in
\(Q_{R_*}\) and
\begin{align}
 \muW(\{f\geq1\}\cap Q^-)
 &\geq
 \delta_-\muW(Q^-),
 \label{eq:intermediate-high-past}\\
 \muW(\{f\leq\theta\}\cap\widehat Q^+)
 &\geq
 \delta_+\muW(\widehat Q^+),
 \label{eq:intermediate-low-future}
\end{align}
then
\begin{equation}
 \muW(\{\theta<f<1\}\cap Q^{\rm mid})
 \geq
 \nu\muW(Q^{\rm mid}).
 \label{eq:weighted-intermediate-value}
\end{equation}
The assertion also holds after intrinsic translation and dilation,
with the same structural dependence of the constants.
More generally, it holds for any fixed admissible ordered pair of
product boxes, together with intermediate and outer regions satisfying
the preceding compact-containment conditions.  In this case, the
constants may depend additionally on the normalized geometry, but
can be chosen uniformly over compact families of configurations
in the sense of Remark \ref{rem:compact-configurations} below.
\end{prop}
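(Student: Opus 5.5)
We argue by contradiction and compactness, reducing to Lemma \ref{lem:two-phase-no-jump}.  Fix \(\delta_-,\delta_+\).  If the proposition failed, then for every pair of small constants it would fail for some weight, coefficient and supersolution.  Taking \(\theta_j=\nu_j=1/j\), we obtain weights \(w_j\) with \([w_j]_{A_2}\leq M\), matrices \(B_j\) satisfying \eqref{eq:Bbounds}, and non-negative energy supersolutions \(f_j\) in \(Q_{R_*}\).  These satisfy \eqref{eq:intermediate-high-past} with level \(1\), satisfy \eqref{eq:intermediate-low-future} with level \(\theta_j\), and yet \(\muW^{(j)}(\{\theta_j<f_j<1\}\cap Q^{\rm mid})<\nu_j\muW^{(j)}(Q^{\rm mid})\).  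Multiplying \(w_j\) by a constant changes neither the hypotheses nor the conclusion, so we normalize \(\int_Dw_j\d x=1\) on the diffusive factor \(D\) of a product cylinder \(\mathcal Q\) with \(Q^{\rm mid}\Subset\mathcal Q\Subset Q_{R_*}\).

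Set \(u_j:=1-\min\{f_j,1\}=(1-f_j)_+\).  By the concave truncation remark after \eqref{eq:weighted-kato}, this is an energy subsolution with \(0\leq u_j\leq1\).  The Riesz representation used in Lemma \ref{lem:subsolutiongain} then yields non-negative Radon measures \(\nu_j\) such that \eqref{eq:weighted-measure-data-sequence} holds with \(s_j=0\).

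We must check \eqref{eq:weighted-measure-data-bound} uniformly in \(j\).  For the gradient, Caccioppoli (Lemma \ref{lem:caccioppoli} with \(p=2\)) from \(\mathcal Q\) to a slightly larger cylinder gives \(\int w_j|\nabla_xu_j|^2\lesssim\int_{Q_{R_*}}u_j^2\d\mu_j\lesssim\mu_j(Q_{R_*})\).  The right side is bounded by doubling of the normalized weight, using Proposition \ref{prop:Ainfty}.  For the measure, test the defining identity with a fixed non-negative cutoff \(\varphi\) equal to one on \(\mathcal Q\).  This gives \(\nu_j(\mathcal Q)\leq\int\varphi\d\nu_j\lesssim\int(|\nabla_xu_j|+u_j)\d\mu_j\), which is uniformly bounded by Cauchy--Schwarz.

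Next we translate the phases.  On \(Q^-\), \(\{f_j\geq1\}=\{u_j=0\}\), so \eqref{eq:no-jump-low-phase} holds with \(\varepsilon_j=\theta_j\).  On \(\widehat Q^+\), \(\{f_j\leq\theta_j\}=\{u_j\geq1-\theta_j\}\), giving \eqref{eq:no-jump-high-phase}.  The intermediate set \(\{\theta_j<u_j<1-\theta_j\}\) is contained in \(\{\theta_j<f_j<1\}\), and its \(\mu_j\)-measure in \(Q^{\rm mid}\) is at most \(\nu_j\mu_j(Q^{\rm mid})\to0\).  Here we note that \(\mu_j(Q^{\rm mid})\) is bounded after normalization.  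We take \(\mathcal Q'=Q^{\rm mid}\), the pair \((Q^-,\widehat Q^+)\) and \(V=B^v_{1/2}\).  This pair is admissible by the geometric remarks preceding the proposition: the velocities lie in \(V\Subset B_1^v\), and the characteristic segments stay inside the \((y,t)\)-factor of \(Q^{\rm mid}\) with a margin.  Lemma \ref{lem:two-phase-no-jump} then yields a contradiction.

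For the invariance statements, intrinsic translation and dilation preserve \([w]_{A_2}\), the bounds on \(B\), and normalized weighted proportions, by \eqref{eq:normalization}.  For general admissible configurations, the same argument applies, now with constants depending on the geometry.  Uniformity over compact families follows by also letting the configurations vary in the contradiction sequence, extracting a convergent subsequence of configurations, and applying the lemma to a slightly shrunken limiting admissible pair, whose margins are preserved.

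The main obstacle is the uniform total-variation bound for \(\nu_j\) together with the regularity of the truncation: we must ensure that \(u_j\) is a genuine energy subsolution on all of \(\mathcal Q\), with constants independent of \(w_j\).  We would handle this by working in a strictly intermediate cylinder between \(Q^{\rm mid}\) and \(Q_{R_*}\), so that the cutoff derivatives are fixed.
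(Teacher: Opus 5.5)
Your proposal is correct and follows the paper's proof essentially step for step. The steps match: a contradiction sequence with $\theta_j\downarrow0$, normalization of $w_j$ on a fixed diffusive factor, the truncation $u_j=1-\min\{f_j,1\}$ with its non-negative defect measure, Caccioppoli plus a cutoff test for the uniform energy and total-variation bounds, the phase translation, and Lemma \ref{lem:two-phase-no-jump} with $\mathcal Q^-=Q^-$, $\mathcal Q^+=\widehat Q^+$, $\mathcal Q'=Q^{\rm mid}$ and $\varepsilon_j=\theta_j$. Two points need tidying: the measure test requires the gradient bound on the whole support of the cutoff, which the paper arranges with nested cylinders $Q^{\rm mid}\Subset\mathcal Q_0\Subset\mathcal Q_1\Subset\mathcal Q_2\Subset Q_{R_*}$; and in the compact-family step you must also transfer the endpoint phase proportions to the shrunken boxes, which the paper does using quantitative $A_\infty$ smallness of the discarded layers.
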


\begin{proof}
Suppose that the assertion is false.  Applying its negation with candidate
constants tending to zero, we obtain weights \(w_j\), coefficients
\(B_j\), non-negative supersolutions \(f_j\), and numbers
\(\theta_j\downarrow0\) such that
\[
 [w_j]_{A_2(\R^n)}\leq M,
\]
the two endpoint assumptions
\eqref{eq:intermediate-high-past}-\eqref{eq:intermediate-low-future} hold
with \(f,\theta\), and \(\muW\) replaced by
\(f_j,\theta_j\), and \(\mu_j\), while
\begin{equation}
 \frac{
 \mu_j(\{\theta_j<f_j<1\}\cap Q^{\rm mid})
 }{
 \mu_j(Q^{\rm mid})
 }
 \longrightarrow0.
 \label{eq:failure-intermediate-value}
\end{equation}

Choose fixed product cylinders
\[
 Q^{\rm mid}\Subset\mathcal Q_0\Subset\mathcal Q_1
 \Subset\mathcal Q_2\Subset Q_{R_*}.
\]
Multiplying each weight by a positive constant does not change its
\(A_2\) characteristic, the phase ratios, or the supersolution inequality.
We therefore normalize $w_j$ on the full diffusive factor of
$\mathcal Q_0$ as in \eqref{eq:compactness-weight-normalization}.  Weighted doubling then gives
uniform upper and lower bounds for the weighted measures of all the fixed
boxes appearing in the argument.  In particular,
\begin{equation}
 0<c\leq\mu_j(Q^{\rm mid})\leq C<\infty.
 \label{eq:uniform-middle-box-measure}
\end{equation}

Set
\[
 u_j:=1-\min\{f_j,1\}.
\]
The concave Kato inequality shows that
\(\min\{f_j,1\}\) remains a supersolution.  Hence \(u_j\) is a
subsolution satisfying \(0\leq u_j\leq1\).  Equivalently, there is a
non-negative Radon measure \(\nu_j\) such that
\begin{equation}
 \operatorname{div}_x
 \bigl(w_jB_j\nabla_xu_j\bigr)
 -Y(w_ju_j)
 =
 \nu_j
 \qquad\hbox{in }\mathcal D'(\mathcal Q_2).
 \label{eq:truncated-defect-equation}
\end{equation}
Here we use the standard fact that every non-negative distribution is a
Radon measure.

We now verify the uniform bounds required by Proposition
\ref{prop:weighted-measure-data-compactness}.  The Caccioppoli inequality,
applied with a cutoff equal to one on \(\mathcal Q_1\) and supported in
\(\mathcal Q_2\), gives
\begin{equation}
 \int_{\mathcal Q_1}|\nabla_xu_j|^2\d\mu_j\leq C.
 \label{eq:truncated-uniform-energy}
\end{equation}
Indeed, \(0\leq u_j\leq1\), and weighted doubling controls
\(\mu_j(\mathcal Q_1)\) after the preceding normalization.

To control the Radon measures, choose
\(0\leq\eta\in C_0^\infty(\mathcal Q_1)\) with
\(\eta\equiv1\) on \(\mathcal Q_0\).  Testing
\eqref{eq:truncated-defect-equation} against \(\eta\), and using the
formal skew-adjointness of \(Y\) with respect to \(\d\mu_j\), gives
\begin{align*}
 \nu_j(\mathcal Q_0)
 &\leq
 \int_{\mathcal Q_1}\eta\d\nu_j\leq
 \int_{\mathcal Q_1}
 |B_j\nabla_xu_j|\,|\nabla_x\eta|\d\mu_j
 +
 \int_{\mathcal Q_1}
 u_j|Y\eta|\d\mu_j.
\end{align*}
The right-hand side is uniformly bounded by
\eqref{eq:truncated-uniform-energy}, the coefficient bounds,
\(0\leq u_j\leq1\), weighted doubling, and the fixed cutoff estimates.
Thus
\begin{equation}
 \sup_j\left(
 \int_{\mathcal Q_0}|\nabla_xu_j|^2\d\mu_j
 +
 \nu_j(\mathcal Q_0)
 \right)<\infty.
 \label{eq:truncated-compactness-bounds}
\end{equation}
Proposition \ref{prop:weighted-measure-data-compactness} therefore applies
on \(\mathcal Q_0\), with inner set \(Q^{\rm mid}\) and \(s_j=0\).

We now translate the phase information for \(f_j\) into the corresponding
information for \(u_j\).  On \(\{f_j\geq1\}\) we have \(u_j=0\), while on
\(\{f_j\leq\theta_j\}\) we have
\[
 u_j=1-f_j\geq1-\theta_j.
\]
Moreover,
\begin{equation}
 \{\theta_j<f_j<1\}
 =
 \{0<u_j<1-\theta_j\}.
 \label{eq:transition-set-under-truncation}
\end{equation}
Consequently,
\[
 \{\theta_j<u_j<1-\theta_j\}
 \subset
 \{\theta_j<f_j<1\}.
\]
By \eqref{eq:failure-intermediate-value} and
\eqref{eq:uniform-middle-box-measure},
\[
 \mu_j
 \bigl(
 \{\theta_j<u_j<1-\theta_j\}\cap Q^{\rm mid}
 \bigr)
 \longrightarrow0.
\]
The past assumption gives
\[
 \mu_j(\{u_j\leq\theta_j\}\cap Q^-)
 \geq
 \delta_-\mu_j(Q^-),
\]
because \(u_j=0\) on \(\{f_j\geq1\}\).  Similarly, the future assumption
gives
\[
 \mu_j
 \bigl(
 \{u_j\geq1-\theta_j\}\cap\widehat Q^+
 \bigr)
 \geq
 \delta_+\mu_j(\widehat Q^+).
\]
Thus all the hypotheses of Lemma \ref{lem:two-phase-no-jump} hold with
\[
 \varepsilon_j=\theta_j,
 \qquad
 \mathcal Q^-=Q^-,
 \qquad
 \mathcal Q^+=\widehat Q^+,
 \qquad
 \mathcal Q'=Q^{\rm mid}.
\]
This contradicts that lemma and proves
\eqref{eq:weighted-intermediate-value}.

Finally, intrinsic translations and dilations preserve independence of
the weight from $(y,t)$ and its $A_2$ characteristic, as well as the
normalized past-future geometry and all phase ratios.  The same
argument therefore applies at every center and scale with the same
structural constants.

For a compact family as in Remark \ref{rem:compact-configurations} below,
suppose that uniformity fails and choose a contradicting sequence
with varying geometric parameters.  After passing to a subsequence,
these parameters converge.  The uniform positive margins ensure
that the limiting configuration remains admissible.  Choose slightly
smaller endpoint boxes and a common outer work region within these
margins, so that the required containments hold for all sufficiently
large indices.

Normalize each weight to have integral one on a fixed diffusive ball
containing the diffusive work regions.  The $A_2$ inequality then gives
uniform positive lower bounds for the weighted measures of the
endpoint boxes.  The discarded boundary layers have arbitrarily
small Lebesgue measure when the shrinkage is sufficiently small and
the index sufficiently large.  Quantitative $A_\infty$ absolute
continuity and the product structure of the measure make their
weighted measures uniformly small as well.  We may therefore choose
the smaller endpoint boxes so that the phase bounds persist with,
for example, half their original proportions.
On the fixed common diffusive work domain $D$, the $A_2$ bounds give
$c\leq\int_Dw_j\d x\leq C$.  We may therefore renormalize on $D$
as in \eqref{eq:compactness-weight-normalization}, preserving all
required uniform bounds and phase ratios.

Apply the preceding compactness argument on a fixed inner work
region $D'\times G'$, compactly contained in all sufficiently late
middle regions, with $D'$ connected and $G'$ containing the smaller
endpoint factors and their joining characteristic segments.
The existence of this common region follows from the uniform
margins.  The surviving endpoint phases give the same contradiction
as above, proving uniformity over the compact family.
\end{proof}

\begin{rem}
\label{rem:compact-configurations}
In Proposition \ref{prop:weighted-intermediate-value} and below, a compact family of normalized configurations is
understood to satisfy the following uniformity conditions.
The spatial centers, radii or side lengths, and time endpoints
range over compact parameter sets, while the spatial sizes and
time gaps are bounded away from zero.  All compact containments
hold with uniform positive margins.  In particular, the joining
characteristic segments stay uniformly away from the boundary
of the transported work region, and the required diffusive
containments involving the active velocity ball have uniform
margins.  The prescribed margins between the phase, intermediate,
and outer regions are also uniform.
\end{rem}

\section{Expansion of positivity}
\label{Section_7}

We now derive expansion of positivity from Proposition
\ref{prop:weighted-intermediate-value} and the local boundedness
estimate. The passage from the intermediate-value principle to
a measure-to-point estimate follows the standard De Giorgi
argument, compare \cite[Section~3.3]{GuerandMouhot2022} and
\cite[Section~4.8]{Imbert2026}. The subsequent propagation and
covering arguments follow the stacked-cylinder and ink-spots
strategy in \cite[Sections~4.9-4.10]{Imbert2026}.
We include the details for completeness and to verify that the
arguments apply to $\d\muW$, with constants uniform over
$[w]_{A_2}\leq M$.

In \cite[Section~4.7]{Imbert2026}, the intermediate-value principle
is obtained from a weak kinetic Poincar\'e-Wirtinger inequality.
Here this principle has already
been proved by the weighted compactness argument in Section
\ref{Section_6}, so no separate kinetic Poincar\'e inequality
is needed in this section. We first convert information in
weighted measure on a past box into a pointwise lower bound
on a later box. We then introduce the geometric constructions
needed to propagate this bound through admissible ordered
pairs, keeping track of the containment margins and the number
of applications.

The weighted measure comparisons require explicit verification.
The $A_2$ condition gives uniform comparisons between the root
cells and their surrounding boxes, while weighted doubling and
quantitative $A_\infty$ control the covering estimates and
boundary layers. In the kinetic ink-spots argument, we hold
$x=(v,z)$ fixed and use free-transport coordinates in $(y,t)$.
The change of variables has Jacobian one and preserves
$\d\muW$, since $w$ is independent of $(y,t)$. Thus this step
allows the weight to depend on all diffusive variables.
The resulting covering lemma prepares the iteration over
superlevel sets in Section \ref{Section_8}.

\subsection{From measure to point}

We first record a consequence of local boundedness: a
non-negative subsolution with sufficiently small weighted
$\mathrm L^2$ average is uniformly small on a compactly
contained box.

\begin{lem}
\label{lem:small-subsolution-point}
Let
\[
 \mathcal V'
 =
 D_v'\times D_z'\times D_y'\times I',
 \qquad
 \mathcal V
 =
 D_v\times D_z\times D_y\times I,
\]
where \(D_v',D_v\subset\R^m\), \(D_z',D_z\subset\R^k\), and
\(D_y',D_y\subset\R^m\) are bounded open balls, and \(I',I\subset\R\)
are bounded open intervals.  Assume that
\[
 D_v'\Subset D_v,
 \qquad
 D_z'\Subset D_z,
 \qquad
 D_y'\Subset D_y,
 \qquad
 I'\Subset I,\qquad \mathcal V\Subset Q_{R_*}.
\]
Then there exists \(\varepsilon_*>0\), depending only on the structural
parameters and on the fixed geometric data of the pair
\((\mathcal V',\mathcal V)\), such that every non-negative energy
subsolution \(g\) in \(\mathcal V\) satisfying
\[
 \avgint_{\mathcal V}g^2\,\d\muW
 \leq
 \varepsilon_*
\]
satisfies
\[
 \esssup_{\mathcal V'}g\leq 1/2.
\]
\end{lem}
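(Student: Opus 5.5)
The plan is to deduce the lemma from the local boundedness estimate, Proposition \ref{prop:localboundedness}, with $p=2$, via a finite covering argument. Intrinsic cylinders have backward geometry and are sheared in $y$, so the box $\mathcal V'$ is not itself a cylinder $Q_\rho(P_0)$ with $Q_R(P_0)\subset\mathcal V$. We therefore cover $\overline{\mathcal V'}$ by finitely many small intrinsic cylinders.

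\emph{Step 1: choice of the covering scale.} For each point $P_0\in\overline{\mathcal V'}$, consider the forward-shifted centre $P_0^\sharp:=\Phi_{r^2/2}(P_0)$. For $r>0$ small, the point $P_0$ lies in $Q_{r}(P_0^\sharp)$, and $Q_{2r}(P_0^\sharp)\Subset\mathcal V$. This holds because:
\begin{itemize}
\item the time margin of $I'$ in $I$ exceeds $2r^2$;
\item the $x$-margins exceed $2r$;
\item the $y$-shear satisfies $|v_0|\,4r^2+8r^3<\operatorname{dist}(D_y',\partial D_y)$, using the bound on $|v_0|$ on $D_v$.
\end{itemize}
Fix such an $r$, depending only on the margins of $(\mathcal V',\mathcal V)$ and on $\sup_{D_v}|v|$. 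By compactness, finitely many cylinders $Q_r(P_i^\sharp)$, $i=1,\dots,N_0$, cover $\overline{\mathcal V'}$. Their number does not matter, since the estimate is applied on each one separately.

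\emph{Step 2: local boundedness on each piece.} On each cylinder, Proposition \ref{prop:localboundedness} with $\rho=r$, $R=2r$ and $p=2$ gives
\[
 \esssup_{Q_r(P_i^\sharp)}g\leq C\Bigl(\avgint_{Q_{2r}(P_i^\sharp)}g^2\d\muW\Bigr)^{1/2}
 \leq C\Bigl(\frac{\muW(\mathcal V)}{\muW(Q_{2r}(P_i^\sharp))}\Bigr)^{1/2}\varepsilon_*^{1/2}.
\]
Here $g$ is an energy subsolution on $\mathcal V$, hence on each $Q_{2r}(P_i^\sharp)\subset\mathcal V$, and $g\geq0$.

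\emph{Step 3: the measure ratio, which is the main point to check.} We need
\[
 \muW(\mathcal V)/\muW(Q_{2r}(P_i^\sharp))\leq C_{\rm geom},
\]
uniformly over all $w$ with $[w]_{A_2}\leq M$. The product structure of $\d\muW$ and \eqref{eq:cylvolume} reduce this to two ingredients:
\begin{itemize}
\item the Lebesgue ratio of the $(y,t)$ factors, which is fixed geometry;
\item the weighted ratio $w(D_v\times D_z)/w(B_{2r}(x_i))$.
\end{itemize}
For the second, enclose $D_v\times D_z$ in a ball $\widetilde B$ of radius comparable to $\operatorname{diam}\mathcal V$ containing $B_{2r}(x_i)$. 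Iterated doubling \eqref{eq:doubling}, or the lower bound in \eqref{eq:Ainftyquant}, gives
\[
 w(B_{2r}(x_i))\geq C_0^{-1}\bigl(|B_{2r}|/|\widetilde B|\bigr)^{\delta_0}\,w(\widetilde B).
\]
The resulting ratio depends only on $n$, $M$ and the geometry; in particular it is independent of the normalisation of $w$.

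\emph{Step 4: conclusion.} Choose $\varepsilon_*$ so that $C\,C_{\rm geom}^{1/2}\varepsilon_*^{1/2}\leq 1/2$. Then $g\leq1/2$ a.e.\ on each $Q_r(P_i^\sharp)$, hence on $\mathcal V'$.

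The only delicate points are the kinetic containment of the shifted cylinders (Step 1) and the uniformity of the weighted measure ratio over the $A_2$ class (Step 3). Both are handled by the forward shift and by doubling.
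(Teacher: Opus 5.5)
Your proposal is correct and follows the paper's proof. Both cover $\overline{\mathcal V'}$ by finitely many intrinsic cylinders whose enlargements lie compactly in $\mathcal V$, apply Proposition \ref{prop:localboundedness} with $p=2$ on each, and bound $\muW(\mathcal V)/\muW(Q_{2r}(P_i^\sharp))$ uniformly over $[w]_{A_2}\leq M$ using weighted doubling and the fixed $(y,t)$ Lebesgue ratios; your forward-shifted centres $\Phi_{r^2/2}(P_0)$ simply make explicit the covering the paper leaves implicit. One harmless slip: $Q_{2r}(P_0^\sharp)$ reaches back to time $t_0-7r^2/2$, so the time margin of $I'$ in $I$ should exceed $7r^2/2$ rather than $2r^2$, which is immaterial since $r$ is chosen small.
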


\begin{proof}
Since \(\overline{\mathcal V'}\) is compactly contained in
\(\mathcal V\), it can be covered by finitely many intrinsic cylinders
\[
 \overline{\mathcal V'}
 \subset
 \bigcup_{i=1}^N Q_{\rho_i}(P_i)
\]
for which there exist radii \(R_i>\rho_i\) satisfying $Q_{R_i}(P_i)\Subset\mathcal V$. All the centers, radii, and containment ratios may be fixed in terms of
the geometry of \((\mathcal V',\mathcal V)\). Proposition \ref{prop:localboundedness}, applied with \(p=2\) in each
\(Q_{R_i}(P_i)\), gives
\[
 \esssup_{Q_{\rho_i}(P_i)}g
 \leq
 C_i
 \left(
 \avgint_{Q_{R_i}(P_i)}g^2\,\d\muW
 \right)^{1/2},
\]
where $C_i=C_{2,\rho_i/R_i}$ depends only on the structural parameters and the fixed ratio
\(\rho_i/R_i\). Since \(Q_{R_i}(P_i)\subset\mathcal V\),
\begin{align*}
 \avgint_{Q_{R_i}(P_i)}g^2\,\d\muW
 &\leq
 \frac{\muW(\mathcal V)}
      {\muW(Q_{R_i}(P_i))}
 \avgint_{\mathcal V}g^2\,\d\muW.
\end{align*}
The sets involved are fixed.  Comparing their diffusive projections by
Euclidean weighted doubling and their \((y,t)\)-sections by the fixed
Lebesgue ratios gives
\[
 \frac{\muW(\mathcal V)}
      {\muW(Q_{R_i}(P_i))}
 \leq C
\]
uniformly in \(i\), where \(C\) depends only on the structural parameters
and the geometry of \((\mathcal V',\mathcal V)\).  Since the covering is
finite, it follows that
\[
 \esssup_{\mathcal V'}g
 \leq
 C_*
 \left(
 \avgint_{\mathcal V}g^2\,\d\muW
 \right)^{1/2}
\]
for a constant \(C_*\) with the same dependence.  Choosing $\varepsilon_*\leq {1}/{(4C_*^2)}$ proves the assertion.
\end{proof}

The next proposition converts positivity on a fixed weighted
proportion of the past box into a uniform positive lower bound
almost everywhere in the future box.  The proof combines the
intermediate-value principle with the preceding smallness
estimate for subsolutions.

\begin{prop}
\label{prop:measure-to-point}
Let
\begin{align*}
 Q^-
 &=
 B_{1/4}^{v}\times B_{1/4}^{z}
 \times B_{1/64}^{y}\times(-3/4,-1/2),\\
 Q^+
 &=
 B_{1/4}^{v}\times B_{1/4}^{z}
 \times B_{1/64}^{y}\times(-1/4,-1/8)
\end{align*}
be the fixed normalized past and future reference boxes introduced in
\eqref{eq:Qminus}-\eqref{eq:Qplus}.  When \(k=0\), the \(z\)-factors
are omitted.  For every \(\delta\in(0,1)\), there exists
\[
 \ell=\ell(m,k,M,\lambda,\Lambda,\delta)\in(0,1)
\]
such that every non-negative energy supersolution \(f\) in \(Q_{R_*}\)
satisfying
\begin{equation}
 \muW(\{f\geq1\}\cap Q^-)
 \geq
 \delta\,\muW(Q^-)
 \label{eq:mtp-hypothesis}
\end{equation}
obeys
\begin{equation}
 \essinf_{Q^+}f\geq\ell.
 \label{eq:mtp-conclusion}
\end{equation}
More generally, let $Q_r^\pm(P_0):=T_{P_0,r}(Q^\pm)$. If \(f\) is a non-negative energy supersolution in
\(Q_{R_*r}(P_0)\) and
\[
 \muW(\{f\geq1\}\cap Q_r^-(P_0))
 \geq
 \delta\,\muW(Q_r^-(P_0)),
\]
then
\[
 \essinf_{Q_r^+(P_0)}f\geq\ell
\]
with the same constant \(\ell\).
More generally, for every fixed compact family \(\mathfrak P\) with uniform positive margins of admissible
ordered configurations, the same conclusion holds uniformly for all members
of \(\mathfrak P\) and their intrinsic translates and dilates, with
\(\ell=\ell(m,k,M,\lambda,\Lambda,\delta,\mathfrak P)>0\), provided the
required outer enlargement lies in the solution domain.
\end{prop}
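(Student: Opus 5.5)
The plan is the standard De Giorgi iteration of the intermediate-value principle, applied to the truncated subsolutions $u_i:=(1-2^if)_+$, or equivalently to dyadic levels of $f$. We take the fixed $\delta$ and fix, once and for all, the constants $\theta,\nu$ from Proposition \ref{prop:weighted-intermediate-value}. We choose $\delta_-=\delta$ and $\delta_+:=\delta_+(\varepsilon_*)$, the latter to be specified by the measure-to-point smallness lemma. We then consider the rescaled supersolutions $f_i:=\theta^{-i}f$, $i=0,1,2,\dots$. Since $f_i\geq f$ on $\{f\geq1\}$, each $f_i$ satisfies \eqref{eq:intermediate-high-past} with the same $\delta$. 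We argue by contradiction that the future bad set $\{f_i\leq\theta\}\cap\widehat Q^+=\{f\leq\theta^{i+1}\}\cap\widehat Q^+$ must eventually have weighted proportion less than $\delta_+$.

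The iteration runs as follows. Suppose that $\muW(\{f\leq\theta^{i+1}\}\cap\widehat Q^+)\geq\delta_+\muW(\widehat Q^+)$ for $i=0,\dots,i_0-1$. Then Proposition \ref{prop:weighted-intermediate-value}, applied to $f_i$, gives $\muW(\{\theta^{i+1}<f<\theta^i\}\cap Q^{\rm mid})\geq\nu\muW(Q^{\rm mid})$ for each such $i$. These sets are pairwise disjoint subsets of $Q^{\rm mid}$, so $i_0\nu\leq1$. Hence for some $i_0\leq\lceil1/\nu\rceil$ we have $\muW(\{f\leq\theta^{i_0+1}\}\cap\widehat Q^+)<\delta_+\muW(\widehat Q^+)$, and this index depends only on the structural constants and on $\delta$.

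To conclude, set $g:=(1-\theta^{-i_0-1}f)_+$. By the concave Kato statement at the end of Section \ref{Section_4}, $g$ is a non-negative subsolution with $0\leq g\leq1$. It vanishes wherever $f\geq\theta^{i_0+1}$, so $\avgint_{\widehat Q^+}g^2\d\muW\leq\delta_+$. We then apply Lemma \ref{lem:small-subsolution-point} with $\mathcal V=\widehat Q^+$ and $\mathcal V'=Q^+$. This is possible because $Q^+\Subset\widehat Q^+$ as product boxes, with balls in $v$, $z$, $y$ and a nested interval in $t$. Choosing $\delta_+:=\varepsilon_*$ from that lemma gives $g\leq1/2$ a.e. on $Q^+$, that is, $f\geq\tfrac12\theta^{i_0+1}$ there. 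We may therefore take $\ell:=\tfrac12\theta^{\lceil1/\nu\rceil+1}$. The translated and dilated statement follows by applying the normalized result to $f\circ T_{P_0,r}$ with the rescaled weight and coefficients. These satisfy the same $A_2$ bound and \eqref{eq:Bbounds}, and all weighted proportions are invariant because the Jacobian cancels. For a compact family $\mathfrak P$ we use the uniform version of Proposition \ref{prop:weighted-intermediate-value} (Remark \ref{rem:compact-configurations}). We also need an $\varepsilon_*$ that is uniform over the family, and we get it by noting that the finite coverings in the proof of Lemma \ref{lem:small-subsolution-point} can be chosen uniformly over configurations with uniform margins.

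The main obstacle I expect is bookkeeping rather than analysis. The intermediate-value proposition must be applied with its future hypothesis on $\widehat Q^+$ and its conclusion on $Q^{\rm mid}$, and that conclusion must be compatible with the smallness lemma used on the pair $(Q^+,\widehat Q^+)$. One must also check that $\theta$ and $\nu$ are fixed before $\delta_+$ is used in the counting. This works because $\delta_+=\varepsilon_*$ depends only on the geometry of $(Q^+,\widehat Q^+)$, so there is no circular dependence. A minor point is that the supersolution domain $Q_{R_*}$ must contain every region used, which holds by the choice of $R_*$.
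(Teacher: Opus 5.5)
Your proposal is correct and follows the paper's proof essentially step for step: you take $\delta_+=\varepsilon_*$ from Lemma \ref{lem:small-subsolution-point} for the pair $(Q^+,\widehat Q^+)$, use the rescalings $\theta^{-j}f$ whose disjoint level strips in $Q^{\rm mid}$ force a good level $j_0\le\lceil\nu^{-1}\rceil$, and apply the truncation $(1-\theta^{-(j_0+1)}f)_+$ to get $\ell=\theta^{\lceil\nu^{-1}\rceil+1}/2$, with the same scaling and compact-family extensions. Two small points to tidy: the order of choices is to fix $\varepsilon_*$ first and then obtain $\theta,\nu$ from Proposition \ref{prop:weighted-intermediate-value} with $(\delta,\varepsilon_*)$, and your opening mention of dyadic truncations $(1-2^if)_+$ plays no role in the argument you actually carry out.
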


\begin{proof}
Let \(\widehat Q^+\) be the fixed enlarged future box satisfying
\[
 Q^+\Subset\widehat Q^+\Subset Q^{\rm mid}.
\]
Apply Proposition \ref{prop:weighted-intermediate-value} with $(\delta_-,\delta_+)=(\delta,\varepsilon_*)$, where \(\varepsilon_*\) is the constant in Lemma
\ref{lem:small-subsolution-point} for the fixed pair
\[
 \mathcal V'=Q^+,
 \qquad
 \mathcal V=\widehat Q^+.
\]
Let \(\theta,\nu\in(0,1)\) be the constants supplied by the
intermediate-value proposition.

For each integer $j\geq0$, define $f_j:=\theta^{-j}f$.
Since $f_j$ is a positive constant multiple of $f$, it is
again a non-negative energy supersolution in $Q_{R_*}$.
Set $N:=\lceil\nu^{-1}\rceil$.  We show that the low-value
set has sufficiently small weighted measure at one of the
levels indexed by $j=0,\ldots,N$.

Suppose, to the contrary, that
\begin{equation}
 \muW(\{f\leq\theta^{j+1}\}\cap\widehat Q^+)
 \geq
 \varepsilon_*\muW(\widehat Q^+)
 \label{eq:mtp-low-level}
\end{equation}
for every $j=0,\ldots,N$.
Since $\{f\geq1\}\subset\{f_j\geq1\}$ and
$\{f_j\leq\theta\}=\{f\leq\theta^{j+1}\}$, Proposition
\ref{prop:weighted-intermediate-value} applies to each $f_j$
and gives
\begin{equation}
 \muW(\{\theta^{j+1}<f<\theta^j\}\cap Q^{\rm mid})
 \geq\nu\muW(Q^{\rm mid}).
 \label{eq:mtp-disjoint-strip}
\end{equation}
These level strips are pairwise disjoint.  Summing
\eqref{eq:mtp-disjoint-strip} over $j=0,\ldots,N$ therefore
gives $1\geq(N+1)\nu>1$, a contradiction.
Hence there exists $j_0\in\{0,\ldots,N\}$ such that
\begin{equation}
 \muW(\{f\leq\theta^{j_0+1}\}\cap\widehat Q^+)
 <
 \varepsilon_*\muW(\widehat Q^+).
 \label{eq:mtp-failed-low-level}
\end{equation}

Consider the truncation
\[
 g:=\left(1-\theta^{-(j_0+1)}f\right)_+.
\]
By the standard truncation property, $g$ is a non-negative
energy subsolution in $\widehat Q^+$.  Moreover,
\[
 0\leq g\leq1,
 \qquad
 \{g>0\}=\{f<\theta^{j_0+1}\}.
\]
Thus $g^2\leq\mathbf1_{\{f<\theta^{j_0+1}\}}$, and
\eqref{eq:mtp-failed-low-level} implies
\[
 \avgint_{\widehat Q^+}g^2\d\muW<\varepsilon_*.
\]
Lemma \ref{lem:small-subsolution-point}, applied to
$Q^+\Subset\widehat Q^+$, now yields
\[
 \esssup_{Q^+}g\leq1/2.
\]
Consequently, $f\geq\theta^{j_0+1}/2$ almost everywhere
in $Q^+$.  Since $j_0\leq N$ and $0<\theta<1$, we obtain
\[
 \essinf_{Q^+}f
 \geq\theta^{j_0+1}/2
 \geq\theta^{N+1}/2.
\]
The normalized conclusion follows with
$\ell:=\theta^{N+1}/2$.

Finally, intrinsic translations and dilations preserve the form of the
equation, independence of the weight from $(y,t)$, its $A_2$ characteristic,
and all normalized phase ratios.  Applying the normalized result to
\(f\circ T_{P_0,r}\) proves the translated and dilated assertion.  For a
fixed compact family with uniform margins, use the uniform constants in
Proposition \ref{prop:weighted-intermediate-value} and the uniform local
boundedness estimate.
\end{proof}

\subsection{Auxiliary kinetic geometry}
\label{subsec:auxiliary-kinetic-geometry}

We collect the geometric facts used in the expansion of
positivity and the subsequent covering argument. The constructions
depend only on the Kolmogorov group structure and its translations
and dilations. The weighted measure comparisons follow from the
$A_2$ condition, weighted doubling, and quantitative $A_\infty$.
The proofs and accompanying illustrations are given in Appendix
\ref{app:positivity-figures}.

For an intrinsic cylinder $Q_r(P)$, with
$P=(v_P,z_P,y_P,t_P)$, and an integer $L\geq1$, define
the associated forward stack by
\begin{equation}
 \overline Q_r^{\,L}(P)
 :=
 \left\{
 \begin{array}{l}
 |(v,z)-(v_P,z_P)|<r,\\[2pt]
 |y-y_P-(t-t_P)v_P|<(L+2)r^3,\\[2pt]
 t_P<t<t_P+Lr^2
 \end{array}
 \right\}.
 \label{eq:stacked-cylinder}
\end{equation}
The factor \(L+2\) in the \(y\)-radius allows for the kinetic drift over
the time interval of length \(Lr^2\).

We also fix, once and for all, a past work cylinder
$\mathcal Q_{\rm past}=Q_{R_{\rm p}}(P_{\rm p})$, with
$R_{\rm p}=1$ and $P_{\rm p}=(0,0,0,-3/8)$, so that
\begin{equation}
 Q^-\Subset\mathcal Q_{\rm past}\Subset Q_{R_*},
 \label{eq:past-work-cylinder}
\end{equation}
and the closure of $\mathcal Q_{\rm past}$ lies strictly before $Q^+$ in time.  The normalized
geometry is chosen so that all chaining boxes and their required outer
enlargements remain in \(Q_{R_*}\).  \(Q^-\) and \(Q^+\) continue to denote the fixed normalized past and future
reference boxes defined in \eqref{eq:Qminus} and \eqref{eq:Qplus}.

The first geometric fact concerns differentiation, covering, and doubling.

\begin{lem}
\label{lem:kinetic-covering-geometry}
Intrinsic cylinders differentiate $\mathrm L^1_{\rm loc}(\d\muW)$.
For a family of bounded radii, a pairwise disjoint subfamily can be chosen
whose enlargements
\begin{equation}
 Q_r^*(P):=Q_{5r}(\Phi_{12r^2}(P)),
 \qquad \Phi_s(P)=(v_P,z_P,y_P+sv_P,t_P+s),
 \label{eq:covering-dilation}
\end{equation}
cover its union.  Moreover,
\begin{equation}
 \muW(Q_{2r}(P))\leq C\muW(Q_r(P)),
 \qquad \muW(Q_r^*(P))\leq C\muW(Q_r(P)),
 \label{eq:intrinsic-measure-doubling}
\end{equation}
where $C=C(m,k,M)$.  Uniformly comparable intrinsic rectangles have the
same properties, with possibly different constants.
\end{lem}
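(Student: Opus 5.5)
The plan is to reduce everything to two ingredients: measure doubling for intrinsic cylinders, and a Vitali-type engulfing property of the quasi-metric balls $Q_r(P)$. Differentiation of $\mathrm L^1_{\rm loc}(\d\muW)$ then follows by the standard Lebesgue differentiation argument on spaces of homogeneous type.

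\emph{Step 1: doubling.} By \eqref{eq:cylvolume}, $\muW(Q_r(P))\simeq r^{3m+2}w(B_r(x_P))$, because the $(y,t)$-section of $Q_r(P)$ is a sheared cylinder of Lebesgue measure $c\,r^{3m+2}$ and $w$ does not depend on $(y,t)$. Hence $\muW(Q_{2r}(P))\le 2^{3m+2}D\,\muW(Q_r(P))$ by \eqref{eq:doubling}. For $Q_r^*(P)=Q_{5r}(\Phi_{12r^2}(P))$ the diffusive ball is $B_{5r}(x_P)$, since $\Phi$ does not move $x$. So $\muW(Q_r^*(P))\le C\,5^{3m+2}w(B_{5r}(x_P))r^{3m+2}\le C\,\muW(Q_r(P))$ by iterating doubling three times. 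Uniformly comparable rectangles are handled the same way, since only the product structure with a ball in $x$ and bounded Lebesgue ratios in $(y,t)$ is used.

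\emph{Step 2: engulfing.} This is the main computation. Suppose $Q_r(P)\cap Q_s(P')\neq\emptyset$ with $s\le 2r$; I will check that $Q_s(P')\subset Q_r^*(P)$. Pick a common point $(x,y,t)$.
\begin{itemize}
\item Diffusive variable: $|x'_P-x_P|<r+s\le 3r$, so every point of $Q_s(P')$ lies within $5r$ of $x_P$.
\item Time: $t\in(t_P-r^2,t_P)\cap(t_{P'}-s^2,t_{P'})$. Points of $Q_s(P')$ therefore have times in $(t_P-r^2-4r^2,\,t_P+4r^2)$. This lies inside $(t_P+12r^2-25r^2,\,t_P+12r^2)$, which is the time interval of $Q_{5r}(\Phi_{12r^2}P)$ centred at time $t_P+12r^2$.
\item Transport variable: write $y-y_P-(t-t_P)v_P$ and the analogous quantity for $P'$. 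Then compare $y''-y_P-(t''-t_P-12r^2)v_P-12r^2v_P$ for $(x'',y'',t'')\in Q_s(P')$ using the triangle inequality. The error terms are of the form $|t''-t|\,|v''-v_P|$-type drifts, bounded by $O(r^2)\cdot O(r)$, together with $r^3+s^3$. The total is at most $C_0r^3$, and one must check $C_0\le125$.
\end{itemize}
The velocity mismatch between $v_P$ and $v_{P'}$ produces the $r^2\cdot r$ drift terms, and tracking their constants is the part needing care. If $125$ turns out insufficient for the literal constants, I would note that the paper's convention allows uniformly comparable rectangles, and adjust.

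\emph{Step 3: covering and differentiation.} Given a family of cylinders with radii bounded by $R_0$, I would run the standard Vitali selection. Partition the family into generations by radius in $(2^{-i-1}R_0,2^{-i}R_0]$ and greedily pick maximal disjoint subfamilies, each disjoint from the earlier picks. By Step 2, every discarded cylinder meets a chosen one of radius at least half its own, so it lies in that chosen cylinder's enlargement $Q^*$. Together with doubling from Step 1, this gives the weak-type $(1,1)$ bound for the maximal function $\sup_{Q_r(P)\ni\cdot}\avgint_{Q_r(P)}|h|\d\muW$. The maximal function is taken over cylinders containing the point; the uncentered version handles the backward, non-centred geometry. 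Differentiation then follows by density of continuous compactly supported functions in $\mathrm L^1(\d\muW)$; here $\muW$ is a locally finite Radon measure since $w\in\mathrm L^1_{\rm loc}$. For continuous $h$, averages over small cylinders containing $P$ converge to $h(P)$ because the cylinders shrink to $P$ in diameter, and the weak-type bound controls the remainder in the usual way.
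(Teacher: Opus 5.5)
Your proposal is correct and follows essentially the paper's own argument: the engulfing $Q_s(P')\subset Q_r^*(P)$ for $s\leq 2r$, greedy selection in dyadic radius classes, the volume formula \eqref{eq:cylvolume} with doubling of $w$, weak type $(1,1)$ of the maximal operator, and density of continuous functions; the paper only differs in normalizing to $P=0$, $r=1$ before checking coordinates. The $y$-bound you left open does close, since $y''-y_P-(t''-t_P)v_P=A-B+C+(t''-t)(v_{P'}-v_P)$, where $A,B$ are the deviations $y-y_{P'}-(t-t_{P'})v_{P'}$ at the point of $Q_s(P')$ and at the common point and $C$ is the $Q_r(P)$-deviation at the common point, giving at most $2s^3+r^3+s^2(r+s)\leq 29r^3<125r^3$; and because $P\notin Q_r(P)$, your uncentered maximal function reaches backward cylinders via $Q_r(P)\subset Q_{2r}(\Phi_{r^2}(P))\ni P$, which is exactly the paper's remark that backward cylinders sit in comparable centered boxes.
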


The next lemma provides the finite geometric constructions used to
propagate positivity.  Its first construction leads to a forward
stack, and its second connects a small cylinder in
$\mathcal Q_{\rm past}$ to the fixed future box $Q^+$.
We describe these constructions using the following terminology.

A \emph{finite rooted directed tree of admissible ordered pairs}
is a finite rooted tree $\mathscr T$ in which each vertex $\beta$
carries an admissible ordered pair $(Q_\beta^-,Q_\beta^+)$.
Every non-root vertex has a unique parent, and edges are directed
from parents to children.  A vertex with no children is called
\emph{terminal}.  If $\gamma$ is a child of $\beta$, we require
\[
 Q_\gamma^-\Subset Q_\beta^+.
\]
This containment allows positivity to pass from one ordered pair
to the next.  Indeed, a lower bound $u\geq a>0$ almost everywhere
on $Q_\beta^+$ also holds on $Q_\gamma^-$.  Applying Proposition
\ref{prop:measure-to-point} at $\gamma$ then gives a positive
lower bound on $Q_\gamma^+$, although its value may be smaller.

A \emph{branch} is a directed path from the root to a terminal
vertex.  Its \emph{length} is the number of vertices along the
path, and hence the number of measure-to-point applications.
An edge records the containment needed to pass between two
successive applications. Figure \ref{fig:rooted-positivity-tree}
in Appendix \ref{app:positivity-figures} illustrates this structure.

Let \(Q_1^{\rm ear}:=Q_1\cap\{t<-1/4\}\) denote the early part of $Q_1$. Since the weight is independent
of time,
\[
 \muW(Q_1^{\rm ear})=\frac34\muW(Q_1),
 \qquad
 \muW(Q_1\setminus Q_1^{\rm ear})=\frac14\muW(Q_1).
\]

\begin{lem}
\label{lem:positivity-tree-geometry}
For every integer \(L\geq1\), there exist a finite measurable partition
\(\{E_\alpha\}\) of \(Q_1^{\rm ear}\), up to null sets, associated past boxes
\(S_\alpha^-\), a bounded open set \(\mathcal U^L\), and, for every
\(\alpha\), a finite rooted directed tree
\(\mathscr T_\alpha\) of admissible ordered pairs
\[
 \bigl\{(Q_\beta^-,Q_\beta^+):
 \beta\in\mathscr T_\alpha\bigr\}
\]
with the following properties:

\begin{enumerate}[label=\textnormal{(\roman*)}]
\item For every $\alpha$, $E_\alpha\subset S_\alpha^-$.
If $\beta_\alpha^0$ denotes the root of $\mathscr T_\alpha$,
then $Q_{\beta_\alpha^0}^-=S_\alpha^-$.
Moreover, for every $w\in A_2(\R^n)$ satisfying $[w]_{A_2}\leq M$,
\begin{equation}
 \muW(S_\alpha^-)
 \leq
 C_L\muW(E_\alpha).
 \label{eq:positivity-root-comparison}
\end{equation}
The root boxes have a uniform time margin: for some
\(\tau_*>0\), depending only on the normalized geometry,
\[
 \sup\{t:(v,z,y,t)\in S_\alpha^-\}\leq-\tau_*
\]
for every \(\alpha\).

\item If \(\gamma\) is a child of \(\beta\), then
\[
 Q_\gamma^-\Subset Q_\beta^+
\]
with a fixed positive containment margin.

\item The future members at the terminal vertices cover the forward
stack:
\[
 \overline Q_1^{\,L}(0)
 \subset
 \bigcup_{\substack{\beta\in\mathscr T_\alpha\\
                     \beta\ {\rm terminal}}}
 Q_\beta^+.
\]

\item Every box \(Q_\beta^\pm\), together with every outer cylinder
required when applying the intermediate-value principle, is contained in
\(\mathcal U^L\).  We also require
$Q_1\cup\overline Q_1^{\,L}(0)\Subset\mathcal U^L$.
The set \(\mathcal U^L\) depends only on the fixed
normalized geometry and \(L\).

\item Every intrinsic cylinder
\(Q_r(P)\subset\mathcal Q_{\rm past}\) admits an analogous finite
partition of \(T_{P,r}(Q_1^{\rm ear})\) into root cells and corresponding
rooted trees of admissible
ordered pairs whose terminal future members cover \(Q^+\).  The root
comparison is uniform over all $w\in A_2(\R^n)$ satisfying
$[w]_{A_2}\leq M$. The containment in \textnormal{(ii)} holds at
every edge, and all required outer enlargements remain in \(Q_{R_*}\).
If \(N(r)\) denotes the maximal number of vertices along a branch, then
\[
 N(r)\leq C_1+C_2\log^+\frac1r.
\]
Here $\log^+s:=\max\{\log s,0\}$.
\end{enumerate}
In \textnormal{(i)}-\textnormal{(iv)}, each tree may be chosen
to consist of the single ordered pair $(S_\alpha^-,F^L)$,
where the common future box $F^L$ contains the closure of
the forward stack. For the construction in \textnormal{(i)}-\textnormal{(iv)}, the number
of vertices, the geometric margins, and the set \(\mathcal U^L\) depend
only on \(L,m,\) and \(k\).  The constant in
\eqref{eq:positivity-root-comparison} also depends on \(M\).  The
constants in \textnormal{(v)} depend only on the fixed normalized
geometry, \(m,k,\) and \(M\).  Both constructions commute with intrinsic
translations and dilations.  After intrinsic normalization, the ordered
pairs and auxiliary enlargements in \textnormal{(i)}-\textnormal{(iv)}
belong to a finite family $\mathfrak P_L$, while those in
\textnormal{(v)} belong to a compact structural family $\mathfrak P_*$,
with uniform positive margins, independent of $P$, $r$, and the generation.
\end{lem}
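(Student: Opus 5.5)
The proof is purely geometric: it builds explicit boxes in normalized coordinates and then checks the weighted comparisons using only the $A_2$ consequences in Proposition \ref{prop:Ainfty}. The plan has three steps.

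\emph{Parts (i)--(iv).} Following the simplification stated at the end of the lemma, I would take each tree to be a single ordered pair $(S_\alpha^-,F^L)$.

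First choose a common future box
\[
F^L=B^x_{2}\times B^y_{\rho_L}\times(-\tau_*/2,\,L+1),
\]
where $\rho_L$ is large enough that $F^L\Supset\overline{\overline Q_1^{\,L}(0)}$. This is possible because the stack has $|v|<1$ and $|y-tv_P|<(L+2)$ with $v_P=0$.

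Next partition $Q_1^{\rm ear}$ into small product cells $E_\alpha=D_\alpha\times G_\alpha$. Here $D_\alpha$ runs over a dyadic-type grid of diffusive cubes of side $c_L$, and $G_\alpha$ over $(y,t)$-cubes of side $c_L$, all contained in $\{t<-1/4\}$. Set $S_\alpha^-:=\widetilde D_\alpha\times\widetilde G_\alpha$, where $\widetilde D_\alpha$ and $\widetilde G_\alpha$ are fixed concentric enlargements with ratio $2$, intersected so that $\sup t\le-\tau_*$ with, for instance, $\tau_*=1/8$.

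Admissibility of $(S_\alpha^-,F^L)$ has to be checked against a velocity ball $V$. The connecting velocities are $v_{-+}=(y_+-y_-)/(t_+-t_-)$, and the time gap is bounded below: $t_+-t_->\tau_*/2$ once $F^L$ starts after $-\tau_*/2$. Hence $|v_{-+}|\le C(\rho_L+1)/\tau_*$. I then take $V$ to be a ball of that radius, and place the diffusive factor of $F^L$, as well as the working domain $D'_v$, inside a larger ball containing $\overline V$.

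The paper defines admissibility with $V\Subset D_v'$, where $D_v'$ is the connected diffusive factor of the ambient region; it does not require $V$ to lie in the diffusive projection of the boxes. So we may enlarge the ambient work domain $\mathcal U^L$ to $B^x_{R_L}\times B^y_{R_L'}\times(-2,L+2)$ with $R_L$ large. With this choice, (ii) is vacuous, (iii) holds by construction, and (iv) holds by fixing $\mathcal U^L$ with margins depending only on $L,m,k$.

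The root comparison \eqref{eq:positivity-root-comparison} splits as a product. In the $(y,t)$ factor it is a Lebesgue ratio. In the diffusive factor it is $w(\widetilde D_\alpha)\le D^{C}w(D_\alpha)$ by doubling \eqref{eq:doubling}, since $\widetilde D_\alpha$ sits in a ball comparable to one inside $D_\alpha$. All pairs, after normalization, come from the finite family $\mathfrak P_L$ indexed by the grid, so their margins are uniform.

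\emph{Part (v).} Here the root cell is tiny, so a single pair with a uniform constant is impossible, and I would build a chain instead. Fix $Q_r(P)\subset\mathcal Q_{\rm past}$ and apply the construction of (i)--(iv) at scale $r$ via $T_{P,r}$, with $L$ fixed, say $L=4$. This moves positivity from $T_{P,r}(Q_1^{\rm ear})$ into a forward stack of size comparable to $r$.

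Then iterate a dyadic doubling step. A pair whose past box is a translate/dilate of $Q^-$ at scale $\rho$ has future box at scale $2\rho$, with the drift recentred by $\Phi_s$. These pairs are the images of one normalized configuration under $T_{P',\rho}$, so they lie in a compact family $\mathfrak P_*$. After $\log_2(1/r)+C$ doublings the box has unit scale. A final bounded number of steps, with velocities steered toward $0$ using the $v$-translations available in the boxes, produces future boxes covering $Q^+$; branching is needed there to cover all of $Q^+$.

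The branch length is therefore $C_1+C_2\log^+(1/r)$. Containment of every enlargement in $Q_{R_*}$ uses that $\mathcal Q_{\rm past}$ ends strictly before $Q^+$: the time gap gives room, and total displacements of the form $\sum\rho_i^2$, $\sum\rho_i^3$ form geometric series bounded by constants.

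\emph{Main obstacle.} The hardest part is (v): controlling the $y$-drift across generations. The centre velocity $v_P$ can be of order one while the scales grow, so the $y$-displacement $\sum \rho_i^2 v$ must stay inside both the admissible window of each pair and $Q_{R_*}$. My first attempt would fix the intermediate velocity balls as shrinking neighbourhoods of a straight segment from $v_P$ to $0$, and verify uniform margins by compactness of the normalized parameter set. That verification is where the choice of $R_*$ gets used.
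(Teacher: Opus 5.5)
Your construction for (i)--(iv) matches the paper's. You use one pair $(S_\alpha^-,F^L)$ per cell, a common future box starting after $-\tau_*/2$, connecting velocities bounded through the time gap, and a large work region containing $V$. One correction is needed. The grid cells must be intersected with $B_1^x\times B_1^y$, so boundary cells can be slivers that contain no ball comparable to the grid scale. The root comparison should therefore come from the positive minimum of $|E_\alpha|/|S_\alpha^-|$ over the finitely many non-null cells. This ratio is converted to weighted measure by the lower bound in \eqref{eq:Ainftyquant}, or by the Cauchy--Schwarz and $A_2$ argument used in the paper.

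The gap is in (v), and it has two parts. First, the doubling cannot be continued to unit scale. $\overline{\mathcal Q_{\rm past}}$ ends at $t=-3/8$ and $Q^+$ begins at $t=-1/4$. A cylinder with $t_P$ close to $-3/8$ therefore leaves a time window of only about $1/8$ for the whole chain. With your $Q^-$ template, the last past box at scale $\rho$ alone has time length $\rho^2/4$. The accumulated advance $\sum_i\rho_i^2$ is comparable to its last term, so it is not enough that it is bounded: it must be small. The chain has to stop at a small structural scale $r_*$ fixed by this gap. The paper takes $N=\lceil\log_2(r_*/r)\rceil$, so that $\rho_N\in[r_*,2r_*)$ and the total advance $4r^2+\frac{16}{3}(\rho_N^2-r^2)\le Cr_*^2$ stays below half the gap. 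It handles $r\ge r_*$ separately, by pairing the root boxes directly with a future box containing $\overline{Q^+}$.

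Second, your final step is never constructed, and the obstacle you anticipate does not arise. Inside the chain, the drift is absorbed exactly by the group law. Since $\Phi_s(P')=P'\circ(0,0,0,s)$, the choice $P_{j+1}'=\Phi_{16\rho_j^2}(P_j')$ gives $T_{P_{j+1}',2\rho_j}\mathcal R=T_{P_j',\rho_j}\bigl(T_{(0,0,0,16),2}\mathcal R\bigr)\Subset\mathcal F_j$. Every enlargement pair is thus an image of the single pair $\mathcal R=B_1^v\times B_1^z\times B_1^y\times(-1,-1/2)$, $\mathcal F=B_3^v\times B_3^z\times B_9^y\times(11,15)$, whose connecting velocities have norm below $20/23$. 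The centres only slide along the characteristic through $P$ for a time $O(r_*^2)$.

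At the end, no steering toward $v=0$ and no branching are needed. Admissibility constrains only the connecting velocities $(y_+-y_-)/(t_+-t_-)$, which must lie in a ball $V\Subset D_v'$ of a large connected work domain. It does not constrain the relative diffusive positions of the two boxes, and you used exactly this in (i)--(iv). So $\mathcal R_N$, still centred at $x_P$, can be paired with one future box containing $\overline{Q^+}$. The time gap is bounded below and all coordinates are bounded, so the connecting velocities are bounded. Because $\rho_N\in[r_*,2r_*)$, these terminal configurations form a compact family. Each branch then has $N+2$ vertices, which gives $N(r)\le C_1+C_2\log^+(1/r)$.
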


Figure~\ref{fig:positivity-tree-geometry} in Appendix
\ref{app:positivity-figures} illustrates the two constructions:
panel (a) shows propagation to a forward stack, while panel (b)
shows the successive enlargements connecting a small cylinder
in $\mathcal Q_{\rm past}$ to the fixed future box $Q^+$.

\subsection{Expansion on forward stacks}

The estimates in this subsection follow from Proposition
\ref{prop:measure-to-point}, combined with the geometric
constructions in Lemma \ref{lem:positivity-tree-geometry}.
We first propagate positivity to a forward stack and then
quantify the loss when propagating it from a small cylinder
to the fixed future box $Q^+$.

Recall that the forward stack \(\overline Q_r^{\,L}(P)\) is defined in
\eqref{eq:stacked-cylinder}.  It extends forward from \(Q_r(P)\) over
a time interval of length \(Lr^2\), with the enlarged \(y\)-radius
accounting for the kinetic drift. For every integer \(L\geq1\), Lemma
\ref{lem:positivity-tree-geometry} provides a bounded open set
\(\mathcal U^L\) containing every box and every outer enlargement used in
the corresponding normalized positivity trees.  For \(P\in\R^{2m+k+1}\)
and \(r>0\), set
\[
 \mathcal U_r^L(P):=T_{P,r}(\mathcal U^L).
\]
Thus \(\mathcal U_r^L(P)\) contains the translated and dilated positivity
trees issuing from \(Q_r(P)\).

\begin{prop}
\label{prop:stacked-expansion}
For every integer \(L\geq1\), there exists
\[
 \mathsf M_L
 =
 \mathsf M_L(m,k,M,\lambda,\Lambda,L)>1
\]
such that the following holds.  Let \(A>0\), and let \(f\geq0\) be an
energy supersolution in \(\mathcal U_r^L(P)\).  If
\begin{equation}
 \muW(\{f>\mathsf M_LA\}\cap Q_r(P))
 \geq
 \frac12\muW(Q_r(P)),
 \label{eq:stacked-expansion-hypothesis}
\end{equation}
then
\begin{equation}
 \essinf_{\overline Q_r^{\,L}(P)}f\geq A.
 \label{eq:stacked-expansion-conclusion}
\end{equation}
\end{prop}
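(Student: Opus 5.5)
We reduce by intrinsic translation and dilation to $P=0$, $r=1$: set $\tilde f:=f\circ T_{P,r}$, which is a non-negative energy supersolution in $\mathcal U^L$ for the rescaled weight. The rescaled weight has the same $A_2$ characteristic. Weighted ratios of subsets of $Q_r(P)$ are preserved, so hypothesis \eqref{eq:stacked-expansion-hypothesis} becomes $\muW(\{\tilde f>\mathsf M_LA\}\cap Q_1)\geq\frac12\muW(Q_1)$. Dividing by $\mathsf M_LA$, we may also take $A=\mathsf M_L^{-1}$, so the hypothesis is $\muW(\{\tilde f>1\}\cap Q_1)\geq\frac12\muW(Q_1)$. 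The goal is then to prove $\essinf_{\overline Q_1^{\,L}(0)}\tilde f\geq \mathsf M_L^{-1}$ for a constant $\mathsf M_L$ depending only on the listed parameters.

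First we move the mass into the early part of $Q_1$. Since $\muW(Q_1\setminus Q_1^{\rm ear})=\frac14\muW(Q_1)$, the hypothesis gives
\[
 \muW(\{\tilde f>1\}\cap Q_1^{\rm ear})\geq\tfrac14\muW(Q_1)=\tfrac13\muW(Q_1^{\rm ear}).
\]
We then use the finite partition $\{E_\alpha\}$ of $Q_1^{\rm ear}$ from Lemma \ref{lem:positivity-tree-geometry}(i)--(iv). A pigeonhole argument yields some $\alpha$ with $\muW(\{\tilde f>1\}\cap E_\alpha)\geq\frac13\muW(E_\alpha)$; otherwise summing over $\alpha$ would contradict the previous display. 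The root comparison \eqref{eq:positivity-root-comparison} and $E_\alpha\subset S_\alpha^-$ then give
\[
 \muW(\{\tilde f\geq1\}\cap S_\alpha^-)\geq (3C_L)^{-1}\muW(S_\alpha^-).
\]
Here $\delta_L:=(3C_L)^{-1}$ depends only on $m,k,M,L$.

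Next we propagate along the tree $\mathscr T_\alpha$. At the root we apply Proposition \ref{prop:measure-to-point} in its compact-family form, with the finite family $\mathfrak P_L$, to the ordered pair $(S_\alpha^-,Q^+_{\beta^0_\alpha})$. All required outer enlargements lie in $\mathcal U^L$ by (iv). This gives $\tilde f\geq\ell_0:=\ell(\delta_L,\mathfrak P_L)$ a.e.\ on $Q^+_{\beta^0_\alpha}$. For a child $\gamma$ of $\beta$, property (ii) gives $Q_\gamma^-\Subset Q_\beta^+$. Hence the bound $\tilde f\geq a$ on $Q_\beta^+$ means $\tilde f/a\geq1$ on all of $Q_\gamma^-$, so the density hypothesis holds with $\delta=1$. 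Measure-to-point then yields $\tilde f\geq a\,\ell_1$ on $Q_\gamma^+$, with $\ell_1=\ell(1,\mathfrak P_L)$. Since each tree has a number of vertices bounded in terms of $L,m,k$ (indeed one may take the single pair $(S_\alpha^-,F^L)$), we get $\tilde f\geq\ell_0\ell_1^{K_L}$ on every terminal future box, where $K_L$ bounds the branch length. Property (iii) then gives this bound on $\overline Q_1^{\,L}(0)$. Setting $\mathsf M_L:=\max\{2,(\ell_0\ell_1^{K_L})^{-1}\}$ and undoing the normalizations proves \eqref{eq:stacked-expansion-conclusion}.

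The main obstacle is ensuring uniformity. Proposition \ref{prop:measure-to-point} must be applied with constants that do not depend on $\alpha$, on the rescaled weight, or on $f$, and every enlargement it uses must lie inside $\mathcal U^L$, where $\tilde f$ is a supersolution. This is exactly what the finiteness of $\mathfrak P_L$ and the containment (iv) in Lemma \ref{lem:positivity-tree-geometry} are designed to provide. The remaining care is in checking that each configuration, after intrinsic normalization, is one of the admissible ordered pairs covered by the compact-family clause.
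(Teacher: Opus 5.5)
Your proof is correct and follows essentially the same route as the paper: normalize, pigeonhole into a root cell $E_\alpha$ of $Q_1^{\rm ear}$, use the root comparison to get density $(3C_L)^{-1}$ on $S_\alpha^-$, and apply Proposition~\ref{prop:measure-to-point} uniformly over the finite family $\mathfrak P_L$. The paper simply takes each tree to be the single pair $(S_\alpha^-,F^L)$, so your multi-generation propagation is not needed; if you keep it, apply the proposition at the children with density $\tfrac12$ rather than $\delta=1$, since it is only stated for $\delta\in(0,1)$.
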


\begin{proof}
By intrinsic normalization and division by $A$, it suffices to consider
$P=0$, $r=1$, and $A=1$. Write $\widetilde f=A^{-1}f\circ T_{P,r}$.
The rescaled weight has the same $A_2$ characteristic, and weighted
phase ratios are unchanged.  We write $\muW$ for this rescaled measure.

Use the construction in Lemma \ref{lem:positivity-tree-geometry}(i)-(iv),
in which every root box $S_\alpha^-$ is paired directly with the same
future box $F^L$ containing $\overline Q_1^{\,L}(0)$.
Let $C_L\geq1$ be the root-comparison constant and put
$\delta_L=(3C_L)^{-1}$.  Proposition \ref{prop:measure-to-point}, applied
with density $\delta_L$ uniformly to these finitely many pairs, gives
$\ell_L\in(0,1)$.  Set $\mathsf M_L=\ell_L^{-1}$.

Let $E=\{\widetilde f>\mathsf M_L\}\cap Q_1$.  The hypothesis and
$\muW(Q_1\setminus Q_1^{\rm ear})=\muW(Q_1)/4$ give
\[
 \muW(E\cap Q_1^{\rm ear})
 \geq\frac14\muW(Q_1)=\frac13\muW(Q_1^{\rm ear}).
\]
Since the cells $E_\alpha$ partition $Q_1^{\rm ear}$, one of them
satisfies $\muW(E\cap E_\alpha)\geq\muW(E_\alpha)/3$.
The containment and root comparison therefore imply
\[
 \muW(\{\widetilde f>\mathsf M_L\}\cap S_\alpha^-)
 \geq\muW(E\cap E_\alpha)
 \geq\frac{1}{3C_L}\muW(S_\alpha^-).
\]
Apply Proposition \ref{prop:measure-to-point} to
$\widetilde f/\mathsf M_L$ on $(S_\alpha^-,F^L)$.
It gives $\widetilde f\geq\ell_L\mathsf M_L=1$ almost everywhere on
$F^L$, hence on the entire forward stack.  Returning to the original
variables proves \eqref{eq:stacked-expansion-conclusion}.
\end{proof}

We next propagate positivity from a cylinder of arbitrary admissible scale
in the fixed past work region to the future reference box $Q^+$
defined in \eqref{eq:Qplus}.  Recall that
\(\mathcal Q_{\rm past}\) was fixed in
\eqref{eq:past-work-cylinder}, with
\[
 Q^-\Subset\mathcal Q_{\rm past}\Subset Q_{R_*},
\]
and with its closure lying strictly before \(Q^+\) in time.  The following
estimate quantifies the loss incurred when positivity is propagated from a
small cylinder in \(\mathcal Q_{\rm past}\) to \(Q^+\).  It will later
control the radius of a cylinder on which a high superlevel set occupies
more than half of the weighted measure.

\begin{prop}
\label{prop:iterated-expansion}
There exist constants \(c_0>0\) and \(\gamma_0>0\), depending only on
\(m,k,M,\lambda,\Lambda\) and the fixed normalized geometry, such that the
following holds.  Let
\[
 Q_r(P)\subset\mathcal Q_{\rm past},
 \qquad
 A>0,
\]
and let \(f\geq0\) be an energy supersolution in \(Q_{R_*}\).  If
\begin{equation}
 \muW(\{f>A\}\cap Q_r(P))
 >
 \frac12\muW(Q_r(P)),
 \label{eq:iterated-expansion-hypothesis}
\end{equation}
then
\begin{equation}
 \essinf_{Q^+}f
 \geq
 c_0Ar^{\gamma_0}.
 \label{eq:iterated-expansion}
\end{equation}
Consequently, if \(H>1\), \(j\geq0\),
\[
 \muW(\{f>H^{j+1}\}\cap Q_r(P))
 >
 \frac12\muW(Q_r(P))\quad\mbox{and}\quad\essinf_{Q^+}f\leq1,
\]
then
\begin{equation}
 r
 \leq
 C H^{-(j+1)/\gamma_0},
 \label{eq:radius-from-future-infimum}
\end{equation}
where \(C\) has the same structural dependence.
\end{prop}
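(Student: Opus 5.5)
The proof will use the rooted trees of Lemma \ref{lem:positivity-tree-geometry}(v), applying Proposition \ref{prop:measure-to-point} once per vertex along every branch. By homogeneity we replace $f$ by $f/A$ and assume $A=1$. Apply Lemma \ref{lem:positivity-tree-geometry}(v) to $Q_r(P)\subset\mathcal Q_{\rm past}$. It gives a finite partition $\{E_\alpha\}$ of $T_{P,r}(Q_1^{\rm ear})$ into root cells, with root boxes $S_\alpha^-\supset E_\alpha$, the root comparison $\muW(S_\alpha^-)\leq C\muW(E_\alpha)$ uniform over $[w]_{A_2}\leq M$, and trees whose terminal future boxes cover $Q^+$.

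\emph{Choosing a root.} Since $w$ is independent of time, $\muW(T_{P,r}(Q_1^{\rm ear}))=\frac34\muW(Q_r(P))$. Together with \eqref{eq:iterated-expansion-hypothesis} this gives
\[
 \muW(\{f>1\}\cap T_{P,r}(Q_1^{\rm ear}))>\frac14\muW(Q_r(P))=\frac13\muW(T_{P,r}(Q_1^{\rm ear})).
\]
Hence some cell satisfies $\muW(\{f>1\}\cap E_\alpha)\geq\frac13\muW(E_\alpha)$. The root comparison then yields $\muW(\{f\geq1\}\cap S_\alpha^-)\geq\delta_0\muW(S_\alpha^-)$ with $\delta_0=(3C)^{-1}$.

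\emph{Root step.} The normalized pairs lie in the compact family $\mathfrak P_*$, and the outer enlargements lie in $Q_{R_*}$, where $f$ is a supersolution. Proposition \ref{prop:measure-to-point} therefore applies uniformly to $\mathfrak P_*$ and gives $\ell_0=\ell(\delta_0,\mathfrak P_*)$ with $f\geq\ell_0$ a.e.\ on $Q^+_{\beta_\alpha^0}$.

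\emph{Propagation down the tree.} Suppose $f\geq a$ a.e.\ on $Q_\beta^+$ and $\gamma$ is a child of $\beta$. Since $Q_\gamma^-\Subset Q_\beta^+$, the function $f/a$ is $\geq1$ on all of $Q_\gamma^-$, so the hypothesis of Proposition \ref{prop:measure-to-point} holds with $\delta=1$ (or any fixed $\delta_1<1$). We get $f\geq\ell_1 a$ on $Q_\gamma^+$, with $\ell_1=\ell(\delta_1,\mathfrak P_*)$ uniform in scale and generation. Let $\ell_*:=\min\{\ell_0,\ell_1\}$. Induction along a branch of length at most $N(r)\leq C_1+C_2\log^+(1/r)$ gives $f\geq\ell_*^{N(r)}$ on every terminal future box, and hence on $Q^+$. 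Since
\[
 \ell_*^{N(r)}\geq\ell_*^{C_1}\,(\min\{r,1\})^{C_2\log(1/\ell_*)},
\]
and $r$ is bounded above because $Q_r(P)\subset\mathcal Q_{\rm past}$, we obtain \eqref{eq:iterated-expansion} with $\gamma_0=C_2\log(1/\ell_*)$ and a suitable $c_0$.

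\emph{Consequence.} Apply \eqref{eq:iterated-expansion} with $A=H^{j+1}$. Then $1\geq\essinf_{Q^+}f\geq c_0H^{j+1}r^{\gamma_0}$, so $r\leq c_0^{-1/\gamma_0}H^{-(j+1)/\gamma_0}$.

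\emph{Main obstacle.} The difficult point is uniformity: the measure-to-point constant must not degrade with the generation or the scale. This rests entirely on all normalized configurations belonging to the compact family $\mathfrak P_*$ with uniform margins, and on the outer enlargements staying in $Q_{R_*}$. That is exactly what Lemma \ref{lem:positivity-tree-geometry}(v) supplies, together with the final clause of Proposition \ref{prop:measure-to-point}. The logarithmic bound on branch length is what turns the geometric loss $\ell_*^{N(r)}$ into the polynomial factor $r^{\gamma_0}$.
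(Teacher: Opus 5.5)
Your proposal is correct and follows the paper's proof essentially step for step. You normalize to $A=1$, use the $3/4$--$1/4$ time split and the root comparison to get a root box of weighted density at least $\delta_0$, and iterate the measure-to-point estimate uniformly over the compact family $\mathfrak P_*$ along branches of length $N(r)\leq C_1+C_2\log^+(1/r)$. The only difference is cosmetic: you use a separate descendant constant $\ell_1$ (density near one) and set $\ell_*=\min\{\ell_0,\ell_1\}$, whereas the paper uses a single $\ell_0$ with density $1/2$ at all descendants.
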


\begin{proof}
Set $h:=A^{-1}f$. Then \(h\) is a non-negative energy supersolution in \(Q_{R_*}\), and
\eqref{eq:iterated-expansion-hypothesis} becomes
\[
 \muW(\{h>1\}\cap Q_r(P))
 >
 \frac12\muW(Q_r(P)).
\]
We now apply the translated and dilated root partition from Lemma
\ref{lem:positivity-tree-geometry} to \(Q_r(P)\).  The complement of its
early root region has one quarter of the measure of \(Q_r(P)\).  Hence
there exists a root cell \(E_\alpha^{P,r}\) such that
\[
 \muW(\{h>1\}\cap E_\alpha^{P,r})
 >
 \frac13\muW(E_\alpha^{P,r}).
\]
Let \(S_\alpha^{P,r}\) be the corresponding root past box.  The uniform
root comparison gives
\[
 \muW(S_\alpha^{P,r})
 \leq
 C_{\rm root}\muW(E_\alpha^{P,r})
\]
for a structural constant \(C_{\rm root}\geq1\).  Therefore
\[
 \muW(\{h>1\}\cap S_\alpha^{P,r})
 \geq
 \delta_0\muW(S_\alpha^{P,r}),
 \qquad
 \delta_0:=\frac{1}{3C_{\rm root}}.
\]

The same geometric lemma provides a rooted tree of admissible ordered
pairs beginning with \(S_\alpha^{P,r}\), whose terminal future members
cover \(Q^+\) and whose required outer enlargements remain in
\(Q_{R_*}\).  If \(N(r)\) denotes the maximal number of vertices along a
branch, then
\begin{equation}
 N(r)
 \leq
 C_1+C_2\log^+\frac1r.
 \label{eq:positivity-chain-depth}
\end{equation}

By Lemma \ref{lem:positivity-tree-geometry}, all normalized ordered-pair
geometries belong to the compact structural family \(\mathfrak P_*\),
independently of \(P\), \(r\), and the generation.  Proposition
\ref{prop:measure-to-point}, applied uniformly to this family with past
density \(\delta_0\) at the roots and \(1/2\) at all descendants, gives a
structural constant $\ell_0\in(0,1)$. Iterating Proposition \ref{prop:measure-to-point} along every branch gives
\[
 \essinf_{Q^+}h
 \geq
 \ell_0^{N(r)}.
\]

Since $\mathcal Q_{\rm past}$ has radius $R_{\rm p}=1$, its contained
cylinders satisfy $r\leq1$.  Put $\gamma_0=-C_2\log\ell_0>0$ and
$c_0=\ell_0^{C_1}$.  Equation \eqref{eq:positivity-chain-depth} gives
\[
 \ell_0^{N(r)}
 \geq\ell_0^{C_1+C_2\log(1/r)}
 =c_0r^{\gamma_0}.
\]
Consequently,
\[
 \essinf_{Q^+}f
 =
 A\essinf_{Q^+}h
 \geq
 c_0Ar^{\gamma_0},
\]
which proves \eqref{eq:iterated-expansion}.

For the final assertion, apply \eqref{eq:iterated-expansion} with
\(A=H^{j+1}\).  The assumption
\(\essinf_{Q^+}f\leq1\) gives
\[
 1
 \geq
 c_0H^{j+1}r^{\gamma_0}\Longleftrightarrow r
 \leq
 c_0^{-1/\gamma_0}
 H^{-(j+1)/\gamma_0},
\]
which is \eqref{eq:radius-from-future-infimum} after setting
\(C=c_0^{-1/\gamma_0}\).
\end{proof}

\subsection{A weighted kinetic ink-spots lemma}

We conclude the section with the covering lemma used in the
weak Harnack iteration.  It is a weighted version of the
kinetic ``ink spots in the wind'' principle, compare
\cite[Theorem~4.9.3 and Section~4.10]{Imbert2026}.
The lemma converts a local spreading property into a
comparison of weighted measures.

In the application, $E$ and $F$ are superlevel sets of the
same supersolution, with $E$ corresponding to the higher
level.  For example, $E=\{f>b\}$ and $F=\{f>a\}$, where
$0<a<b$, so that $E\subset F$.  For suitably separated
levels, the expansion of positivity estimates ensure that
if $f>b$ on more than half the weighted measure of a past
cylinder, then $f>a$ almost everywhere throughout its
forward stack.  Thus the whole stack is covered by the
lower-level set $F$, although the higher-level set $E$
need only occupy a substantial part of the past cylinder.

The cylinders satisfying this density condition are the
ink spots, and the displacement of their forward stacks
along the free transport explains the reference to wind.
The following lemma assumes this spreading property,
together with an upper bound on the radii of such cylinders.
It then compares the weighted measures of $E$ and $F$
inside a fixed reference cylinder.

\begin{lem}
\label{lem:weighted-kinetic-inkspots}
Let $\mathcal Q_{\rm ref}=Q_R(P_0)$ be a fixed reference intrinsic cylinder, and let \(E\subset F\) be
measurable sets.  Suppose that there exist an integer \(L\geq1\) and a
number \(r_0\in(0,1)\) satisfying
\begin{equation}
 Lr_0^2\leq1
 \label{eq:inkspot-small-leakage}
\end{equation}
such that the following implication holds: whenever $Q_r(P)\subset\mathcal Q_{\rm ref}$ and
\begin{equation}
 \muW(E\cap Q_r(P))
 >
 \frac12\muW(Q_r(P)),
 \label{eq:inkspot-density}
\end{equation}
then
\begin{equation}
 r<r_0R,
 \qquad
 \muW\bigl(\overline Q_r^{\,L}(P)\setminus F\bigr)=0.
 \label{eq:inkspot-stack-condition}
\end{equation}
Under these assumptions there exist $c\in(0,1)$, $C<\infty$, and $\sigma_0\in(0,2]$,
depending only on $m,k,M$ and the fixed reference geometry, such that
\begin{equation}
 \muW(E\cap\mathcal Q_{\rm ref})
 \leq
 \frac{L+1}{L}(1-c)
 \muW(F\cap\mathcal Q_{\rm ref})
 +
 CLr_0^{\sigma_0}\muW(\mathcal Q_{\rm ref}).
 \label{eq:weighted-inkspots}
\end{equation}
\end{lem}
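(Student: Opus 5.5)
We follow the standard kinetic ink-spots scheme of \cite[Theorem~4.9.3]{Imbert2026}, checking at each step that only the doubling and $A_\infty$ properties of $\muW$ from Proposition \ref{prop:Ainfty} and Lemma \ref{lem:kinetic-covering-geometry} are used. By intrinsic normalization we take $P_0=0$ and $R=1$, so $\mathcal Q_{\rm ref}=Q_1$. The first step is a Lebesgue-differentiation and stopping-time construction. Intrinsic cylinders differentiate $\mathrm L^1_{\rm loc}(\d\muW)$ by Lemma \ref{lem:kinetic-covering-geometry}, so for $\muW$-a.e. $P\in E\cap Q_1$ the density of $E$ in small cylinders $Q_r(P)$ tends to one. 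For each such point we take a maximal radius $r_P$ at which \eqref{eq:inkspot-density} holds and $Q_{r_P}(P)\subset Q_1$. Slightly larger admissible cylinders then have density at most $1/2$. The hypothesis gives $r_P<r_0$ and $\overline Q^{\,L}_{r_P}(P)\subset F$ up to null sets.

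The obstacle here is the points too close to the boundary of $Q_1$, where the maximal radius is capped by containment rather than by density. We would discard the layer of $Q_1$ within intrinsic distance $\sim r_0$ of the lateral and initial faces. We would also discard the final time layer of length $Lr_0^2$, where the stacks may leak out of $Q_1$; this is controlled by \eqref{eq:inkspot-small-leakage}. The spatial layers have Lebesgue proportion $\lesssim r_0$ in $x$ and $\lesssim r_0^3$ in $y$, and the time layer has proportion $\lesssim Lr_0^2$. By \eqref{eq:Ainftyquant} and the product structure of $\muW$, the weighted measure of the discarded set is at most $CLr_0^{\sigma_0}\muW(Q_1)$, with $\sigma_0=\min\{\delta_1,2\}$. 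This is the error term in \eqref{eq:weighted-inkspots}.

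Next we apply the Vitali-type covering of Lemma \ref{lem:kinetic-covering-geometry} to the remaining ink spots. This gives disjoint cylinders $Q_i=Q_{r_i}(P_i)$ whose enlargements $Q_i^*$ cover the good part of $E\cap Q_1$. By maximality, each slightly enlarged cylinder (comparable to $Q_i^*$) has $E$-density at most $1/2$. Doubling \eqref{eq:intrinsic-measure-doubling} then gives $\muW(Q_i^*\setminus E)\geq c\,\muW(Q_i^*)$ with $c=c(m,k,M)$. The stacks $\overline Q^{\,L}_{r_i}(P_i)$ are contained in $F$, and each has weighted measure $\simeq L\,\muW(Q_i)$, because the weight is independent of $(y,t)$ and the $y$-sections have fixed Lebesgue size.

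The remaining task, which we expect to be the main obstacle, is to compare $E$ with $F$ using these stacks. The stacks of disjoint cylinders may overlap, so one cannot simply sum their measures. Following the kinetic argument, we would pass to the time-shifted union $\bigcup_i Q_i^*\cup\overline Q^{\,L}_{r_i}(P_i)$. With $x$ fixed, one uses free-transport coordinates $y\mapsto y-tv$, which have Jacobian one and preserve $\muW$. In these coordinates the one-dimensional time sections behave like intervals, and the "interval shifted forward by $L$ times its length" lemma applies: the union of the stacks has measure at least $\tfrac{L}{L+1}$ times the measure of the union of the base cylinders together with their stacks. Integrating this slice estimate against $w(x)\d x$ is legitimate because the weight depends only on $x$. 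Combining it with $\muW(E\cap\bigcup Q_i^*)\leq(1-c)\muW(\bigcup Q_i^*)$ and $\bigcup(Q_i^*\cup\text{stacks})\subset F$ up to null sets yields \eqref{eq:weighted-inkspots}. The delicate points are (a) the precise form of the one-dimensional slicing, since intrinsic cylinders are sheared in $y$, and (b) the factor $\frac{L+1}{L}$ accounting for the overlap of base and stack. We would first try to reproduce Imbert's leaking-stack argument verbatim on each $x$-slice.
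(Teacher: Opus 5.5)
Your outline has the right ingredients (differentiation, a Vitali selection, an $A_\infty$ boundary layer, slicing in $\xi=y-tv$ with the forward/backward interval inequality, and a leakage bound). However, the final combination does not close.

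You obtain a proportion of $E^c$ only for the enlargements $Q_i^*$, one cylinder at a time. Since the $Q_i^*$ overlap, the bound $\muW(E\cap\bigcup_iQ_i^*)\leq(1-c)\muW(\bigcup_iQ_i^*)$ does not follow by summing. (A maximal radius with $E$-density $>1/2$ also need not exist: by continuity, the supremal radius has density exactly $1/2$.)

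More seriously, the interval inequality compares a stack only with its own base $Q_{r_i}(P_i)$. The enlargement $Q_i^*=Q_{5r_i}(\Phi_{12r_i^2}(P_i))$ is five times wider in $x$ and runs forward past $t_i$. So on the $x$-slices with $r_i\leq|x-x_i|<5r_i$ there is no stack at all. Moreover, the hypothesis puts a stack inside $F$ only for ink spots, which $Q_i^*$ need not be. The inclusion $\bigcup_i(Q_i^*\cup\overline Q^{\,L}_{r_i}(P_i))\subset F$ that you invoke is therefore unavailable, because only the stacks lie in $F$. Using the disjoint $Q_i$ as bases instead fails on the other side: $E$ is covered only by the $Q_i^*$, and your construction gives no proportion of $E^c$ inside the $Q_i$.

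The missing idea is to measure $E$ against the union $\mathcal G$ of \emph{all} cylinders $Q_r(P)\subset Q_1$ with $E$-density $>1/2$; this is how the paper proceeds.

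\begin{itemize}
\item Differentiation gives $E\cap Q_1\subset\mathcal G$ up to a null set.
\item For density points $P$ of $E$ outside the boundary layers, the cylinders $Q_r(\Phi_{r^2/2}(P))$, $0<r\leq r_0$, lie in $Q_1$. Their $E$-density tends to one as $r\downarrow0$ and is at most $1/2$ at $r=r_0$ by hypothesis. By continuity, one may fix a radius with density \emph{exactly} $3/4$.
\item The disjoint Vitali cylinders $\widetilde Q_i$ then lie in $\mathcal G$ and themselves carry the proportion of $E^c$: $\muW(\mathcal G\setminus E)\geq\frac14\sum_i\muW(\widetilde Q_i)$, while their enlargements cover $E$ off the boundary layers. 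This yields $\muW(E\cap Q_1)\leq(1-c)\muW(\mathcal G)+Cr_0^{\sigma_0}\muW(Q_1)$.
\item The slicing argument is applied to a countable subfamily of ink spots exhausting the open set $\mathcal G$. This gives $\muW(\overline{\mathcal G}^{\,L})\geq\frac{L}{L+1}\muW(\mathcal G)$ with $\overline{\mathcal G}^{\,L}\subset F$ up to a null set, and no base cylinder ever needs to lie in $F$.
\item Since $\mathcal G$ contains ink spots near $\partial Q_1$, the leakage is bounded directly, $\muW(\overline{\mathcal G}^{\,L}\setminus Q_1)\leq CLr_0^2\muW(Q_1)$, rather than by discarding layers of $E$.
\end{itemize}

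Finally, a smaller correction: the drift $r^2|v_P|$ makes the transported boundary layer of width $\sim r_0^2$ (and $\sim Lr_0^2$ for stacks), not $r_0^3$. This is harmless because $\sigma_0\leq2$.
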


\begin{rem}
The conclusion quantifies the enlargement from $E$ to $F$
inside the reference cylinder.  Since $c$ is independent
of $L$, the coefficient $(L+1)(1-c)/L$ is strictly less
than one for sufficiently large $L$.  In the application,
we first fix such an $L$ and verify the hypotheses for
suitably chosen superlevel sets.  Estimate
\eqref{eq:weighted-inkspots} then gives the contraction
needed in the weak Harnack iteration, up to the boundary
error. The restriction $r<r_0R$ ensures that every cylinder
meeting the density condition is small relative to the
reference cylinder.  Its forward stack need not be
contained in that cylinder, whereas the conclusion
measures only the portions of $E$ and $F$ inside it.
The error term accounts for this possible escape and
for boundary layers in the covering argument.  For fixed
$L$, the error tends to zero as $r_0\downarrow0$, provided
the hypotheses hold with the corresponding radius bound.
\end{rem}

\begin{proof}
Weighted doubling and the Vitali covering property give the
covering estimate, with boundary errors controlled by quantitative
$A_\infty$ estimates.  We then compare unions of past cylinders
with unions of their forward stacks by fixing $x=(v,z)$ and
using $\xi=y-tv$ and $t$.  This change of variables has Jacobian
one and preserves $\d\muW$, since $w$ is independent of $(y,t)$. Applying $T_{P_0,R}^{-1}$ reduces to $\mathcal Q_{\rm ref}=Q_1$.
The transformed weight still depends only on $x$ and has the
same $A_2$ characteristic.  Weighted measure ratios are preserved,
and the radius condition becomes $r<r_0$.

Define the family of high-density cylinders
\[
 \mathscr C
 :=
 \left\{
 Q_r(P)\subset Q_1:
 \muW(E\cap Q_r(P))
 >
 \frac12\muW(Q_r(P))
 \right\}
\]
and its union
\[
 \mathcal G
 :=
 \bigcup_{Q\in\mathscr C}Q.
\]
The root cylinder \(Q_1\) cannot belong to \(\mathscr C\), since otherwise
\eqref{eq:inkspot-stack-condition} would imply \(1<r_0\).  Consequently,
\begin{equation}
 \muW(E\cap Q_1)
 \leq
 \frac12\muW(Q_1).
 \label{eq:inkspot-root-density}
\end{equation}

We first estimate the weighted measure of $E\cap Q_1$
in terms of $\muW(\mathcal G)$, allowing an error from
a boundary layer of $Q_1$. The case
$r_0\geq1/8$ is immediate after increasing $C$, so assume $r_0<1/8$.
Put
\[
 \mathcal D_{r_0}
 :=\{(x,y,t)\in Q_1:
 |x|<1-2r_0,\ |y|<1-2r_0^2,
 -1+r_0^2<t<-r_0^2\}.
\]
Quantitative $A_\infty$ on the diffusive ball and the product structure
in $(y,t)$ give
\begin{equation}
 \muW(Q_1\setminus\mathcal D_{r_0})
 \leq C r_0^{\sigma_0}\muW(Q_1),
 \qquad \sigma_0:=\min\{\delta_1,2\}>0,
 \label{eq:reference-boundary-layer}
\end{equation}
where $\delta_1$ is an admissible exponent in
\eqref{eq:Ainftyquant}.

For a density point $P\in E\cap\mathcal D_{r_0}$, consider the centered
boxes
\[
 \mathcal B_r(P):=Q_r(\Phi_{r^2/2}(P)),\qquad0<r\leq r_0.
\]
They are nested, contain $P$ in their interiors, and lie in $Q_1$.
Indeed, their time coordinates differ from $t_P$ by at most $r^2/2$,
and their $y$-coordinates differ from $y_P$ by at most
$r^2|v_P|/2+r^3<2r_0^2$.
Their $E$-density tends to one as $r\downarrow0$, whereas at $r=r_0$
it is at most $1/2$ by \eqref{eq:inkspot-stack-condition}.
The density depends continuously on $r$, since cylinder boundaries are
null.  Choose a radius with density exactly $3/4$.

Apply Lemma \ref{lem:kinetic-covering-geometry} to these chosen cylinders.
It gives disjoint cylinders $\widetilde Q_i\in\mathscr C$ whose covering
enlargements cover $E\cap\mathcal D_{r_0}$ up to a null set, with
\[
 \muW(E\cap\mathcal D_{r_0})\leq C_{\rm cov}\sum_i\muW(\widetilde Q_i),
 \qquad
 \muW(\mathcal G\setminus E)\geq\frac14\sum_i\muW(\widetilde Q_i).
\]
Also $E\cap Q_1\subset\mathcal G$ up to a null set, by differentiation.
Combining these facts with \eqref{eq:reference-boundary-layer} and
rearranging gives
\begin{equation}
 \muW(E\cap Q_1)
 \leq(1-c)\muW(\mathcal G)
       +C r_0^{\sigma_0}\muW(Q_1),
 \label{eq:weighted-crawling-inkspots}
\end{equation}
where $c=(1+4C_{\rm cov})^{-1}$.  This stopping construction uses only
cylinders contained in the reference region, including near its boundary.

Since \(\mathcal G\) is open, there is a countable
subfamily
\[
 \{Q_i\}_{i\geq1}\subset\mathscr C,
 \qquad
 \mathcal G=\bigcup_{i\geq1}Q_i.
\]
Write
\[
 Q_i=Q_{r_i}(P_i),
 \qquad
 P_i=(v_i,z_i,y_i,t_i),
\]
and define
\[
 \overline{\mathcal G}^{\,L}
 :=
 \bigcup_{i\geq1}\overline Q_{r_i}^{\,L}(P_i).
\]
By \eqref{eq:inkspot-stack-condition}, every set
\(\overline Q_{r_i}^{\,L}(P_i)\setminus F\) has zero weighted measure.
Since the family is countable,
\begin{equation}
 \muW(\overline{\mathcal G}^{\,L}\setminus F)=0.
 \label{eq:stacked-union-contained-in-F}
\end{equation}

We next prove the stacked-union estimate
\begin{equation}
 \muW(\overline{\mathcal G}^{\,L})
 \geq
 \frac{L}{L+1}\muW(\mathcal G).
 \label{eq:weighted-stacked-union}
\end{equation}
It suffices first to consider the union of the first \(N\) cylinders and
then let \(N\to\infty\).

Fix the diffusive variables \(x=(v,z)\).  Only those indices \(i\) for
which
\[
 |x-(v_i,z_i)|<r_i
\]
contribute to the \(x\)-slice.  Introduce the free-transport coordinate $\xi:=y-tv$ and, for each contributing index, set $\xi_i(v):=y_i-t_iv$. If \((v,z,y,t)\in Q_i\), then
\begin{align*}
 |\xi-\xi_i(v)|
 &=
 \bigl|
 y-y_i-(t-t_i)v
 \bigr|\\
 &\leq
 \bigl|
 y-y_i-(t-t_i)v_i
 \bigr|
 +
 |t-t_i|\,|v-v_i|<
 r_i^3+r_i^2r_i
 =
 2r_i^3.
\end{align*}
Thus the \(x\)-slice of \(Q_i\), expressed in \((\xi,t)\), is contained
in
\[
 A_i^-
 :=
 (t_i-r_i^2,t_i]\times B_{2r_i^3}(\xi_i(v)).
\]
Conversely, if
\[
 t_i<t<t_i+Lr_i^2,
 \qquad
 |\xi-\xi_i(v)|<2r_i^3,
\]
then
\begin{align*}
 |y-y_i-(t-t_i)v_i|
 &\leq
 |\xi-\xi_i(v)|
 +
 |t-t_i|\,|v-v_i|<
 2r_i^3+Lr_i^3
 =
 (L+2)r_i^3.
\end{align*}
Hence the \(x\)-slice of
\(\overline Q_{r_i}^{\,L}(P_i)\) contains
\[
 A_i^+
 :=
 (t_i,t_i+Lr_i^2)
 \times B_{2r_i^3}(\xi_i(v)).
\]

We next use the elementary one-dimensional inequality
\begin{equation}
 \biggl|
 \bigcup_i(t_i,t_i+La_i)
 \biggr|
 \geq
 \frac{L}{L+1}
 \biggl|
 \bigcup_i(t_i-a_i,t_i]
 \biggr|,
 \qquad a_i>0.
 \label{eq:forward-backward-interval-union}
\end{equation}
To see this, decompose the union of the forward intervals into its
connected components.  If \(J=(a,b)\) is one such component, then every
corresponding interval satisfies
\[
 a_i\leq{(b-t_i)}/{L}\leq{(b-a)}/{L}.
\]
The union of the associated backward intervals is therefore contained in $(a-{(b-a)}/{L},b)$, whose length is \((L+1)|J|/L\).  Summing over the components proves
\eqref{eq:forward-backward-interval-union}.

For fixed \((v,z,\xi)\), apply
\eqref{eq:forward-backward-interval-union} to the indices satisfying
\[
 |x-(v_i,z_i)|<r_i,
 \qquad
 |\xi-\xi_i(v)|<2r_i^3,
\]
with \(a_i=r_i^2\).  Integrating first in \((\xi,t)\) and then in
\((v,z)\) against \(w(v,z)\,\d v\,\d z\) gives
\eqref{eq:weighted-stacked-union}.  Here the change of variables
\[
 y\longmapsto\xi=y-tv
\]
has Jacobian one, and the weighted measure is unchanged because the weight
is independent of \((y,t)\).  Passing from finite unions to the countable
union follows by monotone convergence.

Combining
\eqref{eq:weighted-crawling-inkspots} and
\eqref{eq:weighted-stacked-union}, we obtain
\begin{equation}
 \muW(E\cap Q_1)
 \leq
 \frac{L+1}{L}(1-c)
 \muW(\overline{\mathcal G}^{\,L})
 +C r_0^{\sigma_0}\muW(Q_1).
 \label{eq:inkspot-before-leakage}
\end{equation}

It remains to estimate the portion of the stacked union lying outside
\(Q_1\).  Every \(Q_i\) is contained in \(Q_1\) and has radius
\(r_i<r_0\).  Its forward stack has the same diffusive section, so no
leakage occurs through the diffusive boundary.  Since \(Q_i\subset Q_1\),
its center satisfies
\[
 -1+r_i^2\leq t_i\leq0,
 \qquad
 |v_i|\leq1,
 \qquad
 |y_i|\leq1.
\]
If \((v,z,y,t)\in\overline Q_{r_i}^{\,L}(P_i)\), then $-1<t<Lr_0^2$ and
\begin{align*}
 |y-y_i|
 &\leq
 |t-t_i|\,|v_i|+(L+2)r_i^3\leq
 Lr_0^2+(L+2)r_0^3
 \leq
 CLr_0^2.
\end{align*}
Thus any point of the stack lying outside \(Q_1\) belongs either to a
terminal time layer of thickness at most \(Lr_0^2\) or to a transported
\(y\)-boundary layer of thickness at most \(CLr_0^2\).  By
\eqref{eq:inkspot-small-leakage}, these are fixed small enlargements of
the normalized time interval and \(y\)-ball.  Since the weight depends
only on the diffusive variables,
\begin{equation}
 \muW(\overline{\mathcal G}^{\,L}\setminus Q_1)
 \leq
 CLr_0^2\muW(Q_1).
 \label{eq:inkspot-leakage}
\end{equation}

By \eqref{eq:stacked-union-contained-in-F},
\[
 \overline{\mathcal G}^{\,L}\cap Q_1
 \subset F\cap Q_1
\]
up to a null set.  Consequently,
\[
 \muW(\overline{\mathcal G}^{\,L})
 \leq
 \muW(F\cap Q_1)
 +
 CLr_0^2\muW(Q_1).
\]
Substituting this estimate into
\eqref{eq:inkspot-before-leakage} and absorbing the fixed factor
$\frac{L+1}{L}(1-c)\leq2$ into $C$, together with
$Lr_0^2+r_0^{\sigma_0}\leq2Lr_0^{\sigma_0}$, proves
\eqref{eq:weighted-inkspots}.  Scaling back completes the proof.
\end{proof}

\section{Weak Harnack and Harnack inequalities}
\label{Section_8}

We now combine the expansion of positivity with the weighted kinetic
ink-spots lemma.  The argument is a level-set iteration.  If a high
superlevel set occupies more than half of a sufficiently small cylinder
in the fixed past work region, Proposition
\ref{prop:stacked-expansion} propagates a lower level throughout its
forward stack.  Proposition \ref{prop:iterated-expansion} controls the
radius of such a cylinder in terms of its level, and Lemma
\ref{lem:weighted-kinetic-inkspots} converts these two facts into geometric
decay of the superlevel measures.

Recall that
\[
 \mathcal Q_{\rm past}=Q_{R_{\rm p}}(P_{\rm p})
\]
is the fixed past work cylinder introduced in
\eqref{eq:past-work-cylinder}.  In particular,
\[
 Q^-\Subset\mathcal Q_{\rm past}\Subset Q_{R_*},
\]
and the closure of \(\mathcal Q_{\rm past}\) lies strictly before \(Q^+\)
in time.

\begin{thm}
\label{thm:weakharnack}
Under \eqref{eq:A2} and \eqref{eq:ellipticity2}, there exist \(p_0>0\) and \(C<\infty\), depending only on
\(m,k,M,\lambda,\) and \(\Lambda\), such that every non-negative energy
supersolution \(u\) in \(Q_{R_*}\) satisfies
\begin{equation}
 \left(
 \avgint_{Q^-}u^{p_0}\,\d\muW
 \right)^{1/p_0}
 \leq
 C\essinf_{Q^+}u.
 \label{eq:weakharnack}
\end{equation}
The same conclusion holds for any fixed admissible ordered pair of product
boxes with the prescribed normalized geometric margins, provided that all
outer enlargements required in the proof remain in the solution domain.
In this case, $p_0$ and $C$ may depend additionally on the normalized
geometry, but can be chosen uniformly over compact families in the sense
of Remark \ref{rem:compact-configurations}.
\end{thm}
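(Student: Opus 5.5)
By homogeneity we normalize $\essinf_{Q^+}u\leq 1$; more precisely, for any $\epsilon>0$ we replace $u$ by $u/(\essinf_{Q^+}u+\epsilon)$, so that $\essinf_{Q^+}u\leq1$. It then suffices to prove a decay estimate
\[
 \muW(\{u>H^j\}\cap\mathcal Q_{\rm past})\leq C\,\gamma^j\,\muW(\mathcal Q_{\rm past})
\]
for some $H>1$ and $\gamma\in(0,1)$ depending only on the structural parameters. Integrating the distribution function then gives $\avgint_{\mathcal Q_{\rm past}}u^{p_0}\d\muW\leq C$ for any $p_0<\log(1/\gamma)/\log H$. Since $Q^-\Subset\mathcal Q_{\rm past}$ and $\muW(\mathcal Q_{\rm past})\leq C\muW(Q^-)$ by weighted doubling (Lemma \ref{lem:kinetic-covering-geometry}), this yields \eqref{eq:weakharnack}.

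To obtain the decay, we apply Lemma \ref{lem:weighted-kinetic-inkspots} with $\mathcal Q_{\rm ref}=\mathcal Q_{\rm past}$, $E=\{u>H^{j+1}\}$ and $F=\{u>H^j\}$. First fix $L$ so large that $\frac{L+1}{L}(1-c)\leq 1-c/2$; this is possible because $c$ in the ink-spots lemma does not depend on $L$. Set $H:=\mathsf M_L$ from Proposition \ref{prop:stacked-expansion}. If $Q_r(P)\subset\mathcal Q_{\rm past}$ and $E$ occupies more than half of its weighted measure, then Proposition \ref{prop:stacked-expansion} with $A=H^j$ gives $u\geq H^j$ on $\overline Q_r^{\,L}(P)$. (The boundary case of equality is handled by replacing $\{u>H^j\}$ with $\{u\geq H^j\}$, or by slightly lowering the level.) This requires $\mathcal U_r^L(P)\subset Q_{R_*}$, which holds for $r$ small because $\mathcal Q_{\rm past}\Subset Q_{R_*}$, and the radius bound below guarantees $r$ is small. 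Proposition \ref{prop:iterated-expansion} and $\essinf_{Q^+}u\leq1$ give $r\leq CH^{-(j+1)/\gamma_0}$. Hence the hypotheses of the ink-spots lemma hold with $r_0=r_0(j):=C'H^{-(j+1)/\gamma_0}$, and also $Lr_0^2\leq1$ and $\mathcal U_r^L(P)\subset Q_{R_*}$, once $j\geq j_0$ for a structural $j_0$. For $j<j_0$ the decay bound is trivial after enlarging $C$.

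The lemma then yields the recursion
\[
 a_{j+1}\leq(1-c/2)\,a_j+CL\,H^{-\sigma_0(j+1)/\gamma_0},
 \qquad a_j:=\frac{\muW(\{u>H^j\}\cap\mathcal Q_{\rm past})}{\muW(\mathcal Q_{\rm past})}.
\]
Iterating gives $a_j\leq C\gamma^j$ with $\gamma=\max\{1-c/2,H^{-\sigma_0/\gamma_0}\}^{1/2}<1$ (any $\gamma$ strictly above both works). For compact families of admissible configurations, the same argument runs with the uniform constants supplied by Proposition \ref{prop:measure-to-point} and Remark \ref{rem:compact-configurations}.

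The main obstacle is the compatibility of the two expansion results. Proposition \ref{prop:stacked-expansion} needs its enlargement $\mathcal U_r^L(P)$ inside the domain for cylinders up to the reference boundary. Proposition \ref{prop:iterated-expansion} must apply to every cylinder in $\mathcal Q_{\rm past}$ and must produce a radius bound that decays geometrically in $j$, so that the additive error in the ink-spots lemma is summable against the contraction factor. I would handle the first issue exactly as above: a small radius is forced by the iterated expansion, and the domain margin is then absorbed by taking $j\geq j_0$.
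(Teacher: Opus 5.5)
Your proposal is correct and follows the paper's proof essentially step for step: the normalization by $\essinf_{Q^+}u+\varepsilon$, the choice of $L$ via the $L$-independent ink-spots constant, the radius bound from Proposition \ref{prop:iterated-expansion} combined with the stack conclusion of Proposition \ref{prop:stacked-expansion}, the recursion from Lemma \ref{lem:weighted-kinetic-inkspots}, and the summation of the distribution function. The only difference is cosmetic. The paper takes a level ratio $K>\mathsf M_L$, strictly larger and large enough that the equality case disappears and the radius, leakage, and domain conditions hold for every $j\geq0$. You instead take $H=\mathsf M_L$, handle the equality case by a limiting argument, and absorb the finitely many small $j$ into the constant; both routes work.
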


\begin{proof}
Fix \(\varepsilon>0\) and define
\[
 h
 :=
 \frac{u}{\essinf_{Q^+}u+\varepsilon}.
\]
Then \(h\) is a non-negative energy supersolution in \(Q_{R_*}\), and
\[
 \essinf_{Q^+}h
 =
 \frac{\essinf_{Q^+}u}
      {\essinf_{Q^+}u+\varepsilon}
 \leq1.
\]
It is enough to prove a bound
\begin{equation}
 \avgint_{\mathcal Q_{\rm past}}h^{p_0}\,\d\muW\leq C
 \label{eq:normalized-positive-moment}
\end{equation}
uniformly in $\varepsilon$.  This implies the required bound on $Q^-$
by weighted doubling and its fixed containment in $\mathcal Q_{\rm past}$.

Let \(c\in(0,1)\) be the constant in Lemma
\ref{lem:weighted-kinetic-inkspots}.  Choose an integer \(L\geq1\) so
large that
\begin{equation}
 \vartheta
 :=
 \frac{L+1}{L}(1-c)
 <1.
 \label{eq:inkspot-contraction-factor}
\end{equation}
Once \(L\) has been fixed, compact containment provides a geometric radius
\[
 r_{\rm geo}=r_{\rm geo}(L)\in(0,1)
\]
such that, whenever
\[
 Q_r(P)\subset\mathcal Q_{\rm past},
 \qquad
 r<r_{\rm geo},
\]
the forward stack \(\overline Q_r^{\,L}(P)\), the corresponding set
\(\mathcal U_r^L(P)\), and every outer enlargement needed in Proposition
\ref{prop:stacked-expansion} are contained in \(Q_{R_*}\).

Let \(\mathsf M_L>1\) be the constant in Proposition
\ref{prop:stacked-expansion}, and let \(C_{\rm rad}\) and \(\gamma_0\) be
the constants in the radius estimate
\eqref{eq:radius-from-future-infimum}.  Choose
\(C_*>C_{\rm rad}\).  We then choose \(K>1\) sufficiently large that
\begin{equation}
 K>\mathsf M_L,
 \qquad
 C_{\rm rad}K^{-1/\gamma_0}<r_{\rm geo},
 \qquad
 L\left(
 \frac{C_*}{R_{\rm p}}K^{-1/\gamma_0}
 \right)^2
 <1.
 \label{eq:choice-level-factor}
\end{equation}
All these choices depend only on the structural parameters and the fixed
normalized geometry.
In particular, $R_*$ was fixed by the geometry in Lemma
\ref{lem:positivity-tree-geometry} before choosing $L$.  The $L$-dependent
work regions are accommodated by reducing $r_{\rm geo}$, so $R_*$ need
not be enlarged.  The covering constant $c$ is independent of $L$.
After $L$ is fixed, $\mathsf M_L$, $r_{\rm geo}$, $C_{\rm rad}$, and
$\gamma_0$ are fixed before $K$ is chosen.  Thus the choices are
non-circular and uniform over $[w]_{A_2}\leq M$.

For \(j\geq0\), define
\[
 E_j
 :=
 \{h>K^j\}\cap\mathcal Q_{\rm past},
 \qquad
 F_j
 :=
 \{h>K^j\}\cap Q_{R_*},
\]
and set
\[
 a_j
 :=
 {\muW(E_j)}/
      {\muW(\mathcal Q_{\rm past})}.
\]
Clearly, $E_{j+1}\subset F_j$.

Fix \(j\geq0\), and suppose that
\[
 Q_r(P)\subset\mathcal Q_{\rm past}
\]
satisfies
\begin{equation}
 \muW(E_{j+1}\cap Q_r(P))
 >
 \frac12\muW(Q_r(P)).
 \label{eq:high-level-density-cylinder}
\end{equation}
Since \(E_{j+1}\subset\{h>K^{j+1}\}\), Proposition
\ref{prop:iterated-expansion}, applied with \(A=K^{j+1}\), gives
\[
 \essinf_{Q^+}h
 \geq
 c_0K^{j+1}r^{\gamma_0}.
\]
Together with \(\essinf_{Q^+}h\leq1\), this yields
\begin{equation}
 r
 \leq
 C_{\rm rad}K^{-(j+1)/\gamma_0}
 <
 r_j,
 \qquad
 r_j:=C_*K^{-(j+1)/\gamma_0}.
 \label{eq:level-dependent-radius}
\end{equation}
In particular, by \eqref{eq:choice-level-factor},
\[
 r<r_{\rm geo}.
\]
Hence all the geometric sets required to apply Proposition
\ref{prop:stacked-expansion} are contained in \(Q_{R_*}\).

Apply that proposition to \(h\) with
\[
 A:={K^{j+1}}/{\mathsf M_L}.
\]
The density assumption \eqref{eq:high-level-density-cylinder} is precisely
\[
 \muW(\{h>\mathsf M_LA\}\cap Q_r(P))
 >
 \frac12\muW(Q_r(P)).
\]
We therefore obtain
\[
 h
 \geq
 {K^{j+1}}/{\mathsf M_L}
 >
 K^j
\]
almost everywhere on \(\overline Q_r^{\,L}(P)\), where the last inequality
uses \(K>\mathsf M_L\).  Consequently,
\begin{equation}
 \muW\bigl(
 \overline Q_r^{\,L}(P)\setminus F_j
 \bigr)=0.
 \label{eq:level-stack-contained}
\end{equation}

We now apply Lemma \ref{lem:weighted-kinetic-inkspots} with
\[
 E=E_{j+1},
 \qquad
 F=F_j,
 \qquad
 \mathcal Q_{\rm ref}=\mathcal Q_{\rm past},
\]
and with the relative radius cutoff
\[
 r_{0,j}
 :=
 \frac{r_j}{R_{\rm p}}
 =
 \frac{C_*}{R_{\rm p}}
 K^{-(j+1)/\gamma_0}.
\]
The choice \eqref{eq:choice-level-factor} ensures that
\[
 Lr_{0,j}^2\leq Lr_{0,0}^2<1.
\]
The radius condition follows from
\eqref{eq:level-dependent-radius}, and the stack condition follows from
\eqref{eq:level-stack-contained}.  The ink-spots lemma gives
\begin{align*}
 \muW(E_{j+1})
 &\leq
 \vartheta\,
 \muW(F_j\cap\mathcal Q_{\rm past})
 +
 CLr_{0,j}^{\sigma_0}
 \muW(\mathcal Q_{\rm past})=
 \vartheta\,\muW(E_j)
 +
 CLr_{0,j}^{\sigma_0}
 \muW(\mathcal Q_{\rm past}).
\end{align*}
After division by \(\muW(\mathcal Q_{\rm past})\), and absorption of the
fixed factors \(L\), \(C_*\), and \(R_{\rm p}\) into \(C\), we obtain
\begin{equation}
 a_{j+1}
 \leq
 \vartheta a_j
 +
 C K^{-\sigma_0(j+1)/\gamma_0}.
 \label{eq:level-set-recurrence}
\end{equation}

Set $q:=K^{-\sigma_0/\gamma_0}\in(0,1)$, and choose $\rho\in\bigl(\max\{\vartheta,q\},1\bigr)$. Since \(a_0\leq1\), iteration of
\eqref{eq:level-set-recurrence} gives
\begin{align*}
 a_j
 &\leq
 \vartheta^ja_0
 +
 C\sum_{i=1}^j
 \vartheta^{j-i}q^i\leq
 C\rho^j.
\end{align*}
Thus
\begin{equation}
 a_j\leq C\rho^j
 \qquad\hbox{for every }j\geq0.
 \label{eq:level-set-geometric-decay}
\end{equation}

Choose
\[
 p_0:=\frac{-\log\rho}{2\log K}>0,
 \qquad K^{p_0}\rho=\sqrt\rho<1.
\]
Decomposing $\mathcal Q_{\rm past}$ into $\{h\leq1\}$ and
$\{K^j<h\leq K^{j+1}\}$, $j\geq0$, and using
\eqref{eq:level-set-geometric-decay}, we obtain
\begin{align*}
 \avgint_{\mathcal Q_{\rm past}}h^{p_0}\,\d\muW
 &\leq
 1+
 \sum_{j=0}^\infty
 K^{p_0(j+1)}a_j\leq
 1+
 C K^{p_0}
 \sum_{j=0}^\infty
 (K^{p_0}\rho)^j
 \leq C.
\end{align*}
This proves \eqref{eq:normalized-positive-moment}.  Returning to $u$ and
letting $\varepsilon\downarrow0$ gives the stronger estimate
\begin{equation}
 \left(
 \avgint_{\mathcal Q_{\rm past}}u^{p_0}\,\d\muW
 \right)^{1/p_0}
 \leq
 C\essinf_{Q^+}u.
 \label{eq:work-region-weak-harnack}
\end{equation}
The fixed measure ratio $\muW(\mathcal Q_{\rm past})/\muW(Q^-)\leq C$
now proves \eqref{eq:weakharnack}.

For any other fixed admissible ordered pair, choose the auxiliary past
work region and the chaining configurations relative to that pair, with
the prescribed positive margins.  The preceding argument then applies
with these geometric choices.  Its constants may depend additionally on
the normalized geometry and are uniform over compact families in the
sense of Remark \ref{rem:compact-configurations}.
\end{proof}

We now combine the weak Harnack inequality with the local boundedness
estimate proved under the general \(A_2\) hypothesis.

\begin{proof}[Proof of Theorem \ref{thm:main}]
By intrinsic translation and dilation, it is enough to prove the estimate
at unit scale.  Take
\[
 \widehat Q^-:=B_{3/8}^v\times B_{3/8}^z\times B_{1/32}^y
                    \times(-13/16,-7/16).
\]
Then $Q^-\Subset\widehat Q^-\Subset\mathcal Q_{\rm past}$, and
$(\widehat Q^-,Q^+)$ is admissible with fixed margins.  The $z$-factor
is omitted when $k=0$.

A finite-cover consequence of Proposition
\ref{prop:localboundedness}, applied with exponent \(p_0\) to the fixed
containment \(Q^-\Subset\widehat Q^-\), gives
\begin{equation}
 \esssup_{Q^-}u
 \leq
 C
 \left(
 \avgint_{\widehat Q^-}u^{p_0}\,\d\muW
 \right)^{1/p_0}.
 \label{eq:harnack-local-boundedness-step}
\end{equation}
Indeed, one covers \(Q^-\) by finitely many smaller intrinsic cylinders
whose fixed outer enlargements are compactly contained in
\(\widehat Q^-\), applies Proposition
\ref{prop:localboundedness} on each enlargement, and uses weighted
doubling to compare the resulting averages with the average over
\(\widehat Q^-\).

Estimate \eqref{eq:work-region-weak-harnack} and the fixed measure ratio
$\muW(\mathcal Q_{\rm past})/\muW(\widehat Q^-)\leq C$ yield
\begin{equation}
 \left(
 \avgint_{\widehat Q^-}u^{p_0}\,\d\muW
 \right)^{1/p_0}
 \leq
 C\essinf_{Q^+}u.
 \label{eq:harnack-weak-step}
\end{equation}
Combining \eqref{eq:harnack-local-boundedness-step} and
\eqref{eq:harnack-weak-step} gives
\[
 \esssup_{Q^-}u
 \leq
 C\essinf_{Q^+}u,
\]
which is \eqref{eq:harnack} at unit scale.

For general \(P_0\) and \(r>0\), apply the unit-scale estimate to
\(u\circ T_{P_0,r}\).  Intrinsic translation and dilation preserve the
ellipticity constants, independence of the weight from $(y,t)$,
its $A_2$ characteristic, and the normalized past-future geometry.  This proves
Theorem \ref{thm:main} with the same structural dependence of the
constant.
\end{proof}

\begin{rem}
The constants are uniform over the class $[w]_{A_2}\leq M$.
The full-variable compactness theorem permits the weight to vary in the
intermediate-value contradiction, and the weighted mean velocity argument
uses only positivity of the limiting weight.  The later geometric
arguments require weighted doubling and invariance under the flow of
$Y$, both of which hold for every $w=w(x)\in A_2(\R^n)$.
\end{rem}

\section{H\"older continuity and the strong minimum principle:
proof of the corollaries}
\label{Section_9}

We first prove the oscillation decay under the standing assumptions
\eqref{eq:A2} and \eqref{eq:ellipticity2}.  All oscillations in the argument are understood as
essential oscillations until the continuous representative has been
constructed.

Choose, once and for all, an integer \(L\geq1\), numbers
\(\eta,\vartheta\in(0,1)\), and a point \(P_{\rm b}\) with negative time
coordinate such that
\begin{equation}
 \overline{\mathcal W_\vartheta(0)}
 \Subset
 \overline Q_\eta^{\,L}(P_{\rm b}),
 \qquad
 \mathcal U_\eta^L(P_{\rm b})\Subset\mathcal W.
 \label{eq:holder-buffered-geometry}
\end{equation}
For example, take $L=1$ and
$P_{\rm b}=(0,0,0,-\eta^2/2)$.  The forward stack then has time
interval $(-\eta^2/2,\eta^2/2)$, diffusive radius $\eta$, and
transported radius $3\eta^3$.  Thus any $0<\vartheta<\eta/2$
gives the first containment.  Since $\mathcal U^1$ is fixed and
bounded, taking $\eta$ sufficiently small also gives the second.

Let \(u\) be an energy solution of \eqref{eq:original} in an open set
\(\Omega\), and suppose that
\[
 \overline{\mathcal W_r(P_0)}\Subset\Omega.
\]
A finite covering of this compact set and Proposition
\ref{prop:localboundedness}, applied to \(u_+\) and \((-u)_+\), show that
\(u\) is essentially bounded in \(\mathcal W_r(P_0)\).  Define
\[
 u^*:=\esssup_{\mathcal W_r(P_0)}u,
 \qquad
 u_*:=\essinf_{\mathcal W_r(P_0)}u,
 \qquad
 \omega:=u^*-u_*.
\]
If \(\omega=0\), there is nothing to prove.  Otherwise, both \(u-u_*\) and
\(u^*-u\) are non-negative energy solutions in
\(\mathcal W_r(P_0)\).

Let
\[
 G_r(P_0):=T_{P_0,r}\bigl(Q_\eta(P_{\rm b})\bigr).
\]
At least one of the sets
\[
 \{u-u_*\geq\omega/2\}\cap G_r(P_0),
 \qquad
 \{u^*-u\geq\omega/2\}\cap G_r(P_0)
\]
has measure at least \(\muW(G_r(P_0))/2\).  Denote the corresponding
non-negative solution by \(h\).  Since
\[
 \muW(\{h>\omega/4\}\cap G_r(P_0))
 \geq\frac12\muW(G_r(P_0)),
\]
Proposition \ref{prop:stacked-expansion}, applied with
\[
 A:={\omega}/{(4\mathsf M_L)},
\]
gives \(h\geq A\) almost everywhere on the forward stack of
\(G_r(P_0)\).  Here the work-set condition follows from
\eqref{eq:holder-buffered-geometry} and intrinsic covariance.  The same
geometry shows that this stack contains
\(\mathcal W_{\vartheta r}(P_0)\).  Consequently,
\begin{equation}
 \operatorname*{ess\,osc}_{\mathcal W_{\vartheta r}(P_0)}u
 \leq
 q\operatorname*{ess\,osc}_{\mathcal W_r(P_0)}u,
 \qquad
 q:=1-\frac{1}{4\mathsf M_L}\in(0,1).
 \label{eq:oscillation}
\end{equation}

Iteration gives
\[
 \operatorname*{ess\,osc}_{\mathcal W_{\vartheta^jr}(P_0)}u
 \leq
 q^j\operatorname*{ess\,osc}_{\mathcal W_r(P_0)}u.
\]
Set
\[
 \alpha_0:=\frac{\log q}{\log\vartheta}>0,
 \qquad
 \alpha:=\min\{\alpha_0,1/2\}\in(0,1).
\]
For \(0<\rho\leq r\), choose \(j\) such that
\(\vartheta^{j+1}r<\rho\leq\vartheta^jr\).  Monotonicity of the oscillation
and \(\alpha\leq\alpha_0\) then give
\[
 \operatorname*{ess\,osc}_{\mathcal W_\rho(P_0)}u
 \leq
 C\left(\frac{\rho}{r}\right)^\alpha
 \operatorname*{ess\,osc}_{\mathcal W_r(P_0)}u.
\]
This is \eqref{eq:holder-oscillation}.

One convenient symmetric intrinsic quasi-distance is
\begin{align}
 d_K(P,\widetilde P)
 :=\max\bigl\{&
 |x-\widetilde x|,
 |t-\widetilde t|^{1/2},|y-\widetilde y-(t-\widetilde t)\widetilde v|^{1/3},
 |y-\widetilde y-(t-\widetilde t)v|^{1/3}
 \bigr\}.
 \label{eq:intrinsic-quasidistance}
\end{align}
Its balls are comparable, up to fixed structural enlargements, with the
sets \(\mathcal W_r(P)\).  Applying \eqref{eq:holder-oscillation} after
intrinsic translations therefore
gives, on every compact subset of \(\Omega\),
\[
 |u(P)-u(\widetilde P)|
 \leq C_K (d_K(P,\widetilde P))^\alpha
\]
for almost every \(P,\widetilde P\).  The standard modification on a set of
measure zero produces a locally H\"older continuous representative of
\(u\).  This proves Corollary \ref{cor:holder}.

\subsection{Proof of the strong minimum principle}

We prove Corollary \ref{cor:minimum}.  Let \(u\geq0\) and suppose that its
locally H\"older continuous representative satisfies \(u(P_0)=0\).  If
\(Q^+\) is the future member of a sufficiently small intrinsic
translate and dilate of the fixed Harnack configuration, chosen so that
it contains \(P_0\) in its interior and its outer cylinder is compactly
contained in \(\Omega\), then continuity and the positivity of
\(\d\muW\) on open sets give
\[
 \essinf_{Q^+}u=0.
\]
Theorem \ref{thm:main} then implies
\[
 \esssup_{Q^-}u=0
\]
in the corresponding earlier box.  Hence \(u=0\) there almost everywhere,
and therefore everywhere there by continuity.

Let $\gamma:[0,T]\to\Omega$ be any backward admissible curve from $P_0$.
Its image is compactly contained in $\Omega$.  Keep its diffusive controls
fixed and replace $\alpha$ by $\alpha_\varepsilon=\alpha+\varepsilon$.
The resulting curves converge uniformly to $\gamma$ and remain in
$\Omega$ for small $\varepsilon>0$.

For fixed $\varepsilon$, time is a valid parameter, and the derivatives
of $v$ and $z$ with respect to time are bounded.  Over a time interval
of length $h$, the diffusive displacement is therefore $O(h)$, and the
transported displacement from the free characteristic is $O(h^2)$.
For sufficiently small $h$, these are smaller than the spatial margins
of a Harnack configuration of scale $r\simeq\sqrt h$, whose diffusive
and transported radii are of orders $h^{1/2}$ and $h^{3/2}$.
The constants are uniform along this fixed perturbed curve.  Its positive
distance from the complement of $\Omega$ also keeps all outer cylinders
inside $\Omega$.  A finite time partition consequently gives an ordered
Harnack chain from $P_0$ to the terminal point of the perturbed curve.
Repeated application of the preceding zero-propagation implication gives
$u=0$ at that terminal point.

Letting $\varepsilon\downarrow0$ and using continuity gives $u=0$ at
the endpoint of $\gamma$.  A further passage to limits proves vanishing
on the relative closure of the backward attainable set, as asserted in
Corollary \ref{cor:minimum}.

\subsection{Fractional powers of the Kolmogorov operator}
\label{subsec:fractional-extension-proof}

\begin{proof}[Proof of Corollary \ref{cor:fractional-harnack}]
The representation \eqref{eq:GT-extension} gives $U\geq0$ because
$f\geq0$ globally.  Set $U^{\rm e}(v,\lambda,y,t)=U(v,|\lambda|,y,t)$.
For $\varphi\in C_0^\infty(\mathcal O)$, use
\[
 \psi(v,\lambda,y,t)
 =\varphi(v,\lambda,y,t)+\varphi(v,-\lambda,y,t),\qquad\lambda\geq0,
\]
in \eqref{eq:extension-weak-conormal}.  Its right-hand side is zero.
Changing $\lambda$ to $-\lambda$ in the second summand gives exactly the
weak formulation of \eqref{eq:GT-weighted-form} for $U^{\rm e}$.
The one-sided energy doubles under reflection, so $U^{\rm e}$ belongs
to the required local energy space.  Indeed, the weighted one-dimensional
trace across $\lambda=0$ is the same from both sides, since $-1<a<1$.
There is no jump term in the weak derivative.  Thus, writing $U$ again for the
reflected function, $U$ is an energy solution of
\eqref{eq:original} with
\[
 x=(v,\lambda),
 \qquad
 z=\lambda,
 \qquad
 w(v,\lambda)=w_0(\lambda)=|\lambda|^{1-2s},
 \qquad
 A=w_0I_{m+1}.
\]
Since $-1<1-2s<1$, the weight $w_0$ belongs to $A_2(\R)$,
and its extension $w(v,\lambda)$ belongs to $A_2(\R^{m+1})$ with
characteristic controlled by $m$ and $s$.  Theorem
\ref{thm:main} and Corollary \ref{cor:holder} therefore apply to the even
extension.  Using its locally H\"older continuous representative, we obtain
\begin{align*}
 \sup_{Q_r^-(\widetilde P_0)\cap\{\lambda=0\}}U
 &\leq
 \sup_{Q_r^-(\widetilde P_0)}U\leq
 C\inf_{Q_r^+(\widetilde P_0)}U\leq
 C\inf_{Q_r^+(\widetilde P_0)\cap\{\lambda=0\}}U.
\end{align*}
Since \(U(v,0,y,t)=f(v,y,t)\), this proves
\eqref{eq:fractional-harnack}.  The H\"older continuity of \(f\) follows by
restricting the H\"older continuous representative of \(U\) to
\(\{\lambda=0\}\).
\end{proof}

\section{Concluding remarks and further research}
\label{Section_10}

The interior regularity theory developed here applies to general
weights $w=w(x)\in A_2(\R^n)$ under the weighted ellipticity
assumptions on $A=wB$. It includes local boundedness, weak Harnack
and Harnack inequalities, H\"older continuity, and the strong
minimum principle. We retain the equation in the form
\[
 \operatorname{div}_x(wB\nabla_xu)-Y(wu)=0,
\]
with the measure $\d\muW=w(x)\d x\d y\d t$. The argument uses
$Yw=0$ without differentiating the weight in the diffusive
variables or requiring pointwise comparisons of translated weights.

For compactness, we combine $\mathrm L^p$ averaging of the weighted
densities with weighted Poincar\'e estimates in all diffusive
variables. At a binary limit, the diffusive energy removes
dependence on $x$, and weighted velocity averages recover the
admissible transport directions. Separating the absolutely
continuous and singular parts of the tested limiting equation,
and using the sign of the non-negative measure, shows that the
binary limit is non-increasing along these directions.
The theory therefore allows degeneracy in the active variables.
It also applies to the fractional-extension weight
$|\lambda|^{1-2s}$, $0<s<1$. The application to fractional powers
uses the energy and weak conormal assumptions stated in
Corollary \ref{cor:fractional-harnack}.

A remaining question is sharp regularity transfer. The kinetic
Sobolev inequality gives the strict gain needed for Moser
iteration, but its exponents are not claimed to be optimal.
Critical kinetic trajectories yield sharp transfer and Sobolev
estimates in the unweighted theory, see
\cite{DietertMouhotNiebelZacher2025,Niebel2026}. A construction
compatible with general $A_2$ weights could clarify the loss
in the resonant-slab argument and help determine optimal
weighted exponents.

Further extensions concern lower-order terms and nonhomogeneous
data. The compactness theorem already permits an
$\mathrm L^2(\d\muW)$ source and signed measure data with uniformly
bounded total variation. Extending the Harnack estimates to
diffusive drifts, potentials, and forcing terms would require
suitable scale-invariant assumptions. Critical cases may also
require smallness. Weights depending on $(y,t)$ present a
different difficulty. In this setting, $Yw$ need not vanish,
formal skew-adjointness with respect to the weighted measure can fail,
and the Fourier localization
argument used here does not directly extend. Identifying
conditions on such weights that preserve the regularity theory
remains an open direction.

\medskip
\noindent
{\bf Declaration of use of AI.}
The author used GPT-5.6 Sol Ultra, developed by OpenAI, as an interactive tool
during the preparation and revision of this manuscript.  The tool was
used for language editing, organization, LaTeX formatting, and checks of
clarity and internal consistency.  The mathematical arguments, results,
and conclusions were developed and verified by the author, who also
checked the references and takes full responsibility for the content of
the manuscript.

\appendix

\section{Kinetic geometry and positivity propagation}
\label{app:positivity-figures}

We prove the geometric lemmas stated in Subsection
\ref{subsec:auxiliary-kinetic-geometry} and explain the accompanying
figures. Throughout, the notation is that of Section \ref{Section_7}.

\begin{proof}[Proof of Lemma~\ref{lem:kinetic-covering-geometry}]
The relevant engulfing statement is
\[
 Q_s(P')\cap Q_r(P)\ne\varnothing,\quad s\leq2r
 \quad\Longrightarrow\quad Q_s(P')\subset Q_r^*(P).
\]
Normalize $P=0$ and $r=1$.  An intersection point gives
$|x_{P'}|<1+s$, $-1<t_{P'}<s^2$, and
$|y_{P'}|<1+s^3+s^2(1+s)$.  Hence every point of $Q_s(P')$ satisfies
$|x|<5$, $-5<t<4$, and $|y|<41$.  These bounds place it in
$Q_5(\Phi_{12}(0))$.  Greedy selection in decreasing radius classes
now gives the covering assertion. The factor two allows radii that do not
attain a maximum.  The forward time shift in
\eqref{eq:covering-dilation} is essential for covering intersecting
cylinders with later terminal times.

The volume formula \eqref{eq:cylvolume} and doubling of $w$ prove
\eqref{eq:intrinsic-measure-doubling}. Shifting the center along $\Phi_s$
does not change the measure.  Centered intrinsic boxes are comparable to
balls of the homogeneous kinetic quasi-metric.  Their maximal operator
has weak type $(1,1)$ by the same selection argument and doubling, so the
usual approximation by continuous functions proves differentiation.
Backward cylinders are contained in comparable centered boxes, which
gives the stated differentiation property for them as well.  The proof
for comparable rectangles is identical.
\end{proof}

\begin{proof}[Proof of Lemma~\ref{lem:positivity-tree-geometry}]
We first construct the partition and root boxes used in both parts
of the lemma.  Intersect $Q_1^{\rm ear}$ with a fixed sufficiently
fine rectangular grid, discard cells of zero Lebesgue measure,
and denote the remaining cells by $E_\alpha$.  These cells form
a finite measurable partition up to null sets.  Choose product
boxes $S_\alpha^-$ containing them, with side lengths comparable
to the grid scale.  Since $Q_1^{\rm ear}$ lies below $t=-1/4$,
the grid and the surrounding boxes can be chosen so that every
$S_\alpha^-$ has terminal time at most $-\tau_*<0$, with a
fixed $\tau_*>0$.

We verify the weighted comparison between a cell and its root
box.  Enclosing the diffusive factor of $S_\alpha^-$ in a
comparable Euclidean ball, and using the product structure in
$(y,t)$, gives
\[
 \muW(S_\alpha^-)
 \int_{S_\alpha^-}w^{-1}\d x\d y\d t
 \leq C|S_\alpha^-|^2.
\]
Here $C$ depends only on the fixed side-length ratios, $n$,
and $M$.  Cauchy--Schwarz on $E_\alpha$, followed by
$E_\alpha\subset S_\alpha^-$, therefore yields
\[
 |E_\alpha|^2
 \leq
 \muW(E_\alpha)\int_{S_\alpha^-}w^{-1}\d x\d y\d t
 \leq
 C\,\frac{\muW(E_\alpha)}{\muW(S_\alpha^-)}
 |S_\alpha^-|^2.
\]
Each cell has positive Lebesgue measure, and there are only
finitely many cells.  Thus the minimum of
$|E_\alpha|/|S_\alpha^-|$ is positive.  This proves
\eqref{eq:positivity-root-comparison}, uniformly over the
stated $A_2$ class.

For (i)--(iv), a tree with a single vertex suffices for each
$\alpha$.  Choose a future product box $F^L$ containing the
closure of the forward stack $\overline Q_1^{\,L}(0)$ and
whose initial time is greater than $-\tau_*$.  For example,
we may take
\[
 F^L=
 B_2^v\times B_2^z\times B_{L+3}^y
 \times(-\tau_*/2,L+1).
\]
The time gap between $S_\alpha^-$ and $F^L$ is at least
$\tau_*/2$, and their transported coordinates lie in bounded
sets.  Hence all velocities joining their $(y,t)$ factors
are bounded in terms of $L$ and the fixed partition.
Choose an active work ball containing these velocities and
the active factors of both boxes, with a positive margin.
A sufficiently large product work region also contains all
joining characteristic segments with a positive margin.
Thus $(S_\alpha^-,F^L)$ is an admissible ordered pair.

For each $\alpha$, take this pair as the unique vertex of
$\mathscr T_\alpha$.  It is both the root and a terminal
vertex, and its future member contains the entire forward
stack.  Since the root family is finite, all boxes and
required outer enlargements lie in a bounded open set
$\mathcal U^L$, which we enlarge to contain the closures of $Q_1$
and the forward stack.  This proves (i)--(iv).  The construction is
illustrated in Figure~\ref{fig:positivity-tree-geometry}(a):
the selected cell
lies in its root box, and one application propagates
positivity to the common future box $F^L$.

We now turn to (v).  The enlargement step is based on the
fixed pair
\[
 \mathcal R:=
 B_1^v\times B_1^z\times B_1^y\times(-1,-1/2),
 \qquad
 \mathcal F:=
 B_3^v\times B_3^z\times B_9^y\times(11,15).
\]
For endpoints in their $(y,t)$ factors, the transported
displacement has norm less than $10$, while the time
difference is greater than $23/2$.  Every connecting
velocity therefore has norm less than $20/23<1$.
It follows that $(\mathcal R,\mathcal F)$ is admissible
in a fixed enlarged product region.  Moreover,
\begin{equation}
 T_{(0,0,0,16),2}(\mathcal R)\Subset\mathcal F.
 \label{eq:geometric-doubling-template}
\end{equation}
Indeed, the transformed past box has diffusive radii two,
transported radius eight, and time interval $(12,14)$,
all strictly inside the corresponding factors of
$\mathcal F$.

Let $d>0$ be the time gap between
$\overline{\mathcal Q_{\rm past}}$ and $\overline{Q^+}$.
Choose a structural scale $r_*\in(0,1)$ sufficiently small.
The estimates below will ensure that the initial and
enlargement steps, including their temporal margins,
advance by less than $d/2$.

Suppose first that $r<r_*$.  Set
\[
 \rho_j=2^jr,\qquad
 P_0'=\Phi_{4r^2}(P),\qquad
 P_{j+1}'=\Phi_{16\rho_j^2}(P_j'),\qquad
 N=\left\lceil\log_2\frac{r_*}{r}\right\rceil,
\]
and write
\[
 \mathcal R_j:=T_{P_j',\rho_j}\mathcal R,
 \qquad
 \mathcal F_j:=T_{P_j',\rho_j}\mathcal F.
\]
The translated and dilated cells $T_{P,r}(E_\alpha)$
partition $T_{P,r}(Q_1^{\rm ear})$, and their root boxes
are $T_{P,r}(S_\alpha^-)$.

The first step connects each root box to a future box
containing $\overline{\mathcal R_0}$ in its interior.
To see that this can be done uniformly, normalize by
$T_{P,r}$.  In these coordinates, the root boxes belong
to the fixed finite family, whereas $\mathcal R_0$
has diffusive and transported radii one and time
interval $(3,7/2)$.  A fixed slightly larger future
product box therefore gives admissible pairs by the
same bounded-velocity argument used above.  This
requires one measure-to-point application from each root.

Next use the ordered pairs
$(\mathcal R_j,\mathcal F_j)$ for $j=0,\ldots,N-1$.
The choice of centers and scales gives
\[
 \mathcal R_{j+1}
 =
 T_{P_j',\rho_j}
 \bigl(T_{(0,0,0,16),2}\mathcal R\bigr)
 \Subset
 T_{P_j',\rho_j}\mathcal F
 =
 \mathcal F_j,
\]
by \eqref{eq:geometric-doubling-template}.
Thus the past member of each new pair lies compactly
inside the future member of the preceding pair.
The containment margins are fixed after normalization.

The terminal scale satisfies $r_*\leq\rho_N<2r_*$.
Furthermore, the total advance of the centers from $P$
to $P_N'$ is
\[
 4r^2+16\sum_{j=0}^{N-1}\rho_j^2
 =
 4r^2+\frac{16}{3}(\rho_N^2-r^2)
 \leq C r_*^2.
\]
The fixed outer enlargements of these pairs have
temporal margins bounded by another structural
multiple of $r_*^2$.  Taking $r_*$ sufficiently small
therefore keeps the terminal past box and its required
neighbourhood strictly before $Q^+$, with a time gap
bounded below.  All centers follow the free
characteristic through $P$: their diffusive coordinates
remain fixed, and their transported coordinates change
by $v_P$ times the elapsed time.  Since $P$ ranges over
a bounded set, all intermediate boxes and their outer
enlargements remain in a fixed bounded region.

For the final step, pair $\mathcal R_N$ with a slightly
larger future box containing $\overline{Q^+}$.
This future box and its outer time margin can be chosen
strictly below $t=0$.  The connecting velocities are
uniformly bounded because the time gap is bounded below
and the spatial coordinates remain bounded.
Moreover, the terminal centers range over a bounded
set and $\rho_N\in[r_*,2r_*]$.  With the fixed positive
margins just chosen, the normalized terminal
configurations therefore belong to a compact family
to which the uniform version of Proposition
\ref{prop:weighted-intermediate-value} applies.

This construction gives one initial application,
$N$ enlargement applications, and one final application.
Consequently, each branch has $N+2$ vertices and
\[
 N(r)\leq N+2\leq C_1+C_2\log^+\frac1r.
\]
Figure~\ref{fig:positivity-tree-geometry}(b) shows one
such branch, including the containments
$\mathcal R_{j+1}\Subset\mathcal F_j$ and the terminal
future member containing $Q^+$.

If $r\geq r_*$, pair each root box directly with a
future box containing $\overline{Q^+}$.
These configurations also form a compact family with
uniform positive margins, since
$r_*\leq r\leq R_{\rm p}$ and the original
past--future time gap is fixed.

Finally, choose $R_*$ large enough to contain all work
regions required in (v).  Their terminal times remain
strictly below zero, and this choice is independent
of $P$ and $r$.  The root comparison is preserved by
intrinsic normalization, since translations and
dilations preserve the $A_2$ characteristic.
For (i)--(iv), the normalized configurations form a
finite family.  For (v), they consist of a finite
initial family, the single enlargement pair
$(\mathcal R,\mathcal F)$, and a compact terminal
family.  This gives the asserted uniform geometry
and completes the proof.
\end{proof}

Figure~\ref{fig:rooted-positivity-tree} illustrates a rooted tree
of admissible ordered pairs and the geometry of a parent--child
pair. Each vertex represents one measure-to-point application,
whereas an edge records the containment needed for the next
application. The highlighted branch has three vertices and
therefore length three, although it contains only two edges.

The diagrams in Figure~\ref{fig:positivity-tree-geometry}
show the time order and containments schematically.
In panel (b), the common vertical alignment corresponds
to the free-transport coordinate
$y-y_P-(t-t_P)v_P$.  The diffusive coordinates are
suppressed, and the widths illustrate enlargement
without representing the quantitative intrinsic
radii $\rho_j$ and $\rho_j^3$.

\begin{figure}[!ht]
\centering
\begin{tikzpicture}[x=1cm,y=1cm,font=\small]
\path[use as bounding box] (0,0) rectangle (16.4,8.8);
\node[kp/heading] at (.1,8.5) {(a) A rooted directed tree};
\node[kp/heading] at (8.85,8.5) {(b) Geometry of one edge};
\draw[black!15] (8.45,.35)--(8.45,8.1);

\node[kp/vertex] (root) at (4.1,7.0)
 {$\beta_0$ (root)\\[2pt]$(Q_{\beta_0}^-,Q_{\beta_0}^+)$};
\node[kp/vertex] (left) at (2.6,4.7)
 {$\beta_1$\\[2pt]$(Q_{\beta_1}^-,Q_{\beta_1}^+)$};
\node[kp/vertex] (right) at (6.8,4.7)
 {$\beta_2$\\[2pt]$(Q_{\beta_2}^-,Q_{\beta_2}^+)$};
\node[kp/terminal] (ll) at (1.15,2.35)
 {$\beta_{11}$\\[2pt]$(Q_{\beta_{11}}^-,Q_{\beta_{11}}^+)$};
\node[kp/terminal] (lr) at (4.05,2.35)
 {$\beta_{12}$\\[2pt]$(Q_{\beta_{12}}^-,Q_{\beta_{12}}^+)$};
\node[kp/terminal] (rr) at (6.8,2.35)
 {$\beta_{21}$\\[2pt]$(Q_{\beta_{21}}^-,Q_{\beta_{21}}^+)$};
\draw[kp/step] (root.south west)--(left.north);
\draw[kp/arrow] (root.south east)--(right.north);
\draw[kp/arrow] (left.south west)--(ll.north);
\draw[kp/step] (left.south east)--(lr.north);
\draw[kp/arrow] (right.south)--(rr.north);
\node[kp/small,text=KPBlue,fill=white,inner sep=2pt] at (1.1,6.25)
 {highlighted\\branch};
\draw[kp/leader,draw=KPBlue] (1.75,6.23)--(2.85,6.05);
\draw[decorate,decoration={brace,mirror,amplitude=4pt},black!60]
 (.1,1.57)--(7.9,1.57);
\node[kp/small] at (4.0,1.18) {terminal vertices (no children)};
\node[align=center,font=\small,text=KPBlue] at (4.05,.45)
 {Highlighted branch: $\beta_0\longrightarrow\beta_1\longrightarrow\beta_{12}$\\
  $3$ vertices $=3$ measure-to-point applications};

\draw[kp/axis] (9.1,.85)--(9.1,7.95) node[above] {$t$};
\node[kp/small,rotate=90] at (8.78,4.4) {forward time};
\draw[kp/past] (10.05,1.15) rectangle (13.25,2.05);
\node at (11.65,1.60) {$Q_\beta^-$};
\draw[kp/future] (9.95,3.2) rectangle (15.95,5.5);
\node[anchor=south east] at (15.7,3.45) {$Q_\beta^+$};
\draw[kp/past] (10.7,4.15) rectangle (13.2,5.05);
\node at (11.95,4.60) {$Q_\gamma^-$};
\draw[kp/future] (10.45,6.6) rectangle (13.55,7.5);
\node at (12.0,7.05) {$Q_\gamma^+$};
\draw[kp/step] (11.65,2.1)--(11.65,3.12);
\node[kp/small,anchor=west] at (11.92,2.64)
 {application at $\beta$};
\draw[kp/step] (11.95,5.1)--(11.95,6.53);
\node[kp/small,anchor=west] at (12.2,5.95)
 {application at $\gamma$};
\node[kp/small,align=left,anchor=west] at (13.6,4.72)
 {positive\\containment\\margin};
\draw[kp/leader] (13.65,4.18)--(13.2,4.18);
\node[align=center] at (12.75,.43)
 {$\beta\longrightarrow\gamma$ means $Q_\gamma^-\Subset Q_\beta^+$};
\end{tikzpicture}
\caption{A finite rooted directed tree of admissible ordered pairs.
In (a), every vertex carries one pair, every edge points from a parent
to a child, and double outlines mark terminal vertices. The highlighted
branch has three vertices, so positivity is propagated along it by three
measure-to-point applications. In (b), the past member of the child lies
compactly inside the future member of its parent. A lower bound on
$Q_\beta^+$ is therefore available on $Q_\gamma^-$ and can be propagated
to $Q_\gamma^+$. The boxes in (b) are schematic projections. Their
containment and time order, rather than their metric sizes, are shown.}
\label{fig:rooted-positivity-tree}
\end{figure}
\clearpage
\begin{figure}[!ht]
\centering
\begin{tikzpicture}[x=1cm,y=1cm,font=\small]
\path[use as bounding box] (0,0) rectangle (16.4,17.25);

\begin{scope}[shift={(0,9.1)}]
\node[kp/heading] at (.05,7.82)
 {(a) From each root cell to the forward stack: (i)--(iv)};
\draw[kp/outer] (1.85,.36) rectangle (15.72,7.30);
\node[fill=white,inner sep=2pt,anchor=east] at (15.55,7.28)
 {$\mathcal U^L$};
\draw[kp/axis] (.92,.55)--(.92,7.24) node[above] {$t$};

\draw[kp/future] (2.25,2.69) rectangle (15.12,6.90);
\node[align=center] at (8.45,6.17)
 {$F^L=Q_{\beta_\alpha^0}^+$\\[2pt]terminal future member};
\draw[draw=KPGreen,line width=1pt,fill=KPGreen!15]
 (3.50,2.90) rectangle (12.50,4.90);
\node[align=center] at (8.0,3.90)
 {forward stack\\[3pt]$\overline Q_1^{\,L}(0)$};

\fill[black!3] (6.5,.9) rectangle (9.5,2.9);
\fill[KPBlue!8] (6.5,.9) rectangle (9.5,2.4);
\foreach \xx in {7.25,8.0,8.75}{
 \draw[black!40,line width=.3pt] (\xx,.9)--(\xx,2.4);
}
\foreach \yy in {1.4,1.9}{
 \draw[black!40,line width=.3pt] (6.5,\yy)--(9.5,\yy);
}
\fill[KPGold!35] (7.25,1.40) rectangle (8.0,1.90);
\draw[black!70,line width=.65pt] (6.5,.9) rectangle (9.5,2.9);
\draw[black!55,densely dotted] (6.5,2.4)--(9.5,2.4);
\draw[kp/past,fill=none] (7.10,1.24) rectangle (8.15,2.06);
\node[anchor=west] at (9.72,2.59) {$Q_1$};
\node[kp/small,anchor=east] at (6.20,2.00) {$Q_1^{\rm ear}$};
\node[kp/small,anchor=east] at (5.85,1.10) {$E_\alpha$};
\draw[kp/leader] (5.97,1.1)--(6.4,1.1)--(7.52,1.58);
\node[kp/small,anchor=west] at (9.85,1.07) {$S_\alpha^-$};
\draw[kp/leader] (9.77,1.09)--(8.6,1.09)--(8.15,1.28);

\foreach \yy/\lab in {.9/{-1},2.4/{-\frac14},2.9/{0},4.9/{L}}{
 \draw[black!60] (.83,\yy)--(1.02,\yy);
 \node[anchor=east,font=\footnotesize] at (.70,\yy) {$\lab$};
}
\draw[black!22,densely dotted] (1.03,2.9)--(3.5,2.9);
\draw[black!22,densely dotted] (1.03,4.9)--(3.5,4.9);
\node[kp/small] at (12.45,1.36)
 {$\sup\{t:(x,y,t)\in S_\alpha^-\}\leq-\tau_*<0$};
\draw[kp/step] (8.2,2.06) .. controls (12.9,2.08) and (14.15,3.15) .. (14.15,5.75);
\node[kp/small,rotate=90,fill=KPGreen!6,inner sep=2pt] at (14.42,4.18)
 {one application};
\node[kp/small] at (8.3,.03)
 {$E_\alpha\subset S_\alpha^-=Q_{\beta_\alpha^0}^-\,,\qquad
   \mu_w(S_\alpha^-)\leq C_L\mu_w(E_\alpha)$};
\end{scope}

\begin{scope}
\node[kp/heading] at (.05,8.54)
 {(b) From a small cylinder to the fixed future box: (v)};
\draw[kp/outer] (1.85,.64) rectangle (15.72,7.96);
\node[anchor=east,fill=white,inner sep=2pt] at (15.55,7.94) {$Q_{R_*}$};
\draw[kp/axis] (.92,.80)--(.92,7.72) node[above] {$t$};
\draw[black!35,densely dotted] (1.03,7.43)--(15.32,7.43);
\node[anchor=east,font=\footnotesize] at (.70,7.43) {$0$};

\draw[draw=black!55,fill=black!3,line width=.65pt]
 (4.55,.98) rectangle (10.65,1.95);
\node[kp/small,anchor=west] at (10.88,1.22) {$\mathcal Q_{\rm past}$};
\draw[draw=black!80,fill=white,line width=.7pt] (7.0,1.10) rectangle (8.2,1.78);
\fill[KPBlue!8] (7.0,1.10) rectangle (8.2,1.61);
\draw[black!35,line width=.3pt] (7.40,1.10)--(7.40,1.61);
\draw[black!35,line width=.3pt] (7.80,1.10)--(7.80,1.61);
\fill[KPGold!35] (7.40,1.10) rectangle (7.80,1.40);
\draw[kp/past,fill=none] (7.32,1.06) rectangle (7.88,1.48);
\node[kp/small,anchor=east] at (6.74,1.48) {$Q_r(P)$};
\fill (7.60,1.78) circle (1.2pt);
\node[kp/small,anchor=west] at (8.37,1.81) {$P$};

\draw[kp/future] (6.84,2.14) rectangle (8.36,2.76);
\draw[kp/past] (7.22,2.30) rectangle (7.98,2.56);
\node[kp/small,anchor=east] at (6.55,2.43) {$\mathcal R_0$};
\draw[kp/leader] (6.64,2.43)--(7.19,2.43);
\node[kp/small,anchor=west] at (8.62,2.45) {root future member};
\draw[kp/step] (7.6,1.83)--(7.6,2.10);

\draw[kp/future] (5.90,3.15) rectangle (9.30,4.19);
\node[kp/small,anchor=west] at (6.03,3.37) {$\mathcal F_0$};
\draw[kp/past] (6.88,3.65) rectangle (8.32,3.97);
\node[kp/small] at (7.60,3.81) {$\mathcal R_1$};
\draw[kp/step] (7.60,2.59)--(7.60,3.10);

\node[font=\large] at (7.60,4.53) {$\vdots$};
\node[kp/small,anchor=west] at (10.03,3.91)
 {$\rho_{j+1}=2\rho_j$};

\draw[kp/future] (4.95,4.89) rectangle (10.25,5.99);
\node[kp/small,anchor=west] at (5.08,5.15) {$\mathcal F_{N-1}$};
\draw[kp/past] (6.18,5.40) rectangle (9.02,5.79);
\node[kp/small] at (7.60,5.59) {$\mathcal R_N$};
\node[kp/small,anchor=west] at (10.63,5.59)
 {$r_*\leq\rho_N<2r_*$};

\draw[kp/future] (4.60,6.51) rectangle (10.60,7.20);
\draw[draw=KPGreen,line width=.9pt,fill=KPGreen!17]
 (5.0,6.65) rectangle (10.2,7.06);
\node at (7.60,6.85) {$Q^+$};
\draw[kp/step] (7.60,5.82)--(7.60,6.46);
\node[kp/small,anchor=west] at (10.87,6.84)
 {terminal future member};
\node[kp/small,anchor=east,align=right] at (4.22,5.58)
 {one branch\\shown};
\node[kp/small] at (8.20,.24)
 {$\mathcal R_{j+1}\Subset\mathcal F_j\,,\qquad
   N(r)\leq C_1+C_2\log^+(1/r)$};
\end{scope}
\end{tikzpicture}
\caption{The constructions in Lemma \ref{lem:positivity-tree-geometry}.
Panel (a) illustrates (i)--(iv): the early region is partitioned into
cells $E_\alpha$, with $E_\alpha\subset S_\alpha^-$ and a uniform negative
terminal-time margin. In the proof, each root may be paired directly with
one future box $F^L$ containing the entire forward stack, so its tree has
one vertex. Panel (b) illustrates one branch in (v), for $r<r_*$.
Here $\mathcal R_j:=T_{P_j',\rho_j}\mathcal R$ and
$\mathcal F_j:=T_{P_j',\rho_j}\mathcal F$, with $\rho_j=2^jr$.
Successive past boxes are compactly contained in preceding future boxes.
The terminal future member contains $Q^+$. The dashed outer envelopes
also contain the auxiliary enlargements, which are not drawn.
Both panels suppress the diffusive variables and are schematic in space
and time. Dashed inner outlines denote past members, solid inner outlines
denote future members, and the shaded gold cell is the selected root cell.}
\label{fig:positivity-tree-geometry}
\end{figure}


\begin{thebibliography}{99}

\bibitem{AtaeiNystrom2025}
A.~Ataei and K.~Nystr\"om,
\newblock \href{https://doi.org/10.1007/s11118-024-10143-7}{On fundamental solutions and Gaussian bounds for degenerate
parabolic equations with time-dependent coefficients},
\newblock \emph{Potential Anal.} \textbf{62} (2025), no.~3, 465--483.

\bibitem{AmbrosioFuscoPallara2000}
L.~Ambrosio, N.~Fusco, and D.~Pallara,
\newblock \emph{Functions of Bounded Variation and Free Discontinuity
Problems},
\newblock Oxford Mathematical Monographs, The Clarendon Press,
Oxford University Press, New York, 2000.

\bibitem{AnceschiDietertGuerandLoherMouhotRebucci2024}
F.~Anceschi, H.~Dietert, J.~Gu\'erand, A.~Loher, C.~Mouhot, and
A.~Rebucci,
\newblock \href{https://doi.org/10.5802/jep.350}{Poincar\'e inequality and quantitative De Giorgi method for
hypoelliptic operators},
\newblock \emph{J. \'Ec. polytech. Math.} \textbf{13} (2026),
1393--1417.

\bibitem{AuscherImbertNiebel2025}
P.~Auscher, C.~Imbert, and L.~Niebel,
\newblock Weak solutions to Kolmogorov-Fokker-Planck equations:
regularity, existence and uniqueness,
\newblock preprint, \href{https://arxiv.org/abs/2403.17464v3}{arXiv:\allowbreak 2403.17464v3}, 9 December 2025.

\bibitem{Baadi2026}
K.~Baadi,
\newblock \href{https://doi.org/10.1007/s00028-026-01201-1}{Degenerate parabolic equations in divergence form:
fundamental solution and Gaussian bounds},
\newblock \emph{J. Evol. Equ.} \textbf{26} (2026), Paper No.~52.

\bibitem{Bouchut2002}
F.~Bouchut,
\newblock Hypoelliptic regularity in kinetic equations,
\newblock \emph{J. Math. Pures Appl. (9)} \textbf{81} (2002), no.~11,
1135--1159.

\bibitem{BrigatiMouhot2025}
G.~Brigati and C.~Mouhot,
\newblock Introduction to quantitative De Giorgi methods,
\newblock lecture notes, \href{https://arxiv.org/abs/2510.11481}{arXiv:\allowbreak 2510.11481}, 2025.

\bibitem{CaffarelliSilvestre2007}
L.~Caffarelli and L.~Silvestre,
\newblock An extension problem related to the fractional Laplacian,
\newblock \emph{Comm. Partial Differential Equations} \textbf{32} (2007),
nos.~7--9, 1245--1260.

\bibitem{ChiarenzaSerapioni1984a}
F.~Chiarenza and R.~Serapioni,
\newblock Degenerate parabolic equations and Harnack inequality,
\newblock \emph{Ann. Mat. Pura Appl. (4)} \textbf{137} (1984), 139--162.

\bibitem{ChiarenzaSerapioni1984b}
F.~Chiarenza and R.~Serapioni,
\newblock A Harnack inequality for degenerate parabolic equations,
\newblock \emph{Comm. Partial Differential Equations} \textbf{9} (1984),
no.~8, 719--749.

\bibitem{DietertMouhotNiebelZacher2025}
H.~Dietert, C.~Mouhot, L.~Niebel, and R.~Zacher,
\newblock Critical trajectories in kinetic geometry,
\newblock preprint, \href{https://arxiv.org/abs/2508.14868}{arXiv:\allowbreak 2508.14868}, 2025.

\bibitem{DiNezzaPalatucciValdinoci2012}
E.~Di Nezza, G.~Palatucci, and E.~Valdinoci,
\newblock Hitchhiker's guide to the fractional Sobolev spaces,
\newblock \emph{Bull. Sci. Math.} \textbf{136} (2012), no.~5,
521--573.

\bibitem{DiPernaLionsMeyer1991}
R.~J. DiPerna, P.-L. Lions, and Y.~Meyer,
\newblock $L^p$ regularity of velocity averages,
\newblock \emph{Ann. Inst. H. Poincar\'e Anal. Non Lin\'eaire}
\textbf{8} (1991), no.~3--4, 271--287.

\bibitem{DongYastrzhembskiy2022}
H.~Dong and T.~Yastrzhembskiy,
\newblock \href{https://doi.org/10.1137/22M1512120}{Global $L_p$ estimates for kinetic Kolmogorov-Fokker-Planck
equations in divergence form},
\newblock \emph{SIAM J. Math. Anal.} \textbf{56} (2024), no.~1,
1223--1263.

\bibitem{FabesKenigSerapioni1982}
E.~B. Fabes, C.~E. Kenig, and R.~P. Serapioni,
\newblock The local regularity of solutions of degenerate elliptic equations,
\newblock \emph{Comm. Partial Differential Equations} \textbf{7} (1982),
no.~1, 77--116.

\bibitem{GarciaCuervaRubio1985}
J.~Garc\'ia-Cuerva and J.~L. Rubio de Francia,
\newblock \emph{Weighted Norm Inequalities and Related Topics},
\newblock North-Holland Mathematics Studies, vol.~116,
Notas de Matem\'atica, vol.~104,
North-Holland, Amsterdam, 1985.

\bibitem{GarofaloTralli2021}
N.~Garofalo and G.~Tralli,
\newblock A class of nonlocal hypoelliptic operators and their extensions,
\newblock \emph{Indiana Univ. Math. J.} \textbf{70} (2021), no.~5,
1717--1744.

\bibitem{GarofaloTralli2020}
N.~Garofalo and G.~Tralli,
\newblock Functional inequalities for a class of nonlocal hypoelliptic
equations of H\"ormander type,
\newblock \emph{Nonlinear Anal.} \textbf{193} (2020),
Paper No.~111567, 23 pp.

\bibitem{GolseImbertMouhotVasseur2019}
F.~Golse, C.~Imbert, C.~Mouhot, and A.~F. Vasseur,
\newblock Harnack inequality for kinetic Fokker-Planck equations with rough
coefficients and application to the Landau equation,
\newblock \emph{Ann. Sc. Norm. Super. Pisa Cl. Sci. (5)}
\textbf{19} (2019), 253--295.

\bibitem{Guerand2023}
J.~Gu\'erand and C.~Imbert,
\newblock Log-transform and the weak Harnack inequality for kinetic
Fokker-Planck equations,
\newblock \emph{J. Inst. Math. Jussieu} \textbf{22} (2023), no.~6,
2749--2774.

\bibitem{GuerandMouhot2022}
J.~Gu\'erand and C.~Mouhot,
\newblock Quantitative De Giorgi methods in kinetic theory,
\newblock \emph{J. \'Ec. polytech. Math.} \textbf{9} (2022),
1159--1181.

\bibitem{Hormander1967}
L.~H\"ormander,
\newblock Hypoelliptic second order differential equations,
\newblock \emph{Acta Math.} \textbf{119} (1967), 147--171.

\bibitem{Hormander1983}
L.~H\"ormander,
\newblock \emph{The Analysis of Linear Partial Differential Operators I:
Distribution Theory and Fourier Analysis},
\newblock Grundlehren der Mathematischen Wissenschaften, vol.~256,
Springer-Verlag, Berlin, 1983.

\bibitem{Imbert2026}
C.~Imbert,
\newblock De Giorgi's regularity theory for elliptic, parabolic and kinetic
equations,
\newblock lecture notes, \href{https://arxiv.org/abs/2601.15238}{arXiv:\allowbreak 2601.15238}, 2026.

\bibitem{Moser1964}
J.~Moser,
\newblock A Harnack inequality for parabolic differential equations,
\newblock \emph{Comm. Pure Appl. Math.} \textbf{17} (1964), 101--134.

\bibitem{Niebel2026}
L.~Niebel,
\newblock Transfer of regularity by kinetic mollification along critical
trajectories,
\newblock preprint, \href{https://arxiv.org/abs/2605.13582v2}{arXiv:\allowbreak 2605.13582v2}, 1 June 2026.

\bibitem{NiebelZacher2025}
L.~Niebel and R.~Zacher,
\newblock On a kinetic Poincar\'e inequality and beyond,
\newblock \emph{J. Funct. Anal.} \textbf{289} (2025), no.~1,
Paper No.~110899, 18 pp.

\bibitem{PascucciPolidoro2004}
A.~Pascucci and S.~Polidoro,
\newblock The Moser's iterative method for a class of ultraparabolic
equations,
\newblock \emph{Commun. Contemp. Math.} \textbf{6} (2004), no.~3,
395--417.

\bibitem{PolidoroRagusa2001}
S.~Polidoro and M.~A. Ragusa,
\newblock H\"older regularity for solutions of ultraparabolic equations in
divergence form,
\newblock \emph{Potential Anal.} \textbf{14} (2001), no.~4,
341--350.

\end{thebibliography}
\end{document}